\numberwithin{equation}{section}  
\theoremstyle{plain}
\newtheorem{theorem}{Theorem}[section]
\newtheorem{Def}[theorem]{Definition}
\newtheorem{lemma}[theorem]{Lemma}
\newtheorem{lem}[theorem]{Lemma}
\newtheorem{proposition}[theorem]{Proposition}
\newtheorem{kor}[theorem]{Corollary}
\newtheorem{cor}[theorem]{Corollary}
\theoremstyle{definition}
\newtheorem{rem}[theorem]{Remark}
\newcommand{\CB}{\mathcal{B}}
\newcommand{\CC}{\mathcal{C}}
\newcommand{\CE}{\mathcal{E}}
\newcommand{\CL}{\mathcal{L}}
\newcommand{\CN}{\mathcal{N}}
\newcommand{\CS}{\mathcal{S}}
\newcommand{\CX}{\mathcal{X}}
\newcommand{\BN}{\mathbb{N}}
\newcommand{\BR}{\mathbb{R}}
\newcommand{\SF}{\mathscr{F}}
\newcommand{\SH}{\mathscr{H}}
\newcommand{\SO}{\mathscr{O}}
\newcommand{\SP}{\mathscr{P}}
\newcommand{\ST}{\mathscr{T}}
\newcommand{\1}{\mathbbm{1}} 
\DeclareMathOperator*{\supp}{supp}
\DeclareMathOperator*{\ext}{ext}
\DeclareMathOperator*{\extn}{Ext}
\DeclareMathOperator*{\tr}{\tilde{\gamma}}
\DeclareMathOperator*{\trc}{\gamma}
\DeclareMathOperator*{\trn}{Tr}
\DeclareMathOperator*{\dive}{div}
\newcommand{\norm}[1]{\left\lVert#1\right\rVert}
\newcommand{\normtwo}[1]{{\left\vert\kern-0.25ex\left\vert\kern-0.25ex\left\vert #1 
		\right\vert\kern-0.25ex\right\vert\kern-0.25ex\right\vert}}
\newcommand{\id}{\mathrm{id}}
\newcommand{\di}{\,\text{d}}
\newcommand{\dishort}{\text{d}}
\newcommand{\abs}[1]{\ensuremath{\left\vert#1\right\vert}}
\DeclareMathOperator{\sgn}{sgn}
\newcommand{\lap}{\Delta}
\newcommand{\ssubset}{\subset \joinrel \subset} 
\DeclareMathOperator{\distw}{dist}
\newcommand{\dist}[2]{\distw(#1, #2)} 
\newcommand*{\diam}[1]{\text{diam}(#1)} 
\newcommand{\ie}{i.e.\,}
\newcommand{\aev}{a.e.\,}
\newcommand{\eg}{e.g.\,}
\newcommand{\etc}{etc.\,}
\begin{document}
	\title{Robust nonlocal trace spaces and Neumann problems}
	
	\author{Florian Grube, Thorben Hensiek}
	\address{Fakultät für Mathematik, Universität Bielefeld, Postfach 10 01 31, 33501 Bielefeld, Germany}
	\email{fgrube@math.uni-bielefeld.de, thensiek@math.uni-bielefeld.de}
	
	\makeatletter
	\@namedef{subjclassname@2020}{%
		\textup{2020} Mathematics Subject Classification}
	\makeatother
	
	\subjclass[2020]{Primary: 46E35, Secondary: 35J25, 47A07, 35A15}
	
	\keywords{Nonlocal Sobolev spaces, trace theorem, extension theorem, convergence of trace spaces, Neumann problems}
	
	\begin{abstract} We prove trace and extension results for fractional Sobolev spaces of order $s\in(0,1)$. These spaces are used in the study of nonlocal Dirichlet and Neumann problems on bounded domains. The results are robust in the sense that the continuity of the trace and extension operators is uniform as $s$ approaches $1$ and our trace spaces converge to $H^{1/2}(\partial \Omega)$. We apply these results in order to study the convergence of solutions of nonlocal Neumann problems as the integro-differential operators localize to a symmetric, second order operator in divergence form.
	\end{abstract}
	
	\maketitle
\section{Introduction}
The study of trace and extension operators is motivated by the classical Dirichlet problem for the Laplacian, \ie
\begin{align}\label{eq:dirichlet}
	-\Delta u &=0 \text{ in }\Omega,\nonumber\\
	u &= g \text{ on }\partial\Omega
\end{align}	
for a function $g:\partial \Omega\to \BR$ and a sufficiently smooth domain $\Omega\subset \BR^d$. A classical question is under which assumptions on $g$ there exists a unique solution to the Dirichlet problem. The Poincaré inequality and Lax-Milgram lemma yield the existence and uniqueness of a weak solution $u\in H^1(\Omega)$ to the problem \eqref{eq:dirichlet} for $g\in H^1(\Omega)$, \ie 
\begin{align*}
	\int_{\Omega} \nabla u(x) \cdot \nabla v(x) \di x = 0
\end{align*}
for all $v \in H^1_0(\Omega):= \overline{C_c^\infty(\Omega)}^{H^1(\Omega)}$ and $u -g \in H^1_0(\Omega)$. This notion of weak solution is motivated by the Green-Gauß formula. These standard tools require the function $g$ to be prescribed on the whole domain. 

In the classical works of Aronszajn \cite{trace_aronzajin}, Prodi \cite{trace_prodi} and Slobodeckij \cite{trace_Slobodeckij} the existence of a continuous trace operator
\begin{align*}
	\trc  : H^1(\Omega) \to L^2(\partial\Omega)
\end{align*}
satisfying $\trc (u)=u|_{\partial\Omega}$ for all $u \in C(\overline{\Omega})\cap H^1(\Omega)$ has been established. Additionally, the image of the trace operator has been characterized as the Sobolev-Slobodeckij space $H^{1/2}(\partial\Omega)$ and the existence of a continuous right inverse $\ext:H^{1/2}(\partial \Omega)\to H^1(\Omega)$, the classical extension operator, has been proven. Gagliardo extended this result in \cite{Trace_Galirado} to $W^{1,p}(\Omega) \to W^{1-1/p,p}(\partial\Omega)$, $p > 1$. The existence of an extension operator allows to prescribe the boundary datum $g \in H^{1/2}(\partial \Omega)$ in \eqref{eq:dirichlet}. The existence of the trace operator guaranties that this definition of a weak solution is a consistent generalization of classical solutions because the condition $u-v \in H^1_0(\Omega)$ is equivalent to $\trc (u-v) =0$.
\medskip

In recent years there has been an intense study of nonlocal operators. The most prominent example is the fractional Laplacian
\begin{align*}
	(-\Delta)^s u(x):= \kappa_{d,s}\text{p.v.}\int_{\BR^d}\frac{u(x)-u(y)}{\abs{x-y}^{d+2s}}\di x,
\end{align*}        
for $s \in (0,1)$ with
\begin{equation*}
	\kappa_{d,s}:= \frac{2^{2s}\, s\, \Gamma(\frac{d+2s}{2})}{\pi^{d/2}\, \Gamma(1-s)}.
\end{equation*}
Here $\Gamma$ denotes Euler's gamma function. The normalization constant $\kappa_{d,s}$ guaranties the Fourier representation
\begin{align*}
	\SF(	(-\Delta)^s u)(\xi) = \abs{\xi}^{2s} \SF(u)(\xi).
\end{align*} 
This representation implies the convergence $(-\Delta)^s \to -\Delta$ as $s \to 1-$. In this work the asymptotic behavior of $\kappa_{d,s} \asymp s(1-s)$, see \cref{prop:asymptotics_constant}, will be important. Dirichlet problems for nonlocal integro-differential operators have been studied extensively in the literature, see the survey \cite{Ros-Oton_Survey} by Ros-Oton. For Hilbert space approaches we refer for example to \cite{Felsinger2013} by Felsinger, Kassmann and Voigt, \cite{bucur_valdinoci} by Bucur and Valdinoci as well as \cite{Rutkowski_Dirichlet} by Rutkowski. The nonlocality of $(-\Delta)^s$ requires functions to be defined on $\BR^d$. Thus, Dirichlet problems for these kind of operators are typically formulated as complement value problems. As in the case for $-\Delta$ the notion of a weak solution to the Dirichlet problem 
\begin{align}\label{eq:dirichlet_nonlocal}
	(-\Delta)^s u&= 0 \text{ in }\Omega,\nonumber\\
	u&= g \text{ on }\Omega^c
\end{align}
is motivated by a nonlocal Green Gauß formula. For $u \in C^2_b(\BR^d)$ and $v \in C_b^1(\Omega)$ it holds
\begin{align}\label{eq:green_gauss_frac_lap}
		\int\limits_{\Omega} [(-\Delta)^s u(x)] v(x) \di x &= \frac{\kappa_{d,s}\,}{2} \iint\limits_{ \BR^d\times \BR^d\setminus \Omega^c\times \Omega^c} \frac{(u(x)-u(y))(v(x)-v(y))}{\abs{x-y}^{d+2s}} \di x \di y\nonumber\\
		&\qquad\qquad\qquad- \int\limits_{\Omega^c} \CN_{s}u(y) v(y) \di y.
\end{align}
Here $\CN_s$ is the nonlocal normal derivative with respect to $(-\Delta)^s$ and $\Omega$ defined via
\begin{equation*}
	\CN_{s}u(y)= \kappa_{d,s}\,\text{p.v.}\int\limits_{\Omega}\frac{u(y)-u(x)}{\abs{x-y}^{d+2s}}\di y,\, y\in \Omega^c.
\end{equation*}
This concept of a nonlocal normal derivative first appeared in the work of Dipierro, Ros-Oton and Valdinoci in \cite{Valdinoci_nonlocal_derivative}. A similar operator has been introduced in the earlier work of Du, Gunzburger, Lehoucq and Zhou in \cite{Du_nonlocal_normal} in the context of peridynamics. This formula yields the following fractional analogue of $H^1(\Omega)$. We define the bilinear form
\begin{equation*}
	[u,v]_{V^s(\Omega \, |\, \BR^d)}:= \frac{\kappa_{d,s}\,}{2}\iint\limits_{\BR^d\times \BR^d\setminus \Omega^c\times \Omega^c} \frac{(u(x)-u(y))(v(x)-v(y)) }{\abs{x-y}^{d+2s}}\di x \di y
\end{equation*}
for measurable functions $u,v : \BR^d \to \BR $ and the set
\begin{equation*}
	V^s(\Omega\,|\,\BR^d) := \{ u : \BR^d \to \BR \, | \, 	[u,u]_{V^s(\Omega \, |\, \BR^d)} <\infty \}.
\end{equation*}
Since $u \in 	V^s(\Omega\,|\,\BR^d) $ implies $ u \in L^2(\Omega)$ for bounded domains $\Omega$, we equip this space with the norm
\begin{align*}
	\norm{u}_{	V^s(\Omega\,|\,\BR^d) } := \big([u,u]_{V^s(\Omega \, |\, \BR^d)} + \norm{u}^2_{L^2(\Omega)}\big)^{1/2}.
\end{align*} 
Such forms for more general L\'{e}vy measures are also considered by Servadei and Valdinoci in \cite{energy_valdinoci_1} and \cite{energy_valdinoci_2}. For nonsymmetric kernels and related forms the Dirichlet problem has been studied in \cite{Felsinger2013}. Furthermore, such forms appeared in \cite{Valdinoci_nonlocal_derivative}. For $g \in V^s(\Omega\,|\, \BR^d)$ we call $u \in V^s(\Omega\,|\, \BR^d)$ a weak solution to the Dirichlet problem \eqref{eq:dirichlet_nonlocal}, if 
\begin{align*}
	[u,v]_{ V^s(\Omega\,|\, \BR^d)} =0
\end{align*}
for all $v \in  V_0^s(\Omega\,|\, \BR^d) :=  \{w \in  V^s(\Omega\,|\, \BR^d) \, | \, w =0 \text{ on }\Omega^c \}$ and $u-g \in V_0^s(\Omega\,|\, \BR^d)$. This definition requires the complement data to be defined on the whole space. It is a natural question to ask for which complement values $g:\Omega^c\to \BR$ the Dirichlet problem has a unique weak solution. As for \eqref{eq:dirichlet} this motivates the study of trace and extension operators. In contrast to the local case, where a specific construction for the trace operator is needed, here the trace operator is the restriction to $\Omega^c$. 

The asymptotics of fractional phenomena as they localize have gained considerable attention in recent years. In particular, the space of weak solutions to the fractional Dirichlet problem $V^s(\Omega\,|\,\BR^d)$ converges to $H^1(\Omega)$ as $s \to 1-$. Indeed, the following holds. Let $\Omega$ be a bounded Lipschitz domain and $u \in H^1(\BR^d)$. Then 
\begin{align*}
		\lim\limits_{s \to 1-}[u,u]_{V^s(\Omega \, |\, \BR^d)} = \int_{\Omega} \abs{\nabla u(x)}^2 \di x.
\end{align*}
For a proof we refer the reader to \cite[Corollary 2]{bourgain_compactness} by Bourgain, Brezis and Mironescu, \cite{ponce} by Ponce and \cite[Theorem 3.4, (3.5)]{kassmann_mosco} by Foghem, Kassmann and Voigt. Furthermore, the family of Dirichlet forms $[\cdot , \cdot ]_{V^s(\Omega\,|\,\BR^d)}$ converge to $[\cdot , \cdot ]_{H^1(\Omega)}$ in the Mosco sense. We refer the reader to \cite[Theorem 1.6]{kassmann_mosco} and Foghem \cite[Theorem 5.73]{FoghemGounoue2020}. In \cite[Theorem 7.1]{Marvin_Mosco} Kassmann and Weidner proved Mosco convergence of forms related to nonsymmetric kernels. Mosco convergence implies the convergence of the corresponding stochastic processes and semigroups, see \cite{mosco_original} by Mosco. \medskip

In light of the previous discussion it is natural to ask the following question.

\textbf{Question:} Do there exist Hilbert spaces $X_s$ of functions $g: \Omega^c \to \BR$ for $s\in(0,1)$ such that
\begin{enumerate}[(i)]
	\item{ there exist trace operators $\trn :V^s(\Omega \, |\, \BR^d) \to X_s$ which are continuous uniformly in the limit $s\to 1-$, }
	\item{ there exist extension operators $\extn : X_s \to V^s(\Omega \, |\, \BR^d)$ which are continuous uniformly as $s\to 1-$,}
	\item{ $X_s$ converges to the classical trace space $H^{1/2}(\partial \Omega)$ as $s\to 1$?}
\end{enumerate}
The main goal of this article is an answer to this question.
\subsection{Main results}
For the remainder of this paper we fix the dimension $d\in \BN$, $d\ge 2$ and only consider domains $\Omega\subset \BR^d$. 

Now we define a space of functions on $\Omega^c$ which answers the aforementioned question. For sufficiently regular $f,g:\Omega^c \to \BR$ we set 
\begin{align}\label{eq:L2-part}
	(f,g)_{L^2(\Omega^c, \tau_s)} :=\int\limits_{\Omega^c } f(x)g(x)\tau_s(x)  \di x \ \text{ with } \ \tau_s(x):=  \frac{1-s}{d_x^s\,(1+d_x)^{d+s}}
\end{align}
and the weighted $L^2$ norm $\norm{f}_{L^2(\Omega^c, \tau_s)}:= (f,f)_{L^2(\Omega^c, \tau_s)}^{1/2}$. Throughout this paper we use the notation $d_x := \dist{x}{\partial\Omega} := \inf\{\abs{x-z} | z \in \partial\Omega\}$. The weight $\tau_s$ captures the decay of the kernel of the fractional Laplacian $\kappa_{d,s}\abs{\cdot}^{-d-2s}$ at infinity. The term $(1-s)d_x^{-s}$ concentrates at the boundary as $s$ increases. In fact, it is responsible for the reduction of dimension of $\Omega^c$ to the boundary $\partial \Omega$. In \cref{lem:weak_convergence_measures} we prove that the measure $\tau_s(x)\di x$ converges weakly to the surface measure on $\partial \Omega$. Additionally, we introduce the bilinear form
\begin{align}\label{eq:seminorm-part}
	[f,g]_{\ST^s(\Omega^c|\Omega^c)}:= \int\limits_{\Omega^c}\int\limits_{\Omega^c} (f(x)-f(y))(g(x) -g(y)) k_s(x,y)\di x \di y,
\end{align}
and the seminorm $[f]_{\ST^s(\Omega^c|\Omega^c)}:= [f,f]_{\ST^s(\Omega^c|\Omega^c)}^{1/2}$ 
where 
\begin{equation}\label{eq:our_trace_kernel}
	k_s(x,y):= \frac{(1-s)^2}{d_x^s \,(1+d_x)^s  d_y^s\,(1+d_y)^s \big( \abs{x-y}+ d_x\,d_y+d_x+d_y \big)^d  } 
\end{equation}
for any $x,y\in \overline{\Omega}^{\,c}$. We call $k_s$ the interaction kernel on $\Omega^c$. It has the same decay properties in each variable as $\kappa_{d,s}\abs{\cdot}^{-d-2s}$ at infinity. Notice that functions in $V^s(\Omega\,|\, \BR^d)$ are merely integrable on $\Omega^c$ away from the boundary with this decay. The term 
\begin{equation*}
	\kappa_{d,s}\int\limits_{\Omega} \int\limits_{\Omega^c} \frac{(u(x)-u(y))^2}{\abs{x-y}^{d+2s}}\di x \di y
\end{equation*} 
in the $V^s$-norm requires functions to have some regularity close to the boundary $\partial \Omega$. This behavior is captured by the term $d_x^{-s}d_y^{-s}(\abs{x-y}+d_x+d_y+d_xd_y)^{-d}$ in the kernel $k_s$. Again the terms $(1-s)d_x^{-s}$ and $(1-s)d_y^{-s}$ are responsible for the dimension reduction $\Omega^c\to \partial \Omega$ as $s\to 1-$. 
\begin{Def}\label{def:our_trace_space}
	We define the Hilbert space 
	\begin{align*}
	\ST^{s}(\Omega^c) := \{g:\Omega^c\to \BR \text{ measurable } \mid \norm{g}_{\ST^s(\Omega^c)}<\infty  \}
	\end{align*}
	endowed with the norm 
	\begin{align*}
	\norm{g}_{\ST^s(\Omega^c)} := \big(\norm{g}_{L^2(\Omega^c, \tau_s)}^2 + [g]_{\ST^s(\Omega^c|\Omega^c)}^2  \big)^{1/2}.
	\end{align*}
\end{Def}
Now we state our main results.
	\begin{theorem}\label{th:trace_and_extension}
		Let $\Omega$ be a bounded $C^{1,1}$-domain and $s_\star\in(0,1)$.
		\begin{enumerate}
			\item{ There exists a continuous trace operator $\trn : V^s(\Omega \, | \, \BR^d) \to \ST^{s}(\Omega^c)$ and constant $C=C(d,\Omega, s_\star)>0$ such that for all $s \in (s_\star,1)$ and $u \in   V^s(\Omega \, | \, \BR^d) $
			\begin{align*}
				\norm{\trn u}_{\ST^{s}(\Omega^c)} \le C \norm{u}_{V^s(\Omega \, | \, \BR^d)}.
			\end{align*}}
			\item There exists a continuous extension operator $\extn : \ST^{s}(\Omega^c) \to V^s(\Omega \, | \, \BR^d)$ and a constant $C=C(d,\Omega)>0$ such that for all $s \in (0,1)$ and $g \in  \ST^{s}(\Omega^c) $
			\begin{align*}
				\norm{\extn g}_{V^s(\Omega \, | \, \BR^d)} \le C \norm{g}_{\ST^{s}(\Omega^c)}.
			\end{align*}
			\item $\trn$ is the left inverse of $\extn$, i.e. $ \trn \circ \extn  = \id$.
		\end{enumerate}
	\end{theorem}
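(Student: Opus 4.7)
Since every element of $V^s(\Omega\,|\,\BR^d)$ is already defined on all of $\BR^d$, I would take $\trn u := u|_{\Omega^c}$, and construct $\extn$ so that $(\extn g)|_{\Omega^c}=g$ for every $g\in\ST^s(\Omega^c)$. With these conventions part~(3) is immediate: $\trn(\extn g)=(\extn g)|_{\Omega^c}=g$. The work concentrates on parts~(1) and~(2), each of which I would split into the $L^2(\Omega^c,\tau_s)$-piece and the seminorm piece of $\norm{\cdot}_{\ST^s(\Omega^c)}$.

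\textbf{Part~(1) (trace).} For the weighted $L^2$-bound I would, for each $x\in\Omega^c$, build a ``shadow'' set $B_x\subset\Omega$ near $\partial\Omega$ with $|B_x|\sim\min(d_x,1)^d$ and $|x-y|\le C(d_x+1)$ for all $y\in B_x$, which exists by the uniform interior sphere condition for $C^{1,1}$-domains. Starting from $u(x)^2\le 2(u(x)-u(y))^2+2u(y)^2$, averaging in $y\in B_x$ and multiplying by $\tau_s(x)$, the explicit form of $\tau_s$ yields the pointwise comparison
\begin{equation*}
	\frac{\tau_s(x)}{|B_x|}\le C\,\kappa_{d,s}\,|x-y|^{-d-2s}\qquad\text{for }(x,y)\text{ with }y\in B_x,
\end{equation*}
in which the $(1-s)$ in $\tau_s$ is absorbed by the matching factor in $\kappa_{d,s}\asymp s(1-s)$ (\cref{prop:asymptotics_constant}) and the residual $s^{-1}$ is bounded by $s_\star^{-1}$. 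The remaining $L^2(\Omega)$-piece is handled by Fubini together with the $s$-uniform bound $\int_{\Omega^c}\tau_s\,\di x\le C$. For the seminorm bound I would, for each pair $x,y\in\Omega^c$, select a single shadow set $B_{x,y}\subset\Omega$ with $|B_{x,y}|\sim\min(d_x+d_y,1)^d$ and $|x-z|,|y-z|\le C(|x-y|+d_x+d_y)$ for $z\in B_{x,y}$. Applying $(u(x)-u(y))^2\le 2(u(x)-u(z))^2+2(u(z)-u(y))^2$ and averaging in $z$, the key point is that the kernel $k_s$ in \eqref{eq:our_trace_kernel} is engineered so that
\begin{equation*}
	\frac{k_s(x,y)}{|B_{x,y}|}\le C\,\kappa_{d,s}\bigl(|x-z|^{-d-2s}\wedge|y-z|^{-d-2s}\bigr);
\end{equation*}
Fubini then reduces the estimate to the interaction part of $[u,u]_{V^s(\Omega\,|\,\BR^d)}$.

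\textbf{Part~(2) (extension).} I would define
\begin{equation*}
	\extn g(x):=\begin{cases}g(x), & x\in\Omega^c,\\[2pt] \dfrac{1}{|A_x|}\displaystyle\int_{A_x}g(z)\,\di z, & x\in\Omega,\end{cases}
\end{equation*}
where $A_x\subset\Omega^c$ is a ``reflected shadow'' of $x$, chosen via the uniform exterior sphere condition so that $|A_x|$ and $\sup_{z\in A_x}|x-z|$ are both comparable to $d_x$ (cut off at a fixed scale to accommodate interior points far from $\partial\Omega$). The $L^2(\Omega)$-bound of $\extn g$ follows from Jensen and Fubini together with a bound on the reverse shadow count $|\{x\in\Omega:z\in A_x\}|$ that pairs correctly with $\tau_s$. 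For the seminorm of $\extn g$, split into the mixed piece on $\Omega\times\Omega^c$ and the interior piece on $\Omega\times\Omega$. On the mixed piece, Jensen gives $(\extn g(x)-g(y))^2\le|A_x|^{-1}\int_{A_x}(g(z)-g(y))^2\,\di z$, and one checks the pointwise comparison
\begin{equation*}
	\frac{\kappa_{d,s}\,|x-y|^{-d-2s}}{|A_x|}\le C\,k_s(y,z)\qquad\text{for }y\in\Omega^c,\ z\in A_x,
\end{equation*}
reducing matters to $[g]_{\ST^s(\Omega^c|\Omega^c)}^2$ plus a remainder controlled by $\norm{g}_{L^2(\Omega^c,\tau_s)}^2$. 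The interior piece is handled analogously via the double-averaging $\extn g(x)-\extn g(y)=|A_x|^{-1}|A_y|^{-1}\iint_{A_x\times A_y}(g(z)-g(w))\,\di z\,\di w$. Since $\extn$ itself is independent of $s$, the constant can be chosen independent of $s_\star$.

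\textbf{Main obstacle.} The hard part is the simultaneous design of the geometric shadow sets $B_x, B_{x,y}, A_x$ and the verification of the pointwise comparisons between $\tau_s(x)/|B_x|$, $k_s(x,y)/|B_{x,y}|$, $\kappa_{d,s}|x-y|^{-d-2s}$ and $k_s(y,z)|A_x|$. The specific powers of $(1-s)$, $(1+d_x)^{-s}$ and the distance term $|x-y|+d_xd_y+d_x+d_y$ in \eqref{eq:L2-part}--\eqref{eq:our_trace_kernel} are tuned exactly so that these inequalities close up uniformly in $s$, while the $C^{1,1}$-regularity of $\partial\Omega$ is what supplies the uniform interior and exterior sphere properties that underpin the shadow constructions.
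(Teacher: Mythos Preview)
Your overall strategy—restriction for the trace and a Whitney-style averaging extension via reflected shadow sets—is a genuinely different route from the paper's. The paper defines $\extn$ as the \emph{Poisson extension} $\SP_{s,\Omega}$ and leans on the Douglas identity $[\SP_{s,\Omega}g,\SP_{s,\Omega}g]_{V^s}=[g,g]_{\CX^s}$ of Bogdan et~al.\ (\cref{th:douglas}); both trace and extension seminorm estimates then reduce to the robust two-sided comparison $k_s\asymp k_s^\star$ between the explicit kernel $k_s$ and the Poisson-kernel-based interaction kernel $k_s^\star$ (\cref{prop:upperbound}, \cref{prop:lowerbound}), which in turn rests on Chen's Poisson kernel estimates for $C^{1,1}$ domains (\cref{th:chen99_poissonestimate}). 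Your averaging approach is closer in spirit to Dyda--Kassmann and has the virtue of being more elementary and of producing an $\extn$ that does not depend on $s$; the paper's Poisson extension, on the other hand, yields the actual solution of the nonlocal Dirichlet problem and an \emph{equality} in the Douglas identity rather than a mere inequality.

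There is, however, a concrete gap in your $L^2$-trace argument. After Fubini, the term $\int_{\Omega^c}\tau_s(x)\fint_{B_x}u(y)^2\,\di y\,\di x$ becomes $\int_\Omega u(y)^2\, w_s(y)\,\di y$ with $w_s(y)=\int_{\{x:\,y\in B_x\}}\tau_s(x)\,|B_x|^{-1}\,\di x$; with $|B_x|\sim\min(d_x,1)^d$ one computes $w_s(y)\asymp(1-s)\,d_y^{-s}$ for $y$ near $\partial\Omega$, which is \emph{not} uniformly bounded and is certainly not controlled by $\int_{\Omega^c}\tau_s\le C$. What is actually required here is the robust Hardy-type inequality
\[
(1-s)\int_\Omega\frac{u(y)^2}{d_y^{\,s}}\,\di y\;\le\;C\Big(\norm{u}_{L^2(\Omega)}^2+(1-s)\iint_{\Omega\times\Omega}\frac{(u(x)-u(y))^2}{|x-y|^{d+2s}}\,\di x\,\di y\Big),
\]
which is \cref{prop:robust_trace_hardy_type} in the paper and needs its own nontrivial proof (fractional Hardy for $s$ bounded away from $1$, a Fourier/trace-type argument for $s$ close to $1$). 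A second issue: the pointwise kernel comparisons you write for the seminorm pieces are false as stated. Taking $d_x\sim d_y\sim|x-y|=:r$ small gives $k_s(x,y)/|B_{x,y}|\asymp(1-s)^2 r^{-2d-2s}$ while $\kappa_{d,s}\,|x-z|^{-d-2s}\asymp s(1-s)\,r^{-d-2s}$ for $z\in B_{x,y}$, so your claimed bound fails by a factor $r^{-d}$; an analogous failure occurs in the mixed-piece comparison for the extension. Only an \emph{integrated} version (after Fubini in the redundant variable) can possibly hold, and establishing that with constants uniform as $s\to 1^-$ is precisely the delicate step—comparable in difficulty to the paper's kernel comparison $k_s\asymp k_s^\star$—that your sketch does not address.
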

\begin{rem}
	The domain assumptions in \cref{th:trace_and_extension} are due to robust Poisson kernel estimates, see \cref{th:chen99_poissonestimate}. In the proof of \cref{th:trace_and_extension} we also show the existence of a continuous trace $\trn: V^s(\Omega\,|\, \BR^d)\to L^2(\Omega^c, \tau_{s})$ under milder assumptions on the boundary of the domain $\Omega\subset \BR^d$. Here the main tool is \cref{prop:robust_trace_hardy_type}. More precisely, it holds for any bounded Lipschitz domain $\Omega\subset \BR^d$ and $s_\star\in (0,1)$ there exists a continuous trace operator $\trn: V^s(\Omega\,|\, \BR^d)\to L^2(\Omega^c, \tau_s)$ and a constant $C=C(d,\Omega,s_\star)>0$ such that for all $s\in (s_\star, 1)$ and $u\in V^s(\Omega\,|\, \BR^d)$ 
	\begin{equation*}
		\norm{\trn u}_{L^2(\Omega^c,\tau_s)}\le C\, \norm{u}_{V^s(\Omega\,|\, \BR^d)}. 
	\end{equation*}
\end{rem}
We define the operator $\extn$ via the Poisson extension operator $\SP_{s,\Omega}$ as in the work by Bogdan, Grzywny, Pietruska-Pałuba and Rutkowski, see\cite{Bogdan_trace}. Thus, for $ g\in \ST^s(\Omega^c)$ the function $\extn (g) \in V^s(\Omega\,|\,\BR^d)$ is the unique solution to the Dirichlet problem \eqref{eq:dirichlet_nonlocal}, see \cite[Theorem 5.5]{Bogdan_trace}. The robust continuity of the trace and extension yield robust estimates of solutions to Dirichlet and Neumann problems in terms of the Dirichlet data in $\ST^s(\Omega^c)$ respectively Neumann data in $\ST^s(\Omega^c)'$, see \cref{th:solution_neumann_nonlocal}. The robustness of these estimates is essential for the convergence of solutions as $s\to 1-$ in \cref{sec:neumann}.

The next theorem answers the question on the asymptotics for $s\to 1- $.	
	\begin{theorem}\label{th:convergence_pointwise}
		Let $\Omega\subset\BR^d$ be a bounded $C^{1,1}$-domain. If $g\in H^1(\Omega^c)$, then 
		\begin{align*}
			&\norm{g}_{L^2(\Omega^c, \tau_s)} \to \norm{\tr g}_{L^2(\partial \Omega)},	\\
			&[g,g]_{\ST^s(\Omega^c|\Omega^c)}\to [\tr g,\tr g]_{H^{1/2}(\partial \Omega)} 
		\end{align*}
		as $ s \to 1-$. In particular, $\norm{g}_{\ST^{s}(\Omega^c)} \to \norm{\tr g}_{H^{1/2}(\partial \Omega)}	$ as $s \to 1-$. Here $\tr :H^1(\Omega^c)\to H^{1/2}(\partial \Omega)$ is the classical trace operator, see \cref{prop:definition_E}. 
	\end{theorem}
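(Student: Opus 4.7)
Both limits localize near $\partial \Omega$: the factors $(1-s)\, d_x^{-s}$ in $\tau_s$ and $(1-s)^2 d_x^{-s} d_y^{-s}$ in $k_s$ concentrate all mass in a thin tubular neighborhood of $\partial \Omega$ as $s \to 1-$. I would fix $\eta>0$ small enough that the $C^{1,1}$-tubular map $\Phi(\sigma,r)=\sigma+r\nu(\sigma)$ is a bi-Lipschitz diffeomorphism of $\partial\Omega \times (0,\eta)$ onto $U_\eta := \{x \in \Omega^c : d_x < \eta\}$, with Jacobian $J(\sigma,r) \to 1$ uniformly as $r \to 0$.

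For the $L^2(\Omega^c,\tau_s)$-convergence, contributions from $\Omega^c \setminus U_\eta$ are $O((1-s)\eta^{-s}) \to 0$ since $g \in L^2(\Omega^c)$. On $U_\eta$, Fubini in tubular coordinates gives
\[
\int_{U_\eta} g^2 \tau_s \, dx = \int_0^\eta \Bigg( \int_{\partial \Omega} g(\Phi(\sigma,r))^2 J(\sigma,r)\,d\sigma \Bigg) \frac{1-s}{r^s (1+r)^{d+s}}\,dr.
\]
The inner integral is continuous in $r$ at $r=0$ with value $\| \tr g \|_{L^2(\partial \Omega)}^2$ (standard $L^2$-continuity of parallel traces for $H^1$-functions), and the outer radial measures have total mass $\to 1$ (via the elementary identity $(1-s)\int_0^\eta r^{-s}dr = \eta^{1-s} \to 1$, up to a negligible $(1+r)$-correction) and concentrate at $r=0$. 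A Dirac-approximation argument yields the limit.

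For the Gagliardo-type seminorm, I would first establish the uniform bound $\|g\|_{\ST^s(\Omega^c)} \leq C\|g\|_{H^1(\Omega^c)}$ for $s \in (s_\star, 1)$: extend $g$ to $\tilde g \in H^1(\BR^d)$, apply \cref{th:trace_and_extension}(1), and bound $[\tilde g,\tilde g]_{V^s(\Omega|\BR^d)} \leq \frac{\kappa_{d,s}}{2} [\tilde g]_{H^s(\BR^d)}^2 \lesssim \|\tilde g\|_{H^1(\BR^d)}^2$ using $\kappa_{d,s} \asymp 1-s$ and the Bourgain-Brezis-Mironescu bound. This reduces the problem to $g \in C_c^\infty(\BR^d)$ by density. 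For such $g$, writing $x = \Phi(\sigma_1,r_1)$ and $y = \Phi(\sigma_2,r_2)$, the contribution of $(\Omega^c \times \Omega^c) \setminus (U_\eta \times U_\eta)$ is $O((1-s)^2)$. On $U_\eta \times U_\eta$ I would split
\[
(g(x) - g(y))^2 = (\tr g(\sigma_1) - \tr g(\sigma_2))^2 + E(x,y),
\]
where $|E|$ carries $(r_1+r_2)$-factors absorbable by smoothness of $g$; integrated against $k_s$ these produce $O(1-s)$ contributions. For the main term, fixing $\rho := |\sigma_1 - \sigma_2|$ and substituting $r_i = \rho u_i$, a double Beta-function computation
\[
(1-s)^2\!\int_0^\infty\!\!\int_0^\infty\! \frac{du_1\,du_2}{u_1^s u_2^s(1+u_1+u_2)^d} = \frac{(1-s)^2\,\Gamma(1-s)^2\,\Gamma(d-2+2s)}{\Gamma(d)} \to 1
\]
shows that the radial part of $k_s$ integrates to a multiple of $|\sigma_1 - \sigma_2|^{-d}$ in the limit $s \to 1-$, matching the $H^{1/2}(\partial\Omega)$-seminorm normalization after integration over $\partial\Omega\times\partial\Omega$.

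The main obstacle is controlling the error term $E$ uniformly in $s$: for merely $H^1$-data, a careful Hardy-type estimate of the form $\int_{\partial\Omega}|g(\Phi(\sigma,r))-\tr g(\sigma)|^2\,d\sigma \leq r\int_0^r\int_{\partial\Omega}|\partial_r g|^2\,d\sigma\,dt$ would be required. The density reduction via the uniform $H^1$-to-$\ST^s$ bound sidesteps this difficulty, leaving only the elementary pointwise bound $|g(\Phi(\sigma,r))-\tr g(\sigma)| \lesssim r \|\nabla g\|_\infty$ to exploit for smooth $g$.
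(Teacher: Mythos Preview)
Your strategy matches the paper's: establish the uniform embedding $H^1(\Omega^c)\hookrightarrow \ST^s(\Omega^c)$ (this is \cref{example:h1_pointwise_convergence}), reduce by density to a regular class, and prove the convergence there. The paper packages the reduction as an abstract statement (\cref{th:convergence_pointwise_abstract}) and carries out the regular case for $C_b^{0,1}(\Omega^c)$ in \cref{prop:convergence_Lipschitz}; your $L^2$-argument via radial Dirac approximation is essentially the paper's \cref{lem:weak_convergence_measures} in tubular coordinates.

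The executions for the seminorm in the regular class differ. The paper's device is weak convergence of measures: $\mu_s = (1-s)d_x^{-s}\1_{\Omega_r}\,dx \rightharpoonup \sigma$ (\cref{lem:weak_convergence_measures}), hence $\mu_s\otimes\mu_s \rightharpoonup \sigma\otimes\sigma$. It writes the integrand as $h(x,y)\,\mu_s\otimes\mu_s$ with $h(x,y)=\tfrac{(f(x)-f(y))^2}{(1+d_x)^s(1+d_y)^s(|x-y|+d_x+d_y+d_xd_y)^d}$, applies weak convergence where $h$ is continuous and bounded (off the diagonal, after a cutoff $\eta_\varepsilon(x-y)$), and shows the near-diagonal piece vanishes uniformly in $s$ as $\varepsilon\to 0$ using only the Lipschitz bound on $f$. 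Your Beta identity confirms the normalization is right, but it is computed for the model denominator $(\rho+r_1+r_2)^d$; since in tubular coordinates $|x-y|$ equals $\rho$ only at $r_1=r_2=0$, extracting the limiting constant \emph{exactly} (rather than up to the bi-Lipschitz factor of $\Phi$) still requires the concentration mechanism that $\mu_s\otimes\mu_s \rightharpoonup \sigma\otimes\sigma$ encodes. Likewise, your tangential-error splitting $(g(x)-g(y))^2 = (\tr g(\sigma_1)-\tr g(\sigma_2))^2 + E$ faces the diagonal singularity directly and the claimed $O(1-s)$ bound on the $E$-contribution needs care (the rescaled $u$-integral with an extra $(u_1+u_2)$ is borderline for $d=2$); the paper's cutoff-and-weak-convergence route sidesteps this entirely.
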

The benefit of the space $\ST^s(\Omega^c)$ is that it is intrinsically defined and one can decide whether a function is in $\ST^s(\Omega^c)$ by simply calculating the integrals. This is particularly important for the study localization phenomena.

Our study of trace spaces allows for a detailed discussion of nonlocal Neumann problems and their asymptotics as the operator localizes. Recall the definition of a weak solution to the Neumann problem, motivated by the Green-Gauß formula. If $ g \in L^2(\partial\Omega)$ and $f \in L^2(\Omega)$ are given, then $u \in H^1(\Omega)$ is called a weak solution to   
\begin{align*}
-\Delta u &=f \text{ in } \Omega,\\
\partial_{n} u &= g \text{ on } \partial\Omega,
\end{align*} 
if 
\begin{align*}
\int_{\Omega} \nabla u(x) \cdot \nabla v(x) \di x = \int_{\Omega} f(x) v(x) \di x + \int_{\partial\Omega} g(x) \trc  v(x) \di \sigma (x)
\end{align*}
for all $ v \in H^1(\Omega)$. These problems are typically solved in $H^1_\perp(\Omega)= H^1(\Omega)\cap\{ u\in L^2(\Omega)\,|\, \int_\Omega u =0 \}$ using the Lax-Milgram lemma. Thus, this problem may be generalized easily to Neumann data from the dual of the trace space $H^{1/2}(\partial \Omega)$ and inhomogeneities from the dual of $H_\perp^1(\Omega)$. For $G\in H^{1/2}(\partial \Omega)'$ and $F \in H_\perp^1(\Omega)'$, we call $u\in H^1_\perp(\Omega)$ a weak solution to the Neumann problem with Neumann data $G$ and inhomogeneity $F$, if $\int_{\Omega}\nabla u \cdot \nabla v = F(v)+ G(\trc  \phi)$ for all $v\in H_\perp^1(\Omega)$. In sight of the nonlocal Green-Gauß formula \eqref{eq:green_gauss_frac_lap}, for $G_s\in \ST^s(\Omega^c)'$ and $F_s \in V^s_\perp(\Omega \,|\, \BR^d)'$ we say that $u\in V_\perp^s(\Omega\,|\, \BR^d)= V^s(\Omega\,|\, \BR^d)\cap \{ \int_\Omega u =0 \}$ is a weak solution to the nonlocal Neumann problem 
\begin{align*}
(-\Delta)^s u &=F_s \text{ in } \Omega,\\
\CN_{s} u &= G_s \text{ on } \Omega^c, 
\end{align*}
if
\begin{equation*}
[u,v]_{V^s(\Omega\,|\, \BR^d)}=  F_s(v)+ G_s(\trn v)
\end{equation*}
for all $v\in V_\perp^s(\Omega\,|\, \BR^d)$. The trace spaces $H^{1/2}(\partial \Omega)$ and $\ST^s(\Omega^c)$ appear naturally in the formulation of Neumann problems. An application of \cref{th:trace_and_extension} and \cref{th:convergence_pointwise} is the convergence of solutions of nonlocal Neumann problems for operators $\CL_{s}$, which are comparable to the fractional Laplacian, to a solution of a local Neumann problem, see \cref{th:convergence_neumann_mean_zero} and \cref{th:convergence_neumann_mean_zero_example}. Furthermore, given a solution to a local Neumann problem for a symmetric elliptic second order operator in divergence form, we prove that there exists a sequence of nonlocal Neumann problems, such that the solutions converge to each other, see \cref{th:convergence_neumann_mean_zero_contrary} and \cref{th:convergence_neumann_mean_zero_contrary_example}. We generalize the results from \cite{FoghemGounoue2020} and \cite{Kassmann_Foghem2022} by Foghem and Kassmann.   

\subsection{Related literature} Nonlocal trace spaces have first been studied by Dyda and Kassmann in their work \cite{kassmann_dyda_ext}. They introduced\footnote{Note that in some results of \cite{kassmann_dyda_ext} the double-integral in \eqref{eq:DyKa-error} erroneously is taken over $\Omega^c \times \Omega^c$. The corrected version was communicated to us by the authors.} for $ 1\le p < \infty$ the space $X^{s,p}(\Omega^c)$ of functions $f:\Omega^c \to \BR$ satisfying
\begin{align}\label{eq:DyKa-error}
	\int\limits_{ \Omega^c} \int\limits_{\Omega^{\text{ext}}_{\text{inr}(\Omega)}} \frac{\abs{f(x)-f(y)}^p}{(\abs{x-y} + d_x + d_y)^{d+sp}} \di x \di y < \infty \,,
\end{align}
where $\Omega^{\text{ext}}_{\text{inr}(\Omega)} = \{ y \in \Omega^c | \, d_y < \text{inr}(\Omega)\}$ and $\text{inr}(\Omega) = \sup \{ r > 0 | B_r(x) \subset \Omega \text{ for some } x \in \Omega\}$.
The space $X^{s,p}(\Omega^c)$, equipped with the norm
\begin{align*}
	\bigg( \int_{\Omega^c} \frac{\abs{f(x)}^p}{\abs{x-y}^{d+sp}} \di x + \int\limits_{ \Omega^c} \int\limits_{\Omega^{\text{ext}}_{\text{inr}(\Omega)}} \frac{\abs{f(x)-f(y)}^p}{(\abs{x-y} + d_x + d_y)^{d+sp}} \di x \di y  \bigg)^{1/p},
\end{align*}
is a Banach space. \cite[Theorem 3, Theorem 5, Theorem 8]{kassmann_dyda_ext} yield the existence of a continuous trace and extension operator $X^s(\Omega^c) \leftrightarrow V^s(\Omega\,|\,\BR^d) $. The extension is constructed via a Whitney decomposition of $\Omega$. Under their domain assumptions, \cite[Defintion 14, Definition 15]{kassmann_dyda_ext} for every cube inside $\Omega$ there exists a reflected cube outside. This allows to copy the values of a function defined on $\Omega^c$ inside $\Omega$, see \cite[p. 16]{kassmann_dyda_ext}. In contrast to our space $\ST^s(\Omega^c)$ the space $X^s(\Omega^c)$ does not converge to $H^{1/2}(\partial\Omega)$. But their extension result yields for $s=1$ a new extension theorem for classical Sobolev spaces.

Bogdan et al. proved in \cite{Bogdan_trace} trace and extension results for a variety of nonlocal Sobolev spaces based on unimodal Lévy measures $\nu$, \ie radial and almost decreasing, with some additional assumptions, see \cite[A1, A2]{Bogdan_trace}. In the case that $\nu$ is the L\'{e}vy measure $\nu = \kappa_{d,s} \abs{\cdot }^{-d-2s}$ of the fractional Laplacian their result reads as follows. They defined the Hilbert space $\CX^s(\Omega^c)$ as the space of all measurable functions $f:\Omega^c\to \BR$ such that $[f,f]_{\CX^s(\Omega^c)}<\infty$, where
\begin{equation*}
	[f,g]_{\CX^s(\Omega^c)}:= \iint\limits_{\Omega^c \times \Omega^c} (f(x)-f(y))(g(x)-g(y))\,k_s^{\star}(x,y)\di x \di y, 
\end{equation*}
see  \cite[(2.8)]{Bogdan_trace}. Here the interaction kernel $k_s^{\star}$ is defined via
\begin{align}\label{eq:bogdan_trace_kernel}
	k_s^{\star}(x,y)&:= \kappa_{d,s}\,\int\limits_{\Omega} \abs{y-z}^{-d-2s} P_{s,\Omega}(z,x) \di z,
\end{align}
where $P_{s,\Omega}$ is the Poisson kernel to $(-\lap)^s$ on $\Omega$. Additionally, they fixed some $x_0\in \Omega$ and defined for $f:\Omega^c\to \BR$ measurable the weighted $L^2$ norm
\begin{equation*}
	|f|_{\CX^s(\Omega^c)}:= \big(\int\limits_{\Omega^c} \abs{f(y)}^2 \, P_{s,\Omega}(x_0,y) \di y\big)^{1/2}.
\end{equation*}
By \cite[Lemma 4.6]{Bogdan_trace}, if $f\in \CX^s(\Omega^c)$, then $|f|_{\CX^s(\Omega^c)}<\infty$. The authors endowed the space $\CX^s(\Omega^c)$ with the norm 
\begin{equation*}
	\norm{f}^2_{\CX^s(\Omega^c)}:= |f|_{\CX^s(\Omega^c)}^2 + 	[f,f]_{\CX^s(\Omega^c)}
\end{equation*}
and the canonical inner product, see \cite[(4.3)]{Bogdan_trace}. Naturally, the Poisson extension operator for any $g\in \CX^s(\Omega^c)$ is defined by 
\begin{equation}\label{eq:poisson_extension}
	\SP_{s,\Omega}\,g(z)= \begin{cases}
		\int\limits_{\Omega^c} P_{s,\Omega}(z,x)\, g(z) \di x& z\in \Omega,\\
		g(z) & z\in \Omega^c.
	\end{cases}
\end{equation}
In \cite[Theorem 2.3]{Bogdan_trace} they proved for an open set $D\subset \BR^d$ such that $D^c$ satisfies volume density condition, see \cite[(VDC)]{Bogdan_trace}, and $\abs{\partial \Omega}=0$ the following.  If $g\in \CX^s(\Omega^c)$, then $\SP_{s,\Omega}\,g\in V^s(\Omega\,|\, \BR^d)$ and
\begin{equation}\label{eq:douglas_intro}
	[\SP_{s,\Omega}\,g,\SP_{s,\Omega}\,g]_{V^s(\Omega \, | \, \BR^d)}= [g,g]_{\CX^s(\Omega^c)}.
\end{equation}
Additionally, if $u\in V^s(\Omega\,|\, \BR^d)$, then $g= u|_{\Omega^c}\in \CX^s(\Omega^c)$ and 
\begin{equation*}
 	[u,u]_{V^s(\Omega \, | \, \BR^d)} \ge [g,g]_{\CX^s(\Omega^c)}.
\end{equation*}
The equality \eqref{eq:douglas_intro} can be understood nonlocal version of the classical Douglas identity in \cite{Douglas_identity_1931}. Notice that \cite[Theorem 2.3]{Bogdan_trace} does not include the continuity of the trace operator ($V^s(\Omega\,|\, \BR^d)\to \CX^s(\Omega^c)$) and extension operator ($\CX^s(\Omega^c)\to V^s(\Omega\, |\, \BR^d)$) as a map between normed spaces. Furthermore,  they proved estimates on the interaction kernel $k_s^{\star}$ in \cite[Theorem 2.6]{Bogdan_trace}. For $s\in(0,1)$, these estimates yield after a short calculation constants $c_s,C_s> 0$ such that $s(1-s)\,c_sk_s^{\star}(w,z)\le k_s(w,z)\le s(1-s)\, C_s k_s^{\star}(w,z)$ for any $w,z \in \Omega^c$ with $d_w\le \diam{\Omega}$ or $d_z\le \diam{\Omega}$. These estimates are not robust in the limit $s\to 1-$. Therefore, we prove new estimates on the interaction kernel $k_s^{\star}$ to retrieve robust bounds.
\begin{proposition}\label{th:equivalence}
	Let $\Omega\subset \BR^d$ be a bounded $C^{1,1}$-domain. The norms of the spaces $\CX^s(\Omega^c)$ and $\ST^s(\Omega^c)$ are equivalent, \ie there exists a constant $C=C(d,\Omega)>0$ such that for all $s \in (0,1)$  
	\begin{equation*}
		\frac{s}{C} \, \norm{f}_{\ST^s(\Omega^c)}\le \norm{f}_{\CX^s(\Omega^c)} \le \sqrt{s}C\,  \norm{f}_{\ST^s(\Omega^c)}.
	\end{equation*}
\end{proposition}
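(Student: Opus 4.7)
The plan is to establish robust pointwise two-sided comparisons
\begin{equation*}
P_{s,\Omega}(x_0,y) \asymp s\,\tau_s(y) \quad \text{and} \quad k_s^\star(x,y) \asymp s^2\, k_s(x,y),
\end{equation*}
with multiplicative constants depending only on $d, \Omega$ (and $x_0$ for the first). Summing these will then produce the claimed asymmetric equivalence of norms, the $s$-powers $s$ and $s^2$ being responsible for the asymmetry.

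First I would invoke the robust Poisson kernel estimate from \cref{th:chen99_poissonestimate} for $(-\Delta)^s$ on bounded $C^{1,1}$-domains, which gives sharp two-sided bounds of the form
\begin{equation*}
P_{s,\Omega}(z,x) \asymp \frac{\sin(\pi s)\, d_z^s}{d_x^s(1+d_x)^{s}\,(\abs{z-x}+d_z+d_x)^d}
\end{equation*}
(or an equivalent geometric form) with constants depending only on $d$ and $\Omega$. Since $\sin(\pi s)\asymp s(1-s)$ and $\kappa_{d,s} \asymp s(1-s)$ by \cref{prop:asymptotics_constant}, specializing to fixed $z = x_0 \in \Omega$ yields $P_{s,\Omega}(x_0,y) \asymp s\,\tau_s(y)$ for $y\in\Omega^c$ robustly in $s$: the factor $s$ arises after matching one of the two $(1-s)$ factors against the $(1-s)$ present in $\tau_s$, while the remaining $d_{x_0}$-dependent geometric factors are absorbed into constants depending only on $\Omega, d, x_0$. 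This settles the weighted $L^2$ comparison $\abs{f}_{\CX^s(\Omega^c)}^2 \asymp s\,\norm{f}_{L^2(\Omega^c,\tau_s)}^2$.

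The main step is the pointwise bound on $k_s^\star(x,y) = \kappa_{d,s}\int_\Omega \abs{y-z}^{-d-2s}P_{s,\Omega}(z,x)\,dz$. Substituting the Chen bounds and factoring out the prefactor $\kappa_{d,s}\sin(\pi s)/[d_x^s(1+d_x)^s] \asymp s^2(1-s)^2/[d_x^s(1+d_x)^s]$ reduces matters to proving, robustly in $s$, the integral estimate
\begin{equation*}
\int_\Omega \frac{d_z^s\,dz}{\abs{y-z}^{d+2s}(\abs{z-x}+d_z+d_x)^d} \asymp \frac{1}{(1-s)\,d_y^s(1+d_y)^s(\abs{x-y}+d_xd_y+d_x+d_y)^d}.
\end{equation*}
This I would obtain by decomposing $\Omega$ into dyadic annular regions around $y$ together with a far-field piece, bounding $d_z \le \min(\abs{y-z}+d_y,\diam{\Omega})$ from above and $d_z \gtrsim \abs{y-z}$ from below on an appropriately chosen sub-region, and controlling each dyadic contribution by the scale-appropriate geometric factor. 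The $(1-s)^{-1}$ on the right-hand side arises from the radial integration $\int_{d_y}^{\diam{\Omega}} R^{-1-2s}R^{d-1}\cdot R^{s-d}\,dR$ around the boundary-layer scale $R\sim d_y$. Multiplying the integral estimate by the prefactor then yields $k_s^\star \asymp s^2 k_s$ with constants independent of $s\in(0,1)$.

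The main obstacle is precisely this uniform-in-$s$ evaluation of the integral. Bogdan et al.'s short calculation already produces a geometric expression of the correct shape but with constants $c_s, C_s$ that degenerate as $s\to 1-$; the improvement required is to track the $(1-s)$ factors through the dyadic decomposition and observe that the $(1-s)^{-1}$ emerging from the boundary-layer radial integral precisely compensates one of the two $(1-s)$-factors in $\kappa_{d,s}\sin(\pi s)$, leaving exactly $s^2(1-s)$ to match the $(1-s)^2$ of $k_s$ against one surviving $s^2/(1-s)\cdot(1-s)^2 = s^2(1-s)$. Once the two pointwise comparisons are in hand, summing gives $\norm{f}_{\CX^s(\Omega^c)}^2 \asymp s\,\norm{f}_{L^2(\Omega^c,\tau_s)}^2 + s^2\,[f,f]_{\ST^s(\Omega^c|\Omega^c)}$, and using $s^2 \le s$ for the upper bound and $s \ge s^2$ for the lower bound immediately delivers the asymmetric inequalities $(s/C)\norm{f}_{\ST^s(\Omega^c)} \le \norm{f}_{\CX^s(\Omega^c)} \le \sqrt{s}\,C\norm{f}_{\ST^s(\Omega^c)}$.
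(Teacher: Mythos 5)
The overall strategy — robust Poisson kernel estimates (Chen's two-sided bounds), a pointwise comparison of $k_s^\star$ with $k_s$, and combining the $L^2$ and seminorm parts — matches the paper's plan. However, the quantitative centerpiece of your argument, the claim $k_s^\star(x,y) \asymp s^2 k_s(x,y)$ with $s$-independent constants, is not correct, and the intermediate integral asymptotic you use to derive it is wrong as stated.

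The paper establishes (\cref{prop:lowerbound} and \cref{prop:upperbound}) the asymmetric two-sided bound $s^2\,c\,k_s(x,y) \le k_s^\star(x,y) \le s\,C\,k_s(x,y)$; the upper exponent is $s$, not $s^2$. The $1/s$ loss in the upper bound is genuine and unavoidable: it comes from the tail integral $\int_\Omega |x-z|^{-d-s}\,\dishort z \asymp s^{-1}\dist{x}{\Omega}^{-s}$ for $x$ at moderate distance from $\Omega$ (cf.\ \cref{lem:distance_upper}), so for such $(x,y)$ one really has $k_s^\star \asymp s\,k_s$ and no $s^2$-upper bound can hold as $s\to 0+$. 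Your claimed integral asymptotic
\begin{equation*}
	\int_\Omega \frac{d_z^s\,\dishort z}{\abs{y-z}^{d+2s}(\abs{z-x}+d_z+d_x)^d} \asymp \frac{1}{(1-s)\,d_y^s(1+d_y)^s(\abs{x-y}+d_xd_y+d_x+d_y)^d}
\end{equation*}
is incorrect on both sides: the paper's estimates show the left side is bounded above by $C/(s\,d_y^s(1+d_y)^s(\ldots)^d)$ (not $1/(1-s)$) and below by $c/(d_y^s(1+d_y)^s(\ldots)^d)$, with no $(1-s)^{-1}$ anywhere. Moreover your bookkeeping is internally inconsistent: multiplying your claimed $1/(1-s)$ integral by the prefactor $\asymp s^2(1-s)^2/(d_x^s(1+d_x)^s)$ gives $k_s^\star \asymp \frac{s^2}{1-s}\,k_s$, which diverges as $s\to 1-$, not $s^2 k_s$. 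The correct route is exactly the asymmetric pair of bounds above; those, together with $\abs{f}_{\CX^s}^2 \asymp s\,\norm{f}_{L^2(\Omega^c,\tau_s)}^2$, yield $\norm{f}_{\CX^s}^2 \le s\,C\,\norm{f}_{\ST^s}^2$ (using the $s$-upper bound on both summands) and $\norm{f}_{\CX^s}^2 \ge s^2\,C^{-1}\norm{f}_{\ST^s}^2$ (using $s \ge s^2$ on the $L^2$ summand), which is what produces the $\sqrt{s}$ vs.\ $s$ asymmetry. Finally, the real work of the proof — the two-sided integral estimates that furnish these pointwise kernel comparisons, which in the paper occupy several pages of case analysis (\cref{prop:upperbound}, \cref{prop:lowerbound}) — is left unexecuted in your sketch; "dyadic annular regions" is a reasonable heuristic but does not substitute for the careful treatment of the competing cases $d_x \gtrless d_y$ and the exterior-ball reflection argument.
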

\cref{th:trace_and_extension} and \cref{th:equivalence} prove the trace and extension between $\CX^s(\Omega^c)$ and $V^s(\Omega\,|\, \BR^d)$ to be robustly continuous in the limit $s\to 1-$.

The same authors considered in \cite{bogdan_nonlinear} the nonlinear case for a variety of Lévy measures. In contrast to \cite{kassmann_dyda_ext}, where increments of the form $\abs{u(x)-u(y)}^p$ are studied, Bogdan et al. considered forms based on increments of type $(u(x)^{\langle p-1\rangle}-u(y)^{\langle p-1\rangle})(u(x)-u(y))$, where $x^{\langle \alpha \rangle} = \sgn(x)\abs{x}^\alpha $ is the french power. Similar to \cite{Bogdan_trace}, they proved an extension and trace result as well as a Douglas type identity
\begin{align*}
	&\iint\limits_{(\Omega^c\times \Omega^c)^c} (u(x)^{\langle p-1\rangle}-u(y)^{\langle p-1\rangle})(u(x)-u(y)) \nu(x-y) \di x \di y\\
	&\qquad\qquad\qquad=\iint\limits_{\Omega^c\times \Omega^c} (g(z)^{\langle p-1\rangle}-g(w)^{\langle p-1\rangle})(g(z)-g(w)) k^\star(z,w)\di z \di w,
\end{align*}
where $k^\star$ is the interaction defined analogous to \eqref{eq:bogdan_trace_kernel}. They compared their results to the $p$-increment case in \cite[Section 6]{bogdan_nonlinear}.

In \cite{zoran_neumann}, Vondra\v{c}ek constructed reflected jump Markov processes related to nonlocal Neumann problems. The author considered the Sobolev-Slobodeckij-type space $V_\nu(\Omega\,|\, \BR^d)\cap L^2(\BR^d, m(x) \dishort x)$, where $m(x)=\1_{\Omega}(x)+ \1_{\Omega^c}(x)\mu(x)$ and $\mu(x)= \int_{\Omega} \nu(x-y)\di y$. Here $V_{\nu}(\Omega\,|\, \BR^d)$ is defined similar to $V^s(\Omega\,|\, \BR^d)$ based on a unimodal Lévy measure $\nu$ with obvious modifications. In the case of the fractional Laplacian, this space is smaller than the typical energy space $V^s(\Omega\,|\, \BR^d)$. In \cite[Lemma 2.2]{zoran_neumann}, the author proved that $L^2(\Omega^c, \mu(x)\dishort x)$ is the trace space and that zero extensions define functions in $V_\nu(\Omega\,|\, \BR^d)\cap L^2(\BR^d, m(x) \dishort x)$. For further discussions we also refer the reader to \cite[Remark 2.37]{Kassmann_Foghem2022}.

In \cite{Kassmann_Foghem2022}, Foghem and Kassmann introduced several possible choices of weighted $L^2-$spaces $L^2(\Omega^c, \tilde{\nu})$ with weights $\tilde{\nu}$ based on a symmetric Lévy measure $\nu$ and proved the continuity of the trace map $\trn: V_{\nu}(\Omega\,|\, \BR^d)\to L^2(\Omega^c, \tilde{\nu})$, see \cite[Proposition 2.34]{Kassmann_Foghem2022}. They defined the general trace space 
\begin{equation}
	T_{\nu}(\Omega^c):= \{ g:\Omega^c\to \BR \text{ measurable }\,|\, \exists u\in V_{\nu}(\Omega\,|\, \BR^d): v=u|_{\Omega^c}  \}
\end{equation}
equipped with its natural norm, see \cite[Definition 2.29]{Kassmann_Foghem2022}. Again $V_{\nu}(\Omega\,|\, \BR^d)$ is defined similar to $V^s(\Omega\,|\, \BR^d)$ based on $\nu$. The authors introduced several equivalent norms on $T_{\nu}$ which combine their weighted $L^2$-norms on $L^2(\Omega^c, \tilde{\nu})$ and the seminorms from \cite{Bogdan_trace} or \cite{kassmann_dyda_ext}, see \cite[Proposition 2.31]{Kassmann_Foghem2022}. We also refer to the discussion in \cref{sec:abstract_trace_space}. 

Recently, Frerick, Vollmann and Vu considered several approaches for nonlocal trace spaces in \cite{vu_trier}. In \cite[Theorem 5.2]{vu_trier} they generalized the result \cite[Lemma 2.2]{zoran_neumann} to more general kernels. Based on a kernel $j:\BR^d\times \BR^d \to [0,\infty]$ they define the weight 
\begin{equation*}
	w(y):= \int\limits_{\Omega} \frac{j(y,x)}{\int_{\Gamma} j(z,x)\di z +c }, \quad y\in \Gamma,
\end{equation*}
where $c\ge 0$ and $\Gamma:= \{ y\in \Omega^c \,|\, \int_{\Omega} j(y,x)\di x>0 \}$. Under additional assumptions on $j$ and $c$ they prove a continuous trace embedding $V_k(\Omega\,|\, \BR^d)\to L^2(\Gamma\,|\, w(y)\dishort y), f\mapsto f|_{\Gamma}$. In the case of the fractional Laplacian $j(y,x)= (1-s) \abs{x-y}^{-d-2s}$ and $c=0$ this weight behaves like
\begin{equation*}
	w_s(y)\asymp \int_{\Omega} \frac{d_x^{2s} }{\abs{x-y}^{d+2s}}\di x \asymp \begin{cases}
		-\ln(d_y) &\,\text{if } d_y<1/2,\\
		(1+d_y)^{-d-2s}&\,\text{if } d_y\ge 1/2.
		\end{cases}
\end{equation*}
Thereby, $L^2(\Omega^c, \, w_s(y)\dishort y)$ does not collapse to $L^2(\partial \Omega)$ as $s\to 1-$. In \cite[Theorem 5.4]{vu_trier} they introduced the additional seminorm 
\begin{equation*}
	\int\limits_{ \Gamma} \int\limits_{ \Gamma} (u(y)-u(z))^2 \int_{\Omega} \frac{j(y,x)j(z,x)}{\int_{\Gamma} j(s,x) \di s +c } \di x \di y \di z
\end{equation*}
on the space $L^2(\Gamma, w(y)\dishort y)$ for some fixed $c\ge 0$. The kernel is similar to the kernel introduced in \cite[Equation (6)]{Abatangelo_Neumann} for the fractional Laplacian. In this case it behaves like
\begin{equation*}
	\begin{cases}
	\frac{1+\abs{\ln\big(\frac{d_x\wedge d_y}{\abs{x-y}}\big)}}{\abs{x-y}^{d+2s}} &\,\text{if } d_x \wedge d_y \le \abs{x-y},\\
		\big(d_x \wedge d_y\big)^{-d-2s}&\,\text{if } d_x, d_y \ge \abs{x-y}
	\end{cases}
\end{equation*}
for $x,y\in \Omega^c$ which has been proven in \cite[Proposition 2.1]{Ros_Oton_Neumann_regularity}. Under further assumptions, see \cite[Corollary 5.5, 5.6]{vu_trier}, they proved a continuous trace and extension result. It is unclear whether \cite[Corollary 5.5]{vu_trier} or \cite[Corollary 5.6]{vu_trier} is applicable to the case of the fractional Laplacian. 

Du, Tian, Wright and Yu studied in \cite{du_trace_extension} trace and extension result for nonlocal Dirichlet problems with finite range of interaction. Let $\Omega$ be a bounded, simply connected Lipschitz domain, set $\Omega_\delta:= \{ x\notin \Omega\,|\, \dist{x}{\Omega}<\delta \}$, $\hat{\Omega}:= \Omega\cup \Omega_\delta$. They considered kernels like $k_\delta^\beta(\abs{h}):= C_{d,p,\beta}\delta^{-d-p+\beta} \abs{h}^{-\beta}\1_{\abs{h}<\delta}$ for $\delta>0$ and $\beta \in [0,d+p)$. The constant $C_{d,p,\beta}$ normalizes the $p$-th moment of the kernel. The corresponding Sobolev-type function space $S_\delta^\beta(\hat{\Omega})$ consists of all functions $u\in L^p(\hat{\Omega})$ such that
\begin{equation*}
	\abs{u}_{S_\delta^\beta(\hat{\Omega})}:= \Big(\iint\limits_{\hat{\Omega}\times\hat{\Omega}} k_\delta^\beta(\abs{x-y}) \abs{u(x)-u(y)}^p \di y \di x\Big)^{1/p}
\end{equation*}
is finite. This space is a Banach space equipped with the norm $\norm{u}_{{S_\delta^\beta(\hat{\Omega})}}:= \Big( \norm{u}_{L^p(\hat{\Omega})}^p +  \abs{u}_{S_\delta^\beta(\hat{\Omega})}^p \Big)^{1/p}$. For $\beta\in (d,d+p)$ this space is equivalent to the classical Sobolev space $W^{(\beta-d)/p,p}(\hat{\Omega})$. The space $S_\delta^\beta(\hat{\Omega})$ converges to $W^{1,p}(\Omega)$ as $\delta\to 0+$. Since they consider kernels with range $\delta>0$, $\Omega_\delta$ is their \enquote*{nonlocal boundary} of the domain $\Omega$. They introduced the trace space $T_\delta^\beta(\Omega_\delta)$ as the space of all functions $L^p(\Omega_\delta)$ such that 
\begin{equation*}
	\abs{u}_{T_\delta^\beta(\Omega_\delta)}:= \Big( \delta^{\beta-2} \iint\limits_{\Omega_\delta\times\Omega_\delta} \frac{\abs{u(x)-u(y)}^p}{(\abs{x-y}\vee \delta)^{d+p-2}(\abs{x-y}\wedge \delta)^{\beta}} \di x \di y \Big)^{1/p}
\end{equation*}
is finite. The space $T_\delta^\beta(\Omega_\delta)$ is a Banach space equipped with the norm $\norm{g}_{T_\delta^\beta(\Omega_\delta)}:= \big( \norm{u}_{L^p(\Omega_\delta)}^p + \abs{u}_{T_\delta^\beta(\Omega_\delta)}^p \big)^{1/p}$. They proved the existence of a continuous trace operator $S_\delta^\beta(\hat{\Omega})\to T_\delta^\beta(\Omega_\delta)$ as well as a continuous extension operator $T_\delta^\beta(\Omega_\delta)\to S_\delta^\beta(\hat{\Omega})$ which are robust in the limit $\delta>0$. In \cite[Proposition 2.1]{du_trace_extension} they proved the convergence of the trace space $T_\delta^\beta(\Omega_\delta)\to W^{1-1/p,p}(\partial \Omega)$ in case that $\Omega$ is the half space. In contrast to our work, Du, Tian, Wright and Yu localized by reducing the horizon of the kernel $k_\delta^\beta$ while scaling it up. On the other hand, we are interested in kernels with infinite range of interaction where the localization is due to increasing the singularity of the kernel $\kappa_{d,s}\abs{\cdot}^{-d-2s}$, $s\to 1-$.

A detailed discussion of related literature on nonlocal Neumann problems can be found in \cref{sec:related_literature_neumann}.  

	\subsection{Outline}\label{sec:outline} In \cref{sec:prelims} we introduce notation used throughout this work and discuss function spaces and their basic properties. We prove the trace and extension result, \cref{th:trace_and_extension}, in \cref{sec:trace_ext} as well as the equivalence to the space $\CX^s(\Omega^c)$ introduced in \cite{Bogdan_trace}, \ie \cref{th:equivalence}. In \cref{sec:abstract_trace_space} we shortly discuss the abstract trace space. The asymptotics of the trace space $\ST^s(\Omega^c)$ as $s\to 1-$ are studied in \cref{sec:convergence} which includes the proof of \cref{th:convergence_pointwise}. In \cref{sec:hilbert_space_convergence} we prove that the spaces $\ST^{s}(\Omega^c)$ converge to $H^{1/2}(\partial \Omega)$ as $s\to 1-$ in the sense of Kuwae and Shioya introduced in their work \cite{kuwae_mosco}, see also \cref{sec:appendix_convergence}. This will be important in \cref{sec:neumann} where we study Neumann problems for operators comparable to the fractional Laplacian and the convergence of solutions as the operators localize, see \cref{th:convergence_neumann_mean_zero}, \cref{th:convergence_neumann_mean_zero_example}, \cref{th:convergence_neumann_mean_zero_contrary} and \cref{th:convergence_neumann_mean_zero_contrary_example}.	
	\subsection*{Acknowledgments} 
	Financial support by the German Research Foundation (GRK 2235
	- 282638148) is gratefully acknowledged. We like to thank Moritz Kassmann and Guy Fabrice Foghem Gounoue for very helpful discussions and Solveig Hepp and Soobin Cho for careful proofreading of the manuscript. 
	\section{Preliminaries}\label{sec:prelims}
	We introduce notation used throughout this paper. We write $a\wedge b = \min\{ a,b \}$ and $a \vee b = \max \{ a,b \}$ for real numbers $a,b\in \BR$. $\Omega \subseteq \mathbb{R}^d$ always denotes a domain, \ie $\Omega$ is open and connected. We will add conditions on $\Omega$ if we need them. For $x \in \mathbb{R}^d$ we define the distance to the boundary of $\Omega$ via $d_x:=\dist{x}{\partial\Omega} := \inf\{\abs{x-y} \mid y \in \partial\Omega\}$. Additionally, we define an $\varepsilon$-annulus around $\Omega$ by $\Omega_{\varepsilon}:= \{ x\in \Omega^c\,|\, \dist{x}{\Omega}<\varepsilon \}$ and denote the remainder of the complement by $\Omega^{\varepsilon}:= \Omega^c \setminus \Omega_{\varepsilon}$ for $\varepsilon>0$. Notice that $\Omega_{\varepsilon}$ is neither open nor closed and $\Omega^\varepsilon$ is closed. We call $\Omega$ a bounded Lipschitz (resp. $C^{1,1}$) domain, if $\Omega$ is bounded and for every point $z\in \partial \Omega$ there exists a ball $B_\varepsilon(z)$, a translation and rotation $T_z:\BR^d\to \BR^d$ as well as a Lipschitz continuous (resp. $C^{1,1}$) function $\phi_z:B_1^{(d-1)}(0)\to \BR$ such that $T_z\big( \Omega \cap B_\varepsilon(z) \big)= \{ (x',x_d)\in B_1(0)\,|\, \phi_z(x')> x_d \}$. Here $B_{1}^{(d-1)}(0)\subset \BR^{d-1}$ is the $(d-1)$-dimensional unit ball centered at the origin. We say a domain $\Omega$ satisfies uniform interior (resp. exterior) cone condition if there exists a height $h$ and an angle $\alpha$ such that for every $z \in \partial\Omega$ there exists a cone $\CC \subset \Omega$ (resp. $\CC \subset \overline{\Omega}^c$) with height $h$ and opening angle $\alpha$ satisfying $\overline{\CC} \cap \partial\Omega = \{z\}$. For a Lipschitz domain $\Omega$ we denote the outer normal vector at the boundary point $z\in \partial \Omega$ by $n_z$ whenever it exists. Recall that a bounded domain $\Omega$ is $C^{1,1}$ if and only if it satisfies uniform interior and exterior ball condition, \ie there exists a radius $\rho>0$ such that for every boundary point $z\in \partial \Omega$ there exist an interior ball $B_\rho\subset \Omega$ and an exterior ball $B^\star_{\rho}\subset \overline{\Omega}^c$ with radius $\rho$ satisfying $\overline{B_\rho}\cap \Omega^c = \{z\}=\overline{B^\star_\rho}\cap \overline{\Omega}$. Furthermore, $\SH$ is the $(d-1)$-dimensional Hausdorff measure on $\BR^d$. The Hausdorff measure is monotone and equals the standard surface measure on $(d-1)$-dimensional, compact, Lipschitz submanifolds, see \eg \cite[Chapter 3]{coarea}. The classical trace operator for $H^1(\Omega)$ functions will be denoted by \begin{equation*}
		\trc  : H^1(\Omega) \to H^{1/2}(\partial\Omega)
	\end{equation*} if $\Omega$ is sufficiently regular. We denote by $\sigma$ the surface measure on $\partial \Omega$. We write $\omega_{d-1}:= 2\pi^{d/2}/\Gamma(d/2)$ for the measure of the $(d-1)$-dimensional unit sphere. For two normed spaces $(X, \norm{\cdot}_X),(Y, \norm{\cdot}_{Y})$ and a continuous linear map $l:X\to Y$ we define the operator norm by $\norm{l}_{X\to Y}:= \sup_{\norm{x}_X\le1} \norm{l(x)}_Y$. We will use small case $c_1,c_2$ \etc as running constants and we will reset them in every proof. \medskip
	
	We introduce function spaces which we use throughout this paper and recall basic properties. We assume all functions in this work to be Borel measurable. If $\Omega$ is bounded, we define the closed subspace of $L^2(\Omega)$ of functions with mean zero by \begin{equation*}
		L_\perp^2(\Omega)= \{ u\in L^2(\Omega)\,|\, \int_{\Omega} u =0 \}.
	\end{equation*} The Sobolev space $H^1(\Omega)$ consists of all $L^2(\Omega)$ functions whose weak derivatives are square integrable. We endow this spaces with the canonical norm
	\begin{align*}
		\norm{u}_{H^1(\Omega)}^2 := \norm{u}_{L^2(\Omega)}^2 + [u,u]_{H^1(\Omega)},\quad
		[u,v]_{H^1(\Omega)}:= \int_{\Omega} \nabla u(x) \cdot \nabla v(x) \di x.
	\end{align*}
	For the inner product on $H^1(\Omega)$ we write 
	\begin{align*}
		(u,v)_{H^1(\Omega)} := (u,v)_{L^2(\Omega)} + [u,v]_{H^1(\Omega)},
	\end{align*}
	We also use the closed subspace of $H^1(\Omega)$ with mean zero
	\begin{align*}
		H_{\perp}^1(\Omega) &:= \big\{u \in H^1(\Omega) \mid  \int_{\Omega} u(x) \di x =0 \big\}
	\end{align*}
	whenever $\Omega$ is bounded. For $s \in (0,1)$ the Sobolev-Slobodeckij space $H^{s}(\Omega)$ is defined as the set of all functions in $L^2(\Omega)$ endowed with norm
	\begin{align*}
		\norm{v}_{H^{s}( \Omega)}^2 := \norm{v}_{L^2(\Omega)}^2 + [v,v]_{H^{s}(\Omega)},\quad
		[u,v]_{H^{s}(\Omega)}:= \int_{\Omega} \int_{\Omega} \frac{(u(x)-u(y))(v(x)-v(y))}{|x-y|^{d+2s}} \di x \di y.
	\end{align*}
	For the inner product on $H^{s}(\Omega)$ we write 
	\begin{align*}
		(u,v)_{H^{s}(\Omega)} := (u,v)_{L^2(\Omega)} + [u,v]_{H^{s}(\Omega)}.
	\end{align*}
	We denote by $H^{1/2}(\partial\Omega)$ the Sobolev Slobodeckij space endowed with the norm
	\begin{align*}
		\norm{v}_{H^{1/2}(\partial \Omega)}^2 &:= \norm{v}_{L^2(\partial \Omega)}^2 + [v,v]_{H^{1/2}(\partial\Omega)},\\
		[u,v]_{H^{1/2}(\partial\Omega)}&:= \int_{\partial\Omega} \int_{\partial\Omega} \frac{(u(x)-u(y))(v(x)-v(y))}{|x-y|^{d}} \sigma(\dishort x) \sigma(\dishort y).
	\end{align*}
	For the inner product on $H^{1/2}(\partial\Omega)$ we write 
	\begin{align*}
		(u,v)_{H^{1/2}(\partial\Omega)} := (u,v)_{L^2(\partial\Omega)} + [u,v]_{H^{1/2}(\partial\Omega)}.
	\end{align*}
	The following nonlocal function spaces play a key role in this work. For $s\in (0,1)$ we define the spaces
	\begin{align*}
		V^s(\Omega\,| \,\BR^d) &:=\{ u:\BR^d\to \BR \text{  measurable  } \mid [u,u]_{V^s(\Omega \, |\, \BR^d)}<\infty\},\\
		V_{ 0}^s(\Omega) &:= \{ u \in V^s(\Omega\,|\,\BR^d) \mid u=0 \ \text{\aev on  }\Omega^c \},\\
		V_{\perp}^{s}(\Omega\,|\, \BR^d)&:= \big\{ v\in V^s(\Omega\,|\, \BR^d)\,|\, \int\limits_{\Omega}v(x) \di x = 0 \big\}.
	\end{align*}
	We only consider $V_{\perp}^{s}(\Omega\,|\, \BR^d)$ if $\Omega$ is bounded. Here 
	\begin{equation*}
		[u,v]_{V^s(\Omega \, |\, \BR^d)}:= \frac{\kappa_{d,s}\,}{2}\iint\limits_{\BR^d\times \BR^d\setminus \Omega^c\times \Omega^c} \frac{(u(x)-u(y))(v(x)-v(y)) }{\abs{x-y}^{d+2s}}\di x \di y,
	\end{equation*}
	where
	\begin{equation*}
		\kappa_{d,s}:= \Big( \int_{\BR^d} \frac{1-\cos(x_1)}{\abs{x}^{d+2s}}\di x  \Big)^{-1}
	\end{equation*}
	is the normalization constant of the fractional Laplacian $(-\Delta)^s$. This bilinear form is strongly connected to the fractional Laplacian by the nonlocal Green-Gauß formula, see \cref{prop:greengauss}. We endow these spaces with the norm
	\begin{equation*}
		\norm{u}^2_{V^{s}(\Omega\,|\, \BR^d)}:= \norm{u}_{L^2(\Omega)}^2 + [u,u]_{V^s(\Omega \, |\, \BR^d)}
	\end{equation*}
	and with the inner product 
	\begin{align*}
		(u,v)_{V^s(\Omega\,|\,\BR^d)} := (u,v)_{L^2(\Omega)} + [u,v]_{V^s(\Omega \, |\, \BR^d)}.
	\end{align*}
	The following proposition is the constant $\kappa_{d,s}$ is calculated and estimated robust in $s$. This result is taken from the work \cite{bucur_valdinoci} by Bucur and Valdinoci.
	\begin{proposition}[{\cite[(2.15)]{bucur_valdinoci}}]\label{prop:asymptotics_constant}
		For $d\in \BN$, $d\ge 2$
		\begin{align}
			\kappa_{d,s}:= \frac{2^{2s}\, s\, \Gamma(\frac{d+2s}{2})}{\pi^{d/2}\, \Gamma(1-s)}.
		\end{align}
		In particular, there exists a constant $C=C(d)\ge 1$ such that 
		\begin{equation*}
			C^{-1} \le \frac{\kappa_{d,s}}{s(1-s)} \le C
		\end{equation*}
		for all $s\in(0,1)$.
	\end{proposition}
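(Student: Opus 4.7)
The plan is to start from the closed-form identity cited from Bucur--Valdinoci, whose derivation amounts to computing $\int_{\BR^d}(1-\cos x_1)|x|^{-d-2s}\di x$ by Fubini and polar coordinates in the transverse variable $x'=(x_2,\ldots,x_d)$. That reduction collapses the $(d-1)$-dimensional transverse integral into a beta-function factor $\tfrac{1}{2}B(\tfrac{d-1}{2},\tfrac{1+2s}{2})\,|x_1|^{-1-2s}$, and what remains is the classical one-dimensional integral $\int_0^\infty(1-\cos t)\,t^{-1-2s}\di t$, which evaluates in closed form in terms of $\Gamma$ via integration by parts (or equivalently by Plancherel). Combining both factors and simplifying with the Legendre duplication formula for $\Gamma$ yields the stated expression for $\kappa_{d,s}$. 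Since the authors invoke \cite{bucur_valdinoci} directly, the proof in the paper needs only record this identity and turn attention to the asymptotic bound.

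For the bound, I would absorb the factors $s$ and $1-s$ into the Gamma terms using the functional equation $\Gamma(z+1)=z\Gamma(z)$ with $z=1-s$, rewriting
\[
	\frac{\kappa_{d,s}}{s(1-s)}=\frac{2^{2s}\,\Gamma\!\left(\tfrac{d+2s}{2}\right)}{\pi^{d/2}\,\Gamma(2-s)}.
\]
The singular behavior $\Gamma(1-s)\sim(1-s)^{-1}$ at $s=1$ and the factor $s$ at $s=0$ are thereby cancelled. The right-hand side is defined and continuous for every $s\in[0,1]$: the factor $2^{2s}$ takes values in $[1,4]$; for $s\in[0,1]$ the argument $\tfrac{d+2s}{2}$ ranges in the compact interval $[d/2,(d+2)/2]\subset(0,\infty)$ on which $\Gamma$ is continuous and bounded; and $\Gamma(2-s)$ is continuous and strictly positive on $[1,2]$, with $\Gamma(1)=\Gamma(2)=1$, hence bounded away from zero.

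Consequently, $s\mapsto \kappa_{d,s}/(s(1-s))$ extends to a strictly positive continuous function on the compact interval $[0,1]$, so it attains a positive minimum $m_d$ and a finite maximum $M_d$, both depending only on $d$. Choosing $C:=\max(M_d,\,1/m_d)\ge 1$ produces the two-sided estimate $C^{-1}\le\kappa_{d,s}/(s(1-s))\le C$ for every $s\in(0,1)$. The only conceptually nontrivial step is the explicit evaluation of the normalizing integral; once that identity is granted, the robustness estimate is just continuity and positivity of $\Gamma$ on a compact interval, so no substantive obstacle remains.
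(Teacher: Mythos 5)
Your proof is correct. The paper records this proposition as a direct citation of Bucur and Valdinoci \cite[(2.15)]{bucur_valdinoci} and does not include a proof of either the closed-form expression or the two-sided bound, so there is nothing in the paper to compare against line by line; your argument supplies the natural justification. The key algebraic step, rewriting
\[
\frac{\kappa_{d,s}}{s(1-s)}=\frac{2^{2s}\,\Gamma\!\left(\tfrac{d+2s}{2}\right)}{\pi^{d/2}\,\Gamma(2-s)}
\]
via $\Gamma(2-s)=(1-s)\Gamma(1-s)$, is exactly right, and the continuity-plus-positivity argument on the compact interval $[0,1]$ (using $d/2\ge 1$ so that $\tfrac{d+2s}{2}$ stays in $[d/2,(d+2)/2]\subset(0,\infty)$ and $2-s\in[1,2]$) is precisely the point that makes the bound uniform in $s$. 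This is the standard way to obtain the asymptotics $\kappa_{d,s}\asymp s(1-s)$, and your write-up contains no gaps.
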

	\begin{proposition}\label{prop:Vs_separable_hilbertspaces}
		The function spaces $V^s(\Omega\,|\, \BR^d)$, $V_0^s(\Omega)$ as well as $V_\perp^s(\Omega\,|\, \BR^d)$, endowed with the inner product $(\cdot, \cdot)_{V^s(\Omega\,|\, \BR^d)}$, are separable Hilbert spaces.
	\end{proposition}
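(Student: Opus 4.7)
The plan is to embed $V^s(\Omega\,|\,\BR^d)$ isometrically into an explicit separable Hilbert space and then verify that $V_0^s(\Omega)$ and $V_\perp^s(\Omega\,|\,\BR^d)$ are closed subspaces of it.

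Set $M:=(\BR^d\times\BR^d)\setminus(\Omega^c\times\Omega^c)$ and let $\mu_s$ be the Borel measure on $M$ with density $\tfrac{\kappa_{d,s}}{2}\abs{x-y}^{-d-2s}$ with respect to Lebesgue measure. Since $M$ off the diagonal is exhausted by sets $\{(x,y):\varepsilon\le\abs{x-y},\,\abs{x},\abs{y}\le R\}$ on which this density is bounded, $(M,\mu_s)$ is $\sigma$-finite, and hence $H:=L^2(\Omega)\oplus L^2(M,\mu_s)$, with the natural product inner product, is a separable Hilbert space. The map
\[
T:V^s(\Omega\,|\,\BR^d)\to H,\qquad T(u):=\bigl(u|_\Omega,\;(x,y)\mapsto u(x)-u(y)\bigr),
\]
is a linear isometry with respect to $(\cdot,\cdot)_{V^s(\Omega\,|\,\BR^d)}$; in passing this confirms that the prescribed form is actually an inner product, and separability of $V^s(\Omega\,|\,\BR^d)$ is inherited from $H$ as soon as we know it is complete, i.e.\ that $T(V^s(\Omega\,|\,\BR^d))$ is closed in $H$.

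The core step is thus completeness. Given a Cauchy sequence $(u_n)\subset V^s(\Omega\,|\,\BR^d)$, the components converge to some $u_\Omega\in L^2(\Omega)$ and $U_n(x,y):=u_n(x)-u_n(y)$ converges to some $V\in L^2(M,\mu_s)$. After extracting a subsequence both convergences hold almost everywhere. By Fubini applied to the a.e.\ convergence on $M$, combined with the a.e.\ convergence on $\Omega$, I fix $x_0\in\Omega$ such that $u_n(x_0)\to u_\Omega(x_0)$ and $U_n(x_0,\cdot)\to V(x_0,\cdot)$ almost everywhere on $\BR^d$, and I define
\[
u(y):=u_\Omega(x_0)-V(x_0,y),\qquad y\in\BR^d.
\]
Then $u_n(y)=u_n(x_0)-U_n(x_0,y)\to u(y)$ almost everywhere, so $u=u_\Omega$ on $\Omega$; and the cocycle identity $U_n(x,y)=U_n(x_0,y)-U_n(x_0,x)$ passes to the limit to give $u(x)-u(y)=V(x,y)$ for $\mu_s$-a.e.\ $(x,y)\in M$. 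Fatou's lemma produces $u\in V^s(\Omega\,|\,\BR^d)$, and a second application of Fatou to the tails of $u_m-u_n$ yields $\norm{u_n-u}_{V^s(\Omega\,|\,\BR^d)}\to 0$.

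Finally, I would check the two subspaces. The functional $u\mapsto\int_\Omega u$ is continuous on $V^s(\Omega\,|\,\BR^d)$ by Cauchy--Schwarz and boundedness of $\Omega$, so $V_\perp^s(\Omega\,|\,\BR^d)$ is the kernel of a continuous linear functional, hence closed. For $V_0^s(\Omega)$, if $u_n\to u$ in $V^s(\Omega\,|\,\BR^d)$ with $u_n\equiv 0$ on $\Omega^c$, the same a.e.-convergent subsequence yields $u(x)-u(y)=\lim_n u_n(x)=u(x)$ for a.e.\ $(x,y)\in\Omega\times\Omega^c$, forcing $u=0$ almost everywhere on $\Omega^c$. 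Both $V_0^s(\Omega)$ and $V_\perp^s(\Omega\,|\,\BR^d)$ are thus closed subspaces of the separable Hilbert space $V^s(\Omega\,|\,\BR^d)$ and inherit its structure. The one nontrivial obstacle is the reconstruction step: a $V^s$-Cauchy sequence provides no direct $L^2$-control on $\Omega^c$, and one must recover the pointwise values of the limit on $\Omega^c$ from the single slice $V(x_0,\cdot)$ via the cocycle identity.
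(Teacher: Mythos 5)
Your argument is correct and self-contained, whereas the paper simply cites \cite[Theorem 3.23]{FoghemGounoue2020} for the completeness and separability of $V^s(\Omega\,|\,\BR^d)$ and observes that $V_0^s(\Omega)$ and $V_\perp^s(\Omega\,|\,\BR^d)$ are closed subspaces. What you supply is essentially the proof hiding behind that citation: the isometric embedding $T$ into $L^2(\Omega)\oplus L^2(M,\mu_s)$ reduces separability to that of the ambient Hilbert space, and the genuinely delicate point --- that a $V^s$-Cauchy sequence has a limit \emph{function} on all of $\BR^d$ even though the norm controls no $L^2$-integral over $\Omega^c$ --- is resolved by the Fubini/slice/cocycle reconstruction from a single good $x_0\in\Omega$. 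That is the right idea and you have executed it cleanly; the only cosmetic remark is that the final ``second application of Fatou'' is unnecessary, since once you know $T(u)=(u_\Omega,V)$ and $T$ is an isometry, $\norm{u_n-u}_{V^s(\Omega\,|\,\BR^d)}=\norm{T(u_n)-(u_\Omega,V)}_H\to 0$ directly. The closedness checks for the two subspaces are also correct: $V_\perp^s$ is the kernel of the continuous functional $u\mapsto\int_\Omega u$, and for $V_0^s$ the a.e.\ convergence of increments along $\Omega\times\Omega^c$ forces the limit to vanish a.e.\ on $\Omega^c$. Compared with the paper's citation approach, yours costs a page but makes the argument transparent and independent of the external reference; the paper's approach is terser but delegates exactly the step you correctly identify as the obstacle.
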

	\begin{proof}
		Since $V_0^s(\Omega)\subset V^s(\Omega\,|\,\BR^d)$ as well as $V_\perp^s(\Omega\,|\, \BR^d) \subset V^s(\Omega\,|\,\BR^d)$ are closed sub spaces the claim follows from \cite[Theorem 3.23]{FoghemGounoue2020}.
	\end{proof}
	We end this section with a basic property of the space $\ST^s(\Omega^c)$. 
	\begin{proposition}\label{lem:complete}
		The space $\ST^s(\Omega^c)$, endowed with the inner product
		\begin{equation*}
			(f,g)_{\ST^s(\Omega^c)}= (f,g)_{L^2(\Omega^c, \tau_s)} + [f,g]_{\ST^s(\Omega^c|\Omega^c)},
		\end{equation*}
	is a separable Hilbert space.
	\end{proposition}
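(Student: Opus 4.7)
The plan is to proceed by the standard completeness-plus-embedding argument for Sobolev-type spaces. First I would check that $(\cdot,\cdot)_{\ST^s(\Omega^c)}$ is genuinely an inner product: bilinearity and symmetry are immediate, positivity of $[\cdot,\cdot]_{\ST^s(\Omega^c|\Omega^c)}$ is clear because the kernel $k_s$ is nonnegative on $\overline{\Omega}^{\,c}\times\overline{\Omega}^{\,c}$, and definiteness follows from the $L^2(\Omega^c,\tau_s)$ part since $\tau_s(x)>0$ for a.e.\ $x\in\Omega^c$ (the set $\partial\Omega$ has Lebesgue measure zero).

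For completeness I would use the isometric identification
\begin{equation*}
J:\ST^s(\Omega^c)\hookrightarrow L^2(\Omega^c,\tau_s)\times L^2(\Omega^c\times\Omega^c,k_s(x,y)\,\dishort x\,\dishort y),\qquad f\mapsto\bigl(f,\,(x,y)\mapsto f(x)-f(y)\bigr),
\end{equation*}
and show that its image is closed. Given a Cauchy sequence $(f_n)$ in $\ST^s(\Omega^c)$, both coordinates of $J(f_n)$ are Cauchy in the respective weighted $L^2$ spaces, which are complete, so there exist $f\in L^2(\Omega^c,\tau_s)$ and $F\in L^2(\Omega^c\times\Omega^c,k_s\,\dishort x\dishort y)$ with $f_n\to f$ and $f_n(x)-f_n(y)\to F(x,y)$ in norm. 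Passing to a common subsequence, I may assume both convergences also hold pointwise a.e. (with respect to Lebesgue on $\Omega^c$ and on $\Omega^c\times\Omega^c$, respectively, since $\tau_s$ and $k_s$ are strictly positive there). Then for a.e.\ $(x,y)$ we have $f_n(x)-f_n(y)\to f(x)-f(y)$, so $F(x,y)=f(x)-f(y)$ a.e. Hence $J(f)=(f,F)\in J(\ST^s(\Omega^c))$ and $\norm{f_n-f}_{\ST^s(\Omega^c)}=\norm{J(f_n)-J(f)}\to 0$. (Fatou's lemma provides the finiteness of $\norm{f}_{\ST^s(\Omega^c)}$ a priori if one prefers to argue that way.)

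For separability, observe that $J$ realises $\ST^s(\Omega^c)$ as a closed subspace of the product $L^2(\Omega^c,\tau_s)\times L^2(\Omega^c\times\Omega^c,k_s\,\dishort x\dishort y)$. The weights $\tau_s(x)\dishort x$ and $k_s(x,y)\dishort x\dishort y$ are $\sigma$-finite Borel measures on separable metric spaces, so the corresponding $L^2$-spaces are separable, and hence so is their product and any closed subspace thereof.

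The only delicate point is the identification $F(x,y)=f(x)-f(y)$ a.e.; this is where one must invoke the strict positivity of $\tau_s$ and $k_s$ on $\Omega^c$ and $\overline{\Omega}^{\,c}\times\overline{\Omega}^{\,c}$, respectively, to pass from $L^2$-convergence with these weights to a.e.\ convergence in the Lebesgue sense along a subsequence. Everything else is routine functional-analytic bookkeeping.
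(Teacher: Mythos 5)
Your proof is correct and takes essentially the same route as the paper: both realize $\ST^s(\Omega^c)$ as a closed subspace of $L^2(\Omega^c,\tau_s)\times L^2(\Omega^c\times\Omega^c,k_s(x,y)\,\dishort x\,\dishort y)$ via the isometry you call $J$ (the paper calls it $\Phi$, used there explicitly for separability) and pass to a.e.\ convergent subsequences to identify limits. The paper phrases the completeness step via Fatou applied to the seminorm of $g_{n_k}-g$ rather than via closedness of $J(\ST^s(\Omega^c))$, but as you yourself remark these are two presentations of the same argument.
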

	\begin{proof}
		Surely, $\norm{\cdot}_{\ST^s(\Omega^c)}$ is a norm on $\ST^s(\Omega^c)$ and $(\cdot, \cdot)_{\ST^s(\Omega^c)}$ is an inner product on $\ST^s(\Omega^c)$ satisfying $(g,g)_{\ST^s(\Omega^c)}= \norm{g}_{\ST^s(\Omega^c)}^2$. It remains to show that $\ST^s(\Omega^c)$ is complete and separable. Let $\{g_n \}$, $g_n\in \ST^s(\Omega^c)$ be a Cauchy sequence. Then $g_n \tau_s$ is a Cauchy sequence in $L^2(\Omega^c)$ and, thus, there exists a limit $h\in L^2(\Omega^c)$. We define $g:= h/\tau_s\in L^2(\Omega^c, \tau_s)$. By $L^2$-convergence, we find a subsequence $\{ n_k \}_k$ such that $g_{n_k}\to g$ \aev on $\Omega^c$. By Fatou's lemma, $[g_{n_k}-g, g_{n_k}-g]_{\ST^s(\Omega^c)}\le \liminf_{l}[g_{n_k}-g_{n_l}, g_{n_k}-g_{n_l}]_{\ST^s(\Omega^c)}\to 0$ as $k\to \infty$. Thereby, $g_{n_k}\to g\in \ST^s(\Omega^c)$. The same is true for the original sequence which can be seen by repeating the argument with an arbitrary subsequence. This proves the completeness. Separability follows, since $L^2(\Omega^c)\times L^2(\Omega^c\times \Omega^c)$ is a separable Hilbert space and the map $\Phi: \ST^s(\Omega^c)\to L^2(\Omega^c)\times L^2(\Omega^c\times \Omega^c)$, $\Phi(g)= \big( g(x)\tau_s(x)^{1/2}, (g(x)-g(y))(k_s(x,y))^{1/2} \big)$ is an isometric isomorphism. 
	\end{proof}	
	\section{Trace and Extension}\label{sec:trace_ext}
	In this section we prove the existence of a continuous trace operator $\trn: V^s(\Omega\,|\, \BR^d)\to \ST^s(\Omega^c)$ and continuous extension operator $\extn: \ST^s(\Omega^c)\to V^s(\Omega\,|\, \BR^d)$, see \cref{th:trace_and_extension}. We pay particular attention to $s$ dependence to gain robust estimates as $s\to 1-$. We will use the main result from the work \cite{Bogdan_trace}, summarized in the introduction. In particular, we will prove in \cref{sec:equiv} that the norm of our trace space $\ST^s(\Omega^c)$ is equivalent to the norm of $\CX^s(\Omega^c)$. 
	\begin{theorem}[{\cite[Theorem 2.3]{Bogdan_trace}}]\label{th:douglas}
		Let $\Omega\subset \BR^d$ be an open set such that $\Omega^c$ satisfies volume density condition, see \cite[(VDC)]{Bogdan_trace}, and $\abs{\partial \Omega}=0$.
		\begin{enumerate}
			\item[(i)]{ If $g\in \CX^s(\Omega^c)$, then $\SP_{s,\Omega}\,g\in V^s(\Omega\,|\, \BR^d)$ and $   [\SP_{s,\Omega}\,g,\SP_{s,\Omega}\,g]_{V^s(\Omega \, | \, \BR^d)}= [g,g]_{\CX^s(\Omega^c)}$.}
			\item[(ii)]{ If $u\in V^s(\Omega\,|\, \BR^d)$, then $g= u|_{\Omega^c}\in \CX^s(\Omega^c)$ and  $[u,u]_{V^s(\Omega \, | \, \BR^d)} \ge [g,g]_{\CX^s(\Omega^c)}$.}
		\end{enumerate}
	\end{theorem}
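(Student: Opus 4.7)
The plan is to prove both statements via a nonlocal Douglas-type identity, with the engine of the argument being the Green-Gauß formula \eqref{eq:green_gauss_frac_lap} combined with the $s$-harmonicity of the Poisson extension.

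For part (i), I first assume $g$ is regular enough that $u := \SP_{s,\Omega}g$ lies in $V^s(\Omega\,|\,\BR^d)$ and satisfies $(-\Delta)^s u \equiv 0$ on $\Omega$; the general case will follow by density and lower semicontinuity. Applying \eqref{eq:green_gauss_frac_lap} with $v = u$ and using $s$-harmonicity gives
\begin{equation*}
[u,u]_{V^s(\Omega\,|\,\BR^d)} = \int_{\Omega^c} \CN_s u(y)\, g(y) \di y.
\end{equation*}
Next I expand $\CN_s u(y)$: using $u(x) = \int_{\Omega^c} P_{s,\Omega}(x,w) g(w) \di w$ for $x\in\Omega$ together with the normalization $\int_{\Omega^c} P_{s,\Omega}(x,w) \di w = 1$, a Fubini argument yields
\begin{equation*}
\CN_s u(y) = \kappa_{d,s} \int_\Omega \frac{\int_{\Omega^c} P_{s,\Omega}(x,w)(g(y)-g(w))\di w}{\abs{x-y}^{d+2s}}\di x = \int_{\Omega^c} (g(y)-g(w))\, k_s^\star(w,y) \di w
\end{equation*}
after recognizing the definition \eqref{eq:bogdan_trace_kernel}. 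Substituting back and symmetrizing in $y \leftrightarrow w$ gives
\begin{equation*}
2[u,u]_{V^s(\Omega\,|\,\BR^d)} = \iint\limits_{\Omega^c\times\Omega^c} (g(y)-g(w))^2 k_s^\star(y,w)\di y \di w = 2[g,g]_{\CX^s(\Omega^c)},
\end{equation*}
provided I can verify the symmetry $k_s^\star(y,w) = k_s^\star(w,y)$. This symmetry follows from the Ikeda--Watanabe representation $P_{s,\Omega}(x,w) = \kappa_{d,s}\int_\Omega G_{s,\Omega}(x,\zeta)\abs{\zeta-w}^{-d-2s}\di\zeta$ for the Poisson kernel together with the symmetry $G_{s,\Omega}(z,\zeta) = G_{s,\Omega}(\zeta,z)$ of the Green function.

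For part (ii), given $u\in V^s(\Omega\,|\,\BR^d)$, I set $g := u|_{\Omega^c}$, $v := \SP_{s,\Omega}g$, and decompose $u = v + w$ where $w := u - v$ vanishes on $\Omega^c$. A second application of \eqref{eq:green_gauss_frac_lap} to the pair $(v,w)$ shows $[v,w]_{V^s(\Omega\,|\,\BR^d)} = 0$, since $(-\Delta)^s v \equiv 0$ on $\Omega$ and $w \equiv 0$ on $\Omega^c$. Expanding
\begin{equation*}
[u,u]_{V^s(\Omega\,|\,\BR^d)} = [v,v]_{V^s(\Omega\,|\,\BR^d)} + 2[v,w]_{V^s(\Omega\,|\,\BR^d)} + [w,w]_{V^s(\Omega\,|\,\BR^d)}
\end{equation*}
and invoking part (i) for $v$, I obtain $[u,u]_{V^s(\Omega\,|\,\BR^d)} = [g,g]_{\CX^s(\Omega^c)} + [w,w]_{V^s(\Omega\,|\,\BR^d)} \ge [g,g]_{\CX^s(\Omega^c)}$, which in particular forces $g \in \CX^s(\Omega^c)$.

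The hardest part will be rigorously justifying these Green-Gauß and Fubini manipulations, since \eqref{eq:green_gauss_frac_lap} is stated for $C^2_b$ and $C^1_b$ functions, well beyond what $V^s$-regularity alone provides. The standard remedy is to prove the identity first for bounded boundary data $g$ sufficiently well behaved (for instance compactly supported and bounded away from $\partial\Omega$) and then pass to the limit by density of such $g$ in $\CX^s(\Omega^c)$ combined with lower semicontinuity of the energies. The volume density condition on $\Omega^c$ and the assumption $\abs{\partial\Omega}=0$ enter precisely at this approximation stage: they ensure the Poisson kernel behaves well enough for $\SP_{s,\Omega}g \in V^s(\Omega\,|\,\BR^d)$ for such $g$, and that the boundary carries no mass for any of the double integrals involved.
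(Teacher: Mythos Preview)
The paper does not prove this theorem; it is quoted from \cite[Theorem 2.3]{Bogdan_trace} and used as a black box throughout \cref{sec:trace_ext}. The paper remarks that the results in \cite{Bogdan_trace} were ``obtained with stochastic methods,'' and indeed the original argument there proceeds via a Hardy--Stein identity for the killed $2s$-stable process rather than the analytic Green--Gau\ss/orthogonality route you sketch. So there is no in-paper proof to compare against, and your approach is genuinely different from the one in the cited reference: where \cite{Bogdan_trace} obtains the exact decomposition $[u,u]_{V^s}=[g,g]_{\CX^s}+[w,w]_{V^s}$ in one stroke from the probabilistic Hardy--Stein formula, you attempt to build it from the deterministic Green--Gau\ss\ identity and the Ikeda--Watanabe representation of $P_{s,\Omega}$.

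Your formal identities are correct, including the symmetry of $k_s^\star$ via the symmetry of $G_{s,\Omega}$. The substantive gap is a circularity in part (ii): you form $v:=\SP_{s,\Omega}g$ and invoke part (i) to identify $[v,v]_{V^s}=[g,g]_{\CX^s}$, but your part (i) for general data is only established under the hypothesis $g\in\CX^s(\Omega^c)$, which is exactly what (ii) asserts. To close this you must either prove the inequality in (ii) by a direct computation that does not presuppose $v\in V^s$, or run the approximation at the level of $u$ rather than $g$: take $u_n\in C_c^\infty(\BR^d)$ converging to $u$ in $V^s(\Omega\,|\,\BR^d)$, apply your regular-case identity to each $u_n$, and then pass to the limit via Fatou on the $\CX^s$ side. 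The latter is workable since $C_c^\infty(\BR^d)$ is dense in $V^s(\Omega\,|\,\BR^d)$ and one can extract a.e.\ convergent subsequences on $\Omega^c$ from the cross term $\int_\Omega\int_{\Omega^c}$, but you have not spelled out this step, and it is where the assumptions $\abs{\partial\Omega}=0$ and the volume density condition actually do work.
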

	Recall that $\SP_{s,\Omega}$ is the Poisson extension operator. The equality in \cref{th:douglas} (i) can be understood as a Douglas type identity. Notice that \cref{th:douglas} does not include the continuity of the trace operator ($V^s(\Omega\,|\, \BR^d)\to \CX^s(\Omega^c)$) and extension operator ($\CX^s(\Omega^c)\to V^s(\Omega\, |\, \BR^d)$) as a map between normed spaces. 
	\subsection{Equivalence results}\label{sec:equiv}
	In this section, we prove that the norm of $\ST^{s}(\Omega^c)$ is equivalent to the norm of $\CX^s(\Omega^c)$. In particular, we want to ensure that the bounds are robust in the limit $s\to 1-$. Throughout this section we assume $\Omega$ to be a bounded $C^{1,1}$ domain. Bogdan et al. proved estimates on the interaction kernel $k_s^{\star}$ in \cite[Theorem 2.6]{Bogdan_trace}. For $s\in(0,1)$, these estimates yield after a short calculation constants $c_s,C_s> 0$ such that $s(1-s)\,c_sk_s^{\star}(w,z)\le k_s(w,z)\le s(1-s)\, C_s k_s^{\star}(w,z)$ for any $w,z \in \Omega^c$ with $d_w\le \diam{\Omega}$ or $d_z\le \diam{\Omega}$. These estimates are not robust in the limit $s\to 1-$. 
	
	We will use the following estimates on the Poisson kernel $P_{s,\Omega}$ which have been proven by several authors, see \cite[Theorem 1.3]{Panki_Poisson_estimates} by Kang and Kim, \cite[Theorem 3.3, Theorem 3.4]{chensonggreen} by Chen and Song and \cite[Theorem 2.10]{Chen1999} by Chen. The following theorem requires $\Omega$ to be a bounded $C^{1,1}$ domain. The remaining statements in this work require the domain to have a $C^{1,1}$ boundary due to this result. 
	\begin{theorem}[{\cite[Theorem 2.10]{Chen1999}}]\label{th:chen99_poissonestimate}
		There exists a constant $C=C(d,\Omega)>1$ such that for any $z\in \Omega$ and $x\in \overline{\Omega}^{\,c}$ 
		\begin{equation*}
			C^{-1}\kappa_{d,s} \,\frac{d_z^{s}}{d_x^s\, (1+d_x)^s} \frac{1}{\abs{x-z}^d} \le P_{s,\Omega}(z,x) \le C \,\kappa_{d,s}\,  \frac{d_z^{s}}{d_x^s\, (1+d_x)^s} \frac{1}{\abs{x-z}^d} .
		\end{equation*}
	\end{theorem}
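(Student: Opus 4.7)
The plan is to adopt the standard two-step approach used by Chen and many later authors: first obtain the estimate on balls via the explicit Riesz formula, then transfer to a general $C^{1,1}$ domain by exploiting the uniform interior/exterior ball condition together with a comparison/boundary Harnack argument.

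First I would recall the classical Riesz formula: for a ball $B=B_r(x_0)$ and $z\in B$, $x\in \overline{B}^{\,c}$,
\begin{equation*}
    P_{s,B}(z,x) \;=\; c_{d,s}\left(\frac{r^2-|z-x_0|^2}{|x-x_0|^2-r^2}\right)^{s}\frac{1}{|x-z|^d},
\end{equation*}
where $c_{d,s}$ is explicit and satisfies $c_{d,s}\asymp \kappa_{d,s}$ (up to constants depending only on $d$). Using $r^2-|z-x_0|^2 \asymp r\,d_z^{B}$ for $z$ near $\partial B$, and the analogue $|x-x_0|^2-r^2 \asymp r\,d_x^B(1+d_x^B)$ globally on the complement, one reads off the two-sided bound of the theorem on any ball, with constants depending only on $d$ and $r$.

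Next I would promote this to $\Omega$ using that $\Omega$ is $C^{1,1}$, which gives a uniform radius $\rho>0$ such that every boundary point admits an interior ball $B_\rho\subset\Omega$ and exterior ball $B_\rho^{\star}\subset\overline{\Omega}^{\,c}$ tangent at that point. For the \emph{lower} bound, I would compare with the Poisson kernel of the largest inscribed ball $B\subset\Omega$ touching $\partial\Omega$ near the nearest boundary point of $z$; by the strong Markov property of the isotropic stable process (equivalently, the maximum principle for $(-\Delta)^s$-harmonic functions with nonnegative exterior data), $P_{s,\Omega}(z,x)\ge P_{s,B}(z,x)$ in an appropriate sense, and splitting into the cases $|x-z|\lesssim \mathrm{diam}(\Omega)$ and $|x-z|\gtrsim \mathrm{diam}(\Omega)$ (where the $(1+d_x)^{-s}$ tail kicks in) yields the claimed lower bound. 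For the \emph{upper} bound, the exterior ball condition gives a barrier $B^{\star}_\rho$ from outside, and a boundary Harnack principle for $(-\Delta)^s$ (together with the decay estimate of nonnegative $s$-harmonic functions vanishing on $\Omega^c$, which behave like $d_z^{s}$) reduces the problem to the analogous bound on a half-space-like model, where the explicit ball computation applies.

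The main obstacle is the \emph{robustness in $s$}. Chen's original proof contains constants that a priori blow up as $s\to 1-$, because they go through the Green function and the boundary Harnack principle whose constants are typically only tracked for $s$ in a compact subinterval of $(0,1)$. Extracting the exact prefactor $\kappa_{d,s}$ requires separating out this explicit $s$-dependent factor (which is already visible on the ball via the Riesz formula once one inserts $c_{d,s}\asymp \kappa_{d,s}$) and showing that all remaining constants produced by the comparison and boundary Harnack steps can be chosen uniformly in $s\in(0,1)$—for instance by scaling, by using probabilistic hitting-distribution arguments that survive the limit $s\to 1-$ (where $(-\Delta)^s\to -\Delta$), and by checking that the $(1+d_x)^{-s}$ polynomial decay matches the tail behaviour of the underlying Lévy measure uniformly. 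Since the statement is quoted verbatim from \cite{Chen1999} (supplemented by \cite{Panki_Poisson_estimates,chensonggreen}), I would simply invoke that result and its refinements rather than reprove it.
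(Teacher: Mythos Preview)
The paper does not prove this theorem at all; it is quoted directly from the literature (Chen \cite{Chen1999}, with refinements in \cite{chensonggreen,Panki_Poisson_estimates}) and used as a black box. Your final sentence already identifies this correctly, so the preceding sketch---while a reasonable outline of the standard Riesz-formula-plus-boundary-Harnack route---is superfluous for the purposes of this paper.
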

	The equivalence of the respective $L^2$ terms $\abs{\cdot}_{\CX^s(\Omega^c)}$ and $\norm{\cdot}_{L^2(\Omega^c, \tau_s)}$ follow directly from \cref{th:chen99_poissonestimate}.
	\begin{kor}\label{cor:L2partequivalence}
		There exists a constant $C=C(d,\Omega)>1$ such that 
		\begin{equation*}
			\sqrt{s}\,\frac{1}{C}\,\norm{f}_{L^2(\Omega^c, \tau_s)}\le |f|_{\CX^s(\Omega^c)}\le \sqrt{s}C\,  \norm{f}_{L^2(\Omega^c, \tau_s)}
		\end{equation*}
		for any $f\in \CX^s(\Omega^c)\cup \ST^s(\Omega^c)$.
	\end{kor}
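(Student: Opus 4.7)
The approach is to reduce the corollary to a pointwise comparison of the two weights on $\Omega^c$: once I establish that
\begin{equation*}
    s\,\tau_s(x)\ \asymp\ P_{s,\Omega}(x_0,x)\qquad\text{for all } x\in \overline{\Omega}^{\,c},
\end{equation*}
with multiplicative constants depending only on $d$ and $\Omega$, the corollary follows by integrating against $\abs{f(x)}^2\di x$ and taking square roots. This is valid for $f$ in either $\CX^s(\Omega^c)$ or $\ST^s(\Omega^c)$ because the two norms differ only in their $L^2$-type components, and a pointwise equivalence of weights makes either finiteness condition imply the other.

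To obtain the pointwise comparison, I specialize \cref{th:chen99_poissonestimate} to $z = x_0$, which gives
\begin{equation*}
    P_{s,\Omega}(x_0,x)\ \asymp\ \kappa_{d,s}\,\frac{d_{x_0}^{\,s}}{d_x^{\,s}(1+d_x)^s\,\abs{x-x_0}^d},
\end{equation*}
and divide by $\tau_s(x)=(1-s)/\bigl(d_x^{\,s}(1+d_x)^{d+s}\bigr)$ to obtain
\begin{equation*}
    \frac{P_{s,\Omega}(x_0,x)}{\tau_s(x)}\ \asymp\ \frac{\kappa_{d,s}}{1-s}\cdot d_{x_0}^{\,s}\cdot \frac{(1+d_x)^d}{\abs{x-x_0}^d}.
\end{equation*}
The first factor is $\asymp s$ by \cref{prop:asymptotics_constant}, with a constant depending only on $d$. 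The second factor $d_{x_0}^{\,s}$ lies between $\min(1,d_{x_0})$ and $\max(1,d_{x_0})$ for every $s\in(0,1)$, and both endpoints are positive constants depending only on $\Omega$ (through the once-and-for-all choice of $x_0\in\Omega$ used to define $\abs{\cdot}_{\CX^s(\Omega^c)}$).

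The remaining factor is purely geometric: since $B_{d_{x_0}}(x_0)\subset\Omega$ and the segment $[x,x_0]$ must cross $\partial\Omega$, we have $\abs{x-x_0}\ge\max(d_x,d_{x_0})$; on the other hand, picking a nearest boundary point $z_x\in\partial\Omega$ to $x$ yields $\abs{x-x_0}\le\abs{x-z_x}+\abs{z_x-x_0}\le d_x+R$ with $R:=\diam{\overline{\Omega}}$. A short case split ($d_x\ge 1$ vs.\ $d_x<1$ for the upper bound, and $d_x\ge R$ vs.\ $d_x<R$ for the lower bound) then shows $(1+d_x)/\abs{x-x_0}$ lies in a fixed compact subinterval of $(0,\infty)$ determined solely by $d_{x_0}$ and $R$. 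Multiplying the three factor bounds yields $P_{s,\Omega}(x_0,x)/\tau_s(x)\asymp s$ uniformly in $s\in(0,1)$ and $x\in\overline{\Omega}^{\,c}$, which is the claim. There is no serious obstacle; the content is careful bookkeeping around the robust Poisson estimate of \cref{th:chen99_poissonestimate} and the asymptotic behavior of $\kappa_{d,s}$.
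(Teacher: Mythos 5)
Your proof is correct and takes the same route as the paper: specialize the robust Poisson estimate of \cref{th:chen99_poissonestimate} to $z=x_0$, use \cref{prop:asymptotics_constant} for $\kappa_{d,s}/(1-s)\asymp s$, and observe that the remaining factors $d_{x_0}^{\,s}$ and $(1+d_x)^d/\abs{x-x_0}^d$ are bounded above and below by constants depending only on $\Omega$ and the fixed $x_0$. If anything you are more explicit than the paper's one-line proof, which absorbs the geometric factor into its constant without spelling out the two-sided bound $\max(d_x,d_{x_0})\le\abs{x-x_0}\le d_x+\diam(\Omega)$ that you correctly supply.
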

	\begin{proof}
		The claim follows from \cref{th:chen99_poissonestimate} and \cref{prop:asymptotics_constant} with $C(d,\Omega):= \sup_{s\in(0,1)} \big(  \frac{c_1\,\kappa_{d,s}}{s(1-s)}\vee \frac{\kappa_{d,s}}{c_1\, s(1-s)}\big)^{1/2}$. Here $c_1=c_1(d,\Omega)>0$ is the constant from \cref{th:chen99_poissonestimate}.
	\end{proof}
	The next two technical lemmata will be used in the proof of \cref{prop:upperbound} and \cref{prop:lowerbound}.
	\begin{lemma}\label{lem:distance_upper}
		Let $D\subset \BR^d$ be an open set and $s>0$. For any $x\in \overline{D}^c$
		\begin{align*}
			\int\limits_{ D }\frac{1}{\abs{x-z}^{d+s}} \di z \le \frac{\omega_{d-1}}{s}\,\frac{1}{\dist{x}{D}^s}
		\end{align*}  
		holds. If $D$ is bounded, then there exists a constant $C=C(d,D)>0$ such that for all $x\in \overline{D}^c$
		\begin{equation*}
			\int\limits_{ D }\frac{1}{\abs{x-z}^{d+s}} \di z \le \frac{C}{s} \frac{1}{\dist{x}{D}^s\, (1+\dist{x}{D})^d}
		\end{equation*}
	\end{lemma}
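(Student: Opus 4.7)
The first inequality is a direct consequence of enlarging the domain of integration. Since $D$ is open and $x\notin\overline{D}$, we have $r:=\dist{x}{D}>0$ and $D\cap B_r(x)=\emptyset$, so $D\subset\{z\in\BR^d:\abs{x-z}\ge r\}$. Passing to polar coordinates centered at $x$ I would then compute
\[
\int_D \frac{\di z}{\abs{x-z}^{d+s}}\le \int_{\abs{x-z}\ge r}\frac{\di z}{\abs{x-z}^{d+s}}= \omega_{d-1}\int_r^\infty \rho^{-1-s}\,\di\rho = \frac{\omega_{d-1}}{s\,r^s}.
\]

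For the second inequality I would combine the first estimate with the trivial volume bound
\[
\int_D \frac{\di z}{\abs{x-z}^{d+s}}\le \frac{\abs{D}}{r^{d+s}},
\]
which uses boundedness of $D$ together with $\abs{x-z}\ge r$ on $D$. The plan is to split on the size of $r=\dist{x}{D}$. When $r\le 1$, the factor $(1+r)^d$ is controlled by $2^d$ and the first estimate already delivers the desired bound with constant $2^d\omega_{d-1}$. When $r>1$, I would use the crude volume bound together with $(1+r)^d\le(2r)^d$ to write
\[
\frac{\abs{D}}{r^{d+s}}\le \frac{2^d\,\abs{D}}{r^s\,(1+r)^d},
\]
and absorb the missing $1/s$ factor into the constant, exploiting that the paper only deals with $s\in(0,1)$ (so $1\le 1/s$). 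Summing the two regimes yields a constant $C=C(d,D)$.

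The main technical point is the interplay between the two competing bounds: the first estimate gives the correct $1/s$ dependence but no decay at infinity, whereas the volume bound gives decay but lacks the $1/s$ factor. A simple case split at $r=1$ reconciles them and produces the combined bound with a single constant depending only on $d$ and $D$ (implicitly assuming $s$ stays bounded away from infinity, which is harmless in the context of the paper).
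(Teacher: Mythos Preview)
Your proof is correct and follows essentially the same route as the paper: the first inequality via enlarging $D$ to the complement of a ball and integrating in polar coordinates, and the second via a case split at $r=\dist{x}{D}=1$, using the first bound for small $r$ and the crude volume bound $\abs{D}/r^{d+s}$ for large $r$. You are also slightly more explicit than the paper in flagging that the $1/s$ factor in the large-$r$ case is obtained only by using $s\le 1$; the paper's proof writes the bound in that case without the $1/s$ and silently relies on the same restriction.
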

	\begin{proof}
		Fix $x\in \overline{D}^c$. We use $D\subset B_{\dist{x}{D}}(x)^c$ and apply polar coordinates.
		\begin{equation*}
			\int\limits_{ D }\frac{1}{\abs{x-z}^{d+s}} \di z\le \int\limits_{ B_{\dist{x}{D}}(x)^c}\frac{1}{\abs{x-z}^{d+s}} \di z= \omega_{d-1} \int\limits_{\dist{x}{D}}^\infty t^{-1-s} \di t = \omega_{d-1} \frac{\dist{x}{D}^{-s}}{s}. 
		\end{equation*}
		In case that $\dist{x}{D}<1$, the second claim for bounded $D$ is a direct consequence of the first statement. If $D$ is bounded and $\dist{x}{D}\ge 1$, then 
		\begin{equation*}
			\int\limits_{ D }\frac{1}{\abs{x-z}^{d+s}} \di z \le \abs{D} \frac{1}{\dist{x}{D}^{d+s}}\le \abs{D}2^{d} \frac{1}{\dist{x}{D}^{s}(1+\dist{x}{D})^d}.
		\end{equation*}
	\end{proof}
	\begin{lemma}\label{lem:distance_lower}
		Let $D\subset \BR^d$ be an open set satisfying uniform interior cone condition with a compact boundary and $s\in(0,1)$. Then there exists a constant $C=C(d,D)>0$ such that 
		\begin{equation*}
			\frac{1}{\dist{x}{D}^s(1+\dist{x}{D})^d}  \le C\,  \int\limits_{ D } \frac{1}{\abs{x-z}^{d+s}} \di z
		\end{equation*}
		for all $x\in \overline{D}^c$.
	\end{lemma}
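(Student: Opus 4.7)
The plan is to exploit the uniform interior cone condition to locate, for every $x\in\overline{D}^{\,c}$, a concrete chunk of $D$ over which the integral $\int|x-z|^{-d-s}\di z$ can be bounded below by direct computation. First I would use the compactness of $\partial D$ to pick a nearest boundary point $z_0\in\partial D$ with $|x-z_0|=\dist{x}{D}=:\delta$. The uniform interior cone condition then supplies a cone $\CC\subset D$ with apex $z_0$, some height $h>0$ and opening angle $\alpha\in(0,\pi/2)$ depending only on $D$. Passing to polar coordinates centred at $z_0$, the cross-section of $\CC$ at radius $r$ is a spherical cap of $(d-1)$-dimensional measure $A_\alpha r^{d-1}$ with $A_\alpha=A_\alpha(d,\alpha)>0$, and the triangle inequality gives $|x-z|\le\delta+r$ for $z\in\CC$ with $|z-z_0|=r$. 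This reduces the problem to the scalar estimate
\[
\int_D |x-z|^{-d-s}\di z \;\ge\; A_\alpha\int_0^h \frac{r^{d-1}}{(\delta+r)^{d+s}}\di r.
\]

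Next I would extract the correct $\delta^{-s}(1+\delta)^{-d}$ decay by splitting into two regimes, which are precisely what the weight $(1+\delta)^{-d}$ is designed to interpolate. In the near-boundary regime $\delta\le h$, the substitution $r=\delta t$ converts the right-hand side into $A_\alpha\delta^{-s}\int_0^{h/\delta} t^{d-1}(1+t)^{-d-s}\di t$, and since $h/\delta\ge 1$ this is bounded below by $A_\alpha\delta^{-s}\int_0^1 t^{d-1}(1+t)^{-d-s}\di t$. Combined with $(1+\delta)^{-d}\le 1$, this produces the desired lower bound. In the far-field regime $\delta>h$, I would use the cruder estimate $\delta+r\le 2\delta$ on $r\in[0,h]$, which gives a lower bound of order $\delta^{-d-s}$; since $\delta>h$ implies $(1+\delta)\le(1+1/h)\delta$, we have $\delta^{-s}(1+\delta)^{-d}\le(1+1/h)^d\delta^{-d-s}$, and the bound is again of the required form. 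Taking $C$ to be the worst of the two case constants (depending only on $d$, $h$, $\alpha$, and hence only on $d$ and $D$) completes the argument.

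The one place care is needed is the uniformity of the constant in $s\in(0,1)$: the natural-looking bound $\int_0^{h/\delta}t^{d-1}(1+t)^{-d-s}\di t\le B(d,s)$ would introduce an $s$-dependence that blows up at $s=0$, so I would deliberately throw away the tail and use only the truncated lower bound $\int_0^1 t^{d-1}(1+t)^{-d-s}\di t\ge (d\,2^{d+1})^{-1}$, which is independent of $s\in(0,1)$. A similar comment applies to the constant $A_\alpha h^d/(d\,2^{d+s})$ in the far-field case, which is controlled by $A_\alpha h^d/(d\,2^{d+1})$. Beyond this uniformity bookkeeping I do not anticipate any real obstacle; the geometric content is entirely absorbed into the existence of the cone $\CC$.
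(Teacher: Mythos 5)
Your proposal is correct and takes essentially the same route as the paper: fix a nearest boundary point, use the uniform interior cone at that point, reduce to the one-dimensional integral $\int_0^h r^{d-1}(\delta+r)^{-d-s}\,dr$ (you via polar coordinates and the spherical-cap cross-section, the paper via the coarea formula applied to $z\mapsto|x-x_0|+|x_0-z|$, which yields the same thing), and then split into the near-boundary regime $\delta\lesssim h$ and the far-field regime $\delta\gtrsim h$ with the same elementary estimates and the same care that the resulting constant is uniform in $s\in(0,1)$. The only cosmetic difference is the bookkeeping in the far-field case, where you insert an extra $(1+1/h)^d$ factor that is not actually needed since $\delta^{-d-s}\ge\delta^{-s}(1+\delta)^{-d}$ holds unconditionally.
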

	\begin{proof}
		Fix any $x\in \overline{D}^c$ and let $x_0\in \partial D$ be a minimizer of the distance of $x$ to $D$. Since $D$ satisfies uniform interior cone condition, we find an interior cone $\CC$ with apex at $x_0$ those opening angle and volume do not depend on $x_0$. We call $h(\CC)$ the height of the cone and define $h:= h(\CC)/2$. There exists a constant $c_1=c_1(d,\Omega)$, which does not depend on $x_0$, such that $\SH\big( \CC \cap B_t(x_0) \big)\ge c_1 t^{d-1}$ for all $0<t<h(\CC)/2=h$, see \eg \cite[Lemma A.4 (A.19)]{RosOton_Vladinoci_appendix}. Recall that $\SH$ is the $(d-1)$-dimensional Hausdorff measure. Therefore, the coarea formula applied to $z \mapsto \abs{x-x_0}+ \abs{x_0-z}$, see \cref{th:coarea}, yields
		\begin{align*}
			&\int\limits_D \frac{1}{\abs{x-z}^{d+s}} \di z \ge \int\limits_\CC \frac{1}{(\abs{x-x_0}+\abs{x_0-z})^{d+s}} \di z=\int\limits_\CC \frac{1}{(\abs{x-x_0}+\abs{x_0-z})^{d+s}} \abs{\nabla_z \abs{x_0-z}}\di z\\
			&\qquad\ge  \int\limits_{\abs{x-x_0}}^{\abs{x-x_0}+ h}  t^{-d-s} \SH\big(\{ z\in \CC\,|\, \abs{z-x_0}= t- \abs{x-x_0} \}\big) \di t
			\ge c_1\, \int\limits_{0}^{h} \frac{t^{d-1}}{(t+\abs{x-x_0})^{d+s}} \di t.
		\end{align*}
		If $\dist{x}{D}=\abs{x-x_0}<h$, then we estimate
		\begin{align*}
			\int\limits_D \frac{1}{\abs{x-z}^{d+s}} \di z &\ge c_1\, \int\limits_{0}^{\abs{x-x_0}} \frac{t^{d-1}}{(2\,\abs{x-x_0})^{d+s}} \di t= \frac{c_1}{d\, 2^{d+s}}\, \abs{x-x_0}^{-s} \ge \frac{c_1}{d\, 2^{d+1}}\, \frac{1}{\abs{x-x_0}^{s}(1+\abs{x-x_0})^{d}}. 
		\end{align*}
		If $\dist{x}{D}=\abs{x-x_0}\ge h$, we define $R:= \sup\{ \abs{x_0-z} | z \in \CC \}$, which only depends on the height $h(\CC)$ and the opening angle of the $\CC$. This implies 
		\begin{equation*}
			\abs{x-z}\le \abs{x-x_0}+\abs{x_0-z}\le \abs{x-x_0}+R\le (1+R/h)\abs{x-x_0}.
		\end{equation*}
		Thereby,
		\begin{align*}
			\int\limits_D \frac{1}{\abs{x-z}^{d+s}} \di z &\ge \big( 1+R/h \big)^{-d-s} \frac{\abs{\CC}}{\abs{x-x_0}^{d+s}} \ge \frac{\abs{\CC}}{\big( 1+R/h \big)^{d+1}}\,\frac{1}{\abs{x-x_0}^s(1+\abs{x-x_0})^d}. 
		\end{align*}
		Set $C:= \max\{ d2^{d+1}/c_1, \abs{\CC}^{-1}(1+R/h)^{d+1} \}$. Note that this constant does not depend on $x_0$.
	\end{proof}
	The following two propositions compare the interaction kernels $k_s^{\star}$ and $k_s$.
	\begin{proposition}\label{prop:upperbound}
		There exists a constant $C=C(d,\Omega)>0$ such that 
		\begin{equation*}
			k_s^{\star}(x,y)\le  s\,C\,  k_s(x,y)
		\end{equation*}
		for any $x,y\in \overline{\Omega}^{\,c}$.
	\end{proposition}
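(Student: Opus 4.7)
The strategy is to combine the robust pointwise Poisson kernel bound from \cref{th:chen99_poissonestimate} with the asymptotic $\kappa_{d,s}\asymp s(1-s)$ from \cref{prop:asymptotics_constant}, thereby reducing the comparison $k_s^\star\le sC\,k_s$ to a purely geometric integral inequality on $\Omega$.

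First I would substitute the upper bound
\begin{equation*}
P_{s,\Omega}(z,x)\le C\,\kappa_{d,s}\,\frac{d_z^s}{d_x^s(1+d_x)^s\,|x-z|^d}
\end{equation*}
into the defining formula \eqref{eq:bogdan_trace_kernel} for $k_s^\star$, pull the $x$-dependent prefactor outside the integral, and replace $\kappa_{d,s}^2$ by $C\,s^2(1-s)^2$ via \cref{prop:asymptotics_constant}. Comparing the result with the definition \eqref{eq:our_trace_kernel} of $k_s$, and writing $R:=|x-y|+d_x+d_y+d_xd_y$, it suffices to prove the robust integral estimate
\begin{equation*}
s\int_\Omega\frac{d_z^s\,\di z}{|y-z|^{d+2s}\,|x-z|^d}\le\frac{C}{d_y^s(1+d_y)^s\,R^d}.
\end{equation*}

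To attack this, I would decompose $\Omega=\Omega'\cup\Omega''$ according to whether $|y-z|\ge|x-z|$ or $|y-z|<|x-z|$. On $\Omega'$ the triangle inequality gives $|x-z|\ge|x-y|/2$, which combined with $|x-z|\ge d_x$ (valid since $x\in\overline{\Omega}^{\,c}$ and $z\in\Omega$) yields $|x-z|\gtrsim d_x+|x-y|$, while $|y-z|\ge d_y$ always. Pulling the factor $|x-z|^{-d}$ out of the integral and using $d_z\le|y-z|$ (valid because the segment from $z\in\Omega$ to $y\in\overline{\Omega}^{\,c}$ crosses $\partial\Omega$) to absorb $d_z^s\le|y-z|^s$, the remaining integral $\int_\Omega|y-z|^{-d-s}\,\di z$ is controlled by \cref{lem:distance_upper} through $C/(s\,d_y^s(1+d_y)^d)$. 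A symmetric argument treats $\Omega''$ by swapping the roles of $x$ and $y$ and using $d_z\le|x-z|$.

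The remaining step is a short case distinction on the sizes of $d_x$ and $d_y$ relative to $1$ (and to $|x-y|$) that reorganizes the products $(d_x+|x-y|)^d$ and $(d_y+|x-y|)^d$ together with $(1+d_y)^d$ into the target denominator $d_y^s(1+d_y)^s R^d$. The principal obstacle is exactly this combinatorial bookkeeping: the quantity $R$ behaves like $|x-y|+d_x+d_y$ in the boundary-layer regime $d_x,d_y\le 1$ but like $d_x d_y\asymp(1+d_x)(1+d_y)$ in the far-field regime $d_x,d_y\ge 1$, so the surplus factor $(1+d_y)^{d-s}$ produced by \cref{lem:distance_upper} must be correctly distributed among the factors of $R^d$ in each subregime, and the extra factor of $s$ on the left-hand side must absorb the $1/s$ coming out of \cref{lem:distance_upper}.
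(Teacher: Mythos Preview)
Your reduction via \cref{th:chen99_poissonestimate} and \cref{prop:asymptotics_constant} to the integral inequality
\[
s\int_\Omega\frac{d_z^s}{|y-z|^{d+2s}\,|x-z|^d}\,\di z\;\le\;\frac{C}{d_y^s(1+d_y)^s\,R^d}
\]
is exactly what the paper does. The gap is in how you treat the integral afterward.

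First, a labeling slip: on $\Omega'=\{|y-z|\ge|x-z|\}$ the triangle inequality gives $|y-z|\ge|x-y|/2$, not $|x-z|\ge|x-y|/2$; the latter holds on $\Omega''$. More importantly, the integrand is \emph{not} symmetric in $x,y$, so ``swapping roles'' on $\Omega''$ is not a symmetric argument. Concretely, on the region $\{|x-z|<|y-z|\}$ you can only pull out $|y-z|^{-(d+2s)}\lesssim(|x-y|+d_y)^{-(d+2s)}$ and are left with $\int_\Omega d_z^s|x-z|^{-d}\,\di z$. Using $d_z\le|x-z|$ yields $\int_\Omega|x-z|^{s-d}\,\di z\asymp(1+d_x)^s/s$, hence a bound of order $(1+d_x)^s(|x-y|+d_y)^{-(d+2s)}$. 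Test this against $d_x\to 0$, $d_y$ small and fixed, $|x-y|\asymp d_y$: your bound is $\asymp d_y^{-(d+2s)}$, whereas the target is $\asymp d_y^{-(d+s)}$. You are off by a factor $d_y^{-s}$, which blows up. This is not bookkeeping---it is exactly the regime $d_x\ll d_y\ll 1$ that is genuinely hard.

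The paper handles this case by a geometric argument that you do not have: using the exterior ball at the nearest boundary point to $x$ (this is where the $C^{1,1}$ hypothesis enters), one constructs a point $\overline{y}$ on the outward normal through $x$ with $d_{\overline{y}}=d_y$ and shows $|y-z|\gtrsim|\overline{y}-z|\gtrsim|x-z|+(d_y-d_x)$ for $z\in\Omega$. This upgrade of the naive bound $|y-z|\ge d_y$ to $|y-z|\gtrsim|x-z|+d_y$ is what feeds into the one-dimensional integral $\int_0^{\infty} t^{s-1}(t+d_y)^{-2s}\,\di t\asymp d_y^{-s}$ and recovers the missing factor. Your decomposition never sees this, and no amount of rearranging $(1+d_y)^{d-s}$ among the factors of $R^d$ will produce it. (Even on your ``good'' region, the bound $d_z\le|y-z|$ should be sharpened to $d_z\lesssim|y-z|/(1+d_y)$---equation \eqref{eq:d_z_bound_against_y} in the paper---otherwise the far-field case $d_x,d_y\gg 1$ also fails by a factor $(1+d_y)^s$.)
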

	We follow some of the ideas Chen and Song used to derive \cite[Theorem 1.5]{chensonggreen} from \cite[Corolary 1.3]{chensonggreen}.
	\begin{proof}
		Let $\rho>0$ be the uniform interior and exterior ball radius of the bounded $C^{1,1}$-domain $\Omega$. Let $x,y\in \overline{\Omega}^c$. By \cref{th:chen99_poissonestimate} and \cref{prop:asymptotics_constant}, there exists a constant $c_1=c_1(d,\Omega)$ such that 
		\begin{equation*}
			k_s^{\star}(x,y)\le c_1\, s^2(1-s)^2\,\frac{1}{d_x^s (1+d_x)^s} \int\limits_{\Omega} \frac{d_z^s}{\abs{y-z}^{d+2s}} \frac{\di z}{\abs{x-z}^d} .
		\end{equation*}
		Therefore, it remains to show that there exists a constant $c_2=c_2(d,\Omega)>0$ such that \begin{equation}\label{eq:first_estimate_upper_bound}
			A:=s\int\limits_{\Omega} \frac{d_z^s}{\abs{y-z}^{d+2s}} \frac{\di z}{\abs{x-z}^d} \le c_2 \frac{1}{ d_y^s (1+ d_y)^s (\abs{x-y}+ d_x\,d_y+d_x+d_y)^d }.
		\end{equation} Notice that for any $z\in \Omega$ that $d_x \le |x-z|$ as well as $d_y \le |y-z|$ and
		\begin{equation*}
			(\abs{x-y}+ d_x\,d_y+d_x+d_y)^d\le 2^d\,  \big( (\abs{x-z}+d_x\,(1+d_y))^d +(\abs{y-z}+d_y\,(1+d_x))^d   \big)
		\end{equation*}
		This yields
		\begin{align*}
			(\abs{x-y}+ d_x\,d_y+d_x+d_y)^d &\le 2^d (|x-z|^d(2+d_y)^d + |y-z|^d(2+d_x)^d)\\
			&\le c_3(d,\Omega) \Big(|x-z|^d(1+d_y)^d + |y-z|^d(1+d_x)^d\Big).
		\end{align*}
		Thus, we estimate $A$ by
		\begin{align*}
			A\le sc_3 \frac{1}{(\abs{x-y}+ d_x\,d_y+d_x+d_y)^d}\, \Big( \underbrace{\int\limits_{\Omega} \frac{d_z^s\, (1+d_y)^d}{\abs{y-z}^{d+2s}}\di z}_{\text{(I)}}+ \underbrace{ \int\limits_{\Omega} \frac{d_z^s}{\abs{y-z}^{2s}} \frac{(1+d_x)^d}{\abs{x-z}^d}\di z}_{\text{(II)}}\Big) .
		\end{align*}
		It remains to show that there exists a constant $c_4=c_4(d,\Omega)$ such that $s(I), \, s(II)\le c_4\,d_y^{-s}(1+d_y)^{-s}$. First, a technical estimate which we use going forward. For any $z\in \Omega$, $d_z\le (1+\diam{\Omega})\abs{z-y}/(1+d_y)$. Therefore, 
		\begin{equation}\label{eq:d_z_bound_against_y}
			\frac{d_z}{\abs{z-y}}\le (1+\diam{\Omega})\frac{1}{1+d_y}= c_5(\Omega)\frac{1}{1+d_y}.
		\end{equation}
		We begin by proving the estimate $s\text{(I)}\le c_4 d_y^{-s}(1+d_y)^{-s}$. By \eqref{eq:d_z_bound_against_y} and \cref{lem:distance_upper}, 
		\begin{equation}\label{eq:upperbound_estimate_for_(I)}
			s\text{(I)}\le s c_5 \frac{1}{(1+d_y)^s}\int\limits_{\Omega} \frac{(1+d_y)^d}{\abs{y-z}^{d+s}}\di z\le c_5\, c_6 \, \frac{1}{d_y^s(1+d_y)^s}.
		\end{equation}
		Here $c_6=c_6(d,\Omega)$ is the constant from \cref{lem:distance_upper}. Thereby, the desired bound on $s$(I) is proven. We continue by estimating $s$(II). We distinguish between several cases, depending on the distances of $x,y$ to the boundary of $\Omega$. We define $\sigma = \tfrac{1}{2} \min\{ \rho, \diam{\Omega} \}$.
		
		\textbf{Case ($2\,d_x\ge d_y$): } We divide the integration domain $\Omega$ in (II) into two regions. For $z\in \Omega$ satisfying $\abs{x-z}\ge \abs{y-z}$, if $d_x\ge 1$, then
		\begin{equation*}
			\frac{1+d_x}{\abs{x-z}}\le \frac{1+d_x}{d_x}\le 2\le 2(\diam{\Omega}+1) \frac{1+d_y}{\abs{y-z}}
		\end{equation*} 
		and, if $d_x< 1$, then 
		\begin{equation*}
			\frac{1+d_x}{\abs{x-z}}\le \frac{2}{\abs{y-z}}\le 2 \frac{1+d_y}{\abs{y-z}}.
		\end{equation*}
		Therefore, $\frac{1+d_x}{\abs{x-z}} \le c_7(\Omega) \frac{1+d_y}{\abs{y-z}}$ for $z\in \Omega$ satisfying $\abs{x-z}\ge \abs{y-z}$. Thus by \eqref{eq:d_z_bound_against_y}, 
		\begin{align*}
			s\hspace{-30pt}\int\limits_{\{z\in \Omega \,|\, \abs{x-z}\ge \abs{y-z}  \}} \frac{d_z^s}{\abs{y-z}^{2s}} \frac{(1+d_x)^d}{\abs{x-z}^d}\di z &\le sc_7^d \int\limits_{\Omega} \frac{d_z^s}{\abs{y-z}^{2s}}\frac{(1+d_y)^d}{\abs{y-z}^{d}} \di z\\
			\le sc_5\,c_7^d \frac{1}{(1+d_y)^s}\int\limits_{\Omega} \frac{(1+d_y)^d}{\abs{y-z}^{d+s}} \di z &\le c_5\,c_6\,c_7^d\, \frac{1}{d_y^s\,(1+d_y)^s}. 
		\end{align*}
		Here we used again \cref{lem:distance_upper}. Now, we prove the bound for the remaining integration domain $\{z\in \Omega \,|\, \abs{x-z}< \abs{y-z}  \}$. By \eqref{eq:d_z_bound_against_y} and \cref{lem:distance_upper}, just as above,  
		\begin{align*}
			s\hspace{-30pt}\int\limits_{\{z\in \Omega \,|\, \abs{x-z}< \abs{y-z}  \}} \frac{d_z^s}{\abs{y-z}^{2s}} \frac{(1+d_x)^d}{\abs{x-z}^d}\di z &\le c_5 \frac{s}{(1+d_y)^s} \int\limits_{\{z\in \Omega \,|\,\abs{x-z}< \abs{y-z}  \}} \frac{(1+d_x)^d}{\abs{x-z}^{d+s}}\di z\\
			&\le c_5 c_6 \frac{1}{d_x^s\,(1+d_y)^s}\le 2\,c_5c_6 \frac{1}{d_y^s\,(1+d_y)^s},
		\end{align*}
		which yields the desired bound in the first case $2\,d_x\ge d_y$, \ie
		\begin{align*}
			s\text{(II)} \le c_4\, \frac{1}{d_y^s\,(1+d_y)^s}.
		\end{align*}
		\textbf{Case ($\sigma\le 2\,d_x<d_y$):} By \eqref{eq:d_z_bound_against_y},
		\begin{align*}
			s\text{(II)}= s\int\limits_{\Omega} \frac{d_z^s}{\abs{y-z}^{2s}} \frac{(1+d_x)^d}{\abs{x-z}^d}\di z&\le s\,c_5 \frac{1}{(1+d_y)^s}\int\limits_{\Omega} \frac{1}{\abs{y-z}^{s}} \frac{(1+d_x)^d}{\abs{x-z}^d}\di z\\
			\le s c_5 \frac{1}{d_y^s\,(1+d_y)^s}  \Big(\frac{1+d_x}{d_x}\Big)^d \,\abs{\Omega}&\le c_5 \Big(\frac{2+\sigma}{\sigma}\Big)^d \frac{1}{d_y^s\,(1+d_y)^s}
		\end{align*}	
		\textbf{Case ($2\,d_x<\sigma\le  d_y$):} 
		\begin{align*}
			s\text{(II)} &= s \int\limits_{\Omega} \frac{d_z^s}{\abs{y-z}^{2s}} \frac{(1+d_x)^d}{\abs{x-z}^d}\di z 
			\le s \frac{(1+\sigma)^{d+1}}{\sigma} \frac{1}{d_y^s\,(1+d_y)^s}\,	\int\limits_{\Omega}  \frac{1}{\abs{x-z}^{d-s}}\di z\\
			&\le s c_8(d,\sigma) \frac{1}{d_y^s\,(1+d_y)^s}\,	\int\limits_{B_{\sigma+\diam{\Omega}}(0)  } \abs{z}^{s-d} \di z= c_8 \omega_{d-1} (\sigma+\diam{\Omega})^s  \frac{1}{d_y^s\,(1+d_y)^s}\\
			&\le c_8 \omega_{d-1} (\sigma+\diam{\Omega}+1)  \frac{1}{d_y^s\,(1+d_y)^s}.
		\end{align*}
		\textbf{Case ($2\,d_x<d_y<\sigma$):} We split the integration domain $\Omega$ into \begin{align*}
			A_{x,y}:= \{z\in \Omega \,|\,\abs{x-z}\le 2\, \abs{y-z}  \}\,\text{ and }\,
			\Omega\setminus A_{x,y}.
		\end{align*} 
		On $\Omega\setminus A_{x,y}$, by \eqref{eq:d_z_bound_against_y} and \cref{lem:distance_upper},
		\begin{align*}
			s\int\limits_{\Omega\setminus A_{x,y}} \frac{d_z^s}{\abs{y-z}^{2s}} \frac{(1+d_x)^d}{\abs{x-z}^d} \di z &\le s c_5  \frac{(1+\sigma/2)^d}{2^d(1+d_y)^s} \int\limits_{\Omega\setminus A_{x,y}} \frac{1}{\abs{y-z}^{d+s}}  \di z\le c_5 c_6 \tfrac{(1+\sigma/2)^d}{2^d}\,\frac{1}{d_y^s \, (1+d_y)^s}.
		\end{align*} 
		For the remaining integration domain $A_{x,y}$ of (II) we first make some observations. Fix a point $\overline{x}\in \partial \Omega$ which minimizes the distance of $x$ to the boundary of $\Omega$, e.g. $d_x= \abs{x-\overline{x}}$. By the uniform exterior ball condition, there exists an exterior ball those closure intersects which $\partial \Omega$ only in $\overline{x}$, e.g. $B_{\rho}(x_0)$ where $x_0 = \overline{x}+\rho\, n_{\overline{x}}$ and $n_{\overline{x}}$ is the outer normal vector at $\overline{x}\in \partial \Omega$. We define $\overline{y}\in \Omega^c$ by 
		\begin{equation*}
			\overline{y}= \overline{x}+d_y\, n_{\overline{x}} = x+ (d_y-d_x)\, n_{\overline{x}}.
		\end{equation*}
		Notice that $\overline{x}, x, \overline{y}$ and $x_0$ are colinear. Additionally, $ x, \overline{y}\in B_\rho(x_0)$ and $d_{\overline{y}}= d_y$. For any $z\in \partial B_{\rho}(x_0)$ 
		\begin{equation} \label{eq:outerballcondition_too_estimate_II}
			\abs{x-z}\le \abs{x-\overline{y}} + \abs{\overline{y}-z}= d_y-d_x+\abs{\overline{y}-z}\le d_y + \abs{\overline{y}-z}\le 2\,\abs{\overline{y}-z}.
		\end{equation}
		By uniform exterior ball condition, the same estimate also holds for any $z\in \Omega$. Additionally, for any $z\in A_{x,y}$ 
		\begin{equation}\label{eq:z_in_A_x,y}
			\abs{\overline{y}-z} = \abs{x+(d_y-d_x)\, n_{\overline{x}  } -z } \le \abs{x-z} +d_y-d_x \le 2\, \abs{y-z}+d_y\le 3\, \abs{y-z}.
		\end{equation}
		Finally, $\abs{\overline{y}-z} \ge d_y\ge d_y-d_x$ for any $z\in \Omega$ and, by \eqref{eq:outerballcondition_too_estimate_II},
		\begin{equation}\label{eq:last_estimate_for_II}
			3\,\abs{\overline{y}-z}\ge \abs{x-z} + d_y-d_x.
		\end{equation}
		By \eqref{eq:z_in_A_x,y} and \eqref{eq:last_estimate_for_II},
		\begin{align}\label{eq:upper_bound}
			s\int\limits_{A_{x,y}} \frac{d_z^s}{\abs{y-z}^{2\,s}} \frac{(1+d_x)^d}{\abs{x-z}^d} \di z &\le s\frac{(1+\sigma/2)^d }{3^{2\,s}} \int\limits_{A_{x,y}} \frac{d_z^s}{\abs{\overline{y}-z}^{2\,s}} \frac{1}{\abs{x-z}^d} \di z\nonumber\\
			&\le s c_9(d,\Omega)	\int\limits_{\Omega} \frac{1}{\big(  \abs{x-z} + d_y-d_x\big)^{2\,s}} \frac{1}{\abs{x-z}^{d-s}} \di z.
		\end{align}
		We want to use polar coordinates to estimate the right-hand side of \eqref{eq:upper_bound}. But first, 
		\begin{align*}
			\int\limits_{d_x}^{d_x+\diam{\Omega}} \frac{t^{s-1}}{(t+d_y-d_x)^{2\,s}} \di t &= \frac{1}{s}\left( \frac{(d_x+\diam{\Omega})^s}{(d_y+\diam{\Omega})^{2\,s}} - \frac{d_x^s}{d_y^{2\,s}}   \right) +2\, \int\limits_{d_x}^{d_x+\diam{\Omega}} \frac{t^s}{(t+d_y-d_x)^{2\,s+1}} \di t\\
			&\le \frac{(\sigma/2+\diam{\Omega})^s}{s\,\diam{\Omega}^{2\,s}} +2\, \int\limits_{d_x}^{d_x+\diam{\Omega}} \frac{1}{(t+d_y-d_x)^{s+1}} \di t\\
			&\le \frac{(\sigma/2+\diam{\Omega})^s}{s\,\diam{\Omega}^{2\,s}} +\frac{2}{s} \frac{1}{d_y^s}\le \frac{4}{s}\frac{1}{d_y^s} \le \frac{4}{s}\frac{(1+\diam{\Omega})^s}{d_y^s (1+ d_y)^s}.
		\end{align*}
		The last inequality is due to the choice of $2\sigma\le \diam{\Omega}$ and by $d_y\le \sigma$. Since $x\in \Omega^c$, $\Omega \subset B_{\diam{\Omega}+d_x}(x)\setminus B_{d_x}(x)$. This calculation together with \eqref{eq:upper_bound} and polar coordinates yields
		\begin{align*}
			s\int\limits_{A_{x,y}} \frac{d_z^s}{\abs{y-z}^{2\,s}} \frac{(1+d_x)^d}{\abs{x-z}^d} \di z \le s \, c_9	\int\limits_{\Omega} \frac{1}{\big(  \abs{x-z} + d_y-d_x\big)^{2\,s}} \frac{1}{\abs{x-z}^{d-s}} \di z\\
			\le s \, c_9\, \omega_{d-1} \int\limits_{d_x}^{d_x+\diam{\Omega}} \frac{t^{s-1}}{(t+d_y-d_x)^{2\,s}} \di t\le c_{10}(d,\Omega) \frac{1}{d_y^s (1+d_y)^s}.
		\end{align*}
		Thus, the last case is finished. Hence, in all cases 
		\begin{align*}
			s\text{(II)} \le c_{11}(d,\Omega) \frac{1}{d_y^s (1+ d_y)^s}.
		\end{align*}
		Combining the estimates on (I) and (II) yield for $A$ from \eqref{eq:first_estimate_upper_bound}
		\begin{equation*}
			A\le sc_3 \frac{\text{(I)}+ \text{(II)}}{(\abs{x-y}+d_xd_y + d_x + d_y)^d}\le c_2(d,\Omega) \frac{1}{ d_y^s (1+ d_y)^s (\abs{x-y}+ d_x\,d_y+d_x+d_y)^d }.
		\end{equation*} 
		which proves \eqref{eq:first_estimate_upper_bound} and finishes the proof.  
	\end{proof}
	\begin{proposition}\label{prop:lowerbound}
		There exists a constant $C=C(d,\Omega)>0$ such that 
		\begin{equation*}
			s^2\,C\,  k_s(x,y)\le k_s^{\star}(x,y)
		\end{equation*}
		for any $x,y\in \overline{\Omega}^{\,c}$.
	\end{proposition}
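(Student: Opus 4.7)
The first step is to reduce to an integral lower bound via the Poisson estimate. Substituting the lower bound of \cref{th:chen99_poissonestimate} into the definition of $k_s^\star$ and using $\kappa_{d,s}\asymp s(1-s)$ from \cref{prop:asymptotics_constant} yields
\begin{equation*}
k_s^\star(x,y)\gtrsim s^2(1-s)^2\,\frac{I(x,y)}{d_x^s(1+d_x)^s},\qquad I(x,y):=\int_\Omega\frac{d_z^s}{|y-z|^{d+2s}|x-z|^d}\di z.
\end{equation*}
Comparing with the explicit form \eqref{eq:our_trace_kernel} of $k_s$, the proposition reduces to the $s$-uniform pointwise bound
\begin{equation*}
I(x,y)\ge c\, d_y^{-s}(1+d_y)^{-s}\,\big(|x-y|+d_xd_y+d_x+d_y\big)^{-d},
\end{equation*}
with a constant $c=c(d,\Omega)>0$. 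Moreover, $k_s^\star$ is in fact symmetric in $(x,y)$: combining the representation $P_{s,\Omega}(z,x)=\kappa_{d,s}\int_\Omega G_\Omega(z,u)|u-x|^{-d-2s}\di u$ with the symmetry of the Dirichlet Green function $G_\Omega$ and Fubini's theorem gives $k_s^\star(x,y)=k_s^\star(y,x)$. We may therefore assume $d_y\le d_x$.

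The key geometric input is the uniform interior ball condition, with radius $\rho>0$. Let $\bar y\in\partial\Omega$ realize $d_y=|y-\bar y|$ and set $y_0:=\bar y-\rho n_{\bar y}$. Fix a solid cone $\mathcal C$ with apex $\bar y$, axis $-n_{\bar y}$ and dimension-dependent aperture such that $\mathcal C\cap B_\rho(\bar y)\subset B_\rho(y_0)\subset\Omega$; on $\mathcal C\cap B_\rho(\bar y)$ one has $d_z\gtrsim|z-\bar y|$, and for $0<T\le\rho/2$ the shell $A_T:=\mathcal C\cap(B_{2T}(\bar y)\setminus B_T(\bar y))$ has volume $\asymp T^d$, $d_z\asymp T$ and $|y-z|\asymp d_y+T$ on it. Write $R:=|x-y|+d_xd_y+d_x+d_y$. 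In the regime $d_y\le\rho/2$ we choose $T=d_y$; then $|x-z|\le|x-\bar y|+|\bar y-z|\le|x-y|+3d_y\le 3R$ on $A_T$ (using $R\ge|x-y|+d_y$), so
\begin{equation*}
I(x,y)\gtrsim|A_T|\cdot\frac{T^s}{(d_y+T)^{d+2s}R^d}\asymp d_y^{-s}R^{-d}.
\end{equation*}
Since $(1+d_y)^s\asymp 1$ in this regime, this matches the target.

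In the complementary case $d_y>\rho/2$, the reduction $d_y\le d_x$ gives $d_x\ge\rho/2$ as well. Fix any ball $K:=B_{\rho/4}(z_0)\subset\Omega$ centered at a point $z_0$ with $\dist{z_0}{\partial\Omega}\ge\rho/2$; on $K$ one has $d_z\asymp 1$, $|y-z|\asymp 1+d_y$ and $d_x\le|x-z|\le d_x+\diam{\Omega}\asymp d_x$, so $I(x,y)\gtrsim d_y^{-d-2s}d_x^{-d}$. Since $d_x,d_y\ge\rho/2$, $(1+d_x)^s(1+d_y)^s\asymp d_x^sd_y^s$ and $|x-y|\le d_x+d_y+\diam{\Omega}\lesssim d_xd_y$, hence $R\asymp d_xd_y$ and $I(x,y)\gtrsim d_y^{-d-2s}d_x^{-d}\asymp d_y^{-s}(1+d_y)^{-s}R^{-d}$, closing the case. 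The main obstacle is bookkeeping across the transitions $d_x,d_y\lessgtr 1$ and $|x-y|\lessgtr d_y$ with constants independent of $s$; as in \cref{prop:upperbound}, the delicate point is matching the localization radius (which switches from $d_y$ to a fixed constant at $d_y\asymp \rho$) so that in every regime the trivial upper bound on $|x-z|$ in terms of $|x-y|$, $d_y$ and $T$ controls the denominator $R^d$ in the target.
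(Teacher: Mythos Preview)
Your proof is correct and follows essentially the same route as the paper: reduce via the Poisson kernel estimate to the integral lower bound \eqref{eq:lowerbound_ts}, handle $d_y$ bounded away from zero with a fixed interior ball, and treat small $d_y$ via an interior cone with a shell at radius $\asymp d_y$ (the paper uses the coarea formula and then restricts the radial integral to $t\in(d_y/4,d_y)$, which is the same idea). Your use of the symmetry $k_s^\star(x,y)=k_s^\star(y,x)$ to assume $d_y\le d_x$ is a valid simplification not in the paper, but it does not change the underlying argument.
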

	\begin{proof}
		Let $\rho>0$ be the uniform inner and outer ball radius of the $C^{1,1}$-domain $\Omega$. Fix $x,y\in \overline{\Omega}^{\,c}$. Without loss of generality $\rho\le 1$. Similar to the proof of \cref{prop:upperbound}, the proof reduces to proving the following. We need to show that there exists a constant $c_1=c_1(d,\Omega)>0$ such that
		\begin{equation}\label{eq:lowerbound_ts}
			\int\limits_{\Omega} \frac{d_z^s}{\abs{y-z}^{d+2s}} \frac{\di z}{\abs{x-z}^d} \ge c_1 \frac{1}{d_y^s\, (1+d_y)^s} \frac{1}{(\abs{x-y}+ d_x\,d_y+d_x+d_y)^d}.
		\end{equation}	
		We distinguish two cases to handle the integral on the left-hand side of \eqref{eq:lowerbound_ts}.
		
		\textbf{Case ($d_y\ge \rho/4$): } We fix a ball $B_{\rho/2}\subset \subset\Omega$ with $\dist{B_{\rho/2}}{\partial \Omega}= \rho/2$. A small calculation yields
		\begin{align*}
			\int\limits_{\Omega} \frac{d_z^s}{\abs{y-z}^{d+2s}} \frac{\di z}{\abs{x-z}^d}&\ge \big(\rho/2\big)^s\int\limits_{B_{\rho/2}} \frac{1}{\abs{y-z}^{d+2s}} \frac{1}{\abs{x-z}^d}\di z\ge \frac{\rho\, \abs{B_{\rho/2}}}{ 2} \frac{2^d}{7^{d+2s} 3^d} \frac{1}{d_y^{2s}\,((1+d_x)d_y)^d}\\
			&\ge  \frac{\rho\, \abs{B_{\rho/2}}}{2} \frac{2^d}{7^{d+2} 3^d} \frac{1}{d_y^s\, (1+d_y)^s} \frac{1}{(\abs{x-y}+ d_x\,d_y+d_x+d_y)^d}.
		\end{align*}
	
		\textbf{Case ($d_y<\rho/4$): }We fix the point $\tilde{y}\in \partial \Omega$ that minimizes the distance of $y$ to the boundary of $\Omega$, \ie such that $\abs{y-\tilde{y}}= d_y$. By uniform interior ball condition, we fix an interior ball $B_\rho(\overline{y})\subset \Omega$ such that $\overline{B_{\rho}(\overline{y})}\cap\partial \Omega=\{\tilde{y}\}$. Now we pick an open Cone $\CC\subset B_{\rho}(\overline{y})$ with apex at $\tilde{y}$, \ie $\overline{\CC}\cap \partial B_{\rho}(\overline{y})= \{ \tilde{y} \}$, and such that $\dist{z}{\partial B_{\rho}(\overline{y})}\ge \frac{\abs{z-\tilde{y}}}{16}$ for any $z\in \CC$, e.g. 
		\begin{equation*}
			\CC=\{  \tilde{y}+t\,(\overline{y}-\tilde{y})+\xi_t\,|\, t\in (0,\rho/2),\, \xi_t\perp (\overline{y}-\tilde{y}), \,\abs{\xi_t}<t/8 \},
		\end{equation*} 
		see \cref{fig:cone}. Notice that the height and the angle at the apex of this cone $\CC$ does not depend on $y$ by the interior ball condition.
		\begin{figure}[!ht]
			\begin{center}
				\begin{tikzpicture}
					\draw (-4,-1) .. controls (0,-1).. (2,-3) node[below left]{$\Omega$};
					\filldraw[black] (0.18,-1) circle (1pt) node[anchor=west]{$y$};
					\filldraw[black] (0,-1.35) circle (1pt) node[anchor=west]{$\tilde{y}$};
					\draw[red] (-0.62,-2.62) circle (1.4 cm) node[below left]{$B_\rho(\overline{y})$};
					\filldraw[black] (-0.62,-2.62) circle (1pt) node[anchor=west]{$\overline{y}$};
					\filldraw[blue] (-0.7,-2.25) -- (0,-1.35) -- (-0.1,-2.5) -- cycle node[left]{$C$};
				\end{tikzpicture}
			\end{center}
			\caption{The Cone $\CC$}
			\label{fig:cone}
		\end{figure} 
	
		Then obviously also $\dist{z}{\partial \Omega}\ge \frac{\abs{z-\tilde{y}}}{16}$ for any $z\in \CC$ holds. We estimate the left-hand side of \eqref{eq:lowerbound_ts} by
		\begin{align*}
			&\int\limits_{\Omega} \frac{d_z^s}{\abs{y-z}^{d+2s}} \frac{\di z}{\abs{x-z}^d}\ge \Big(\frac{1}{16}\Big)^s	\int\limits_{\CC} \frac{\abs{z-\tilde{y}}^s  }{\abs{y-z}^{d+2s}} \frac{\di z}{\abs{x-z}^d}\\
			& \ge \frac{1}{16}\int\limits_{\CC} \frac{\abs{z-\tilde{y}}^s  }{\big( \abs{y-\tilde{y}} + \abs{z-\tilde{y}} \big)^{d+2s}} \frac{1}{\big( \abs{x-y} + \abs{y-\tilde{y}} + \abs{z-\tilde{y}} )^d}\di z=: \text{(I)}.
		\end{align*}
		Now we apply the coarea formula to the integral (I) with the function $z \mapsto  \abs{z-\tilde{y}}$, see \cref{th:coarea}. Notice that the modulus of the gradient of this function is $\abs{\nabla_z \abs{z-\tilde{y}}}=1$. 
		\begin{equation*}
			\text{(I)}\ge \frac{1}{16}\int\limits_{0}^{\rho/3}\int\limits_{\{ z\in \CC\,|\, \abs{z-\tilde{y}}=t \}} \frac{t^s  }{\big( d_y + t \big)^{d+2s}} \frac{1}{\big( \abs{x-y} + d_y + t \big)^d}\di \SH(z)\di t.
		\end{equation*}
		The $(d-1)$-dimensional Hausdorff measure of a ball $B_r$ intersecting a hyperplane scales like $r^{d-1}$. Thus, there exists a constant $c_2(\rho)>0$ such that
		\begin{equation*}
			\SH \big(  \{ z\in \CC\,|\, \abs{z-\tilde{y}}=t \} \big)\ge c_2(\rho) t^{d-1}
		\end{equation*} for any $0<t<\rho/3$, see \eg \cite[Lemma A.4 (A.19)]{RosOton_Vladinoci_appendix}. Thus, we estimate (I) further by
		\begin{align*}
			\text{(I)}&\ge \frac{c_2}{16} \int\limits_{0}^{\rho/3} \frac{t^{d+s-1}  }{\big( d_y + t \big)^{d+2s}} \frac{1}{\big( \abs{x-y} + d_y + t \big)^d}\di t \ge \frac{c_2}{16} \int\limits_{d_y/4}^{d_y} \frac{t^{d+s-1}  }{\big( d_y + t \big)^{d+2s}} \frac{1}{\big( \abs{x-y} + d_y + t \big)^d}\di t \\
			&\ge \frac{c_2}{16} \frac{3\,d_y}{4}\, \frac{\big( d_y/4 \big)^{d+s-1}  }{\big( 2\,d_y \big)^{d+2s}} \frac{1}{\big( \abs{x-y} + 2\,d_y  \big)^d}\ge 3\,c_2 2^{-4d-9}\,\frac{1}{d_y^s\,(1+d_y)^s} \,\frac{1}{\big( \abs{x-y} +d_x\,d_y+ d_y +d_x  \big)^d}.
		\end{align*}
		Therefore, \eqref{eq:lowerbound_ts} is proven in all cases.
	\end{proof}
	\textbf{Proof of \cref{th:equivalence}:} Combine \cref{cor:L2partequivalence}, \cref{prop:upperbound} and \cref{prop:lowerbound}. \qed
	\subsection{Trace and extension operators}\label{sec:trace_and_extension}
	In this section we prove the existence of a trace and extension operator for the Sobolev-Slobodeckij-type space $V^s(\Omega \, | \, \BR^d)$ with respect to the complement $\Omega^c$. In contrast to the classical trace operator $\trc : H^1(\Omega)\to H^{1/2}(\partial\Omega)$ the construction of the nonlocal trace is simply the restriction of a function $u\in V^s(\Omega\, |\, \BR^d)$ to the complement of a domain $\Omega$, \ie $u|_{\Omega^c}$. This is due to $\Omega^c$ being $d$-dimensional. Thereby, the proof of \cref{th:trace_and_extension} $(1)$ is rather straight forward. We only need to show the continuity of the trace embedding, \ie $\norm{u|_{\Omega^c}}_{\ST^s(\Omega^c)}\le C/s \norm{u}_{V^s(\Omega\,|\, \BR^d)}$.\medskip
	
	We pay particular attention to the independence of the constant $C=C(d,\Omega)$ on $s$. The advantage of this robust estimate will be highlighted in the forthcoming \cref{sec:convergence} where we consider the limit case $s\to 1-$. In particular, \cref{th:trace_and_extension}(1) yields the classical trace inequality, \ie $\norm{\trc  u}_{H^{1/2}(\partial \Omega)}\le C\, \norm{u}_{H^1(\Omega)}$, in the limit $s\to 1-$. In addition, this robustness allows us to consider a large class of Neumann data in \cref{sec:neumann}.\medskip 
	
	In view of \cref{sec:equiv}, the results in \cite{Bogdan_trace}, obtained with stochastic methods, play a key role in our considerations. In particular, they showed that $u|_{\Omega^c}\in \CX^s(\Omega^c)$ for any $u\in V^s(\Omega\, |\, \BR^d)$ such that the trace operator $\trn : V^s(\Omega\, |\, \BR^d)\to \CX^s(\Omega^c)$ is well defined. Additionally, the estimate $[u|_{\Omega^c}, u|_{\Omega^c}]_{\CX^s(\Omega^c)}\le [u,u]_{V^s(\Omega\,|\, \BR^d)}$ from \cref{th:douglas} (ii), see \cite[Theorem 2.3]{Bogdan_trace}, together with the comparison of $k_s^{\star}$ and $k_s$ from \cref{th:equivalence}, see also \cref{prop:lowerbound} and \cref{prop:upperbound}, are crucial to our proof of the continuity of the trace operator $\trn: V^s(\Omega\,|\, \BR^d)\to \ST^s(\Omega^c)$. We remark that the continuity of $\trn: V^s(\Omega\,|\, \BR^d)\to \CX^s(\Omega^c)$ as a linear map between normed spaces has not been proven in \cite{Bogdan_trace}. In particular, the estimate $\abs{u|_{\Omega^c}}_{\CX^s(\Omega^c)}\le C \norm{u}_{V^s(\Omega\, |\, \BR^d)}$ has not been proven.\medskip 
	
	The extension operator $\extn : \ST^s(\Omega^c)\to V^s(\Omega\,|\, \BR^d)$ will be the Poisson extension $\SP_{s,\Omega}$, defined in \eqref{eq:poisson_extension}. Bogdan et al. have proven that $\SP_{s,\Omega}:\CX^s(\Omega^c)\to V^s(\Omega\,|\, \BR^d)$ is a well defined map. Additionally, they proved a Douglas identity, \ie $[\SP_{s,\Omega}g, \SP_{s,\Omega}g]_{V^s(\Omega\, |\, \BR^d)}= [g,g]_{\CX^s(\Omega^c)}$, see \cite[Theorem 2.3]{Bogdan_trace} and \cref{th:douglas} (i). By this Douglas identity and the equivalence results from the previous section, see \cref{th:equivalence}, the proof of \cref{th:trace_and_extension} (2) boils down to show a robust estimate on the weighted $L^2$-norm of the Poisson extension, \ie $\norm{\SP_{s,\Omega}g}_{L^2(\Omega)}\le C(d,\Omega) \norm{g}_{L^2(\Omega^c, \tau_s)}$. \medskip
	
	The following proposition is a key ingredient in the proof of the robust trace continuity. It is also interesting on its own. In sight of the convergence results in \cref{sec:convergence}, the inequality \eqref{eq:L2_trace_robust} is a robust approximation of the classical trace inequality $\norm{\trc  u}_{L^2(\partial \Omega)}\le C\, \norm{u}_{H^1(\Omega)}$. We split the proof in two cases. In case of a small parameter $s$ we use the fractional Hardy inequality, which has been proven in \cite{chen_hardy_song} by Chen and Song, in \cite{hardy_inequality} by Dyda and in \cite{brasco_cinti_hardy} by Brasco and Cinti. In case of big parameter $s<1$ we apply similar arguments as in the proof of the classical trace theorem $H^s(\Omega)\to L^2(\partial \Omega)$. 
	\begin{proposition}\label{prop:robust_trace_hardy_type}
		Let $\Omega$ be a bounded Lipschitz domain and $s_\star\in(0,1/2)$. There exists a constant $C=C(d,\Omega, s_\star)>0$ such that for any $s\in (s_\star,1)$ and $u \in H^s(\Omega)$ 
		\begin{equation}\label{eq:L2_trace_robust}
			(1-s)\int\limits_{ \Omega} \frac{u(x)^2}{d_x^s}\di x \le C \Big( \norm{u}_{L^2(\Omega)}^2+ (1-s)\iint\limits_{\Omega\times \Omega} \frac{\abs{u(x)-u(y)}^2}{\abs{x-y}^{d+2s}}\di y \di x  \Big).
		\end{equation}
	\end{proposition}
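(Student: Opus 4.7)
The plan is to split the range $s \in (s_\star, 1)$ into two regimes at a threshold $s_1 \in (1/2, 1)$ and apply different methods on each.

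\textbf{Small $s$, $s \in (s_\star, s_1]$.} Here the prefactor $(1-s) \ge 1 - s_1 > 0$ is bounded below, so it suffices to prove the $s$-uniform estimate
\begin{equation*}
\int_\Omega \frac{u(x)^2}{d_x^s}\,dx \le C\big(\|u\|_{L^2(\Omega)}^2 + [u,u]_{H^s(\Omega)}\big).
\end{equation*}
I would split $\Omega$ along $\{d_x \le 1\}$ and $\{d_x > 1\}$: the contribution of $\{d_x > 1\}$ is at most $\|u\|_{L^2(\Omega)}^2$ (since $1/d_x^s \le 1$ there), while on $\{d_x \le 1\}$ the pointwise bound $1/d_x^s \le 1/d_x^{2s}$ reduces the estimate to the fractional Hardy inequality $\int_\Omega u^2/d_x^{2s}\,dx \le C\big([u,u]_{H^s(\Omega)} + \|u\|_{L^2(\Omega)}^2\big)$ of Dyda, Chen--Song and Brasco--Cinti. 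The Hardy constant depends continuously on $s$ away from the removable singularity at $s = 1/2$, so either by choosing $s_1$ appropriately or by a short separate argument near $s = 1/2$, the bound holds uniformly on $[s_\star, s_1]$.

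\textbf{Large $s$, $s \in (s_1, 1)$.} I would use a finite partition of unity subordinate to boundary charts together with bi-Lipschitz flattening of $\partial\Omega$ to reduce the estimate to the model half-space problem
\begin{equation*}
(1-s)\int_{\BR^d_+} \frac{u(x)^2}{x_d^s}\,dx \le C\Big(\|u\|_{L^2(\BR^d_+)}^2 + (1-s)[u,u]_{H^s(\BR^d_+)}\Big).
\end{equation*}
Fixing $x' \in \BR^{d-1}$ and writing $v(t) = u(x', t)$, I would decompose $v(t) = \bar v + (v(t) - \bar v)$ with $\bar v = 2\int_{1/2}^{1} v(\tau)\,d\tau$. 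The contribution of the constant part integrates against $t^{-s}$ to yield $(1-s)\bar v^2 \int_0^1 t^{-s}\,dt = \bar v^2 \le 2\|v\|_{L^2(1/2,1)}^2$, which after integration in $x'$ is absorbed in $\|u\|_{L^2(\BR^d_+)}^2$. The fluctuation $v(t) - \bar v$ is treated by a uniform-in-$s$ fractional Poincaré/Hardy estimate on $(0,1)$ that compares $\int_0^1 (v(t) - \bar v)^2/t^s\,dt$ with $(1-s)[v,v]_{H^s(0,1)}$, mirroring the argument used in the classical trace theorem $H^s(\Omega) \to L^2(\partial\Omega)$ but with the additional weight $1/t^s$.

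\textbf{Main obstacle.} The hard part is the large-$s$ case. Both the weight $1/t^s$ and the Gagliardo seminorm $[v,v]_{H^s}$ carry $1/(1-s)$-type divergences in the natural scalings, and the factor $(1-s)$ on the left must cancel them exactly so that the final constant depends only on $d,\Omega,s_\star$. This is the same scaling that underlies the Bourgain--Brezis--Mironescu limit $(1-s)[u,u]_{H^s(\Omega)} \to c\,\|\nabla u\|_{L^2(\Omega)}^2$ as $s \to 1-$, but realising it at the level of inequalities rather than limits requires a careful dyadic decomposition of $(0,1)$ near the boundary, or equivalently a weighted fractional Poincaré inequality on intervals with robust constant.
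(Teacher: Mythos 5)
Your small-$s$ argument has a genuine gap. The pointwise bound $d_x^{-s}\le d_x^{-2s}$ on $\{d_x\le 1\}$ reduces the claim to the fractional Hardy inequality at exponent $2s$, namely $\int_\Omega u^2\,d_x^{-2s}\,dx\le C\big([u]_{H^s(\Omega)}^2+\|u\|_{L^2(\Omega)}^2\big)$. This inequality fails outright for every $s\ge 1/2$: taking $u\equiv 1$ gives a finite right-hand side while $\int_\Omega d_x^{-2s}\,dx=\infty$ on a bounded Lipschitz domain whenever $2s\ge 1$. So the blow-up at $s=1/2$ is not a removable singularity of the constant; the inequality genuinely breaks down, and since you hand off to the large-$s$ argument at a threshold $s_1>1/2$, your small-$s$ regime cannot reach $s_1$. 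The paper sidesteps this by exploiting that $d_x^{-s}$ is the Hardy weight for fractional order $s/2$: it applies the Hardy inequality at order $s/2\in(s_\star/2,s^\star/2)\subset(0,1/2)$ (where $C_c^\infty(\Omega)$ is dense in $H^{s/2}(\Omega)$ and the constant is uniform on compacts of $(0,1/2)$), and then upgrades $[u]_{H^{s/2}(\Omega)}^2$ to $\diam{\Omega}^s\,[u]_{H^s(\Omega)}^2$ via $|x-y|^{-d-s}\le\diam{\Omega}^s|x-y|^{-d-2s}$. That substitution is the missing idea in your proposal.

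Your large-$s$ sketch (localize, flatten, split $v$ into mean plus fluctuation, then invoke a weighted fractional Poincar\'e on $(0,1)$ with $s$-robust constant) is a plausible but genuinely different route from the paper's, and you leave precisely the robust-constant Poincar\'e step open. The paper instead runs the classical Fourier-side trace argument: it bounds $|\SF_{x'}f(\xi',t)|^2$ by Cauchy--Schwarz against $(1+|\xi|^2)^s|\SF f(\xi)|^2$ and the $\xi_d$-integral $\int_{\BR}(1+|\xi'|^2+\xi_d^2)^{-s}\,d\xi_d\le \tfrac{2^{1+s}}{2s-1}$, which is uniformly bounded for $s\ge s^\star>1/2$, and then uses the exact identity $\int_0^1\tfrac{1-s}{t^s}\,dt=1$ so that the $(1-s)$ prefactor on the left cancels the divergence of the weight without any dyadic analysis. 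Both routes localize by the same partition-of-unity/bi-Lipschitz-flattening device, but the Fourier computation makes the $(1-s)$-cancellation explicit and elementary, whereas the weighted Poincar\'e inequality you would need still has to be established. If you pursue your route, the substantive content is exactly that inequality; if you want a complete proof now, the Fourier argument is the shorter path.
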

	\begin{proof}
		Fix $s^\star\in(1/2,1)$. We will distinguish the cases $s\in(s_\star, s^\star)$ and $s\in[s^\star,1)$.
		
		\textbf{Case 1: } Let $s\in(s_\star, s^\star)$. By fractional Hardy inequality, see \cite[Theorem 1.1, (17)]{hardy_inequality}, \cite[Theorem 2.3]{chen_hardy_song}, there exists a constant $c_1=c_1(d,\Omega,s_\star/2, s^\star/2 )>0$ such that for any $f\in C_c(\Omega)$ 
		\begin{equation*}
			\int\limits_{\Omega} \frac{\abs{f(z)}^2}{d_x^{2(s/2)}}\le c_1 \Big( \iint\limits_{\Omega\times \Omega} \frac{\abs{f(x)-f(y)}^2}{\abs{x-y}^{d+2(s/2)}}\di x \di y + \norm{f}_{L^2(\Omega)}^2\Big).
		\end{equation*}
		Since $s/2<s^\star/2<1/2$, $C_c^\infty(\Omega)$ is dense in $H^{s/2}(\Omega)$, see \cite[Theorem 1.4.2.4]{grisvard}, \cite[Theorem 3.4.3]{triebel_1}, the inequality holds for all functions in $H^{s/2}(\Omega)\supset H^s(\Omega)$. Thus, for any $u\in H^s(\Omega)$
		\begin{align*}
			(1-s)\int\limits_{\Omega} \frac{\abs{u(z)}^2}{d_x^{s}}&\le (1-s) \, c_1\Big( \iint\limits_{\Omega\times \Omega} \frac{\abs{u(x)-u(y)}^2}{\abs{x-y}^{d+s}}\di x \di y + \norm{u}_{L^2(\Omega)}^2\Big)\\
			&\le c_1\Big( \diam{\Omega}^{s} (1-s) \iint\limits_{\Omega\times \Omega} \frac{\abs{u(x)-u(y)}^2}{\abs{x-y}^{d+2s}}\di x \di y + \norm{u}_{L^2(\Omega)}^2\Big).
		\end{align*}
	
		\textbf{Case 2: } Let $s\in[s^\star,1)$. In case 1 we used Hardy inequality and the density of compactly supported smooth functions in $H^{r/2}(\Omega)$, $r/2<1/2$ to prove our claim. This argument is not robust as $s$ approaches $1$. The benefit of $s\ge s^\star>1/2$ is the existence of a continuous trace onto $\partial \Omega$. We apply arguments similar to \cite[Proposition 3.8]{hitchhiker}. Since $\partial \Omega$ is Lipschitz and compact, by \cite[Main theorem S. 146]{Extension_Jonsson_Wallin} or \cite[Theorem 5.4]{hitchhiker}, there exists a continuous extension operator $\tilde{E}_s: H^s(\Omega)\to H^s(\BR^d)$. By following the constants in the proof of \cite[Theorem 5.4, Lemma 5.1, 5.2, 5.3]{hitchhiker} it is clear that we can choose a constant $c_{\ge s^\star}\ge 1$ depending only on $d,\Omega$ and $s^\star$ such that \begin{equation}\label{eq:Hs_extension}
			\norm{\tilde{E}_{s'}f}_{L^2(\BR^d)} + (1-s')[\tilde{E}_{s'} f]_{H^{s'}(\BR^d)}\le c_{\ge s^\star} \Big( \norm{f}_{L^2(\Omega)} + (1-s')[f]_{H^{s'}(\Omega)} \Big)
		\end{equation} for all $f\in H^{s'}(\Omega)$ and $s'\in[s^\star,1)$.\medskip 
		
		Now, we localize the problem. Because $\Omega$ is a bounded Lipschitz domain, we find finitely many cubes $Q_{r}(z_i)= z_i + (-r,r)^d$, $z_i\in \partial \Omega$, $0<r<1$ such that $\partial \Omega \subset \bigcup_{i=1}^N Q_{r/2}(z_i)$, $N\in \BN$. Additionally, we fix bijective, bi-Lipschitz continuous maps $\phi_i: Q_r(z_i)\to Q_1=(-1,1)^d$ such that $\phi_i\big(\Omega\cap Q_r(z_i)\big)= \{ (x',x_d)\in Q_1\,|\, x_d>0 \}=:Q_1^+$. Since $\partial \Omega\subset \bigcup_{i=1}^N Q_{r/2}(z_i)$ is compactly embedded in an open set, there exists $r_0>0$ such that $R_0:= \{ x\in \Omega\,|\, \dist{x}{\partial \Omega}>r_0 \}$ satisfies $R_0 \cup \bigcup_{i=1}^N \big(Q_{r/2}(z_i)\cap \Omega\big)= \Omega$. Also, we fix a partition of unity $\eta_i\in C_c^\infty(Q_{r/2}(z_i))$,$0\le \eta_i\le 1$, $i=1,\dots, N$, $\eta_0 \in C_c^\infty(R_0)$ such that $\sum_{i=0}^N \eta_i=1$ in $\Omega\cup \Omega_{\tilde{r}}$ for some $0<\tilde{r}<r/2$. By our geometric considerations above, 
		\begin{align*}
			&(1-s)\int\limits_\Omega \frac{u(x)^2}{d_x^s} \di x =(1-s)\int\limits_{R_0} \eta_0(x)\frac{u(x)^2}{d_x^s} \di x\\
			&\qquad\qquad+  \sum_{i=1}^{N}(1-s)\int\limits_{\phi_i\big(\Omega\cap Q_{r/2}(z_i)\big)} \eta_i(\phi_i^{-1}(y))\frac{u(\phi_i^{-1}(y))^2}{\dist{\phi_i^{-1}(y)}{\partial \Omega}^s} \abs{\det D\phi_i^{-1}(y)}\di y.
		\end{align*}
		We split the remainder of the proof of case 2 into the case of $R_0$ and the boundary cases.
		
		\textbf{Case $R_0$: } Let $s\in[s^\star,1)$ and $u\in H^s(\Omega)$. 
		\begin{equation*}
			(1-s)\int\limits_{R_0}\eta_0(x) \frac{u(x)^2}{d_x^s}\di x \le r_0^{-s}\norm{u}_{L^2(R_0)}^2.
		\end{equation*}
	
		\textbf{The boundary cases: }Now, we prove the inequality integrating over $Q_r(z_i)\cap \Omega$. Since $\phi_i$ is bijective and bi-Lipschitz, it is differentiable \aev and there exists a constant $\lambda_i>1$ such that  $\lambda_i^{-1}\le \abs{\det D\phi_i(x)}\le \lambda_i$ for almost every $x\in Q_r(z_i)$. We extend $\phi_i$ to a map on $\BR^d$ via
		\begin{equation*}
			\phi_i(x):= \frac{x-z_i}{r}, x\notin Q_r(z_i).
		\end{equation*}
		Notice that $\phi_i(x)\notin Q_1$ for $x\notin Q_r(z_i)$.\medskip 
		
		We begin by proving the statement for the half space and localize thereafter. Let $f\in \CS(\BR^d)$ be a Schwartz function. We use the convention $x=(x',x_d)\in \BR^d$. By $\SF f$ (resp. $\SF_{x'}$) we denote the Fourier-transformation of $f$ (resp. in the first $d-1$ variables). Similar to the arguments in the proof of \cite[Proposition 3.8]{hitchhiker}
		\begin{equation*}
			\SF_{x'}f(\xi',t)= \int\limits_{\BR} \SF f(\xi', \xi_d) e^{it\xi_d} \di \xi_d.
		\end{equation*}
		Therefore,  
		\begin{align*}
			\abs{\SF_{x'}f(x',t)}^2 \le \int\limits_{\BR} (1+\abs{\xi'}^2+ \xi_d^2)^s \abs{\SF f(\xi',\xi_d)}^2 \di \xi_d \,\int\limits_{\BR} (1+\abs{\xi'}^2+ \xi_d^2)^{-s}\di \xi_d.
		\end{align*}
		Additionally, 
		\begin{equation*}
			\int\limits_{\BR} (1+\abs{\xi'}^2+ \xi_d^2)^{-s}\di \xi_d\le 2^{1+s}\, \int\limits_1^\infty \xi_d^{-2s} \di \xi_d= \frac{2^{1+s}}{2s-1}\le \frac{2^2}{2s^\star -1}.
		\end{equation*}
		The previous two estimates and Plancherel's theorem yield
		\begin{align}
			(1-s)\int\limits_{\BR^{d-1}} \int\limits_{(0,1)} \frac{f(x',x_d)^2}{x_d^s}\di x_d\di x' &= (1-s)\int\limits_{\BR^{d-1}} \int\limits_{(0,1)} \frac{\abs{\SF_{x'}f(\xi',x_d)}^2}{x_d^s} \di x_d\di \xi'\nonumber\\
			\le \tfrac{2^{1+s}}{2s-1}\int\limits_{(0,1)} \frac{1-s}{x_d^s} \di x_d\, \int\limits_{\BR^d} (1+\abs{\xi}^2)^s \abs{\SF f(\xi)}^2 \di \xi 
			&\le \tfrac{2^{1+s}}{2s-1} \Big( 2\kappa_{d,s} \iint\limits_{\BR^d\times \BR^d} \frac{(f(x)-f(y))^2}{\abs{x-y}^{d+2s}} \di x \di y + \norm{f}_{L^2(\BR^d)}^2 \Big).\label{eq:classical_trace_estimate}
		\end{align}
		The last inequality in the previous calculation follows from Sobolev embeddings \cite[Proposition 3.4]{hitchhiker}. Since the Schwartz functions are dense in $H^s(\BR^d)$, the estimate \eqref{eq:classical_trace_estimate} holds for all $f\in H^s(\BR^d)$.
		
		Now, for any $x\in Q_{r/2}(z_i)\cap \Omega$ we find a minimizer of the distance of $x$ to the boundary $\partial \Omega$ in $Q_r(z_i) \cap \partial\Omega$. If this is not possible, we pick even smaller sub-cubes $Q_{\tilde{r}}(z_i)\subset  Q_{r/2}(z_i)\subset Q_r(z_i)$  to cover the boundary. Therefore, 
		\begin{equation*}
			\dist{\phi_i^{-1}(y)}{\partial \Omega}= \inf\{ \abs{\phi_i^{-1}(z)-\phi_i^{-1}(y)}\,|\, z=(z',0)\in Q_1 \}\le \norm{\phi_i}_{C^{0,1}}^{-1} \, \abs{y_d}, \, \, y\in Q_1.
		\end{equation*}
		Now we prove that $\sqrt{\eta_i}$ is Lipschitz. By Taylor's formula, if $\eta_i(x)=0$ for any $x\in \BR^d$, then $\nabla \sqrt{\eta_i}(x)=0$. Since $\eta_i\in C_c^\infty$, $D^2 \eta_i(x)h \cdot h \le \norm{\eta_i}_{C^2(\BR^d)} \abs{h}^2$. This yields a Glaeser-type inequality 
		\begin{equation*}
			\abs{\nabla \eta_i(x)}\le \sqrt{2\norm{\eta_i}_{C^2}\, \eta_i(x)},
		\end{equation*}
		see \eg \cite[Lemma 1]{glaeser_inequality}. Therefore, $\abs{\nabla \sqrt{\eta_i}}\le \sqrt{2\norm{\eta_i}_{C^2(\BR^d)}}$ and, thus, $\sqrt{\eta_i}\in C_b^{0,1}(\BR^d)$. 
		Since $\phi_i$ is bi-Lipschitz, $\sqrt{\eta_i\circ \phi_i^{-1}}\in C_c^{0,1}(Q_1)$. Let $f\in \CS(\BR^d)$ and write $h:= \sqrt{\eta_i\circ \phi_i^{-1}} \, f\circ \phi_i^{-1}\in H^s(\BR^d)$. \eqref{eq:classical_trace_estimate} yields
		\begin{align}
			&(1-s)\int\limits_{Q_1^+} \eta_i(\phi_i^{-1}(y))\frac{f(\phi_i^{-1}(y))^2}{\dist{\phi_i^{-1}(y)}{\partial \Omega}^s} \abs{\det D\phi_i^{-1}(y)}\di y\le \lambda_i\norm{\phi_i}_{C^{0,1}}\, (1-s)\int\limits_{\BR^{d-1}}\int\limits_{ (0,1)} \frac{h(y',y_d)^2}{y_d^s} \di y_d \di y'\nonumber\\
			&\qquad\qquad\le \lambda_i\norm{\phi_i}_{C^{0,1}}\, \tfrac{2^{1+s}}{2s^\star-1} \Big( 2\kappa_{d,s} \iint\limits_{\BR^d\times \BR^d} \frac{(h(x)-h(y))^2}{\abs{x-y}^{d+2s}} \di x \di y + \norm{h}_{L^2(\BR^d)}^2 \Big).\label{eq:robust_trace_hardy_help1}
		\end{align}
		Since $\supp \eta_i \subset Q_{r/2}(z_i)$, $\phi_i$ is bi-Lipschitz and by transformation theorem, $\norm{h}_{L^2(\BR^d)}^2\le \lambda_i\, \norm{f}_{L^2(\BR^d)}$. We split the seminorm term on the right-hand side of \eqref{eq:robust_trace_hardy_help1} into $Q_1\times Q_1$, $\supp(\eta_i\circ \phi_i^{-1})\times Q_1^c$ and $(\supp(\eta_i\circ \phi_i^{-1}))^c \times Q_1^c$. Since $\eta_i\circ \phi_i^{-1}$ is zero on $Q_1^c$, the term \begin{equation*}
			\iint\limits_{(\supp(\eta_i\circ \phi_i^{-1}))^c\times Q_1^c} \frac{(h(x)-h(y))^2}{\abs{x-y}^{d+2s}}\di x \di y =0.
		\end{equation*} Now, on $Q_1\times Q_1$
		\begin{align}
			\iint\limits_{Q_1\times Q_1} \frac{(h(x)-h(y))^2}{\abs{x-y}^{d+2s}} \di x \di y&= \iint\limits_{Q_{r}(z_i)\times Q_{r}(z_i)} \frac{(h\circ\phi_i(x)-h\circ\phi_i(y))^2}{\abs{\phi_i(x)-\phi_i(y)}^{d+2s}}\abs{\det D\phi_i(y)}\abs{\det D\phi_i(x)} \di y \di x\nonumber\\
			&\le \lambda_i^2 \norm{\phi_i^{-1}}_{C^{0,1}}^{-d-2s} \iint\limits_{Q_{r}(z_i)\times Q_{r}(z_i)} \frac{(\eta_i(x)^{1/2}f(x)-\eta_i(y)^{1/2}f(y))^2}{\abs{x-y}^{d+2s}}\di x \di y\nonumber\\
			&\le \lambda_i^2 \norm{\phi_i^{-1}}_{C^{0,1}}^{-d-2s}\Bigg(\,\, \iint\limits_{\BR^d\times \BR^d} \frac{(f(x)-f(y))^2}{\abs{x-y}^{d+2s}}\di x \di y\nonumber\\
			&\quad+ \norm{\sqrt{\eta_i}}_{C^{0,1}}\int\limits_{\BR^d}f(y)^2\int\limits_{ B_2(x)} \abs{x-y}^{-d+2(1-s)}\di x \di y\Bigg)\nonumber\\
			&\le  \lambda_i^2 \norm{\phi_i^{-1}}_{C^{0,1}}^{-d-2s}\Bigg(\,\, \iint\limits_{\BR^d\times \BR^d} \frac{(f(x)-f(y))^2}{\abs{x-y}^{d+2s}}\di x \di y\nonumber\\
			&\quad+ \omega_{d-1} \frac{1}{1-s} 2^{2(1-s)} \norm{\sqrt{\eta_i}}_{C^{0,1}}\norm{f}_{L^2(\BR^d)}^2\Bigg).\label{eq:robust_hardy_help2}
		\end{align}
		Finally, we consider the integral over $(\supp(\eta_i\circ \phi_i^{-1}))\times Q_1^c$. Since $\phi_i:Q_r(z_i)\to Q_1$ is bijective and bi-Lipschitz, $\phi_i(\supp \eta_i)\ssubset Q_1$ is compactly embedded. Therefore, $\delta:= \dist{\phi_i(\supp \eta_i)}{\partial Q_1}\in(0,1)$. Thus, $\abs{\phi_i(x)-y}\ge \abs{y}-1+\delta \ge \delta\abs{y}$ for $x\in \supp(\eta_i)\subset Q_{r/2}(z_i)$ and $y\in Q_1^c$.
		\begin{align}
			\iint\limits_{(\supp(\eta_i\circ \phi_i^{-1}))\times Q_1^c} \frac{(h(x)-h(y))^2}{\abs{x-y}^{d+2s}} \di x \di y&\le \lambda_i \int\limits_{\supp(\eta_i)}\int\limits_{ Q_1^c}\eta_i(x) \frac{f(x)^2}{\abs{\phi_i(x)-y}^{d+2s}} \di y \di x\nonumber\\
			&\le \delta^{-d-2s}\lambda_i \int\limits_{\BR^d} f(x)^2 \di x \int\limits_{Q_1^c} \abs{y}^{-d-2s}\di y\nonumber\\
			&\le\delta^{-d-2s}\lambda_i \frac{\omega_{d-1} }{2s}\int\limits_{\BR^d} f(x)^2 \di x.\label{eq:robust_hardy_help3}			
		\end{align}
		Now we combine \eqref{eq:robust_trace_hardy_help1},\eqref{eq:robust_hardy_help2} and \eqref{eq:robust_hardy_help3}. Thus, there exists a constant $c_1=c_1(d,\Omega,s^\star)>0$ independent of $s$ such that for any $f\in H^s(\BR^d)$
		\begin{equation*}
			(1-s)\int\limits_{Q_r(z_i)\cap \Omega} \eta_i(x) \frac{f(x)^2}{d_x^2} \di x\le c_1 \Big(  (1-s)\iint\limits_{\BR^d\times \BR^d} \frac{(f(x)-f(y))^2}{\abs{x-y}^{d+2s}} \di x \di y + \norm{f}_{L^2(\BR^d)}^2 \Big). 
		\end{equation*}
		This concludes the boundary cases. Consider $u\in H^s(\Omega)$. Combining the $R_0$ and boundary cases yield a constant $c_2=c_2(d,\Omega,s^\star)>0$ independent of $s$ such that 
		\begin{align*}
			(1-s)\int\limits_{\Omega} \frac{(\tilde{E}_s u(x))^2}{d_x^s} \di x &\le c_2 \Big(  (1-s)\iint\limits_{\BR^d\times \BR^d} \frac{(\tilde{E}_s u(x)-\tilde{E}_s u(y))^2}{\abs{x-y}^{d+2s}} \di x \di y + \norm{\tilde{E}_s u}_{L^2(\BR^d)}^2 \Big)\\
			&\le c_{\ge s^\star}c_2 \Big(  (1-s)\iint\limits_{\Omega\times \Omega} \frac{( u(x)- u(y))^2}{\abs{x-y}^{d+2s}} \di x \di y + \norm{ u}_{L^2(\Omega)}^2 \Big).
		\end{align*}
		The last inequality follows from \eqref{eq:Hs_extension}. This proves case 2 and, thus, the proposition. 
	\end{proof}
	The previous proposition, the comparison of $k_s^{\star}$ and $k_s$ from \cref{sec:equiv} and the results in \cite{Bogdan_trace} particularly enable us to prove \cref{th:trace_and_extension}.\medskip
	
	\textbf{Proof of \cref{th:trace_and_extension}}
	\textit{(1)}: For $ v \in V^s(\Omega \, | \, \BR^d)$ we define $\trn v := v|_{\Omega^c}$. We divide the proof into the estimate for the seminorm and the $L^2$-part.
	
	\textbf{Seminorm-part: }By \cref{th:douglas}, $\trn v\in \CX^s(\Omega^c)$ and 
	\begin{equation}\label{eq:seminorm_trace_continuity_1}
		[v,v]_{V^s(\Omega \, | \, \BR^d)} \ge [\trn v,\trn v]_{\CX^s(\Omega^c)}
	\end{equation}
	There exists a constant $c_1=c_1(d,\Omega)>0$ such that $k_s^{\star}(x,y)\ge  c_1\,s^{2} k_s(x,y)$ by \cref{prop:lowerbound}. Therefore, this comparability of the interaction kernels $k_s^{\star}, k_s$ and \eqref{eq:seminorm_trace_continuity_1} yield
	\begin{equation}
		[v,v]_{V^s(\Omega \, | \, \BR^d)}  \ge  [\trn v,\trn v]_{\CX^s(\Omega^c)} \ge c_1\,s^2 [\trn v, \trn v]_{\ST^{s}(\Omega^c|\Omega^c)}.
	\end{equation}
	\textbf{$L^2$-part: }We split $\norm{\trn v}_{L^2(\Omega^c,\tau_s)}^2$ into a part close to $\partial \Omega$ and far away.
	\begin{align*}
		\norm{\trn v}_{L^2(\Omega^c, \tau_s)}^2 &= (1-s)\int\limits_{\Omega^c} \frac{v(x)^2}{d_x^s (1+d_x)^{d+s}}\di x \le (1-s)\int\limits_{\Omega_1} \frac{v(x)^2}{d_x^s}\di x + (1-s)\int\limits_{\Omega^1} \frac{v(x)^2}{d_x^{d+2s}}\di x \\
		&= (I)+ (II).
	\end{align*}
	We begin by estimating $(I)$. By \cref{lem:distance_lower}, there exists a constant $c_2=c_2(d,\Omega)$ such that
	\begin{align*}
		(I)&\le (1-s)c_2\,\int\limits_{\Omega_1} \int\limits_{\Omega}\frac{v(x)^2}{\abs{x-y}^{d+s}}\di y\di x\\
		&\le (1-s)2c_2\,\Bigg(\int\limits_{\Omega_1} \int\limits_{\Omega}\frac{(v(x)-v(y))^2}{\abs{x-y}^{d+s}}\di y\di x+ \int\limits_{\Omega}v(y)^2\int\limits_{\Omega_1}\frac{1}{\abs{x-y}^{d+s}}\di x\di y \Bigg)\\
		&=: (III)+ (IV).
	\end{align*}
	Surely, 
	\begin{align*}
		(III)&\le (1-s)\,4\,c_2\,(\diam{\Omega}+1)^{s}\int\limits_{\Omega_1} \int\limits_{\Omega}\frac{(v(x)-v(y))^2}{\abs{x-y}^{d+2s}}\di y\di x\\
		&\le 4\,c_2\,(\diam{\Omega}+1) \frac{(1-s)}{\kappa_{d,s}} [v,v]_{V^s(\Omega \, | \, \BR^d)}\le c_3(d,\Omega)\, \frac{1}{s} [v,v]_{V^s(\Omega \, | \, \BR^d)}.
	\end{align*}
	Here we used \cref{prop:asymptotics_constant}. Let $c_4=c_4(d,\Omega)>0$ be the constant from \cref{prop:robust_trace_hardy_type}. Since $v\in V^s(\Omega\,|\, \BR^d)$, $v|_{\Omega}\in H^s(\Omega)$. Now, we apply \cref{lem:distance_upper}, \cref{prop:robust_trace_hardy_type} and \cref{prop:asymptotics_constant} to estimate $(IV)$.
	\begin{align*}
		(IV)&\le \frac{\omega_{d-1}}{s}2c_2\,(1-s)\int\limits_{\Omega}\frac{v(y)^2}{d_y^s}\di y\\
		&\le \frac{\omega_{d-1}}{s}2c_2\,c_4\, \Big( \norm{u}_{L^2(\Omega)}^2+ (1-s)\iint\limits_{\Omega\times \Omega} \frac{\abs{u(x)-u(y)}^2}{\abs{x-y}^{d+2s}}\di y \di x  \Big)\\
		&\le \frac{\omega_{d-1}}{s^2}\,4\,c_2\,c_4\, \norm{v}_{V^s(\Omega\,|\, \BR^d)}^2.
	\end{align*}
	Thus, the estimate on $(I)$ is proven. Now, we estimate $(II)$. 
	\begin{align*}
		(II)&\le 2(1-s)\Bigg(\int\limits_{\Omega^1}\fint\limits_{\Omega} \frac{(v(x)-v(y))^2}{d_x^{d+2s}}\di y\di x+ \fint\limits_{\Omega} v(y)^2\di y \int\limits_{\Omega^1}\frac{1}{d_x^{d+2s}}\di x\Bigg)=: (V)+(VI).
	\end{align*}
	Firstly, by \cref{prop:asymptotics_constant}
	\begin{equation*}
		(V)\le 2\frac{(1-s)}{\abs{\Omega}}(\diam{\Omega}+1)^{d+2s} \int\limits_{\Omega^1}\int\limits_{\Omega} \frac{(v(x)-v(y))^2}{\abs{x-y}^{d+2s}}\di y\di x\le \frac{4}{\abs{\Omega}}(\diam{\Omega}+1)^{d+2s} \frac{1}{s} [v,v]_{V^s(\Omega\,|\, \BR^d)}.
	\end{equation*}
	We fix $x_0\in \Omega$. Then $B_1(x_0)\subset (\Omega^1)^c$. For any $x\in \Omega^1$ the distance of $x$ to the boundary is bounded from below by $d_x \ge \tfrac{1}{1+d_{x_0}}\abs{x-x_0}$. Therefore, 
	\begin{align*}
		(VI)&\le  2 \frac{(1-s)}{\abs{\Omega}}(1+d_{x_0})^{d+2s}\,\norm{v}_{L^2(\Omega)}^2 \int\limits_{\Omega^1} \abs{x-x_0}^{-d-2s}\di x\\
		&\le 2 \frac{(1-s)}{\abs{\Omega}}(1+d_{x_0})^{d+2}\,\omega_{d-1}\norm{v}_{L^2(\Omega)}^2 \int\limits_{1}^\infty t^{-1-2s}\di t= \frac{1}{s} \frac{(1-s)}{\abs{\Omega}}(1+d_{x_0})^{d+2}\,\omega_{d-1}\norm{v}_{L^2(\Omega)}^2 . 
	\end{align*}
	This proves \textit{(1)}.
	
	\textit{(2)}: Let $ g\in \ST^{s}(\Omega^c)$. By \cref{th:equivalence}, $g \in \CX^s(\Omega^c)$. The Poisson extension \eqref{eq:poisson_extension} satisfies $\SP_{s,\Omega}(g) \in V^s(\Omega \, | \, \BR^d)$ by \cref{th:douglas}. Therefore, we define $\extn g = \SP_{s,\Omega}(g)$. 
	
		\textbf{Seminorm-part}: By \cref{prop:upperbound} and \cref{th:douglas} there exists a constant $c_4=c_4(d,\Omega)>0$ such that for all $s\in(0,1)$ and $ g\in \ST^{s}(\Omega^c)$ 
	\begin{align*}
		[\extn g,\extn g]_{V^s(\Omega \, | \, \BR^d)}  = 	[g,g]_{\CX^s(\Omega^c)} \le s c_4 [g,g]_{\ST^{s}(\Omega^c|\Omega^c)} \le c_4 [g,g]_{\ST^{s}(\Omega^c|\Omega^c)}.
	\end{align*} 
	\textbf{$L^2$-part}: Since the Poisson kernel $P_{s,\Omega}$ integrates over $\Omega^c$ up to $1$, see \cite[Corollary A.2]{Bogdan_trace}, 
	\begin{align*}
		\norm{\extn g }_{L^2(\Omega)}^2 = \int\limits_{ \Omega}\bigg( \int\limits_{ \Omega^c} g(y) P_{s,\Omega}(x,y) \di y \bigg)^2 \di x \le \int\limits_{ \Omega^c} g(y)^2 \int\limits_{ \Omega} P_{s,\Omega}(x,y) \di x \di y.
	\end{align*}
	We prove that there exists a constant $c_5=c_5(d,\Omega)$ such that for every $y \in \Omega^c$
	\begin{align}\label{eq:to_show_1}
		\int\limits_{ \Omega} P_{s,\Omega}(x,y) \di x  \le c_5 \tau_{s}(y).
	\end{align}
	By \cref{th:chen99_poissonestimate} and \cref{prop:asymptotics_constant} there exists a constant $c_6=c_6(d,\Omega)$ such that 
	\begin{align*}
		P_{s,\Omega}(x,y) \le c_6 \,s\,(1-s)\,  \frac{d_x^{s}}{d_y^s\, (1+d_y)^s} \frac{1}{\abs{x-y}^d} .
	\end{align*}
	Therefore, it is enough to prove the existence of a constant $c_7=c_7(d,\Omega)$ such that
	\begin{align}\label{eq:to_show}
		\int_{\Omega} \frac{d_x^s}{\abs{x-y}^d} \di x \le \frac{c_7}{s} \frac{1}{(1+d_y)^d}.
	\end{align}
	We divide the proof of \eqref{eq:to_show} into two cases. First, if $y \in \Omega^1$, then 
	\begin{align*}
		\int_{\Omega} \frac{d_x^s}{\abs{x-y}^d} \di x \le 2^d \int_{\Omega} \frac{d_x^s}{(1+d_y)^d} \di x \le 2^d \diam{\Omega}^{s} \abs{\Omega} \frac{1}{(1+d_y)^d}\le \frac{2^d (1\vee \diam{\Omega}) \abs{\Omega} }{(1+d_y)^d}.
	\end{align*} 
	This proves \eqref{eq:to_show} in the first case. Next, suppose $y \in \Omega_1$. By polar coordinates, we receive
	\begin{align*}
		\int_{\Omega} \frac{d_x^s}{\abs{x-y}^d} \di x &\le \int_{\Omega} \frac{1}{\abs{x-y}^{d-s}} \di x \le \omega_{d-1}\int_{0}^{\diam{\Omega}+1} t^{-1+s}  \di t\\ 
		&= \omega_{d-1} \frac{1}{s} (\diam{\Omega}+1)^s  \le  \frac{2^d\omega_{d-1}(\diam{\Omega}+1)}{s} \frac{1}{(1+d_y)^d}.
	\end{align*}
	Thus, \eqref{eq:to_show} is proven and it implies \eqref{eq:to_show_1}. Finally, we conclude by \eqref{eq:to_show_1}
	\begin{align*}
		\norm{\extn g}_{L^2(\Omega)}^2 \le c_5 \int_{\Omega^c} g(y)^2 \tau_{s}(y)\di y \le c_5  \norm{g}^2_{L^2(\Omega^c, \tau_s)}
	\end{align*}\qed
	\begin{cor}\label{prop:dense subset}
		Let $\Omega$ be a bounded $C^{1,1}$ domain. $\trn C_c^{\infty}(\BR^d)=C_c^\infty(\Omega^c) \subset \ST^s(\Omega^c)$ is dense in $\ST^s(\Omega^c)$.
	\end{cor}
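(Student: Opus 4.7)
The plan is to reduce the claim to the density of $C_c^\infty(\BR^d)$ in $V^s(\Omega\,|\,\BR^d)$ via the trace/extension pair from \cref{th:trace_and_extension}. Given $g \in \ST^s(\Omega^c)$, set $u := \extn g \in V^s(\Omega\,|\,\BR^d)$; by \cref{th:trace_and_extension}(3) we have $\trn u = g$. If a sequence $\varphi_n \in C_c^\infty(\BR^d)$ can be produced with $\varphi_n \to u$ in $V^s(\Omega\,|\,\BR^d)$, then by the continuity of the trace operator (applied for fixed $s$, so the $s$-dependence of the constant is irrelevant here)
\begin{equation*}
  \norm{\trn \varphi_n - g}_{\ST^s(\Omega^c)} = \norm{\trn(\varphi_n - u)}_{\ST^s(\Omega^c)} \le C_s \norm{\varphi_n - u}_{V^s(\Omega\,|\,\BR^d)} \longrightarrow 0.
\end{equation*}
Since $\trn \varphi_n = \varphi_n\big|_{\Omega^c} \in \trn C_c^\infty(\BR^d)$, this yields the asserted density.

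The substantive content is therefore the density of $C_c^\infty(\BR^d)$ in $V^s(\Omega\,|\,\BR^d)$. I would carry this out by the standard two-step \emph{truncate and mollify} argument. First, pick a smooth cut-off $\eta_R$ with $\eta_R \equiv 1$ on $B_R$ and $\eta_R \equiv 0$ outside $B_{2R}$, and show $u\eta_R \to u$ in $V^s(\Omega\,|\,\BR^d)$ as $R\to\infty$. Splitting the double integral defining $[u-u\eta_R,u-u\eta_R]_{V^s(\Omega\,|\,\BR^d)}$ into the near-diagonal contribution (where one uses the product/commutator estimate $(u(1-\eta_R))(x)-(u(1-\eta_R))(y) = (1-\eta_R)(x)(u(x)-u(y)) + u(y)(\eta_R(y)-\eta_R(x))$ together with $\abs{\nabla \eta_R}\lesssim 1/R$) and the far-field contribution (handled by dominated convergence on $\BR^d\times\BR^d\setminus\Omega^c\times\Omega^c$, with the integrand of $[u,u]_{V^s}$ as an integrable majorant) gives the claim. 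Second, for each compactly supported $u\eta_R$, convolve with a standard mollifier $\rho_\varepsilon$; the differences $(\rho_\varepsilon \ast (u\eta_R))(x) - (\rho_\varepsilon \ast(u\eta_R))(y)$ are controlled pointwise by the mollified $L^2$-majorant, and $L^2$-continuity of translations together with dominated convergence yields convergence of the bilinear form. A diagonal sequence $\varphi_n := \rho_{\varepsilon_n}\ast (u\eta_{R_n})$ then lies in $C_c^\infty(\BR^d)$ and converges to $u$ in $V^s(\Omega\,|\,\BR^d)$.

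The main obstacle is the truncation at infinity: because the form $[\cdot,\cdot]_{V^s(\Omega\,|\,\BR^d)}$ couples $\Omega$ with arbitrarily distant points of $\Omega^c$, the cross terms arising from $\eta_R$ must be shown to go to zero, and this requires comparing a pointwise control $\abs{\eta_R(x)-\eta_R(y)}^2 \le C\,R^{-2}\abs{x-y}^2 \wedge 1$ against the singular kernel $\kappa_{d,s}\abs{x-y}^{-d-2s}$ while knowing only that $u\in L^2(\Omega)$ plus the finite bilinear form. Once this splitting is handled (alternatively, one may invoke a result such as \cite[Theorem 3.23]{FoghemGounoue2020} on $V^s$), the rest of the proof is immediate from the two structural tools already available: $\extn$ lands in $V^s$, and $\trn$ is continuous from $V^s$ into $\ST^s(\Omega^c)$.
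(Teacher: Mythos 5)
Your proposal is correct and follows the same route as the paper: extend $g$ to $u = \extn g \in V^s(\Omega\,|\,\BR^d)$ via \cref{th:trace_and_extension}, approximate $u$ by $C_c^\infty(\BR^d)$ functions in $V^s(\Omega\,|\,\BR^d)$, and push the approximants through the continuous trace. The paper simply cites \cite[Theorem 3.70]{FoghemGounoue2020} for the density of $C_c^\infty(\BR^d)$ in $V^s(\Omega\,|\,\BR^d)$ (your aside points to Theorem 3.23, which is the wrong result — that one is about separability), whereas you also sketch a self-contained truncate-and-mollify proof of that density fact; otherwise the arguments coincide.
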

	\begin{proof}
		Take any $f \in \ST^s(\Omega^c)$. \cref{th:trace_and_extension} yields $\extn f \in V^s(\Omega\,|\, \BR) $. By \cite[Theorem 3.70]{FoghemGounoue2020}, the space $C_c^{\infty}(\BR^d)$ is dense in $V^s(\Omega\,|\, \BR^d)$, thus there exists a sequence of functions $v_n \in C_c^{\infty}(\BR^d)$ which converges to $\extn f$ in $V^s(\Omega\, |\, \BR^d)$. By the continuity of the nonlocal trace operator $\trn$, see \cref{th:trace_and_extension}, it follows that $u_n := \trn v_n \in C_c^{\infty}(\Omega^c)$ converges to $f$ in $\ST^s(\Omega^c)$.
	\end{proof}
	\subsection{Abstract trace space}\label{sec:abstract_trace_space}
	It is a classical assertion that the image of the trace operator $\trc :H^1(\Omega)\to L^2(\partial \Omega)$ is isometrically isomorph to the quotient space $\trc (H^1(\Omega))\simeq H^1(\Omega)/ \ker(\trc )= H^1(\Omega)/H_0^1(\Omega)$. Thereby, we can identify $H^1(\Omega)/H_0^1(\Omega)$ as the abstract trace space of $H^1(\Omega)$ with respect to the topological boundary $\partial \Omega$. In this setting, the surjective, bounded linear trace operator is simply the map
	\begin{align*}
		H^1(\Omega)\to H^1(\Omega)/H_0^1(\Omega), \,
		u\mapsto [u]= \{ u+v\,|\, v\in H_0^1(\Omega) \}.
	\end{align*} 
	$H^1(\Omega)/H_0^1(\Omega)$ can be equipped canonically with the quotient topology. We know for sufficiently regular domains $\Omega$ that $H^1(\Omega)/H_0^1(\Omega)\simeq H^{1/2}(\partial \Omega)$. 
	In the nonlocal setting the same considerations are reasonable. The following definition and the statement therein is taken from \cite{Kassmann_Foghem2022}. 
\begin{Def}[{\cite[Definition 2.29, Theorem 2.30]{Kassmann_Foghem2022} }]
	The abstract trace operator corresponding to $V^s(\Omega\,|\, \BR^d)$ is defined by 
	\begin{align*}
	\overline{\trn}: V^s(\Omega\,|\, \BR^d)&\to V^s(\Omega\,|\, \BR^d)/ V_0^s(\Omega\,|\, \BR^d)\\
	u&\mapsto [u]= \{ u+v\,|\, v\in V_0^s(\Omega\,|\, \BR^d) \}.
	\end{align*}
	The  quotient space $V^s(\Omega\,|\, \BR^d)/V_0^s(\Omega\,|\,\BR^d)$ with its natural topology given by the norm 
	\begin{equation*}
	\norm{[u]}:= \inf\{ \norm{u+v}_{V^s(\Omega\,|\,\BR^d)}\,|\, v \in V_0^s(\Omega\,|\,\BR^d) \}
	\end{equation*}
	is called the abstract trace space. It is isometrically isomorphic to  
	\begin{equation*}
	T^s(\Omega^c) := \{g : \Omega^c \to \BR \text{ measurable } \mid \text{ there exists } u \in V^s(\Omega\,|\,\BR^d) \text{ with } u|_{\Omega^c}= g \}.
	\end{equation*}
	endowed with the norm
	\begin{equation*}
	\norm{g}_{T^s(\Omega^c)}:= \inf\{ \norm{u}_{V^s(\Omega\,|\,\BR^d)}\,|\, u\in V^s(\Omega\,|\, \BR^d), u=g \text{ on } \Omega^c \}.
	\end{equation*}
\end{Def}
The next theorem shows that the space $\ST^{s}(\Omega^c)$ is norm equivalent to the abstract trace space $T^s(\Omega^c)$ robust in the limit $s\to 1-$. It complements \cite[Proposition 2.31]{Kassmann_Foghem2022}. 
	\begin{theorem}\label{th:equivalence_abstract_tracespace}
		$\ST^s(\Omega^c)$ and $T^s(\Omega^c)$ coincide and for $s_\star \in (0,1)$ there exists a constant $C = C(d,\Omega,s_\star)\ge 1$ such that for all $s\in (s_\star,1)$
		\begin{equation*}
			\tfrac{1}{C}\norm{g}_{\ST^{s}(\Omega^c)} \le \norm{g}_{T^s(\Omega^c)}\le C\, \norm{g}_{\ST^{s}(\Omega^c)}
		\end{equation*}
	for all $g\in T^s(\Omega^c)$.
	\end{theorem}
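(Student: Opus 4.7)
The plan is to deduce \cref{th:equivalence_abstract_tracespace} directly from \cref{th:trace_and_extension}, since the trace and extension operators already provide robust two-sided control between $V^s(\Omega\,|\,\BR^d)$ and $\ST^s(\Omega^c)$. The two inclusions $\ST^s(\Omega^c) \subseteq T^s(\Omega^c)$ and $T^s(\Omega^c) \subseteq \ST^s(\Omega^c)$ are handled independently, and in each case the norm inequality comes out on the nose from the corresponding half of \cref{th:trace_and_extension}.

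First I would show $\ST^s(\Omega^c) \subseteq T^s(\Omega^c)$ with $\norm{g}_{T^s(\Omega^c)} \leq C\,\norm{g}_{\ST^s(\Omega^c)}$. Given $g \in \ST^s(\Omega^c)$, \cref{th:trace_and_extension}(2) produces $\extn g \in V^s(\Omega\,|\,\BR^d)$ with $\norm{\extn g}_{V^s(\Omega\,|\,\BR^d)} \leq C_1(d,\Omega)\,\norm{g}_{\ST^s(\Omega^c)}$, where the constant is even uniform in all of $s \in (0,1)$. Part (3) of the same theorem gives $(\extn g)|_{\Omega^c} = \trn(\extn g) = g$, so $g \in T^s(\Omega^c)$ and, taking the infimum in the definition of $\norm{\cdot}_{T^s(\Omega^c)}$, we get $\norm{g}_{T^s(\Omega^c)} \leq \norm{\extn g}_{V^s(\Omega\,|\,\BR^d)} \leq C_1\,\norm{g}_{\ST^s(\Omega^c)}$.

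For the converse $T^s(\Omega^c) \subseteq \ST^s(\Omega^c)$ with $\norm{g}_{\ST^s(\Omega^c)} \leq C\,\norm{g}_{T^s(\Omega^c)}$, I would use \cref{th:trace_and_extension}(1): for $g \in T^s(\Omega^c)$ and any $u \in V^s(\Omega\,|\,\BR^d)$ satisfying $u|_{\Omega^c} = g$, the trace $\trn u = g$ lies in $\ST^s(\Omega^c)$ and satisfies $\norm{g}_{\ST^s(\Omega^c)} = \norm{\trn u}_{\ST^s(\Omega^c)} \leq C_2(d,\Omega,s_\star)\,\norm{u}_{V^s(\Omega\,|\,\BR^d)}$ uniformly in $s \in (s_\star,1)$. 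Taking the infimum over all such $u$ yields $\norm{g}_{\ST^s(\Omega^c)} \leq C_2\,\norm{g}_{T^s(\Omega^c)}$. Setting $C := \max\{C_1, C_2\}$ then gives the stated two-sided bound.

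There is no serious obstacle: the only subtlety is that the constant in part (1) of \cref{th:trace_and_extension} is only uniform down to $s_\star > 0$ (this is where the assumption $s \in (s_\star,1)$ enters), whereas part (2) is uniform in $s \in (0,1)$, so the restriction in the statement of \cref{th:equivalence_abstract_tracespace} matches the weaker of the two and is automatically respected. Consequently the argument is essentially a two-line chase through \cref{th:trace_and_extension}(1)--(3), once the definitions of $T^s(\Omega^c)$ and $\ST^s(\Omega^c)$ are unwound.
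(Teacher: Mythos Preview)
Your proposal is correct and follows essentially the same argument as the paper: use \cref{th:trace_and_extension}(2)--(3) to get $\ST^s(\Omega^c)\subseteq T^s(\Omega^c)$ with the upper bound on $\norm{g}_{T^s(\Omega^c)}$, and \cref{th:trace_and_extension}(1) together with the infimum over extensions to get the reverse inclusion and lower bound. Your remark about where the dependence on $s_\star$ enters is also exactly what the paper does.
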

	\begin{proof}Let $g \in \ST^{s}(\Omega^c)$ and $s_\star \in (0,1)$. By \cref{th:trace_and_extension} it follows that $\extn g \in V^s(\Omega\,|\,\BR^d)$ and $(\extn g)|_{\Omega^c} = u$. Therefore, $g \in T^s(\Omega^c)$. Additionally, there exists a constant $c_1=c_1(d,\Omega)>0$ such that for all $s \in (0,1)$
		\begin{align*}
			\norm{g}_{T^s(\Omega^c)} \le \norm{\extn g}_{V^s(\Omega\,|\,\BR^d)} \le c_1 \norm{g}_{\ST^s(\Omega^c)}.
		\end{align*}
		If $g \in T^s(\Omega^c)$ then $ g = u|_{\Omega^c} = \trn(u)$ for some $ u \in V^s(\Omega\,|\,\BR^d)$. Thus, by \cref{th:trace_and_extension} there exists a constant $c_2=c_2(d,\Omega,s_\star)>0$ such that for all $s \in (s_\star,1)$ 
		\begin{align*}
			\norm{g }_{\ST^{s}(\Omega^c)} \le c_2 \norm{u}_{ V^s(\Omega\,|\,\BR^d)}.
		\end{align*}
		Since this holds true for every extension $u$ of $g$, we receive
		\begin{align*}
			\norm{g }_{\ST^{s}(\Omega^c)} \le c_2 \norm{g}_{T^s(\Omega^c)}
		\end{align*}
		and $g \in \ST^{s}(\Omega^c)$.
	\end{proof}
\section{Convergence of trace spaces}\label{sec:convergence}
In this section we prove \cref{th:convergence_pointwise}, i.e the convergence $[g,g]_{\ST^s(\Omega^c)} \to [g,g]_{H^{1/2}(\partial\Omega)}$ and $\norm{g}_{L^2(\Omega^c,\tau_{s})} \to \norm{g}_{L^2(\partial\Omega)}$ for functions $g \in H^1(\Omega^c)$ in the limit $s \to 1-$. The crucial step is to approximate the surface measure on $\partial\Omega$ by the family of measures $\frac{1-s}{d_x^s}\1_{\Omega_\rho}(x)\di x$, see \cref{lem:weak_convergence_measures} below. In \cref{sec:pointwise} we prove the convergence for functions $u\in C_b^{0,1}(\Omega^c)$, see \cref{prop:convergence_Lipschitz}, and extend the result in \cref{th:convergence_pointwise_abstract}. In \cref{sec:hilbert_space_convergence} we prove the convergence in the sense of converging Hilbert spaces introduced by Kuwae and Shioya in \cite{kuwae_mosco}. This notion of convergence is crucial for the convergence of Neumann problems is \cref{sec:neumann}. Lastly, we want to mention that the convergence of the function spaces $V^s(\Omega\,|\,\BR^d)$ to $H^1(\Omega)$ is well-known. We refer the reader to \cite[Corollary 2]{bourgain_compactness}, \cite{ponce} and \cite[Theorem 3.4, (3.5)]{kassmann_mosco}.
\subsection{Pointwise convergence}\label{sec:pointwise}
The following lemma provides an approximation of the surface measure of a compact $C^1$-submanifold. We were not able to find this result in the literature and, thus, provide the proof for the convenience of the reader. After localizing the majority of the proof is to show the convergence \eqref{eq:approx_main_result}. Thereafter, the result follows by standard arguments using an approximate identity $t\mapsto (1-s)t^{-s}$ as $s\to 1-$.
\begin{lemma}\label{lem:weak_convergence_measures}
	Let $\Omega$ be a bounded $C^1$-domain. For $r >0$ we define a family of measures $\mu_s(\dishort x):=\eta_s(x)\di x$ on $(\BR^d, \CB(\BR^d))$ via
	\begin{align*}
		\eta_s(x):= \frac{1-s}{ d_x^{s}} \1_{\Omega_{r}}(x).
	\end{align*}
	Let $\sigma$ be the surface measure on the $C^1$-submanifold $\partial \Omega$ and set $\sigma(D)=\sigma(\partial \Omega \cap D)$ for sets $D\in \CB(\BR^d)$. $\{\mu_s\}_s$ converges weakly to $\sigma$. 
\end{lemma}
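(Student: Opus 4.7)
Since every $\mu_s$ and the limit $\sigma$ is supported in the fixed bounded set $\overline{\Omega_r}$, it suffices to show that
\begin{equation*}
	\int\limits_{\Omega_r} f(x)\,\eta_s(x)\di x \longrightarrow \int\limits_{\partial \Omega} f(z)\,\sigma(\di z) \quad \text{as } s \to 1-
\end{equation*}
for every $f \in C_b(\BR^d)$. Fix such an $f$ and choose $\rho \in (0,r)$ so small that the distance function $d_x$ is $C^1$ on the exterior tubular neighborhood $\{x \in \Omega^c \mid 0 < d_x < \rho\}$ with $\abs{\nabla d_x} \equiv 1$; this is possible because $\partial \Omega$ is a bounded $C^1$ hypersurface. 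In particular, every such $x$ admits the unique representation $x = z + d_x\, n_z$ with $z \in \partial \Omega$ and $n_z$ the outer unit normal.

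Applying the coarea formula on this tubular neighborhood gives
\begin{equation*}
	\int\limits_{\{x \in \Omega^c \,\mid\, d_x < \rho\}} f(x)\,\eta_s(x)\di x = \int\limits_0^{\rho}(1-s)\,t^{-s}\,h(t)\di t, \qquad h(t) := \int\limits_{\{d_x = t\}} f(y)\,\SH(\di y),
\end{equation*}
while the remaining part of $\mu_s(f)$, integrated over $\{\rho \le d_x < r\}$, is bounded by $\norm{f}_\infty \abs{\Omega_r}(1-s)\,\rho^{-s}$, which vanishes as $s \to 1-$. The family of measures $(1-s)\,t^{-s}\,\1_{(0,\rho)}(t)\di t$ on $(0,\infty)$ is an approximate identity at $0$: its total mass $\rho^{1-s}$ converges to $1$, and for every $\delta \in (0,\rho)$ the mass on $(\delta,\rho)$ equals $\rho^{1-s} - \delta^{1-s}$, which tends to $0$. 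Once $h$ is known to be bounded on $(0,\rho)$ and right-continuous at $0$ with $h(0+) = \int_{\partial \Omega} f\di\sigma$, a standard $\varepsilon/\delta$ argument yields the claim.

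The main step is therefore the analysis of $h(t)$ as $t \to 0+$. Parametrize the level set $\{d_x = t\}$ for $t \in (0,\rho)$ via the tubular homeomorphism $\Phi_t : \partial \Omega \to \{d_x = t\}$, $\Phi_t(z) = z + t\,n_z$, and pull back the surface measure on $\{d_x = t\}$ along $\Phi_t$ to obtain $h(t) = \int_{\partial \Omega} f(z + t\,n_z)\,J_t(z)\di \sigma(z)$, where $J_t$ is a Jacobian factor that is uniformly bounded and converges to $1$ uniformly on $\partial \Omega$ as $t \to 0+$. Dominated convergence then gives $h(t) \to \int_{\partial \Omega} f\di \sigma$, completing the argument. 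The main obstacle I anticipate is justifying this tubular change of variables rigorously at the regularity $C^1$: for $C^{1,1}$ boundaries the inverse function theorem provides it immediately, while in the $C^1$ case one localizes via a finite partition of unity on $\partial \Omega$ and works in boundary graph charts, in which the area formula produces a continuous Jacobian $J_t$ with $J_0 \equiv 1$.
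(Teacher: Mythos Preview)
Your coarea strategy is more streamlined than the paper's and would go through cleanly for $C^{1,1}$ domains, but at the $C^1$ regularity assumed in the lemma two of your claims break down. The assertion that $d_x$ is $C^1$ on an exterior collar is false in general: take $\partial\Omega$ locally the graph of $\phi(x')=|x'|^{3/2}$, closed off smoothly to a bounded domain. The exterior reach at the origin is zero; for every $\varepsilon>0$ the point $\Phi_\varepsilon(0)=(0,\varepsilon)$ has two nearest boundary points (by symmetry) and $d_x$ is not differentiable there. This alone is harmless for the coarea step, since the Lipschitz coarea formula with $|\nabla d_x|=1$ a.e.\ still gives your expression for $h(t)$. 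The genuine obstruction is the step $h(t)\to\int_{\partial\Omega}f\,d\sigma$: the normal map $\Phi_t(z)=z+tn_z$ is only continuous when $\partial\Omega$ is $C^1$ (because $z\mapsto n_z$ is merely $C^0$), so the area formula does not apply and no Jacobian $J_t$ is produced; worse, the same example shows $\Phi_t(\partial\Omega)\not\subset\{d_x=t\}$ for any $t>0$. Your graph-chart fallback does not close this gap unless you specify a \emph{different} map to which the area formula applies, and any such map must avoid the normal direction.

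The paper takes a route that sidesteps level sets of $d_x$ entirely. After localizing to a graph chart $\{x_d=\phi(x')\}$, it factors
\[
\frac{1-s}{d_x^{\,s}}=\frac{1-s}{(x_d-\phi(x'))^{s}}\cdot\Bigl(\frac{x_d-\phi(x')}{d_x}\Bigr)^{s},
\]
uses the first factor as an approximate identity in the \emph{vertical} variable $x_d$, and obtains the surface element from the pointwise limit
\[
\lim_{x_d\to\phi(x')+}\frac{x_d-\phi(x')}{d_{(x',x_d)}}=\sqrt{1+|\nabla\phi(x')|^{2}}.
\]
The bulk of the paper's proof is an elementary but nontrivial geometric argument for this last identity, valid under bare $C^1$ regularity; it plays exactly the role of your unavailable Jacobian $J_t\to 1$.
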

\begin{proof}
	For any $f\in C_b$ and $\varepsilon>0$, 
	\begin{align*}
		\int\limits_{\Omega^{\varepsilon}} \abs{f(x)} \mu_s(\dishort x)\le \norm{f}_{L^\infty} \, \int\limits_{\Omega^{\varepsilon}\cap \Omega_{r}} \frac{1-s}{d_x^s}\di x \le \norm{f}_{L^\infty} \, \abs{\Omega^{\varepsilon}\cap \Omega_{r}} \frac{1-s}{\varepsilon^s}\to 0 \text{ as } s\to 1-. 
	\end{align*}
	This allows us to reduce the problem to the part of $\Omega_r$ close to $\partial \Omega$. Thus, the problem localizes. Without loss of generality there exists a cube $Q=(-\rho,\rho)^d$ and a $C^1$-function $\phi:\BR^{d-1}\to \BR$ such that 
	\begin{equation*}
		\Omega\cap Q = \{ (x',x_d)\,|\, x_d<\phi(x') \}\cap Q.
	\end{equation*}
	Since the boundary $\partial \Omega$ is compact, we can cover it with finitely many cubes. We choose $\varepsilon>0$ such that $\Omega_{\varepsilon}$ is covered by theses cubes.	Fix $f\in C_b(\BR^d)$. 
	\begin{align*}
		\int\limits_{Q\cap \Omega^c} f(x)\mu_s( d x) = \int\limits_{(-\rho,\rho)^{d-1}}\int\limits_{\phi(x')}^{\rho} f(x',x_d) \frac{1-s}{d_{(x',x_d)}^s} \di x_d \di x'.
	\end{align*}
	For each $(x',x_d)$ we want to express $d_x$ in terms of $\abs{x_d-\phi(x')}$. For any $(x',x_d)\in (-\rho,\rho)^{d-1}\times (\phi(x'), \rho)$ we pick $y'=y'(x',x_d)$ such that $(y', \phi(y'))$ minimizes the distance of $(x',x_d)$ to the boundary. If $(x',\phi(x'))$ minimizes the distance of $(x', x_d)$, then we always pick $y'(x',x_d)= x'$. If needed, we may choose smaller cubes to guarantee that the minimizer is in the graph of $\phi$. We fix $x'\in(-\rho,\rho)^{d-1}$. Now, we consider two cases.
	
	\textbf{Case 1: } If there exists $\tilde{x_d}\in(\phi(x'), \rho)$ such that $(x',\phi(x'))$ minimizes the distance of $(x',\tilde{x_d})$ to the boundary $\partial \Omega$, then $(x',\phi(x'))$ also minimizes the distance of $(x', x_d)$ to the boundary for all $x_d\in(\phi(x'), \tilde{x_d})$. Thereby, $x'$ is a local maximum of $\phi$ and thus $\nabla \phi(x')=0$. Therefore,
	\begin{equation*}
		d_{(x',x_d)} =  \frac{\abs{x_d-\phi(x')}}{\sqrt{1+\abs{\nabla \phi(x')}^2}}.
	\end{equation*}
	The values of $d_{(x',x_d)}$ for $x_d> \tilde{x}_d$ play no role for the convergence in \eqref{eq:convergence_proof_final_step}.
	
	\textbf{Case 2: } Here, we assume that $y'(x',x_d)\ne x'$ for every $x_d\in(\phi(x'),\rho)$. Let $\gamma=\gamma(x',x_d)$ be the angle enclosed by $\overrightarrow{(x',\phi(x'),(x',x_d)}$ and $\overrightarrow{(x',\phi(x'), (y',\phi(y'))}$, see \cref{fig:omega}.
	The angle $\gamma$ satisfies the relation 
	\begin{equation*}
		\gamma= \arctan\Big( \frac{\abs{y'-x'}}{\abs{\phi(y') - \phi(x')   }}\Big).
	\end{equation*}
	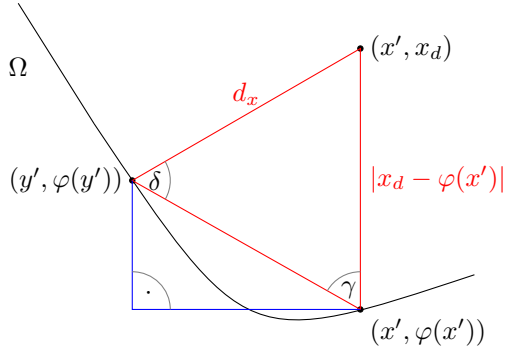
\begin{figure}[h!]
		\begin{center}
			\begin{tikzpicture}
			\draw (-2,-2) .. controls (0,-5).. (3,-4) node[at start, below, xshift=-1cm, yshift=1cm]{$\Omega$}; 
			\draw (-2,-2) -- (-3,-0.4);
			\filldraw[black] (1.5,-1) circle (1pt) node[anchor=west]{$(x',x_d)$};
			\filldraw[black] (1.5,-4.46) circle (1pt) ;
			\filldraw[black] (-1.5,-2.75) circle (1pt) ;
			\draw[red] (1.5,-4.46) node[below right]{\textcolor{black}{$(x',\varphi(x'))$}} coordinate (b) -- (1.5,-1) node[midway, right]{$\abs{x_d-\varphi(x')}$}  coordinate (c) -- (-1.5,-2.75) node[midway, above]{$d_x$} coordinate (a) node[left]{\textcolor{black}{$(y',\varphi(y'))$}} -- (1.5,-4.46) 
			pic["\textcolor{black}{$\gamma$}", draw=gray, -, angle eccentricity=0.6, angle radius=0.5cm]{angle=c--b--a}
			pic["\textcolor{black}{$\delta$}", draw=gray, -, angle eccentricity=0.6, angle radius=0.5cm]{angle=b--a--c};
			
			\draw[blue] (-1.5,-2.75) coordinate (d) -- (-1.5,-4.46) coordinate (e)-- (1.5,-4.46) coordinate (f)
			pic["\textcolor{black}{$\cdot$}",draw=gray, -, angle eccentricity=0.6, angle radius=0.5cm]{angle=f--e--d};
			\end{tikzpicture}
		\end{center}
		\caption{Geometry close to $\partial \Omega$}
		\label{fig:omega}
	\end{figure}
	We denote the angle at $(y', \phi(y'))$ enclosed by $\overrightarrow{(y',\phi(y'),(x',\phi(x')}$ and $\overrightarrow{(y',\phi(y'), (x',x_d)}$ by $\delta(\tilde{x}, x_d)$. Notice that $\abs{\overrightarrow{(y',\phi(y'), (x',x_d)}}=d_{(x',x_d)}$. Thus, we can express $d_x$ via
	\begin{equation}\label{eq:d_x_vs_vertical_distance}
		\frac{\abs{x_d-\phi(x')}}{d_x} = \frac{\sin(\delta(x', x_d))}{\sin(\gamma(x',x_d))}= \sin(\delta(x', x_d))\,\sqrt{1+\Big( \frac{\abs{\phi(y') - \phi(x')   }  } {\abs{y'-x'}} \Big)^2}.
	\end{equation}
	Now, we calculate the limit of the RHS of \eqref{eq:d_x_vs_vertical_distance}. Notice that 
	\begin{align*}
		\abs{(x',x_d)-(y',\phi(y'))}=d_{(x',x_d)}\le \abs{(x',x_d)- (x',\phi(x'))}= \abs{x_d-\phi(x')}\to 0 \text{ as } x_d \to \phi(x').
	\end{align*}
	Thereby, $\abs{y'(x',x_d) - x'}\to 0$ as $x_d\to \phi(x')$.
	
	\textbf{Claim: } Now, we prove 
	\begin{equation*}
		\frac{\abs{\phi(y'(x',x_d)) - \phi(x')   }  } {\abs{y'(x',x_d)-x'}} \to \abs{\nabla \phi(x')} \text{ as } x_d\to \phi(x').
	\end{equation*}		
	
	We begin by proving that $\phi(z')\le \phi(y')$ for all $z'\in (-\rho,\rho)^{d-1}$ satisfying $\abs{z'-x'} = \abs{y'-x'}$. We assume the contrary, \ie there exists $z'\in (-\rho, \rho)^{d-1}$ with $\abs{z'-x'}=\abs{y'-x'}$ such that $\phi(z')>\phi(y')$. Since we assumed that $(x',\phi(x'))$ does not minimize the distance of $(x',x_d)$ to $\partial \Omega$, $\phi(y')>\phi(x')$. By the continuity of $\phi$ and intermediate value theorem, there exists $w'=x'+t(z'-x')$, $t\in(0,1)$ such that $\phi(z')>\phi(w')=\phi(y')>\phi(x')$. Thereby, 
	\begin{align*}
		\abs{(w',\phi(w')) -(x',x_d)}^2 &= \abs{w'-x'}^2 + \abs{\phi(y')-x_d}^2 = t^2 \abs{y'-x'}^2 + \abs{\phi(y')-x_d}^2\\
		&< \abs{y'-x'}^2 + \abs{\phi(y')-x_d}^2 = d_{(x',x_d)}^2.
	\end{align*} 
	This is a contradiction and, thus, $\phi(z')\le \phi(y')$ for all $\abs{z'-x'}=\abs{y'-x'}$. We set $r=r(x',x_d)= \abs{y'-x'}$. Therefore, 
	\begin{align*}
		\frac{\phi(y')-\phi(x')}{\abs{y'-x'}}= \max\limits_{\substack{z'\in (-\rho,\rho)^{d-1}\\ \abs{z'-x'}=\abs{y'-x'}}} \frac{\phi(z')-\phi(x')}{\abs{z'-x'}} = \max\limits_{v\in S^{d-1}} \frac{\phi(x'+rv)-\phi(x')}{r}.
	\end{align*}
	Now, we finishing the proof of the claim. 
	\begin{align}
		\abs{\nabla \phi(x')}&= \max\limits_{v\in S^{d-1}} v \cdot \nabla \phi(x')= \max\limits_{v\in S^{d-1}}\lim\limits_{x_d\to \phi(x')+} \frac{\phi(x'+rv)-\phi(x')}{r}\nonumber\\
		&\le \lim\limits_{x_d\to \phi(x')+} \max\limits_{v\in S^{d-1}} \frac{\phi(x'+rv)-\phi(x')}{r} =  \lim\limits_{x_d\to \phi(x')}\frac{\phi(y')-\phi(x')}{\abs{y'-x'}}\nonumber\\
		&= \lim\limits_{x_d\to \phi(x')}\frac{\nabla \phi(x')\cdot (y'-x') +R(y'-x')}{\abs{y'-x'}}\le \abs{\nabla \phi(x')} + \lim\limits_{x_d\to \phi(x')+}\frac{\abs{R(y'-x')}}{\abs{y'-x'}}\nonumber\\
		&= \abs{\nabla \phi(x')}.\label{eq:convergence_mod_nabla_phi_x}
	\end{align}
	Here we used Taylor's formula. This proves the claim.
	
	Next, we prove that $\frac{y'-x'}{\abs{y'-x'}}$ converges to $\frac{\nabla \phi(x')}{\abs{\nabla \phi(x')}}$ as $x_d\to \phi(x')$. By \eqref{eq:convergence_mod_nabla_phi_x}, 
	\begin{equation*}
		1= \lim\limits_{x_d\to \phi(x')}\frac{\nabla \phi(x')}{\abs{\nabla \phi(x')}}\cdot  \frac{y'-x'}{\abs{y'-x'}}.
	\end{equation*}
	Take an arbitrary sequence $\{t_n\}_n$ such that $t_n \to \phi(x')$ as $n\to \infty$ and $t_n \in(\phi(x'), \rho)$. The sequence $\{ \frac{y'(x',t_n)- x'}{\abs{y'(x',t_n)-x'}} \}$ is bounded and thus there exists a converging subsequence, $\{ n_k \}_k$. We denote the limit by 
	\begin{equation*}
		v:= \lim\limits_{k\to \infty}  \frac{y'(x',t_{n_k})-x'}{\abs{y'(x',t_{n_k})-x'}}
	\end{equation*}
	Thereby, 
	\begin{equation*}
		1= \frac{\nabla \phi(x')}{\abs{\nabla \phi(x')}}\cdot v
	\end{equation*}
	and, thus, $v= \frac{\nabla \phi(x')}{\abs{\nabla \phi(x')}}$. Since the sequence was arbitrary, $\lim\limits_{x_d\to \phi(x')}\frac{y'(x',x_d)- x'}{\abs{y'(x',x_d)-x'}}= \frac{\nabla \phi(x')}{\abs{\nabla \phi(x')}}$.
	
	\textbf{Claim: } Now, we prove that $\delta$ converges to a right angle. By the definition of $\delta$, 
	\begin{equation}\label{eq:delta_equation}
		\cos(\delta(x', x_d))= \frac{(y'-x', \phi(y')-\phi(x'))}{\abs{(y'-x', \phi(y')-\phi(x'))}}\cdot \frac{(y'-x', \phi(y')-x_d)}{\abs{(y'-x', \phi(y')-x_d)}}.
	\end{equation}
	We consider both vectors in \eqref{eq:delta_equation} separately. Firstly, 
	\begin{align*}
		\frac{y'-x'}{\abs{(y'-x', \phi(y')-\phi(x'))}}= \frac{y'-x'}{\abs{y'-x'}} \Big( 1 + \Big( \frac{\phi(y')-\phi(x')}{\abs{y'-x'}} \Big)^2 \Big)^{-1/2}\to \frac{\nabla\phi(x')}{\abs{\nabla \phi(x')}} \Big( 1+ \abs{\nabla\phi(x')}^2 \Big)^{-1/2},
	\end{align*}
	as $x_d\to \phi(x')$. Secondly, 
	\begin{align*}
		\frac{\phi(y')-\phi(x')}{\abs{(y'-x', \phi(y')-\phi(x'))}}= \frac{\phi(y')-\phi(x')}{\abs{y'-x'}  }\Big( 1+ \Big( \frac{\abs{\phi(y')-\phi(x')}}{\abs{y'-x'} }  \Big)^2 \Big)^{-1/2}\to  \frac{\abs{\nabla \phi(x')}}{\big( 1+\abs{\nabla \phi(x')}^2 \big)^{1/2}},
	\end{align*}
	as $x_d\to \phi(x')$. Therefore, 
	\begin{equation*}
		\frac{(y'-x', \phi(y')-\phi(x'))}{\abs{(y'-x', \phi(y')-\phi(x'))}}\to \frac{ 1 }{\big( 1+\abs{\nabla \phi(x')}^2 \big)^{1/2}}\, \big( \frac{\nabla \phi(x')}{\abs{\nabla \phi(x')}} ,\abs{\nabla \phi(x')}\big).
	\end{equation*}
	The function $\phi$ is differentiable. Thereby, the outer normal vector $n_{(y',\phi(y'))}= \frac{(\nabla \phi(y'), -1)  }{\sqrt{1+\abs{\nabla \phi(y')}^2 }}$ exists. Since $(y', \phi(y'))$ minimizes the distance of $(x',x_d)$ to the boundary, we know
	\begin{equation*}
		\frac{(y'-x', \phi(y')-x_d)}{\abs{(y'-x', \phi(y')-x_d)}} = n_{(y',\phi(y'))} = \frac{(\nabla \phi(y'), -1)  }{\sqrt{1+\abs{\nabla \phi(y')}^2 }}.
	\end{equation*}
	Since $\nabla \phi$ is continuous and $y'\to x'$ as $x_d\to \phi(x')$, 
	\begin{equation}\label{eq:alternative_1}
		\frac{(y'-x', \phi(y')-x_d)}{\abs{(y'-x', \phi(y')-x_d)}}\to \frac{(\nabla \phi(x'), -1)}{\sqrt{1+\abs{\nabla \phi(x')}^2 }}.
	\end{equation}
	Together with \eqref{eq:delta_equation}, we conclude that $\delta$ converges to a right angle. 
	\begin{equation*}
		\lim\limits_{x_d\to \phi(x')} \cos(\delta(x',x_d)) = 0.
	\end{equation*}
	We combine these results with \eqref{eq:d_x_vs_vertical_distance} 
	\begin{equation}\label{eq:approx_main_result}
		\lim\limits_{x_d\to \phi(x')} \frac{\abs{x_d-\phi(x')}}{d_x}= \sqrt{1+\abs{\nabla \phi(x')}^2}.
	\end{equation}
	The sequence of functions $\{\tfrac{1-s}{(x_d-\phi(x'))^s}\}_{s}$ is an approximate identity in $x_d=\phi(x')$ as $s\to 1-$. Together with $\Big(\frac{x_d-\phi(x')}{d_{(x',x_d)}}\Big)^s\to \frac{x_d-\phi(x')}{d_{(x',x_d)}}$ in $L^1((-\rho,\rho)^{d-1})$ and Young's inequality, we conclude
	\begin{align}\label{eq:convergence_proof_final_step}
		\int\limits_{Q\cap \Omega^c} f(x)\mu_s( d x) &= \int\limits_{(-\rho,\rho)^{d-1}}\int\limits_{\phi(x')}^{\rho} f(x',x_d) \frac{1-s}{(x_d-\phi(x'))^s}\Big(\frac{x_d-\phi(x')}{d_{(x',x_d)}}\Big)^s \di x_d \di x'\nonumber\\
		&\to \int\limits_{(-\rho,\rho)^{d-1}} f(x', \phi(x')) \, \sqrt{1+\abs{\nabla \phi(x')}^2}\di x' = \int\limits_{Q\cap \partial \Omega} f(x) \sigma(\dishort x) \text{ as }s\to 1-.
	\end{align}
	Now, we combine this local result with a partition of unity to finish the proof.  
\end{proof}
\begin{proposition}\label{prop:convergence_Lipschitz}
	Let $\Omega\subset\BR^d$ be a bounded $C^1$-domain. If $f\in C_b^{0,1}(\Omega^c)$, then 
	\begin{align*}
		&\norm{f}_{L^2(\Omega^c, \tau_s)} \to \norm{ f|_{\partial\Omega}}_{L^2(\partial \Omega)},	\\
		&[f,f]_{\ST^s(\Omega^c|\Omega^c)}\to [f|_{\partial\Omega}, f|_{\partial\Omega}]_{H^{1/2}(\partial \Omega)} 
	\end{align*}
	as $ s \to 1-$.
\end{proposition}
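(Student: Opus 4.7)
The plan is to deduce both limits from Lemma \ref{lem:weak_convergence_measures}, which provides the weak convergence $\mu_s \rightharpoonup \sigma$ of the measures $\mu_s(dx):=\frac{1-s}{d_x^s}\mathbf{1}_{\Omega_r}(x)dx$ to the surface measure $\sigma$ on $\partial\Omega$. For the $L^2$ statement, fix $r>0$ and split $\Omega^c=\Omega_r\cup\Omega^r$. On the far part $\Omega^r$ the bound $d_x\ge r$ gives $\tau_s\le (1-s)r^{-s}(1+d_x)^{-d-s}$, so the corresponding integral is $O(1-s)$ and vanishes. On the near part I would write $\int_{\Omega_r} f^2 \tau_s\,dx=\int_{\BR^d} f(x)^2 (1+d_x)^{-d-s}\,d\mu_s(x)$. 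Since $(1+d_x)^{-d-s}\to (1+d_x)^{-d-1}$ uniformly on $\Omega_r$ and $\mu_s(\BR^d)$ is uniformly bounded (apply the lemma to the constant $1$), Lemma \ref{lem:weak_convergence_measures} with test function $f^2(1+d_x)^{-d-1}$ yields the limit $\int_{\partial\Omega} f^2\,d\sigma$, because this test function equals $f^2$ on $\partial\Omega$.

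For the seminorm, the strategy is the same in spirit but uses the product convergence $\mu_s\otimes\mu_s\rightharpoonup\sigma\otimes\sigma$ (which follows from the one-variable case together with uniform boundedness of total masses). First I would remove the ``far'' regions $\Omega_r\times\Omega^r$, $\Omega^r\times\Omega_r$ and $\Omega^r\times\Omega^r$ of $(\Omega^c)^2$ by exploiting the $(1-s)^2$ prefactor in $k_s$, the uniform bound $(f(x)-f(y))^2 \le 4\|f\|_\infty^2$, and integrability estimates based on Lemma \ref{lem:distance_upper}. On $\Omega_r^2$ the integrand rewrites as $\phi_s(x,y)\,d\mu_s(x)\,d\mu_s(y)$ with
\[
\phi_s(x,y)=\frac{(f(x)-f(y))^2}{(1+d_x)^s(1+d_y)^s(|x-y|+d_xd_y+d_x+d_y)^d},
\]
which is continuous away from the diagonal and whose pointwise limit on $\partial\Omega\times\partial\Omega$ is $(f(x)-f(y))^2|x-y|^{-d}$. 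A smooth cutoff $\chi_\varepsilon$ vanishing on $\{|x-y|<\varepsilon/2\}$ and equal to $1$ on $\{|x-y|>\varepsilon\}$ makes $\phi_s\chi_\varepsilon$ bounded continuous; together with the uniform convergence $(1+d_\cdot)^{-s}\to(1+d_\cdot)^{-1}$ on $\Omega_r$, the tensor-product weak convergence gives convergence of the truncated integrals to $\iint_{(\partial\Omega)^2}\chi_\varepsilon(x,y)(f(x)-f(y))^2|x-y|^{-d}\,d\sigma\otimes d\sigma$. Letting $\varepsilon\to 0$ then finishes the proof, provided the near-diagonal remainders are uniformly small.

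The main obstacle is precisely this uniform-in-$s$ bound on the near-diagonal piece $\iint_{|x-y|<\varepsilon,\ \Omega_r^2\cap(\Omega^c)^2}(f(x)-f(y))^2 k_s(x,y)\,dx\,dy$. After inserting the Lipschitz bound $(f(x)-f(y))^2\le L^2|x-y|^2$ and using $(|x-y|+d_xd_y+d_x+d_y)^d \ge |x-y|^d$, one has to estimate $(1-s)^2\iint_{|x-y|<\varepsilon}\frac{|x-y|^{2-d}}{d_x^s d_y^s}dx\,dy$. Passing to local boundary coordinates $(x',x_d)$ with $d_x\approx x_d$, the tangential integration of $|x-y|^{2-d}$ across a $(d-1)$-dimensional slice contributes $O(\varepsilon)$, while the two normal integrations $\int_0^{r} y_d^{-s}\,dy_d = \frac{r^{1-s}}{1-s}$ interact with the prefactor $(1-s)^2$ to produce a bound $O(\varepsilon)$ uniformly in $s\in[1/2,1)$. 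The analogous bound on the $H^{1/2}$ side follows from the classical estimate $\int_{\partial\Omega\cap B_\varepsilon(x)}|x-y|^{2-d}d\sigma(y)=O(\varepsilon)$. This interplay between the concentrating measure $\mu_s$ and the singular kernel is the only step where genuine care is required; the rest is a direct application of weak convergence.
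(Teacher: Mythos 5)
Your proposal follows essentially the same route as the paper: split $\Omega^c$ into a near-annulus $\Omega_r$ and its complement, kill the far contributions via the $(1-s)$ (resp. $(1-s)^2$) prefactor, rewrite the near contributions against $\mu_s$ (resp. $\mu_s\otimes\mu_s$), invoke Lemma~\ref{lem:weak_convergence_measures}, and handle the diagonal singularity of the seminorm by a cutoff with a uniform-in-$s$ remainder estimate in local boundary coordinates. The only cosmetic difference is that you absorb the factor $(1+d_x)^{-d-s}$ by uniform convergence of the weight (rather than the paper's two-sided sandwich), and you use a cutoff that is $1$ away from the diagonal rather than a bump supported near it; both are equivalent. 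Your remainder estimate $(1-s)^2\iint_{|x-y|<\varepsilon}|x-y|^{2-d}d_x^{-s}d_y^{-s}\,dx\,dy=O(\varepsilon)$ with the two normal integrations producing $(\rho^{1-s}/(1-s))^2$ against the $(1-s)^2$ prefactor and the tangential integral of $|x'-y'|^{2-d}$ over a ball of radius $\varepsilon$ in $\BR^{d-1}$ giving $\omega_{d-2}\varepsilon$ is exactly the computation in the paper.
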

\begin{proof}
We fix $1>r>0$ and define the measures $\mu_s(\dishort x)$ as in \cref{lem:weak_convergence_measures}. Take any arbitrary $f\in C_b^{0,1}(\Omega^c)$.

\textbf{Convergence of the $L^2$-part:} We split $ \norm{f}_{L^2(\Omega^c, \tau_s)}^2$ into $ \norm{f}_{L^2(\Omega^{r}, \tau_s)}^2$ and $ \norm{f}_{L^2(\Omega_{r}, \tau_s)}^2$. Fix $x_0\in \Omega$. Then $\abs{x-x_0}\le d_x ( 2+d_{x_0}/r+\diam{\Omega}/r)= c_1 d_x$ for $x \in \Omega^r$. The first term converges to zero because
\begin{align*}
	\norm{f}_{L^2(\Omega^{r}, \tau_s)}^2 &\le (1-s) \norm{f}_{L^\infty}^2 \int\limits_{\Omega^{r}} d_x^{-d-2s} \di x\le (1-s)c_1^{d+2s} \norm{f}_{L^\infty}^2 \int\limits_{\Omega^{r}} \abs{x-x_0}^{-d-2s} \di x\\
	&\le (1-s)c_1^{d+2s} \omega_{d-1}\norm{f}_{L^\infty}^2 \int\limits_{r}^{\infty} t^{-1-2s} \di t= (1-s)c_1^{d+2s} \omega_{d-1}\norm{f}_{L^\infty}^2\frac{r^{-2s}}{2s}\to 0 \text{ as } s\to 1-.
\end{align*}
\cref{lem:weak_convergence_measures} yields for the second term
\begin{equation*}
	\norm{f}_{L^2(\Omega_{r}, \tau_s)}^2 \ge  \int\limits_{\BR^d} f(x)^2(1+d_x)^{-d-1} \mu_s(\dishort x)\to \int\limits_{\partial \Omega} f(x)^2 \sigma(\dishort x) = \norm{f}_{L^2(\partial \Omega)}^2 \text{ as } s\to 1-.
\end{equation*}
Similarly, 
\begin{equation}
\norm{f}_{L^2(\Omega_{r}, \tau_s)}^2 \le  \int\limits_{\BR^d} f(x)^2 \mu_s(\dishort x)\to \int\limits_{\partial \Omega} f(x)^2 \sigma(\dishort x) = \norm{f}_{L^2(\partial \Omega)}^2 \text{ as } s\to 1-.
\end{equation}
\textbf{Convergence of the seminorm: }We split the integration domain into $\Omega^c\times \Omega^c = (\Omega_{r}\times \Omega_{r} )\cup (\Omega_{r} \times \Omega^{r}) \cup (\Omega^{r} \times \Omega_{r})\cup (\Omega^{r} \times \Omega^{r})$. Firstly,  
\begin{align*}
	[f,f]_{\ST^s(\Omega^{r}\,|\, \Omega^{r})  }^2 &\le  (1-s)^2 (2\norm{f}_{L^\infty})^2 \int\limits_{\Omega^{r}} \int\limits_{\Omega^{r}} d_x^{-d-2s}d_y^{-d-2s}\di y \di x\\
	&\le (1-s)^2\, c_1^{2d+4s} \omega_{d-1}^2 \, 4 \norm{f}_{L^\infty}^2 \frac{r^{-4s}}{(2s)^2}\to 0 \text{ as }s\to 1-.
\end{align*}
By symmetry the cases $(\Omega^{r} \times \Omega_{r})$ and $(\Omega_{r} \times \Omega^{r})$ are equivalent. 
\begin{align*}
	[f,f]_{\ST^s(\Omega_{r}\,|\, \Omega^{r})  }^2 &\le (1-s)^2 (2\norm{f}_{L^\infty})^2 \int\limits_{\Omega_{r}} \int\limits_{\Omega^{r}} \frac{1}{d_x^s d_y^{d+2s} }\di y \di x\\
	&\le (1-s) (2\norm{f}_{L^\infty})^2 c_1^{d+2s} \omega_{d-1} \frac{r^{-2s}}{2s} \mu_s(\BR^d).
\end{align*}
This converges to $0$ as $s\to 1-$ because $\mu_s(\BR^d)$ converges to $\sigma(\partial \Omega)<\infty$. Lastly, we consider the case $(\Omega_{r} \times \Omega_{r})$. By \cref{lem:weak_convergence_measures}, $\mu_s\otimes \mu_s$ converges weakly to $\sigma\otimes \sigma$. We define
\begin{equation*}
	h(x,y):= \frac{(f(x)-f(y))^2}{(1+d_x)^s(1+d_y)^s (\abs{x-y}+d_x+d_y+d_xd_y)^d}.
\end{equation*}
This function is neither continuous nor bounded on $\Omega^c\times \Omega^c$ and therefore \cref{lem:weak_convergence_measures} is not directly applicable. To circumvent this problem, we fix a radial, bump function $\eta \in C_c^\infty(\BR^d)$ such that $0\le \eta \le 1$, $\eta=0$ on $B_1(0)^c$, the profile of $\eta$ is monotonically decreasing and $\eta =1$ on $B_{1/2}(0)$. Now, we define $\eta_\varepsilon(x):= \eta(x/\varepsilon)$ and 
\begin{equation*}
	h_\varepsilon(x,y):= h(x,y)(1-\eta_\varepsilon(x-y)), \quad g_\varepsilon(x,y):=h(x,y)\eta_\varepsilon(x-y).
\end{equation*}
The function $h_\varepsilon$ is bounded and continuous on $\Omega^c\times \Omega^c$. Therefore, \cref{lem:weak_convergence_measures} is applicable to $h_\varepsilon$. 
\begin{align*}
	\int\limits_{\BR^d\times \BR^d} h_\varepsilon(x,y) (\mu_s \otimes \mu_s) \big(\dishort (x,y)\big)
	&\to \int\limits_{\partial \Omega\times \partial \Omega} \frac{(f(x)-f(y))^2}{\abs{x-y}^d}(1-\eta_\varepsilon(x-y)) (\sigma\otimes \sigma)(\dishort (x,y)) \text{ as } s\to 1-\\
	&\to [f,f]_{H^{1/2}(\partial \Omega)}^2 \text{ as } \varepsilon\to 0.
\end{align*}
The first limit follows from \cref{lem:weak_convergence_measures} and the second limit is a consequence of monotone convergence with $\eta_\varepsilon \to 0$ \aev as $\varepsilon\to 0$. Now we will prove that 
\begin{equation*}
	\int\limits_{\BR^d\times \BR^d} g_\varepsilon(x,y) (\mu_s \otimes \mu_s) \big(\dishort (x,y)\big)\to 0 \text{ as } \varepsilon\to 0
\end{equation*}
uniformly in $s\in(0,1)$. Just as in the proof of \cref{lem:weak_convergence_measures} the problem localizes since $\Omega$ is a bounded Lipschitz domain. We cover $\partial \Omega$ with finitely many cubes with side length $2\rho$. Without loss of generality we may assume that $r>0$ is small enough such that these cubes cover $\Omega_r$. Let $Q= (-\rho,\rho)^d$ be one of these cubes and $\phi:\BR^{d-1}\to \BR$ the $C^1$-function such that 
\begin{equation*}
	\Omega\cap Q= \{ (x',x_d)\,|\, x_d<\phi(x') \}\cap Q.
\end{equation*}
Since $\phi$ is Lipschitz continuous, a short calculation yields a constant $c_2=(1+\norm{\phi}_{C^{0,1}})>1$ such that $ c_2^{-1}\abs{x_d-\phi(x')}\le d_{(x',x_d)}\le \abs{x_d-\phi(x')}$ for any $(x',x_d)\in \Omega^c\cap Q$. We bound 
\begin{equation*}
	g_\varepsilon(x,y)\le \norm{f}_{C^{0,1}}^2\, \frac{1}{\abs{x'-y'}^{d-2}}\1_{B_{\varepsilon}(0)}(x'-y') 
\end{equation*}
for any $(x',x_d),(y',y_d)\in \Omega^c\cap Q$. Therefore, 
\begin{align*}
	&\int\limits_{(\Omega^c\times \Omega^c)\cap (Q\times Q)} g_\varepsilon(x,y) (\mu_s \otimes \mu_s) \big(\dishort (x,y)\big)\\ 
	&= \iint\limits_{(-\rho,\rho)^{2d-2}} \int\limits_{\phi(x')}^\rho \int\limits_{\phi(y')}^{\rho}  \frac{g_\varepsilon((x',x_d),(y',y_d)) (1-s)^2}{d_{(x',x_d)}^s\,d_{(y',y_d)}^s} \di y_d\di x_d \di(x',y')\\
	&\qquad\le \norm{f}_{C^{0,1}(\Omega_{\rho})}^2\,c_2^{2} \iint\limits_{(-\rho,\rho)^{2d-2}} \int\limits_{\phi(x')}^\rho \int\limits_{\phi(y')}^{\rho}   \frac{1}{\abs{x'-y'}^{d-2}}  \frac{(1-s)^2\,\1_{B_{\varepsilon}(0)}(x'-y')}{\abs{x_d-\phi(x')}^s\,\abs{y_d- \phi(y')}^s} \di y_d\di x_d \di(x',y')\\
	&\qquad\le \norm{f}_{C^{0,1}(\Omega_{\rho})}^2\,c_2^{2} \rho^{2-2s} \iint\limits_{(-\rho,\rho)^{2d-2}} \frac{1}{\abs{x'-y'}^{d-2}}\1_{B_{\varepsilon}(0)}(x'-y')  \di(x',y')\\
	&\qquad \le \norm{f}_{C^{0,1}(\Omega_{\rho})}^2\,c_2^{2} (2\rho)^{d-1}\rho^{2-2s} \int\limits_{B_\varepsilon(0)} \frac{1}{\abs{y'}^{d-2}} \di(y')= \norm{f}_{C^{0,1}(\Omega_{\rho})}^2\,c_2^{2} (2\rho)^{d-1}\rho^{2-2s} \omega_{d-2}\,\varepsilon\to 0 \text{ as }\varepsilon\to 0.
\end{align*}
The result follows from 
\begin{equation*}
	[f,f]_{\ST^s(\Omega_{\rho}\,|\, \Omega_{\rho})  }^2 =  \int\limits_{\BR^d\times \BR^d} (h_\varepsilon(x,y)+g_\varepsilon(x,y)) (\mu_s \otimes \mu_s) \big(\dishort (x,y)\big).
\end{equation*}
\end{proof}

Now we prove the convergence for functions in spaces $X$, which are uniformly embedded in $\ST^s(\Omega^c)$, have a continuous trace operator on $H^{1/2}(\partial\Omega)$ and where $C_b^{0,1}(\Omega^c)\cap X$ is dense in $X$. An example of such a space $X$ is $H^{1}(\Omega^c)$, see \cref{th:convergence_pointwise}.  

\begin{theorem}\label{th:convergence_pointwise_abstract}
	Let $\Omega$ be a bounded $C^1$-domain and $(X, \norm{\cdot }_X)$ be a space of functions $f:\Omega^c \to \BR$ with the following properties:
	\begin{enumerate}
		\item{ $(X, \norm{\cdot }_X) \hookrightarrow \big(\ST^s(\Omega^c), \norm{\cdot }_{\ST^s(\Omega^c)}\big)$ uniformly, i.e. there exists $s_0\in (0,1)$ and a constant $C >0$ such that $\norm{f}_{\ST^s(\Omega^c)} \le C \norm{f}_X$ holds for all $f\in X, \ s_0<s<1$.}
		\item{ $C_b^{0,1}(\Omega^c) \cap X$ is dense in $(X, \norm{\cdot }_X)$.}
		\item{  There exists a continuous trace operator $\tr:X \to H^{1/2}(\partial\Omega)$ such that if $f\in C(\Omega^c)$ we have $\tr f = f|_{\partial\Omega}$}.			
	\end{enumerate} 
	Then 
	\begin{align*}
		\norm{f}_{\ST^{s}(\Omega^c)} \to \norm{\tr f}_{H^{1/2}(\partial \Omega)}	,\, s \to 1^-\\
		\intertext{as well as}
		\norm{f}_{L^2(\Omega^c, \tau_s)}\to \norm{\tr f}_{L^2(\partial \Omega)}
	\end{align*}
	for all $f \in X$.
\end{theorem}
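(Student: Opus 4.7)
The plan is to derive the convergence for general $f\in X$ from the Lipschitz case (Proposition \ref{prop:convergence_Lipschitz}) via a density argument. The three hypotheses (1)--(3) are tailored for exactly this: (2) supplies Lipschitz approximants, (1) controls the $\ST^s(\Omega^c)$-error of approximation uniformly in $s$, and (3) controls the $H^{1/2}(\partial\Omega)$-error on the boundary. Crucially, the uniformity of the embedding constant in (1) is precisely what decouples the approximation step from the limit $s\to 1-$.

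Concretely, fix $f\in X$ and $\varepsilon>0$. By (2), pick $f_n\in C_b^{0,1}(\Omega^c)\cap X$ with $\norm{f-f_n}_X<\varepsilon$. The reverse triangle inequality together with (1) yields
\[
\bigl|\norm{f}_{\ST^s(\Omega^c)}-\norm{f_n}_{\ST^s(\Omega^c)}\bigr|\le \norm{f-f_n}_{\ST^s(\Omega^c)}\le C\varepsilon
\]
for all $s_0<s<1$, with $C$ independent of $s$; an identical estimate holds for the single piece $\norm{\cdot}_{L^2(\Omega^c,\tau_s)}$ since it is dominated by $\norm{\cdot}_{\ST^s(\Omega^c)}$. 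Hypothesis (3) gives
\[
\bigl|\norm{\tr f}_{H^{1/2}(\partial\Omega)}-\norm{\tr f_n}_{H^{1/2}(\partial\Omega)}\bigr|\le \norm{\tr}_{X\to H^{1/2}(\partial\Omega)}\,\varepsilon,
\]
and analogously for the $L^2(\partial\Omega)$-norms via the trivial embedding $H^{1/2}(\partial\Omega)\hookrightarrow L^2(\partial\Omega)$. Since $f_n$ is bounded Lipschitz on $\Omega^c$, Proposition \ref{prop:convergence_Lipschitz} furnishes both $\norm{f_n}_{L^2(\Omega^c,\tau_s)}\to \norm{\tr f_n}_{L^2(\partial\Omega)}$ and $[f_n,f_n]_{\ST^s(\Omega^c|\Omega^c)}\to [\tr f_n,\tr f_n]_{H^{1/2}(\partial\Omega)}$ as $s\to 1-$, whence also $\norm{f_n}_{\ST^s(\Omega^c)}\to \norm{\tr f_n}_{H^{1/2}(\partial\Omega)}$.

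Combining the three estimates via a standard three-$\varepsilon$ split and then sending $\varepsilon\to 0$ produces both claims. There is no genuine obstacle; the technical heavy lifting was done in Proposition \ref{prop:convergence_Lipschitz} (through the weak convergence of the measures $\mu_s$ in Lemma \ref{lem:weak_convergence_measures}) and in the robust trace inequality underlying hypothesis (1). The only point worth emphasising is that without the uniformity in $s$ of the embedding constant in (1), one would be forced to swap a limit in the approximation index with the limit $s\to 1-$, which is not a priori legitimate and is the conceptual reason why a robust (rather than merely $s$-dependent) theory was needed in the first place.
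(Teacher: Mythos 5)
Your proposal is correct and follows the same route as the paper: approximate $f$ by a Lipschitz function via hypothesis (2), control the error uniformly in $s$ via hypothesis (1) and the boundary error via hypothesis (3), invoke \cref{prop:convergence_Lipschitz} for the approximant, and finish with a three-$\varepsilon$ triangle inequality. The observation that the $L^2(\Omega^c,\tau_s)$ and $L^2(\partial\Omega)$ estimates follow because these norms are dominated by the full $\ST^s$ and $H^{1/2}$ norms is exactly what the paper uses implicitly.
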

\begin{proof}
	Let $f \in X$ and $\varepsilon >0$. By assumption (1) and (2) there exists a function $g\in C_b^{0,1}(\Omega^c) \cap X$ such that $\norm{f-g}_{\ST^s (\Omega^c)} \le  C \norm{f-g}_{X} \le C\,\varepsilon$ for all $s_0 < s < 1$. Furthermore, it follows $ \norm{\tr g - \tr f}_{H^{1/2}(\partial\Omega)} \le c_1 \,\norm{g - f}_X \le c_1  \,\varepsilon$ by assumption (3). We apply \cref{prop:convergence_Lipschitz} and choose $s_0 < s_1<1$ large enough such that $\big\vert \norm{u}_{\ST^{s}(\Omega^c)} - \norm{ \tr g}_{H^{1/2}(\partial\Omega)} \big\vert \le \varepsilon$ as well as $\big\vert \norm{g}_{L^2(\Omega^c, \tau_s)} - \norm{ \tr g}_{L^2(\partial\Omega)} \big\vert \le \varepsilon$ for all $  s_1 \le s <1$.
	By triangle inequality, we conclude
	\begin{align*}
		\big\vert \norm{f}_{\ST^{s}(\Omega^c)}- \norm{\tr f}_{H^{1/2}(\partial\Omega)} \big\vert & \le \big\vert \norm{f}_{\ST^{s}(\Omega^c)}-  \norm{g}_{\ST^{s}(\Omega^c)} \big\vert + \big\vert \norm{g}_{\ST^{s}(\Omega^c)}-  \norm{\tr g}_{H^{1/2}(\partial\Omega)} \big\vert \\
		&\quad+ \big\vert\norm{\tr g}_{H^{1/2}(\partial\Omega)}-  \norm{\tr f}_{H^{1/2}(\partial\Omega)} \big\vert\\
		&\le (C+1+c_1)\, \varepsilon.
	\end{align*} 
	In a similar fashion, 
	\begin{align*}
		\big\vert \norm{f}_{L^2(\Omega^c, \tau_s)}- \norm{\tr f}_{L^{2}(\partial\Omega)} \big\vert & \le (C+1+c_1)\varepsilon.
	\end{align*}
\end{proof}
\begin{proposition}\label{prop:definition_E}
	Let $\Omega$ be a bounded Lipschitz domain. There exists an continuous extension operator $E: H^{1/2}(\partial\Omega) \to H^{1}(\Omega^c)$ and a continuous trace operator $\tr:H^{1}(\Omega^c) \to   H^{1/2}(\partial\Omega)$ such that for all $f \in C(\Omega^c)$ we have $\tr f = f|_{\partial\Omega}$ and $\tr \circ E = \id$.
\end{proposition}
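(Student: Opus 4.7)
The plan is to reduce the problem to the classical Gagliardo trace and extension theorems on a bounded Lipschitz domain by inserting an auxiliary outer boundary. Since $\overline{\Omega}$ is compact, I would fix an open ball (or any smooth bounded open set) $V$ with $\overline{\Omega} \subset V$ strictly, so that $D := V \setminus \overline{\Omega}$ is a bounded Lipschitz domain whose boundary $\partial D = \partial\Omega \sqcup \partial V$ is a disjoint union of two Lipschitz hypersurfaces. The classical theory then furnishes a continuous trace $\gamma_D : H^1(D) \to H^{1/2}(\partial D)$ together with a continuous right inverse $E_D : H^{1/2}(\partial D) \to H^1(D)$ with $\gamma_D \circ E_D = \id$. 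Because the two boundary components are separated by a positive distance, there is a topological decomposition $H^{1/2}(\partial D) \cong H^{1/2}(\partial\Omega) \oplus H^{1/2}(\partial V)$ under which each element is identified with the pair of its restrictions.

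For the trace, I would simply set $\tr u := \bigl(\gamma_D(u|_D)\bigr)\big|_{\partial\Omega}$. Continuity is clear because this is a composition of the norm-nonincreasing restriction $H^1(\Omega^c) \to H^1(D)$, the bounded operator $\gamma_D$, and the continuous projection onto the $\partial\Omega$-component. For $f \in C(\Omega^c) \cap H^1(\Omega^c)$ the classical trace agrees with pointwise restriction on $\partial\Omega$, so $\tr f = f|_{\partial\Omega}$ as required.

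For the extension, given $g \in H^{1/2}(\partial\Omega)$, let $\hat g \in H^{1/2}(\partial D)$ correspond to the pair $(g, 0)$ under the splitting above, put $\tilde g := E_D(\hat g) \in H^1(D)$, and define $Eg$ to equal $\tilde g$ on $D$ and $0$ on $\Omega^c \setminus V$. The main technical point is that $Eg$ defines an element of $H^1(\Omega^c)$ with $\|Eg\|_{H^1(\Omega^c)} = \|\tilde g\|_{H^1(D)} \le \|E_D\|_{\mathrm{op}}\, \|g\|_{H^{1/2}(\partial\Omega)}$. This is the standard fact that a function in $H^1(D)$ with vanishing trace on the smooth component $\partial V$ extends by zero across $\partial V$ without leaving $H^1$; testing against $\varphi \in C_c^\infty(\BR^d \setminus \overline\Omega)$ and applying the Gauss theorem on $D$ shows that the distributional gradient of $Eg$ on $\Omega^c$ is the zero-extension of $\nabla \tilde g$, with no surface contribution on $\partial V$ since $\gamma_D(\tilde g)|_{\partial V} = 0$. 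The identity $\tr \circ E = \id$ then follows from $\gamma_D(\tilde g)|_{\partial\Omega} = \hat g|_{\partial\Omega} = g$, and the only mild obstacle is this zero-extension step, which is classical.
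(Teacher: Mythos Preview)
Your argument is correct and is one of the standard ways to establish this result. The paper does not give a proof of this proposition at all; it simply writes ``We omit the proof since it is standard,'' so there is nothing to compare your approach against. Your auxiliary-annulus construction (trace and extension on the bounded Lipschitz collar $D=V\setminus\overline\Omega$, followed by zero extension across the artificial boundary $\partial V$) is exactly the kind of routine reduction the authors have in mind, and all the ingredients you invoke---the Gagliardo trace/extension pair on $D$, the splitting $H^{1/2}(\partial D)\cong H^{1/2}(\partial\Omega)\oplus H^{1/2}(\partial V)$ for disjoint boundary components, and the fact that an $H^1(D)$-function with vanishing trace on $\partial V$ extends by zero to $H^1(\Omega^c)$---are classical.
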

We omit the proof since it is standard.
\begin{proposition}\label{example:h1_pointwise_convergence}
	Let $\Omega$ be a $C^{1,1}$-domain. Then $(H^1(\Omega^c), \norm{\cdot}_{H^1(\Omega^c)})=(X,\norm{\cdot}_X)$ is admissible in \cref{th:convergence_pointwise_abstract}.
\end{proposition}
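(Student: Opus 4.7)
The plan is to verify the three hypotheses of \cref{th:convergence_pointwise_abstract} for $X = H^1(\Omega^c)$. Hypothesis $(3)$ is immediate from \cref{prop:definition_E}. Hypothesis $(2)$ is a standard density statement: any $f \in H^1(\Omega^c)$ is first extended across $\partial\Omega$ to $\tilde{f} \in H^1(\BR^d)$ (possible since $\Omega^c$ is the complement of a bounded Lipschitz domain and therefore admits a continuous extension operator), then approximated in $H^1(\BR^d)$ by $C_c^\infty(\BR^d)$ via mollification and truncation by a smooth cut-off. Restricting the approximating sequence to $\Omega^c$ produces functions in $C_b^{0,1}(\Omega^c) \cap H^1(\Omega^c)$ converging to $f$.

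The crux is hypothesis $(1)$, i.e.\ the robust embedding
\begin{equation*}
\norm{f}_{\ST^s(\Omega^c)} \le C\, \norm{f}_{H^1(\Omega^c)}
\end{equation*}
uniformly in $s \in (s_\star, 1)$. Given $f \in H^1(\Omega^c)$, extend as above to $\tilde{f} \in H^1(\BR^d)$ with $\norm{\tilde{f}}_{H^1(\BR^d)} \le C_\Omega\, \norm{f}_{H^1(\Omega^c)}$. Since $\trn \tilde{f} = \tilde{f}|_{\Omega^c} = f$, \cref{th:trace_and_extension}~$(1)$ reduces the task to proving $\norm{\tilde{f}}_{V^s(\Omega\,|\,\BR^d)} \le C\, \norm{\tilde{f}}_{H^1(\BR^d)}$ uniformly in $s$. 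For this I estimate the full $H^s(\BR^d)$-seminorm, which dominates the $V^s$-seminorm because the latter integrates over the smaller set $(\Omega^c \times \Omega^c)^c$. Splitting $\BR^d\times\BR^d$ into $\{|x-y|\le 1\}$ and $\{|x-y|>1\}$, the near-diagonal piece is bounded by $\tfrac{\omega_{d-1}}{2(1-s)}\,\norm{\nabla \tilde{f}}_{L^2(\BR^d)}^2$ via the translation estimate $\norm{\tilde{f}(\cdot+h)-\tilde{f}}_{L^2}^2 \le |h|^2 \norm{\nabla \tilde{f}}_{L^2}^2$, while the far-field piece is bounded by $\tfrac{2\omega_{d-1}}{s}\,\norm{\tilde{f}}_{L^2(\BR^d)}^2$. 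Multiplying by $\kappa_{d,s}$ and invoking $\kappa_{d,s} \asymp s(1-s)$ from \cref{prop:asymptotics_constant} absorbs the $(1-s)^{-1}$ divergence and yields the desired uniform bound.

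The main obstacle is precisely this cancellation: the $H^s$-seminorm of a fixed $H^1$ function blows up like $(1-s)^{-1}$ as $s \to 1^-$, but the normalization constant $\kappa_{d,s}$ vanishes at the matching rate $1-s$. This Bourgain--Brezis--Mironescu compensation is the sole reason the embedding $(1)$ is robust. Because the $V^s$-integration domain $(\Omega^c \times \Omega^c)^c$ sits inside $\BR^d \times \BR^d$, one can argue entirely on the whole space after extension and never needs to analyze a boundary layer directly.
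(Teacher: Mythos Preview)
Your proposal is correct and follows essentially the same approach as the paper: both verify hypothesis $(3)$ via \cref{prop:definition_E}, dismiss $(2)$ as standard, and for $(1)$ extend $f$ to $H^1(\BR^d)$, invoke the robust trace inequality \cref{th:trace_and_extension}, and then show $\norm{\tilde f}_{V^s(\Omega\,|\,\BR^d)}\lesssim \norm{\tilde f}_{H^1(\BR^d)}$ uniformly in $s$. The only difference is cosmetic: the paper obtains this last bound by citing the Fourier-side Sobolev embedding (so that $\kappa_{d,s}[\tilde f]_{H^s(\BR^d)}^2\le \norm{\tilde f}_{H^1(\BR^d)}^2$ is immediate from $(1+\abs{\xi}^2)^s\le 1+\abs{\xi}^2$), whereas you derive it directly in real space via the near/far splitting and the cancellation $\kappa_{d,s}\asymp s(1-s)$.
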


\begin{proof}
	The existence of a trace operator follows from \cref{prop:definition_E} and the density condition is clear. We now prove that the space is uniformly embedded in $\ST^s(\Omega^c)$.
	
	Take any $f\in H^1(\Omega^c)$. There exits a continuous extension operator $\tilde{E} : H^1(\Omega^c) \to H^1(\BR^d)$ since $\Omega^c$ is Sobolev extension domain, see \eg \cite{hitchhiker}. Thus $\tilde{E}f \in H^1(\BR^d)\subset H^s(\BR^d)\subset V^s(\Omega\,|\,\BR^d)$. By \cref{th:trace_and_extension}, Sobolev embeddings, see \cite[Proposition 3.4]{hitchhiker} and the continuity of $\tilde{E}$ there exist constants $c_1, c_2, c_3>0$ independent of $s$ such that 
	\begin{align*}
		\norm{f}_{\ST^s(\Omega^c)}^2 &\le c_1^2 \big([\tilde{E}f, \tilde{E}f]_{V^s(\Omega\,|\,\BR^d)} + \norm{\tilde{E}f}_{L^2(\Omega)}^2\big) \le  c_1^2  \big(\tfrac{\kappa_{d,s}}{2}[\tilde{E}f, \tilde{E}f]_{H^s(\BR^d)} + \norm{\tilde{E}f}_{L^2(\BR^d)}^2\big)\\ 
		&\le c_2 \norm{\tilde{E}f}_{H^1(\BR^d)}^2 \le c_3 \norm{f}_{H^1(\Omega^c)}^2 .
	\end{align*}
\end{proof}
\textbf{Proof of \cref{th:convergence_pointwise}:}  \cref{example:h1_pointwise_convergence} and \cref{th:convergence_pointwise_abstract} yield the result. \qed
\begin{rem}
	By \cref{th:convergence_pointwise}, for any $g\in H^{1/2}(\partial \Omega)$ there exists $f\in \ST^s(\Omega^c)$ such that 
	\begin{equation*}
		\norm{f}_{\ST^{s}(\Omega^c)} \to \norm{g}_{H^{1/2}(\partial \Omega)}.
	\end{equation*}
	This is easily obtained via $Eg\in H^1(\Omega^c)\subset\ST^{s}(\Omega^c)$ from \cref{prop:definition_E} by \cref{example:h1_pointwise_convergence}. 
\end{rem}
\subsection{Convergence of Hilbert spaces}\label{sec:hilbert_space_convergence}
In this subsection we prove the convergence of trace spaces in the sense of converging Hilbert spaces, introduced by Kuwae and Shioya in \cite{kuwae_mosco}, see \cref{def:convergence_hilbert}. As a consequence every bounded sequence in $\ST^s(\Omega^c)$ (respectively $L^2(\Omega^c,\tau_{s})$) admits a weakly convergent subsequence to some element in $H^{1/2}(\partial\Omega)$ (respectively $L^2(\partial\Omega)$), see \cref{lem:compact_embedding}. This is crucial in \cref{sec:neumann} for the nonlocal to local convergence of Neumann problems. The definition and basic properties of this notion of convergence are summarized in \cref{sec:appendix_convergence}.
\begin{theorem}\label{th:convergence_hilbert}
	Let $\{s_n\}$ be a sequence converging to $1$ from below. 
	\begin{enumerate}
		\item  The sequence of separable Hilbert spaces $\{ L^2(\Omega^c,\tau_{s_n})\}$ converges to $L^2(\partial\Omega)$ in the sense of \cref{def:convergence_hilbert}.
		\item The sequence of separable Hilbert spaces $\{ \ST^{s_n}(\Omega^c)\}$ converges to $H^{1/2}(\partial\Omega)$ in the sense of \cref{def:convergence_hilbert}.
	\end{enumerate}
\end{theorem}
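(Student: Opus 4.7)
The plan is to invoke the Kuwae--Shioya definition from \cref{sec:appendix_convergence}: for each part it suffices to exhibit a dense subspace $\mathcal{D}$ of the limit Hilbert space together with a sequence of linear linking maps $\Phi_n \colon \mathcal{D} \to H_n$ such that $\|\Phi_n u\|_{H_n} \to \|u\|_H$ for every $u \in \mathcal{D}$. Both target spaces, $L^2(\partial\Omega)$ and $H^{1/2}(\partial\Omega)$, are separable. A convenient common choice is $\mathcal{D} := H^{1/2}(\partial\Omega)$, which is trivially dense in itself and classically dense in $L^2(\partial\Omega)$.

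For the linking maps I would take $\Phi_n := E$, where $E \colon H^{1/2}(\partial\Omega) \to H^{1}(\Omega^c)$ is the continuous extension operator supplied by \cref{prop:definition_E}. By \cref{example:h1_pointwise_convergence}, $H^{1}(\Omega^c)$ embeds uniformly into $\ST^{s}(\Omega^c)$ for $s$ close to $1$, and since $\ST^{s}(\Omega^c) \subset L^2(\Omega^c, \tau_{s})$ by definition, $\Phi_n g = Eg$ lies in both $L^2(\Omega^c, \tau_{s_n})$ and $\ST^{s_n}(\Omega^c)$ for all sufficiently large $n$. Linearity of $\Phi_n$ is inherited from $E$.

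The required convergence of norms then follows directly from \cref{th:convergence_pointwise} applied to $Eg \in H^1(\Omega^c)$, together with the identity $\tr(Eg)=g$ from \cref{prop:definition_E}: for every $g \in H^{1/2}(\partial\Omega)$,
\begin{align*}
\|\Phi_n g\|_{L^2(\Omega^c, \tau_{s_n})} &\;\longrightarrow\; \|\tr(Eg)\|_{L^2(\partial\Omega)} \;=\; \|g\|_{L^2(\partial\Omega)}, \\
\|\Phi_n g\|_{\ST^{s_n}(\Omega^c)} &\;\longrightarrow\; \|\tr(Eg)\|_{H^{1/2}(\partial\Omega)} \;=\; \|g\|_{H^{1/2}(\partial\Omega)},
\end{align*}
which settles both assertions. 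There is no substantial obstacle: the analytic work has already been done in \cref{sec:pointwise}. The only points to verify are the well-definedness and linearity of the $\Phi_n$, which reduce to \cref{prop:definition_E} and \cref{example:h1_pointwise_convergence}, together with the elementary density and separability facts for $H^{1/2}(\partial\Omega)$ and $L^2(\partial\Omega)$.
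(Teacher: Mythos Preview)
Your proposal is correct and essentially identical to the paper's proof: the paper likewise takes $C = H^{1/2}(\partial\Omega)$ as the common dense subspace, defines $\Phi_n g := Eg$ via the extension operator of \cref{prop:definition_E}, and then appeals to \cref{th:convergence_pointwise} (with $\tr(Eg)=g$) for the norm convergence. The only cosmetic difference is that the paper briefly justifies separability of the $H_n$ and density of $H^{1/2}(\partial\Omega)$ in $L^2(\partial\Omega)$ explicitly, whereas you refer to these as elementary facts.
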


\begin{proof}
	We prove \textit{(1)} and \textit{(2)} together. In alignment with the notation in this section we set $H:=L^2(\partial \Omega)$ (respectively $H^{1/2}(\partial\Omega))$ and $H_n:=L^2(\Omega^c\,, \tau_{s_n})$ (respectively $\ST^{s_n}(\Omega^c)$). These spaces are separable Hilbert spaces. For $\ST^{s_n}(\Omega^c)$ this follows by \cref{lem:complete} and for $L^2(\Omega^c\, , \tau_{s_n})$ notice that 
	\begin{equation*}
		L^2(\Omega^c)\ni g \mapsto \tau_{s_n}^{-1} g = \frac{d_x^{s_n}(1+d_x)^{d+s_n}}{1-s_n} \, g \in L^2(\Omega^c\, , \tau_{s_n})
	\end{equation*}
	is an isometric isomorphism. In both cases let $C:= H^{1/2}(\partial \Omega)$. Since $C^{0,1}(\partial \Omega)$ is dense in $L^2(\partial \Omega)$, which follows easily by localizing, $C=H^{1/2}(\partial \Omega)$ is dense in $L^2(\partial \Omega)$. For any $n\in \BN$ we define the linear operator
	\begin{align*}
		\Phi_n: C&\to H_n\\
		C\ni g &\mapsto Eg,
	\end{align*}
	where $E$ is the extension operator from \cref{prop:definition_E}. This is a well defined map since $\Phi_n(g)\in H^1(\Omega^c)$ and $\Phi_n g \in  H_n$ for any $g\in C$ by \cref{prop:definition_E}. \cref{prop:definition_E} and \cref{th:convergence_pointwise} yield
	\begin{equation*}
		\lim\limits_{n\to \infty} \norm{\Phi_n g}_{H_n} = \lim\limits_{n\to \infty}  \norm{E g}_{H_n} =  \norm{g}_{H}.
	\end{equation*}
	Thus, $\{H_n\}$ converges to $H$ in sense of \cref{def:convergence_hilbert}.	
\end{proof}
\section{Convergence of Neumann Problems}\label{sec:neumann}
In this section we prove that solutions of nonlocal Neumann problems for integro-differential operators converge to solutions of Neumann problems for second order elliptic operators. We study two different questions. In \cref{th:convergence_neumann_mean_zero} we start with a sequence of solutions to nonlocal problems for operators $\CL_{s}$ which are comparable to $(-\Delta)^{s}$ with inhomogeneities and Neumann data given by functionals on $V^s_\perp(\Omega\,|\,\BR^d)$ and $\ST^s(\Omega^c)$. We prove, if the functionals are uniformly bounded in $s$, then a subsequence of solutions converges in $L^2(\Omega)$ and weakly in $ V^s_\perp(\Omega\,|\,\BR^d)$ to a solution of a local Neumann problem for a second order elliptic differential operator. We study more specific inhomogeneities and Neumann data in \cref{th:convergence_neumann_mean_zero_example}. In our second approach we study the reversed problem. We begin with a weak solution $u$ to a local Neumann problem for a symmetric, elliptic, second order differential operator in divergence form. Then we construct a sequence of nonlocal operators $\CL_{s}$ comparable to $(-\Delta)^s$, inhomogeneities and Neumann data such that the solutions to the nonlocal problems converge to $u$. This is done in \cref{th:convergence_neumann_mean_zero_contrary} and \cref{th:convergence_neumann_mean_zero_contrary_example}. Our results essentially use the compactness result \cref{th:local_compactness}, see \cite{bourgain_compactness}, \cite[Theorem 2.1]{ponce} and \cite[Theorem 5.76]{FoghemGounoue2020}. \medskip

We begin by introducing suitable conditions used throughout this section. Now $\Omega \subset \BR^d$ will always be a bounded $C^{1,1}$ domain. This condition is necessary, since we use the robust trace continuity from \cref{th:trace_and_extension}. Throughout this section let $J_s:\BR^d\times \BR^d\setminus\text{diag}\to (0,+\infty)$, $s\in(0,1)$, be symmetric and positive. The function $J_{s}$ is the kernel of an integral operator. In our approaches the kernel $J_s$ needs to be comparable to the kernel of the fractional Laplacian. We assume the following condition. There exists a constant $\Lambda\ge 1$ independent of $s$ such that for any $x,y\in \BR^d$ with $x\ne y$
\begin{equation}\label{eq:condition_nu_s}\tag{A}
	\tfrac{\kappa_{d,s}}{\Lambda}\abs{x-y}^{-d-2s}\le J_s(x,y)\le \Lambda \kappa_{d,s} \abs{x-y}^{-d-2s}.
\end{equation}
This condition is necessary to apply \cite[Theorem 5.76]{FoghemGounoue2020}, see \cite[p. 189, (G-E)]{FoghemGounoue2020}. The function $\kappa_{d,s}\abs{ \cdot }^{-d-2s}$ is the integral kernel of the fractional Laplacian $(-\Delta)^s$. For sufficiently regular functions $u:\BR^d\to \BR$ we define the nonlocal operator $\CL_s$ associated to $J_s$ by
\begin{equation}\label{eq:operators_Ls}\tag{$\CL_s$}
	\CL_s u(x):= \text{p.v.}\int\limits_{\BR^d} (u(x)-u(y))J_s(x,y)\di y.
\end{equation}
Additionally, the following bilinear form is connected to $\CL_s$ via a nonlocal Green-Gauß formula, see \cref{prop:greengauss}.
\begin{equation}\label{eq:E_s}\tag{$\CE_s$}
	\CE^s(u,v):= \tfrac{1}{2}\iint\limits_{(\Omega^c\times \Omega^c)^c} (u(x)-u(y))(v(x)-v(y))J_s(x,y)\di x \di y.
\end{equation}
This type of bilinear form also appeared in \cite{Energy_Valdinoci} by Servadei and Valdinoci and \cite{Felsinger2013} by Felsinger, Kassmann, Voigt. In the case $J_s (x,y) = J_s(x-y)$ it is a density of a L\'{e}vy measure and the operators $\CL_s$ are generators of associated L\'{e}vy processes. The benefit of the condition \eqref{eq:condition_nu_s} is that we can study nonlocal Neumann problems for $\CL_s$ in the Hilbert space $V^s(\Omega\,|\, \BR^d)$, because the forms $\CE^s$ and $[\cdot, \cdot]_{V^s(\Omega\,|\,\BR^d)}$ are comparable. Therefore, the results on the trace space $\ST^s(\Omega^c)$ are applicable.

Let $A(\cdot):\BR^d\to \BR^d\times\BR^d$ be a matrix valued function. The second order differential operator $u\mapsto -\dive \big( A(\cdot ) \nabla u(\cdot) \big)$ is called symmetric if $A(x)$ is a symmetric matrix for all $x\in \BR^d$ and elliptic if there exists a constant $\lambda\ge 1$ such that for all $\xi\in \BR^d$ and $x\in \BR^d$ 
\begin{equation}\label{eq:elliptic_matrix}
	\lambda^{-1}\abs{\xi}^2 \le (A(x)\xi )\cdot \xi \le \lambda \abs{\xi}^2.
\end{equation}
We define the energy associated to $-\text{div}(A(\cdot)\nabla)$ by
\begin{equation}\label{eq:E_A}\tag{$\CE^A$}
	\CE^A(u,v):= \int\limits_{\Omega} \big(A(x)\nabla u(x)\big)\cdot \nabla v(x) \di x.
\end{equation}
For kernels $J_{s}$ satisfying \eqref{eq:condition_nu_s} the bilinear forms $\CE^s$ converge to a bilinear form $\CE^A$ of a symmetric, elliptic second order differential operator in the limit $s\to 1-$, see \cref{th:local_compactness}.\medskip

Before we define nonlocal Neumann problems, we discuss Neumann problems for $-\dive\big(A(\cdot)\nabla\big)$. In sight of the Green-Gauß formula a weak solution is defined as follows.
\begin{Def}[Solution to local Neumann problems]\label{def:neumann_problem_mean_zero_local}
	Let $A(\cdot)$ satisfy \eqref{eq:elliptic_matrix}, $F\in H^1_\perp(\Omega)'$ and $G\in H^{1/2}(\partial \Omega)'$. We say $u\in H_\perp^1(\Omega)$ is a weak solution to the Neumann problem
	\begin{align*}
		-\dive\big(A(\cdot)\nabla u\big) &= F \text{ in }\Omega, \\
		\partial_{n_A}u:=n(\cdot)\cdot(A(\cdot)\nabla u) &= G \text{ on } \partial \Omega, \nonumber
	\end{align*}
	if 
	\begin{equation}\label{eq:local_neumann_mean_zero}
		\CE^A(u  ,v)= F(v)+ G(\trc v) \text{ for every } v \in H^1_{\perp}(\Omega). \tag{$N^{\text{loc}}_\perp$}
	\end{equation}
	Here $n(\cdot)$ is the outer normal vector on $\partial\Omega$ and $\trc :H^1(\Omega)\to H^{1/2}(\partial \Omega)$ is the classical trace operator. 
\end{Def}
The classical Neumann problem has often been studied, e.g we refer the reader to the book \cite{Neumann_Mikhailov} and the recent article \cite{Neumann_Droniou} by Droniou and V\'{a}zquez. Instead of $H^1_\perp(\Omega)$ we can use $H^1(\Omega)$ additional the compatibility assumption $F(1)+G(1)=0$. Then solutions are only unique up to an additive constant. In alignment with \cite{Valdinoci_nonlocal_derivative, Kassmann_Foghem2022}, we define the nonlocal normal derivative. 
\begin{Def}[Nonlocal normal derivative]
	For $s\in(0,1)$ and a domain $\Omega \subset \BR^d$ we define the nonlocal normal derivative corresponding to $\CL_s$ as
	\begin{equation*}
		\CN_su(y):= \text{p.v.}\int\limits_{\Omega} (u(x)-u(y)) J_s(x,y) \di x,\, y\in \Omega^c,
	\end{equation*}
	for any sufficiently regular, measurable function $u:\BR^d \to \BR$.
\end{Def}
As mentioned in the introduction, a similar operator has been introduced in \cite{Du_nonlocal_normal}. In analogy to the local case, the following nonlocal Green-Gauß formula holds. 
\begin{proposition}[Nonlocal Green-Gauß formula, {\cite[Theorem 4.9]{FoghemGounoue2020}}] \label{prop:greengauss}
	Assume $\Omega\subset \BR^d$ is open and bounded with Lipschitz boundary. Let $J_s$ satisfy \eqref{eq:condition_nu_s}. For every $u\in V^s(\Omega\,|\, \BR^d)$ with $\CL_s u \in L^2(\Omega)$ and any $v\in V^s(\Omega\,|\, \BR^d)$
	\begin{equation*}
		\int\limits_{\Omega} \big(\CL_s u(x)\big) v(x) \di x = \CE^{s}(u,v)- \int\limits_{ \Omega^c} \big( \CN_s u(y)\big) \, v(y) \di y.
	\end{equation*}
\end{proposition}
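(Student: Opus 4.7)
The proof proceeds by truncating the kernel to avoid the principal value, establishing the identity for the regularized objects by direct algebraic manipulation, and then passing to the limit. For $\varepsilon>0$, set $J_s^\varepsilon(x,y):=J_s(x,y)\1_{\{\abs{x-y}>\varepsilon\}}$ and define $\CL_s^\varepsilon$, $\CE^{s,\varepsilon}$, $\CN_s^\varepsilon$ by replacing $J_s$ with $J_s^\varepsilon$ in the respective formulas. For each fixed $\varepsilon>0$, $J_s^\varepsilon$ is bounded away from the diagonal and decays like $\abs{x-y}^{-d-2s}$ at infinity by \eqref{eq:condition_nu_s}, so all the integrals below are absolutely convergent (Cauchy--Schwarz together with $u,v\in V^s(\Omega\,|\,\BR^d)$ and the uniform bound $\int_{\BR^d}J_s^\varepsilon(x,\cdot)\le c(\varepsilon,s,d)<\infty$), and Fubini applies freely.

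For fixed $\varepsilon>0$, one writes $\int_\Omega v(x)\,\CL_s^\varepsilon u(x)\di x=\iint_{\Omega\times\BR^d}v(x)(u(x)-u(y))J_s^\varepsilon\di x\di y$ and splits the inner $y$-domain as $\BR^d=\Omega\cup\Omega^c$. On $\Omega\times\Omega$ the symmetry of $J_s^\varepsilon$ yields the standard symmetrization $\iint_{\Omega\times\Omega}v(x)(u(x)-u(y))J_s^\varepsilon=\tfrac12\iint_{\Omega\times\Omega}(u(x)-u(y))(v(x)-v(y))J_s^\varepsilon$. On $\Omega\times\Omega^c$, decomposing $v(x)=(v(x)-v(y))+v(y)$ splits the integral into two pieces: the first, combined with its mirror image on $\Omega^c\times\Omega$ via $J_s^\varepsilon(x,y)=J_s^\varepsilon(y,x)$, reconstructs precisely the missing off-diagonal contribution $\tfrac12\iint_{(\Omega\times\Omega^c)\cup(\Omega^c\times\Omega)}(u(x)-u(y))(v(x)-v(y))J_s^\varepsilon$ of $\CE^{s,\varepsilon}(u,v)$, while the second piece equals $\pm\int_{\Omega^c}v(y)\,\CN_s^\varepsilon u(y)\di y$ by a final Fubini, with sign determined by the convention for $\CN_s$. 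Rearranging gives the Green--Gauß identity at the truncated level.

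To pass to the limit $\varepsilon\to 0^+$, three convergences are needed. First, $\CE^{s,\varepsilon}(u,v)\to\CE^s(u,v)$ by Lebesgue's dominated convergence theorem, with majorant $\abs{(u(x)-u(y))(v(x)-v(y))}J_s$, integrable on $(\Omega^c\times\Omega^c)^c$ by Cauchy--Schwarz and the finiteness of $[u,u]_{V^s(\Omega\,|\,\BR^d)}$, $[v,v]_{V^s(\Omega\,|\,\BR^d)}$. Second, the hypothesis $\CL_s u\in L^2(\Omega)$ is interpreted as saying that the principal-value limit $\CL_s^\varepsilon u\to\CL_s u$ holds in $L^2(\Omega)$, so $\int_\Omega v\,\CL_s^\varepsilon u\di x\to\int_\Omega v\,\CL_s u\di x$. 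Third, the remaining integral $\int_{\Omega^c}\CN_s^\varepsilon u\cdot v\,\di y$ then converges automatically by difference, its limit serving as the definition of $\int_{\Omega^c}\CN_s u\cdot v\,\di y$. The main technical obstacle is verifying absolute integrability at the truncated level so the Fubini rearrangements are legitimate; this uses Cauchy--Schwarz in tandem with \cref{th:trace_and_extension}, which guarantees $v\in L^2(\Omega^c,\tau_s)$ and the requisite integrability on $\Omega\times\Omega^c$. An alternative route, bypassing truncation altogether, is to prove the identity first for $u,v\in C_c^\infty(\BR^d)$ (dense in $V^s(\Omega\,|\,\BR^d)$ by \cite[Theorem 3.70]{FoghemGounoue2020}) by classical manipulation of the principal value, and then extend by density using the continuity of all three pairings in $V^s(\Omega\,|\,\BR^d)^2$.
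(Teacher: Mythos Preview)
The paper does not supply its own proof of this proposition; it is quoted verbatim as \cite[Theorem 4.9]{FoghemGounoue2020} and used as a black box. There is therefore no ``paper's proof'' to compare against.

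Your truncation--symmetrization--limit scheme is the standard argument and is essentially sound. Two points deserve tightening. First, you invoke \cref{th:trace_and_extension} to justify Fubini on $\Omega\times\Omega^c$, but that theorem requires a $C^{1,1}$ boundary whereas the proposition only assumes Lipschitz; you should instead cite the remark immediately following \cref{th:trace_and_extension} (or \cref{prop:robust_trace_hardy_type}), which gives $\trn:V^s(\Omega\,|\,\BR^d)\to L^2(\Omega^c,\tau_s)$ under the Lipschitz hypothesis alone. In fact, for the truncated kernel you do not even need a trace estimate: since $\int_{\Omega^c}J_s^\varepsilon(x,y)\di y\le c(d,s)\varepsilon^{-2s}$ uniformly in $x\in\Omega$, Cauchy--Schwarz with $v\in L^2(\Omega)$ and $[u,u]_{V^s}<\infty$ suffices. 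Second, your reading of ``$\CL_s u\in L^2(\Omega)$'' as ``$\CL_s^\varepsilon u\to\CL_s u$ in $L^2(\Omega)$'' is the natural one but should be stated as the working definition rather than asserted; once that is granted, your ``by difference'' argument for the boundary term is legitimate, though it effectively defines $\int_{\Omega^c}\CN_s u\cdot v$ as a distributional pairing rather than an absolutely convergent integral. If you want the latter, an independent bound on $\CN_s u$ in $L^2(\Omega^c,\tau_s^{-1})$ (or a dual norm) would close the gap.
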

In sight of this formula we define solutions to nonlocal Neumann problems. This solution concept was also used in \cite{Kassmann_Foghem2022, Valdinoci_nonlocal_derivative}.
\begin{Def}[Solution to nonlocal Neumann problems]\label{def:neumann_problem_mean_zero}
	Let $s\in (0,1)$ and $J_{s}$ satisfy \eqref{eq:condition_nu_s}. Additionally, let $F_s\in V_{\perp}^s(\Omega\,|\, \BR^d)'$ and $G_s\in \ST^{s}(\Omega^c)'$. We say $u\in V_{\perp}^s(\Omega\,|\, \BR^d)$ is a weak solution to the Neumann problem
	\begin{align*}
		\CL_s u &= F_s \text{ in } \Omega,\\
		\CN_s u &= G_s \text{ on } \Omega^c,\nonumber
	\end{align*}
	if 
	\begin{align}\label{eq:neumann_mean_zero}
		\CE^{s}(u,v)= F_s(v)+ G_s(\trn v)\tag{$N_\perp$} \text{ for any $v\in V_{\perp}^s(\Omega\,|\, \BR^d)$.}
	\end{align}
\end{Def}
Analogously to the local case, we can solve the Neumann problem in the space $V^s(\Omega\, | \, \BR^d)$ instead of $V^s_\perp(\Omega\, | \, \BR^d)$ if we assume the compatibility assumption $F(1)+G(1) =0$. Then the solutions will only be unique up to an additive constant, see \cite[Theorem 4.9]{Kassmann_Foghem2022}. We also want to mention that $u \in V^s_\perp(\Omega\,|\,\BR^d)$ is a solution to \eqref{eq:neumann_mean_zero}  if and only if it minimizes the functional $v \mapsto \tfrac{1}{2}\CE^s(v,v) -F(v)-G(v)$ in $V^s_\perp(\Omega\,|\,\BR^d)$. This is proven in \cite[Proposition 4.7]{Kassmann_Foghem2022}. 

The following lemma connects linear functionals on $\ST^s(\Omega^c)$ with $\CN_{s}(V^s(\Omega\,|\,\BR^d))$. It is a minor modification of \cite[Theorem 4.10]{FoghemGounoue2020}.
\begin{lemma}[{\cite[Theorem 4.10]{FoghemGounoue2020}}]
	Let $J_s$ satisfy \eqref{eq:condition_nu_s}. For any $l\in \ST^s(\Omega^c)'$ there exists $w\in V^s(\Omega\,|\, \BR^d)$ such that for any $v\in C_c^\infty(\overline{\Omega}^c)$
	\begin{equation*}
		l(v)= \int\limits_{ \Omega^c} \CN_s w(y)\, v(y) \di y. 
	\end{equation*}
	In particular, if $g:\Omega^c\to \BR$ is a measurable function such that $l_g:= (g,\cdot )_{L^2(\Omega^c)}$ is a continuous functional on $\ST^s(\Omega^c)$, then there exists $w\in V^s(\Omega\,|\,\BR^d)$ such that $g=\CN_s w$ \aev on $\Omega^c$.
\end{lemma}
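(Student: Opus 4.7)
The plan is to obtain $w$ via the Lax--Milgram lemma applied to a Neumann-type variational problem on the Hilbert space $V_\perp^s(\Omega\,|\,\BR^d)$. Given $l \in \ST^s(\Omega^c)'$, define the linear functional $\Lambda: V_\perp^s(\Omega\,|\,\BR^d) \to \BR$ by $\Lambda(\phi) := l(\trn \phi)$. By \cref{th:trace_and_extension}~(1) the trace $\trn$ is a bounded linear map into $\ST^s(\Omega^c)$, so $\Lambda$ is continuous. Under assumption \eqref{eq:condition_nu_s}, the form $\CE^s$ is comparable to $[\cdot,\cdot]_{V^s(\Omega\,|\,\BR^d)}$ and therefore continuous, while a mean-zero nonlocal Poincar\'e inequality on $V_\perp^s(\Omega\,|\,\BR^d)$ (proved analogously to \cite[Theorem 3.47]{FoghemGounoue2020}) yields coercivity. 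Lax--Milgram produces a unique $w \in V_\perp^s(\Omega\,|\,\BR^d)$ with $\CE^s(w,\phi) = l(\trn \phi)$ for all $\phi \in V_\perp^s(\Omega\,|\,\BR^d)$.

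Next, I would test this identity against $v \in C_c^\infty(\overline{\Omega}^c)$. Such a $v$ vanishes on a neighborhood of $\overline{\Omega}$, hence belongs to $V_\perp^s(\Omega\,|\,\BR^d)$ and satisfies $\trn v = v|_{\Omega^c}$, so that $l(v) = \Lambda(v) = \CE^s(w,v)$. Computing $\CE^s(w,v)$ directly: the integration set $(\Omega^c\times \Omega^c)^c$ decomposes as $(\Omega\times \Omega)\cup(\Omega\times \Omega^c)\cup(\Omega^c\times \Omega)$; on $\Omega\times \Omega$ the factor $v(x)-v(y)$ vanishes, while by symmetry of $J_s$ the other two contributions combine to
\begin{equation*}
\CE^s(w,v) = \int_{\Omega^c} v(y) \int_\Omega \bigl(w(y)-w(x)\bigr)\, J_s(x,y) \di x \di y.
\end{equation*}
The use of Fubini is justified because, for a.e.\ $y\in \Omega^c$, the Cauchy--Schwarz inequality together with \eqref{eq:condition_nu_s} gives
\begin{equation*}
\int_\Omega |w(y)-w(x)|\, J_s(x,y)\di x \le \Bigl(\int_\Omega (w(y)-w(x))^2 J_s(x,y) \di x\Bigr)^{1/2} \Bigl(\int_\Omega J_s(x,y) \di x\Bigr)^{1/2},
\end{equation*}
and both factors are finite---the first by $w\in V^s(\Omega\,|\,\BR^d)$ and Fubini, the second by boundedness of $\Omega$ and integrability of $|x-y|^{-d-2s}$ for $y\in \Omega^c$. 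The inner integral is precisely $\CN_s w(y)$, which yields the first claim.

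For the second statement, apply the first part to $l = l_g$ to obtain $w\in V^s(\Omega\,|\,\BR^d)$ satisfying $\int_{\Omega^c}(g - \CN_s w)\,v\,\di y = 0$ for every $v\in C_c^\infty(\overline{\Omega}^c)$. Since $C_c^\infty(\overline{\Omega}^c)$ is dense in $L^2(\Omega^c)$ (because $\partial\Omega$ has Lebesgue measure zero), the standard variational lemma forces $g = \CN_s w$ almost everywhere on $\Omega^c$. I expect the main technical obstacle to be the coercivity of $\CE^s$ on $V_\perp^s(\Omega\,|\,\BR^d)$: this requires a mean-zero Poincar\'e-type inequality that is genuinely nonlocal, since the form controls differences of values both inside $\Omega$ and between $\Omega$ and $\Omega^c$. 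The remaining ingredients---trace continuity, the computation of $\CE^s(w,v)$ for exterior test functions, and the density argument---are standard once that estimate is in place.
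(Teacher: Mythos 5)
Your argument is correct, but it takes a genuinely different route. The paper's proof is two lines: it observes that $(\cdot,\cdot)_{L^2(\Omega)} + \CE^s(\cdot,\cdot)$ is an equivalent inner product on all of $V^s(\Omega\,|\,\BR^d)$ under \eqref{eq:condition_nu_s} and then cites \cite[Theorem 4.10]{FoghemGounoue2020} to produce the representative $w$. You instead run a self-contained Lax--Milgram argument with the bare form $\CE^s$ on the mean-zero subspace $V_\perp^s(\Omega\,|\,\BR^d)$, which requires the nonlocal Poincar\'e inequality (\cref{lem:poincare}) for coercivity---a dependency the paper's choice sidesteps, since its form is an inner product outright and thus trivially coercive. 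The two setups yield the same identity when tested against $v\in C_c^\infty(\overline{\Omega}^c)$, because the extra term $(w,v)_{L^2(\Omega)}$ in the paper's form vanishes (there $v=0$ on $\Omega$). Your construction is essentially a special case of \cref{th:solution_neumann_nonlocal} with vanishing inhomogeneity, so it is a natural alternative; the paper's route is simply leaner for this particular lemma. One small point worth tightening: in the last step the fundamental lemma of the calculus of variations needs $g-\CN_s w \in L^1_{\mathrm{loc}}(\overline{\Omega}^c)$. You have already shown $\CN_s w$ is locally integrable away from $\partial\Omega$, and $g\in L^1_{\mathrm{loc}}(\overline{\Omega}^c)$ follows from the continuity of $l_g$ on $\ST^s(\Omega^c)$ (bounded functions with compact support in $\overline{\Omega}^c$ lie in $\ST^s(\Omega^c)$, since both $\tau_s$ and $k_s$ are bounded there), but this should be stated explicitly.
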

\begin{proof}
	$(\cdot, \cdot)_{L^2(\Omega)}+ \CE_s(\cdot, \cdot)$ is an equivalent inner product on $V^s(\Omega\,|\, \BR^d)$ by condition \eqref{eq:condition_nu_s}. The result follows from \cite[Theorem 4.10]{FoghemGounoue2020}.
\end{proof} 
For the convergence of solutions we need the existence of solutions to \eqref{eq:neumann_mean_zero} with a robust bound in $V^s(\Omega\, | \, \BR^d)$ for $s\to 1-$. Therefore, we recall the robust Poincaré inequality for $V^s(\Omega\,|\, \BR^d)$ proven by Foghem in \cite{FoghemGounoue2020}. This is an essential tool for proving the existence of solutions via the Lax-Milgram lemma. Furthermore, the robust Poincaré inequality and the robust trace continuity, see \cref{th:trace_and_extension}, enables us to pick the constant $C(d,\Omega, s_\star)$ in the inequality in \cref{th:solution_neumann_nonlocal} such that it only depends on a lower bound on $s$. This robustness is crucial for the convergence of solutions.
\begin{lemma}[Robust Poincaré inequality, {\cite[Corollary 5.43]{FoghemGounoue2020}}]\label{lem:poincare}
	Let $s_*\in(0,1)$ and $s\in(s_*,1)$. There exists a constant $C=C(d,\Omega,s_*)$ such that 
	\begin{align*}
		\norm{u-\fint_{\Omega}u}_{L^2(\Omega)}^2 \le C\,[u,u]_{V^s(\Omega\,|\, \BR^d)} \text{ for every $u\in V^s(\Omega\,|\, \BR^d)$}.
	\end{align*}
\end{lemma}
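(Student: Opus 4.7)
The plan is to establish the inequality by a compactness-contradiction argument that exploits the asymptotic $\kappa_{d,s} \asymp s(1-s)$ together with the Bourgain--Brezis--Mironescu-type compactness results already cited in the introduction (and used again in \cref{sec:neumann} as \cref{th:local_compactness}). Since the result is cited from \cite[Corollary 5.43]{FoghemGounoue2020}, I will only sketch the core reasoning.

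First I would split the range $s\in(s_\star,1)$ into two regimes. On any compact subregion $s\in[s_\star,1-\delta]$ the constant $\kappa_{d,s}$ stays bounded below by a positive number depending only on $d,s_\star,\delta$, and the standard Poincaré inequality for $H^s(\Omega)$ gives a uniform bound (one can verify the $s$-continuity of the optimal constant by a routine compactness argument, again using that $\Omega$ is connected and bounded). Hence the only delicate regime is $s\to 1-$, and it suffices to prove that there exist $\delta>0$ and $C>0$ such that the inequality holds for all $s\in(1-\delta,1)$.

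Next I would argue by contradiction: suppose there is a sequence $s_n\to 1-$ and $u_n\in V^{s_n}(\Omega\,|\,\BR^d)$ with $\fint_\Omega u_n=0$, $\norm{u_n}_{L^2(\Omega)}=1$, and $[u_n,u_n]_{V^{s_n}(\Omega\,|\,\BR^d)}\to 0$. Restricting the domain of integration,
\begin{align*}
	[u_n,u_n]_{V^{s_n}(\Omega\,|\,\BR^d)}\ge \frac{\kappa_{d,s_n}}{2}\iint_{\Omega\times\Omega}\frac{(u_n(x)-u_n(y))^2}{\abs{x-y}^{d+2s_n}}\di x\di y,
\end{align*}
and by \cref{prop:asymptotics_constant} we obtain $(1-s_n)[u_n|_\Omega,u_n|_\Omega]_{H^{s_n}(\Omega)}\to 0$. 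In particular this quantity is bounded, so by the Bourgain--Brezis--Mironescu/Ponce compactness theorem (the relevant incarnation being \cref{th:local_compactness} in \cref{sec:neumann}) a subsequence of $\{u_n|_\Omega\}$ converges in $L^2(\Omega)$ to some $u\in H^1(\Omega)$; moreover the lower-semicontinuity part of that theorem forces $\int_\Omega\abs{\nabla u}^2\le\liminf_n (1-s_n)[u_n,u_n]_{H^{s_n}(\Omega)}/c =0$ (up to the universal constant from the BBM formula). Hence $\nabla u\equiv 0$ on the connected domain $\Omega$, so $u$ is constant; the normalization $\fint_\Omega u_n=0$ passes to the limit and yields $u\equiv 0$, contradicting $\norm{u}_{L^2(\Omega)}=\lim_n\norm{u_n}_{L^2(\Omega)}=1$.

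The main obstacle is Step 2, namely invoking the robust compactness: it is the nontrivial ingredient and the reason why one needs $\Omega$ Lipschitz and uses $(1-s)$ in front of the Gagliardo seminorm. Once compactness (together with the associated $\Gamma$-type lower bound identifying the limit as a member of $H^1(\Omega)$) is available, the remainder of the argument is a direct application of connectedness of $\Omega$ and the mean-zero normalization. The regime $s\in[s_\star,1-\delta]$ is handled separately because compactness is not required there; a direct continuity/compactness argument in $s$ for the classical fractional Poincaré constant on a bounded Lipschitz domain suffices.
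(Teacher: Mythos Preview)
The paper does not supply its own proof of this lemma; it is simply quoted from \cite[Corollary~5.43]{FoghemGounoue2020}. So there is nothing to compare your argument against inside the paper itself.

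Your compactness--contradiction sketch is the standard route to this result and is essentially correct. A couple of small points are worth tightening. First, the reference you give for the compactness step, \cref{th:local_compactness}, is stated in the paper as a convergence result for the bilinear forms $\CE^{s_n}$; what you actually need is the underlying $L^2(\Omega)$-precompactness of sequences with bounded $(1-s_n)[u_n]_{H^{s_n}(\Omega)}^2$ together with the $\Gamma$-liminf inequality identifying the limit as an $H^1$ function with Dirichlet energy controlled by the liminf. Those ingredients are exactly the BBM/Ponce results cited in the introduction (\cite{bourgain_compactness}, \cite{ponce}); just cite them directly rather than \cref{th:local_compactness}. Second, your treatment of the regime $s\in[s_\star,1-\delta]$ by invoking ``$s$-continuity of the optimal constant'' is itself a compactness--contradiction argument in disguise; it is cleaner (and no harder) to run a single contradiction argument for an arbitrary sequence $s_n\to s_0\in[s_\star,1]$, splitting into the cases $s_0<1$ (where compactness in $L^2(\Omega)$ follows from the uniform bound on $[u_n]_{H^{s_\star}(\Omega)}$ and the compact embedding $H^{s_\star}(\Omega)\hookrightarrow L^2(\Omega)$) and $s_0=1$ (your BBM step). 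This avoids having to justify a separate uniform bound on the fixed-$s$ Poincaré constants.
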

The next theorem proves the existence of solutions to the Neumann problem for operators $\CL_s$ which are comparable to $(-\Delta)^s$. The novelty of \cref{th:solution_neumann_nonlocal} is a uniform bound in $V^s$ depending only on a lower bound on $s$. The existence of solutions is known in the literature, we refer the reader to \cite[Theorem 3.6]{Valdinoci_nonlocal_derivative} and \cite[Theorem 4.9]{Kassmann_Foghem2022}. We apply standard techniques and pay particular attention to the independence of the constant $C=C(d,\Omega, s_\star)$ on $s$.
\begin{theorem}[Existence of solutions with a robust bound]\label{th:solution_neumann_nonlocal}
	Let $s\in(0,1)$, $G\in \ST^s(\Omega^c)'$, $F\in V_\perp^s(\Omega\,|\, \BR^d)'$ and $J_s$ satisfy \eqref{eq:condition_nu_s}. There exists a weak solution $u\in V_\perp^s(\Omega\,|\,\BR^d)$ to the problem \eqref{eq:neumann_mean_zero}. Additionally, for $s_\star \in (0,s)$, there exists a constant $C=C(d,\Omega,s_\star)>0$, such that the solution $u$ satisfies satisfying the bound
	\begin{equation*}
		\norm{u}_{V^s(\Omega\,|\, \BR^d)}\le C\big( \norm{F}_{V^s \to \BR}+ \norm{G}_{\ST^s\to \BR} \big).
	\end{equation*}
\end{theorem}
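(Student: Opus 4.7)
The plan is to apply the Lax--Milgram lemma to the bilinear form $a(u,v):=\CE^s(u,v)$ on the Hilbert space $V^s_\perp(\Omega\,|\,\BR^d)$ (see \cref{prop:Vs_separable_hilbertspaces}) with right-hand side $L(v):=F(v)+G(\trn v)$. Continuity of $a$ is immediate from the upper bound in \eqref{eq:condition_nu_s} and Cauchy--Schwarz: $|a(u,v)|\le \Lambda\,\norm{u}_{V^s(\Omega\,|\,\BR^d)}\norm{v}_{V^s(\Omega\,|\,\BR^d)}$. For coercivity, the lower bound in \eqref{eq:condition_nu_s} yields $a(u,u)\ge \Lambda^{-1}[u,u]_{V^s(\Omega\,|\,\BR^d)}$, and since $u\in V^s_\perp(\Omega\,|\,\BR^d)$ has mean zero, the robust Poincar\'e inequality \cref{lem:poincare} gives $\norm{u}_{L^2(\Omega)}^2\le C_P\,[u,u]_{V^s(\Omega\,|\,\BR^d)}$ with $C_P=C_P(d,\Omega,s_\star)$ independent of $s\in(s_\star,1)$. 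Combining these produces $a(u,u)\ge \alpha\,\norm{u}_{V^s(\Omega\,|\,\BR^d)}^2$ with $\alpha=\Lambda^{-1}(1+C_P)^{-1}$.

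Next I would control $L$ on $V^s_\perp(\Omega\,|\,\BR^d)$. The $F$-contribution is estimated directly by $\norm{F}_{V^s\to\BR}\norm{v}_{V^s(\Omega\,|\,\BR^d)}$. The $G$-contribution is where the robust trace theorem enters decisively: by \cref{th:trace_and_extension}(1) there is $C_T=C_T(d,\Omega,s_\star)$ such that $\norm{\trn v}_{\ST^s(\Omega^c)}\le C_T\norm{v}_{V^s(\Omega\,|\,\BR^d)}$ for every $s\in(s_\star,1)$, whence $|G(\trn v)|\le C_T\norm{G}_{\ST^s\to\BR}\norm{v}_{V^s(\Omega\,|\,\BR^d)}$. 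Thus $L\in V^s_\perp(\Omega\,|\,\BR^d)'$ with norm bounded by $\norm{F}_{V^s\to\BR}+C_T\norm{G}_{\ST^s\to\BR}$. Lax--Milgram then produces a unique $u\in V^s_\perp(\Omega\,|\,\BR^d)$ solving \eqref{eq:neumann_mean_zero}, together with $\norm{u}_{V^s(\Omega\,|\,\BR^d)}\le \alpha^{-1}\norm{L}_{V^s_\perp\to\BR}$, which is the advertised bound with $C=\alpha^{-1}\max\{1,C_T\}=C(d,\Omega,s_\star)$ (with the fixed constant $\Lambda$ absorbed).

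The only genuine difficulty is the uniformity in $s$ of the constant $C$; the mere existence of a solution with a non-robust bound is classical and already recorded in \cite{Valdinoci_nonlocal_derivative,Kassmann_Foghem2022}. Robustness is inherited entirely from the two $s$-uniform tools cited above: the Poincar\'e inequality \cref{lem:poincare} for coercivity and the trace estimate \cref{th:trace_and_extension}(1) for continuity of $G\circ\trn$. Thus the proof reduces to careful bookkeeping of the dependencies of these two constants, and the fact that each of them depends only on $d,\Omega$ and a lower bound $s_\star$ for $s$ is precisely what makes the conclusion useful for the subsequent localization arguments in \cref{sec:neumann}.
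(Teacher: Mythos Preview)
Your proposal is correct and follows essentially the same route as the paper: Lax--Milgram on $V^s_\perp(\Omega\,|\,\BR^d)$, with continuity of $\CE^s$ from the upper bound in \eqref{eq:condition_nu_s}, coercivity from the lower bound together with the robust Poincar\'e inequality \cref{lem:poincare}, and the $G$-term handled via the robust trace estimate \cref{th:trace_and_extension}(1). The only cosmetic difference is how the coercivity constant is assembled (the paper splits $[u,u]$ in halves before invoking Poincar\'e, whereas you use $(1+C_P)^{-1}$ directly), but the logic and the $s$-dependencies tracked are identical.
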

\begin{proof}
	$V_\perp^s(\Omega\,|\, \BR^d)$ is a closed subspace of $V^s(\Omega\,|\, \BR^d)$ and, thus, a separable Hilbert space. Notice that $V_\perp^s(\Omega\,|\,\BR^d) \ni \phi \mapsto G(\trn \phi)$ is a continuous, linear functional in $V_\perp^{s}(\Omega\,|\, \BR^d)'$ since $\trn: V^{s}(\Omega\,|\, \BR^d)\to \ST^s(\Omega^c)$ is linear and continuous by \cref{th:trace_and_extension}. Additionally, the bilinear form $\CE^{s}:V_\perp^s(\Omega\,|\, \BR^d)\times V_\perp^s(\Omega\,|\, \BR^d)\to \BR$ is continuous. To use the Lax-Milgram lemma it remains to show coercivity. Let $s_\star \in (0,s)$. For $u\in V_\perp^s(\Omega\,|\, \BR^d)$, by Poincaré inequality \cref{lem:poincare}, exists a constant $c_1=c_1(d,\Omega, s_\star)$ such that
	\begin{equation*}
		\CE^{s}(u,u)\ge \Lambda^{-1} [u,u]_{V^s(\Omega \, |\,\BR^d)}   \ge \tfrac{1}{2\Lambda} [u,u]_{V^s(\Omega \, |\,\BR^d)}  + \tfrac{c_1}{2\Lambda}\norm{u-\fint_{\Omega}u}_{L^2(\Omega)}\ge \frac{1\wedge c_1}{2\Lambda} \norm{u}_{V^s(\Omega\,|\, \BR^d)}^2.
	\end{equation*}
	Thus, the application of the Lax-Milgram lemma yields a unique element $u\in V_\perp^s(\Omega\,|\, \BR^d)$ such that \eqref{eq:neumann_mean_zero} is satisfied. Lastly, notice that 
	\begin{align*}
		\norm{u}_{V^s(\Omega\,|\, \BR^d)}^2 &\le \frac{2\Lambda}{1\wedge c_1}\CE^{s}(u,u)= \frac{2\Lambda}{1\wedge c_1}\big( F(u)+ G(\trn u)\big)\\
		&\le \frac{2\Lambda}{1\wedge c_1}\big( \norm{F}_{V^s_\perp\to \BR}\norm{u}_{V^s(\Omega\,|\, \BR^d)}+ \norm{G}_{\ST^s\to \BR}\, \norm{\trn u}_{\ST^s(\Omega^c)}\big)\\
		&\le \frac{2\Lambda (1+c_2)}{1\wedge c_1} \big( \norm{F}_{V^s_\perp\to \BR}+ \norm{G}_{\ST^s\to \BR}\big)\norm{u}_{V^s(\Omega\,|\, \BR^d)}.
	\end{align*}
	Here we used \cref{th:trace_and_extension} to estimate $\norm{\trn u}_{\ST^s(\Omega^c)} \le c_2 \norm{u}_{V^s(\Omega\,|\, \BR^d)}$, with $c_2=c_2(\Omega, s_\star)$. Thus, we conclude the result with $C:= \tfrac{2\Lambda c_2}{1\wedge c_1}$. 
\end{proof}
\subsection{Related literature}\label{sec:related_literature_neumann} Before we state our convergence theorems, we discuss related results in the literature. Foghem and Kassmann considered in \cite[Theorem 5.4]{Kassmann_Foghem2022} weak nonlocal Neumann problems of the form  
\begin{align*}
	(-\Delta)^s u_s &= f_s \text{ in } \Omega,\\
	\CN_s u_s &= \CN_{s}\varphi \text{ on } \Omega^c, 
\end{align*}
\ie $\CE^s(u,v)= (f_s,v)_{L^2(\Omega)} + (\CN_{s}\varphi, v)_{L^2(\Omega^c)} $ for all $v\in V^s_\perp(\Omega\,|\, \BR^d)$, with $f_s \in L^2(\Omega)$ and $\varphi \in C^2_b(\BR^d)$. Here the weak solution concept matches \eqref{eq:neumann_mean_zero}. If $\{f_s\}$ converges weakly to some $f \in L^2(\Omega)$ the authors have proven that $\{u_s\}$ converges in $L^2(\Omega)$ to a solution $u \in H^1_\perp(\Omega)$ of the local Neumann problem
\begin{align*}
	-\Delta u &= (f,\cdot)_{L^2(\Omega)} \text{ in } \Omega,\\
	\partial_n u &= (\partial_{n}\varphi, \cdot)_{L^2(\Omega^c)} \text{ on } \partial\Omega.
\end{align*}
Additionally, for all $v \in H^1(\BR^d)$
\begin{align*}
	[u_s,v]_{V^s(\Omega\,|\,\BR^d)} \rightarrow [u,v|_\Omega]_{H^1(\Omega)}, \text{    as } s \to 1-.
\end{align*} 
In \cite{FoghemGounoue2020} Foghem proved this convergence result for a larger class of integro-differential operators $L_s$. This includes the case where the integral kernel is comparable to the one of the fractional Laplacian, see \eqref{eq:condition_nu_s}. The limit function solves a Neumann problem with a second order elliptic differential operator in divergence form. The key ingredients are an asymptotic compactness result, see \cite{bourgain_compactness}, \cite[Theorem 2.1]{ponce}, \cite[Theorem 5.76]{FoghemGounoue2020}, and 
\begin{align}\label{eq:guy_converngence_normal_derivative}
	\int_{\Omega^c}\CN_{s} \varphi(x)v(x) \di x \rightarrow \int_{\partial\Omega} \partial_{n} \varphi(x)\, \trc  v(x) \di \sigma(x)
\end{align}
for $v \in H^1(\BR^d)$, see \cite[Lemma 5.75]{FoghemGounoue2020}.
Recall that $\trc  : H^1(\Omega) \to H^{1/2}(\partial\Omega)$ denotes the classical trace operator. This convergence has been proven for the fractional Laplacian in \cite[Proposition 5.1]{Valdinoci_nonlocal_derivative}. \cref{cor:nonlocal_compactness} allows us to considerably relax the assumption on the Neumann data using the convergence of the trace spaces $\ST^s(\Omega^c)$ to $H^{1/2}(\partial\Omega)$, see \cref{th:convergence_neumann_mean_zero_example} and \cref{rem:comparison_with_Guy}. 

An early approach to Neumann problems for the fractional Laplacian has been done by Dipierro, Ros-Oton and Valdinoci in \cite{Valdinoci_nonlocal_derivative}. Their solution concept is different from ours and the test space in \eqref{eq:neumann_mean_zero} depends on the Neumann data, see \cite[Definition 3.6, Equation (3.1)]{Valdinoci_nonlocal_derivative}. Therefore, they could not study the inhomogeneous problem. Regularity results for the homogeneous Neumann problem can be found in \cite[Theorem 1.1, Theorem 1.3]{Ros_Oton_Neumann_regularity} by Audrito, Felipe-Navarro and Ros-Oton. In \cite[Theorem 1.1]{Abatangelo_Neumann} by Abatangelo representations of $(-\Delta)^s u$ as a regional operator for functions $u$ satisfying $\CN_{s} u =0$ were proven. Existence theory for solutions to a Neumann problem for the fractional Schrödinger equation was done in \cite[Theorem 1.1]{Chen_Neumann_Schrodinger} by Chen. The case of the fractional $p$-Laplacian, including the discussion of eigenvalue problems, can be found in \cite{Neumann_nonlinear_1, Mungai_Neumann_p_Laplace} by Mungai and Proietti Lippi. Various nonlocal Neumann problems with nonlinearities were studied in \cite{Neumann_nonlinear} by Cinti and Colasuonno, \cite{Neumann_nonlinear_2} by Alves and Torres Ledesma and \cite{Neumann_nonlinear_3} by Bahrouni and Salort. In \cite{vu_trier} Frerick, Vollmann and Vu considered Neumann problems for a large class of symmetric and nonsymmetric, nonlocal integro-differential operators and proved Poincaré inequalities as well as well posedness results. Additionally, they studied Robin problems and proved a representation formula for solutions, which is a generalization of the work \cite{Abatangelo_Neumann}. For the higher order fractional Laplacian we refer the reader to \cite{Neumann_higher_order} by Barrios et al.. Mixed Dirichlet and Neumann problems have been studied in the context of peridynamics by Du, Tian and Zhou in \cite{Du_neumann_problems}. They also proved convergence results of solutions as the operators localize. Nonlocal diffusion equations for the regional fractional Laplacian with Neumann condition were studied in \cite{Cortazar_Neumann, Cortazar_2_Neumann} by Cortazar et al., \cite{Chaves_Neumann} by Chasseigne, Chaves and Rossi. For the spectral fractional Laplacian and related Neumann problems we refer the reader to \cite{Montefusco_neumann} by Montefusco, Pellacci and Verzini, \cite{Stinga_Neumann} by Stinga and Volzone. Deterministic reflections of the diffusion corresponding to the fractional Laplacian were considered \cite{Barles_2_Neumann, Barles_Neumann} by Barles et al.. The boundary condition $\partial_n (u/ d_x^{s-1}) =g$ on $\partial\Omega$ and $u =0$ on $\Omega^c$ was considered by Grubb in \cite{Grubb_Neumann}. A detailed discussion on these different approaches can be found in \cite[Section 7]{Valdinoci_nonlocal_derivative}.

\subsection{Convergence of nonlocal Neumann problems to local Neumann problems}
In order to prove the convergence of solutions to \eqref{eq:neumann_mean_zero} to solutions of \eqref{eq:local_neumann_mean_zero}, we need to show that if $G_s \in \ST^s(\Omega^c)'$ is given as the Neumann data in \eqref{eq:neumann_mean_zero} for all $s$, then there exists $G \in H^{1/2}(\partial\Omega)'$ such that $G_s \to G$ weakly as $s \to 1-$. In contrast to the approach in \cite[Lemma 5.75]{FoghemGounoue2020}, we use the convergence of $\ST^s(\Omega^c)$ to $H^{1/2}(\partial\Omega)$ in the sense of \cref{def:convergence_hilbert} and the compactness result \cref{lem:compact_embedding} to guarantee the existence of $G$. 
\begin{theorem}\label{th:nonlocal_compactness}
	We fix $s_\star\in(0,1)$. Let $g_{s}\in \ST^{s}(\Omega^c)$ (resp. $g_s\in L^2(\Omega^c, \tau_s)$) be a family of functions for $s\in(s_\star,1)$ such that 
	\begin{equation*}
	\sup\limits_{s\in(s_\star,1)} \norm{g_{s}}_{\ST^{s}(\Omega^c)}<\infty\quad \Big( \text{resp.} 	\sup\limits_{s\in(s_\star,1)} \norm{g_{s}}_{L^2(\Omega^c, \tau_s)}<\infty \Big).
	\end{equation*}
	There exists $g\in H^{1/2}(\partial \Omega)$ (resp. $g\in L^2(\partial \Omega)$) and a sequence $\{ s_n\}$ with $s_n \to 1-$ as $n\to \infty$ such that $\{g_{s_n}\}$ converges to $g$ weakly in sense of \cref{def:weak_convergence} w.r.t. $H_n= \ST^{s_n}(\Omega^c)$ (resp. $H_n= L^2(\Omega^c, \tau_{s_n})$) and $H=H^{1/2}(\partial \Omega)$ (resp. $H=L^2(\partial \Omega)$). In particular, for any $v\in H^1(\BR^d)$ 
	\begin{align*}
	\lim\limits_{n\to \infty} (g_{s_n},\trn v)_{\ST^{s_n}(\Omega^c)}&= (g,\tr (v|_{\Omega^c}))_{H^{1/2}(\partial \Omega)}\\
	\Big( \text{resp.} 	\lim\limits_{n\to \infty} (g_{s_n},\trn v)_{L^2(\Omega^c, \tau_{s_n})}&= (g,\tr (v|_{\Omega^c}))_{L^2(\partial \Omega)} \Big).
	\end{align*} 
\end{theorem}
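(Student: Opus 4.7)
The plan is to derive the theorem purely from the Kuwae--Shioya convergence of the trace spaces established in \cref{th:convergence_hilbert}, combined with the pointwise norm convergence of \cref{th:convergence_pointwise} and the general compactness principle for bounded sequences in a converging family of Hilbert spaces (\cref{lem:compact_embedding} from the appendix). I describe the argument in detail for $\ST^s(\Omega^c)$; the $L^2(\Omega^c,\tau_s)$ case proceeds verbatim.

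First, pick any sequence $s_n \to 1-$ with $s_n \in (s_\star,1)$. \cref{th:convergence_hilbert}\,(2) supplies the Kuwae--Shioya convergence $\ST^{s_n}(\Omega^c) \to H^{1/2}(\partial\Omega)$, implemented via the approximating maps $\Phi_{s_n} g := Eg$, where $E$ is the extension operator from \cref{prop:definition_E}. The uniform bound on $\norm{g_{s_n}}_{\ST^{s_n}(\Omega^c)}$ together with the abstract compactness principle then produces, after passing to a subsequence (still denoted $\{s_n\}$), an element $g \in H^{1/2}(\partial\Omega)$ with $g_{s_n} \rightharpoonup g$ weakly in the Kuwae--Shioya sense.

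For the \enquote*{in particular} statement, fix $v \in H^1(\BR^d)$ and set $w_n := \trn v = v|_{\Omega^c} \in \ST^{s_n}(\Omega^c)$ and $w := \tr(v|_{\Omega^c}) \in H^{1/2}(\partial\Omega)$. By the definition of weak convergence it suffices to show that $\{w_n\}$ converges to $w$ strongly. Using the constant approximating sequence $\tilde w := w$, the defect equals $h := w_n - \Phi_{s_n}\tilde w = v|_{\Omega^c} - E\tr(v|_{\Omega^c}) \in H^1(\Omega^c)$, and since $\tr \circ E = \id$ (\cref{prop:definition_E}) we have $\tr h = 0$. Applying \cref{th:convergence_pointwise} to $h$ yields $\norm{h}_{\ST^{s_n}(\Omega^c)} \to \norm{\tr h}_{H^{1/2}(\partial\Omega)} = 0$, which is precisely the defining condition for strong convergence. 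The claimed limit of the inner products then follows from weak--strong duality.

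No step of this argument presents a significant obstacle, since the essential analytic work is already contained in \cref{th:convergence_pointwise} and \cref{th:convergence_hilbert}. The only mildly delicate point is noticing the decomposition $v|_{\Omega^c} = E\tr(v|_{\Omega^c}) + h$ with $h$ of vanishing boundary trace, which reduces the strong convergence of $\{\trn v\}$ to a norm decay that is exactly the content of \cref{th:convergence_pointwise}. The $L^2(\Omega^c,\tau_s)$ case is handled identically, invoking the $L^2$-halves of \cref{th:convergence_pointwise} and \cref{th:convergence_hilbert}.
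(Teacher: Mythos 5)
Your proof is correct and follows essentially the same route as the paper: you obtain the weak limit from \cref{th:convergence_hilbert} and \cref{lem:compact_embedding}, and you verify strong convergence of the constant test sequence $v|_{\Omega^c}$ by applying \cref{th:convergence_pointwise} to $h = v|_{\Omega^c} - E\tr(v|_{\Omega^c})$, whose boundary trace vanishes by $\tr\circ E=\id$. The only difference is that you spell out this trace-zero decomposition explicitly where the paper compresses it to a citation of \cref{prop:definition_E} and \cref{th:convergence_pointwise}.
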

\begin{proof}
	We prove both statements together. Let $H_s:=\ST^s(\Omega^c)$ (resp. $H_s:= L^2(\Omega^c, \tau_s)$) and $H:= H^{1/2}(\partial \Omega)$ (resp. $H:=L^2(\partial \Omega)$). From \cref{th:convergence_hilbert} and \cref{lem:compact_embedding} we immediately get the existence of a sequence $(s_n)$ such that $(g_{s_n})$ converges weakly to some $g \in H$ in the sense of \cref{def:weak_convergence}. Now, we fix $v \in H^1(\BR^d)$. Notice that the constant sequence $v_n := v|_{\Omega^c} $ is in $H_{s_n}$ for all $n$ by \cref{example:h1_pointwise_convergence} and converges to $\tr (v|_{\Omega^c})$ in the sense of \cref{def:strong_convergence} by \cref{th:convergence_pointwise}. This is due to the construction of $\Phi_n$ in the proof of \cref{th:convergence_hilbert} and since
	\begin{align*}
		\limsup\limits_{n \to \infty} \norm{(E \circ \tr v_n)-v|_{\Omega^c}}_{H_{s_n}} = 0 
	\end{align*}
	by \cref{prop:definition_E} and \cref{th:convergence_pointwise}. 
\end{proof}
\begin{cor}\label{cor:nonlocal_compactness}
	Fix any sequence $\{s_n\}\subset (0,1)$ that converges to $1$ and $G_n\in \ST^{s_n}(\Omega^c)'$. Suppose $\{ G_{n}(v|_{\Omega^c}) \}$ is a Cauchy sequence in $\BR$ for every $v\in H^1(\BR^d)$ and $\sup\limits_n \norm{G_{n}}_{\ST^{s_n}\to \BR}<\infty$. There exists $G\in H^{1/2}(\partial \Omega)'$ such that for any $v\in H^1(\BR^d)$ 
	\begin{equation*}
		\lim\limits_{n\to \infty} G_{n}(v|_{\Omega^c})= G(\tr v|_{\Omega^c}).
	\end{equation*}
\end{cor}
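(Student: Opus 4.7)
The plan is to identify each functional $G_n$ with an element of $\ST^{s_n}(\Omega^c)$ via Riesz representation, apply the compactness result \cref{th:nonlocal_compactness} to extract a weakly converging subsequence, and then upgrade subsequential convergence to convergence of the full sequence using the Cauchy hypothesis.

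\textbf{Step 1 (Riesz representation and uniform bound).} Since $\ST^{s_n}(\Omega^c)$ is a Hilbert space by \cref{lem:complete}, there exists a unique $g_n \in \ST^{s_n}(\Omega^c)$ such that $G_n(\phi) = (g_n, \phi)_{\ST^{s_n}(\Omega^c)}$ for all $\phi \in \ST^{s_n}(\Omega^c)$, and moreover $\norm{g_n}_{\ST^{s_n}(\Omega^c)} = \norm{G_n}_{\ST^{s_n} \to \BR}$. The assumption $\sup_n \norm{G_n}_{\ST^{s_n}\to\BR} < \infty$ therefore yields $\sup_n \norm{g_n}_{\ST^{s_n}(\Omega^c)} < \infty$.

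\textbf{Step 2 (Extraction of a weak limit).} Fix any $s_\star \in (0,1)$ with $s_n > s_\star$ for all $n$ sufficiently large. Applying \cref{th:nonlocal_compactness} to the bounded family $\{g_n\}$, we find a subsequence $\{g_{n_k}\}$ and some $g \in H^{1/2}(\partial \Omega)$ such that $g_{n_k} \to g$ weakly in the sense of \cref{def:weak_convergence} with respect to $H_k = \ST^{s_{n_k}}(\Omega^c)$ and $H = H^{1/2}(\partial \Omega)$. Moreover, \cref{th:nonlocal_compactness} gives for every $v \in H^1(\BR^d)$
\begin{equation*}
\lim_{k\to\infty} G_{n_k}(v|_{\Omega^c}) = \lim_{k\to\infty} (g_{n_k}, v|_{\Omega^c})_{\ST^{s_{n_k}}(\Omega^c)} = (g, \tr(v|_{\Omega^c}))_{H^{1/2}(\partial \Omega)}.
\end{equation*}

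\textbf{Step 3 (Definition of $G$ and passage to the full sequence).} Define $G \in H^{1/2}(\partial \Omega)'$ by $G(h) := (g, h)_{H^{1/2}(\partial \Omega)}$, which is continuous by Cauchy-Schwarz. For any $v \in H^1(\BR^d)$, the scalar sequence $\{G_n(v|_{\Omega^c})\}$ is Cauchy by hypothesis and its subsequence $\{G_{n_k}(v|_{\Omega^c})\}$ converges to $G(\tr(v|_{\Omega^c}))$ by Step 2; consequently the full sequence converges to the same limit, which is the desired conclusion.

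\textbf{Main obstacle.} The only genuine ingredient is Step 2, where one must transfer the Hilbert-space weak compactness from the varying spaces $\ST^{s_n}(\Omega^c)$ into a statement about $H^{1/2}(\partial \Omega)$; this is precisely the content of \cref{th:nonlocal_compactness}, so nothing further is required. Note that a priori $G$ is a functional on $H^{1/2}(\partial\Omega)$ rather than on the larger space of traces of $H^1(\BR^d)$-functions, but this is harmless since $H^1(\BR^d)|_{\Omega^c}$ traces surjectively onto $H^{1/2}(\partial \Omega)$ via \cref{prop:definition_E} composed with any $H^1$-extension from $\Omega^c$ to $\BR^d$.
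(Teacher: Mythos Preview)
Your proof is correct and follows essentially the same approach as the paper: Riesz representation to identify $G_n$ with $g_n\in\ST^{s_n}(\Omega^c)$, then \cref{th:nonlocal_compactness} for subsequential weak convergence, then the Cauchy hypothesis to pass to the full sequence. The paper's proof is just a two-line sketch of exactly these steps, so your version simply fills in the details.
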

\begin{proof}
	By Riesz-representation theorem, there exists a unique $g_n\in \ST^{s_n}(\Omega^c)$ such that $G_n=(g_n,\cdot)_{\ST^{s_n}(\Omega^c)}$. The result follows from \cref{th:nonlocal_compactness}. Lastly, the original sequence converges since we assumed $\{G_n(v|_{\Omega^c})\}$ is a Cauchy sequence in $\BR$.
\end{proof}
For solutions $u_s \in V^s_\perp(\Omega\,|\,\BR^d)$ to \eqref{eq:neumann_mean_zero} the next theorem is the key tool to guarantee the existence of a limit function $u \in H^1_\perp(\Omega)$, which will be a solution to the local Neumann problem \eqref{eq:local_neumann_mean_zero}. Asymptotic compactness is crucial in the proof of this theorem, see \cite{bourgain_compactness, ponce, FoghemGounoue2020}. This theorem is a slight modification of \cite[Theorem 5.76]{FoghemGounoue2020}.
\begin{theorem}[{\cite[Theorem 5.76]{FoghemGounoue2020}}]\label{th:local_compactness}
	Let $s_\star\in(0,1)$. For any $s\in(s_\star,1)$ let $j_s:\BR^d\times \BR^d\setminus\text{diag}\to (0,\infty)$ be a kernel that satisfies \eqref{eq:condition_nu_s}. We define the symmetrization $J_s(x,y):= \tfrac{1}{2}(j_s(x,y)+j_s(y,x))$. Let $u_{s}\in V^s(\Omega\,|\, \BR^d)$ such that 
	\begin{equation*}
		\sup\limits_{s\in(s_\star,1)} \norm{u_{s}}_{V^s(\Omega\mid \BR^d)}<\infty.
	\end{equation*}
	There exists $u\in H^{1}(\Omega)$ and a sequence $\{s_n\}$ with $s_n\to 1$ as $n\to \infty$ such that $\{u_{s_n}\}$ converges to $u$ in $L^2(\Omega)$ and for any $v\in H^1(\BR^d)$ 
	\begin{equation*}
		\lim\limits_{n\to \infty} \CE^{s_n}(u_{s_n},v)= \CE^A(u, v\mid_{\Omega}), 
	\end{equation*}
	where $A(\cdot):\BR^d \times \BR^d\to \BR$ is symmetric, satisfies \eqref{eq:elliptic_matrix} and is given by $A(\cdot)=(a_{i,j}(\cdot))_{i,j}$,
	\begin{align}\label{eq:conv_A}
		a_{i,j}(x):= \lim\limits_{s\to 1-} \frac{1}{2} \int\limits_{B_\delta(0)} h_i h_j j_s(x,x+h)\di h, \,\delta>0.
	\end{align}
	The last expression is independent of $\delta$.
\end{theorem}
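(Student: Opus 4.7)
The proof combines three ingredients: an $L^2$-compactness of Bourgain-Brezis-Mironescu/Ponce type for families bounded in $V^s$, a diagonal extraction producing the second-moment coefficients $a_{ij}$, and a nonlocal Green-Gauß argument together with the trace-space compactness of \cref{cor:nonlocal_compactness} to pass to the limit in $\CE^{s_n}(u_{s_n},v)$.

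First I would establish $L^2$-compactness of $\{u_s\}$. The lower bound in \eqref{eq:condition_nu_s} combined with $\kappa_{d,s}\asymp s(1-s)$ from \cref{prop:asymptotics_constant} yields
\[
\sup_{s\in(s_\star,1)}(1-s)\iint_{\Omega\times\Omega}\frac{|u_s(x)-u_s(y)|^2}{|x-y|^{d+2s}}\,dx\,dy<\infty,
\]
which is precisely the hypothesis of the Bourgain-Brezis-Mironescu/Ponce compactness theorem (\cite{bourgain_compactness,ponce}). This supplies a subsequence, which I relabel as $u_{s_n}$, converging in $L^2(\Omega)$ to some $u\in H^1(\Omega)$ with $\|\nabla u\|_{L^2(\Omega)}^2\le\liminf_n [u_{s_n},u_{s_n}]_{V^{s_n}(\Omega\,|\,\BR^d)}$.

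Next I would extract the limit matrix $A$. For each $x\in\BR^d$ and $\delta>0$, \eqref{eq:condition_nu_s} gives the uniform estimate
\[
\Bigl|\tfrac{1}{2}\int_{B_\delta(0)} h_ih_j\,j_s(x,x+h)\,dh\Bigr|\le \tfrac{\Lambda\,\kappa_{d,s}\,\omega_{d-1}}{2(2-2s)}\,\delta^{2-2s},
\]
which is bounded in $s\in(s_\star,1)$. A Cantor diagonal procedure over a countable dense set of pairs $(x,\delta)$ then produces a further subsequence for which these second moments converge to a number $a_{ij}(x)$. Independence of $\delta$ is verified through the annular estimate $\kappa_{d,s}(\delta_2^{2-2s}-\delta_1^{2-2s})/(2-2s)\to 0$ as $s\to 1-$, and the two-sided bound \eqref{eq:condition_nu_s} propagates to the symmetry and uniform ellipticity \eqref{eq:elliptic_matrix} of $A$.

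Finally, I would pass to the limit in $\CE^{s_n}(u_{s_n},v)$. Restricting first to $v\in C_c^\infty(\BR^d)$, the nonlocal Green-Gauß formula \cref{prop:greengauss} gives
\[
\CE^{s_n}(u_{s_n},v)=\int_\Omega u_{s_n}(x)\,\CL_{s_n}v(x)\,dx+\int_{\Omega^c}u_{s_n}(y)\,\CN_{s_n}v(y)\,dy.
\]
A Taylor expansion of $v$ inside $\CL_{s_n}v$ together with the second-moment convergence yields $\CL_{s_n}v\to -\dive(A\nabla v)$ in $L^2(\Omega)$. For the boundary term, one verifies a uniform bound $\|\CN_{s_n}v\|_{\ST^{s_n}(\Omega^c)'}\lesssim \|v\|_{C^2(\BR^d)}$ and then invokes \cref{cor:nonlocal_compactness} to identify the weak limit as the functional $\gamma\mapsto\int_{\partial\Omega}(A\nabla v)\cdot n\,\gamma\,d\sigma$. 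Pairing with the strong $L^2(\Omega)$-convergence $u_{s_n}\to u$ and applying the classical Green identity on $\Omega$ yields $\CE^{s_n}(u_{s_n},v)\to\CE^A(u,v|_\Omega)$. For general $v\in H^1(\BR^d)$ I would approximate by $v_k\in C_c^\infty(\BR^d)$ with $v_k\to v$ in $H^1(\BR^d)$ and use the Cauchy-Schwarz inequality $|\CE^{s_n}(u_{s_n},v-v_k)|^2\le \CE^{s_n}(u_{s_n},u_{s_n})\,\CE^{s_n}(v-v_k,v-v_k)$ together with the BBM limit of the second factor to interchange $\lim_n$ and $\lim_k$. The main obstacle is the boundary term in the Green-Gauß identity: controlling $\CN_{s_n}v$ uniformly in the dual norm and identifying its weak limit without additional regularity on $u_{s_n}$ is only possible via the robust trace theory of \cref{sec:trace_ext} and the Hilbert-space convergence developed in \cref{sec:hilbert_space_convergence}, which is precisely the modification over \cite[Theorem 5.76]{FoghemGounoue2020}.
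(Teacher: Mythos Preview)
Your approach differs substantially from the paper's, and it has a genuine gap in the boundary term.

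The paper's proof is almost a one-line reduction. The only point of \cref{th:local_compactness} over \cite[Theorem 5.76]{FoghemGounoue2020} is to allow a possibly nonsymmetric kernel $j_s$. The observation is that the double integral defining $\CE^s$ is already symmetrized, so $\CE^s$ built with $J_s=\tfrac12(j_s(x,y)+j_s(y,x))$ coincides with the form built from $j_s$, and $J_s$ satisfies \eqref{eq:condition_nu_s}. One then cites \cite[Theorem 5.76]{FoghemGounoue2020} for symmetric kernels, the only adjustment being that the identification of $A$ in \eqref{eq:conv_A} uses $j_s$ rather than $J_s$; this is handled by adapting \cite[Theorem 5.69]{FoghemGounoue2020}. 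In particular the paper does \emph{not} use any trace theory from \cref{sec:trace_ext} or the Hilbert-space convergence from \cref{sec:hilbert_space_convergence} here; your final paragraph misidentifies the modification.

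The gap in your argument is the boundary term $\int_{\Omega^c} u_{s_n}(y)\,\CN_{s_n}v(y)\,dy$. \cref{cor:nonlocal_compactness} only tells you that $G_n(w|_{\Omega^c})\to G(\tr w|_{\Omega^c})$ for \emph{fixed} $w\in H^1(\BR^d)$; it says nothing when the argument is the varying $u_{s_n}|_{\Omega^c}$, which lies only in $\ST^{s_n}(\Omega^c)$, not in the image of a single $H^1$-function. To push your route through you would need (i) weak convergence of $\trn u_{s_n}$ in the Kuwae--Shioya sense together with the identification of the weak limit as $\trc u$, and (ii) \emph{strong} convergence of the Riesz representatives of $\CN_{s_n}v$ in $\ST^{s_n}(\Omega^c)$---not merely a uniform dual bound. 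Neither step is established, and ``pairing with the strong $L^2(\Omega)$-convergence $u_{s_n}\to u$'' does not touch the values of $u_{s_n}$ on $\Omega^c$. A secondary issue: your Cantor diagonal over a countable set of $(x,\delta)$ does not produce a measurable matrix field $A(\cdot)$, and in any case the statement asserts a full limit $s\to 1-$ in \eqref{eq:conv_A}, not a subsequential one.
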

\begin{proof}
	Firstly, the symmetrization $J_s$ obviously satisfies \eqref{eq:condition_nu_s}. In \cite[Theorem 5.76]{FoghemGounoue2020} the result has been proven for symmetric $j_s$. Recall that $\CE^s(\cdot, \cdot)$ is equipped with the kernel $J_s$. Due to the symmetry of the double integral
	\begin{equation*}
		\CE^s(u,v)= \frac{1}{2} \iint\limits_{(\Omega^c\times \Omega^c)^c} (u(x)-u(y))(v(x)-v(y))j_s(x,y)\di y \di x.
	\end{equation*}
	Therefore, the symmetrization defines the same bilinear form. By adapting the proof of \cite[Theorem 5.69]{FoghemGounoue2020} yields for $u\in H^1(\Omega)$
	\begin{equation}\label{eq:replacement_th_5.56}
		\lim\limits_{s\to 1-}\frac{1}{2} \iint\limits_{\Omega\times \Omega} (u(x)-u(y))^2J_s(x,y)\di y \di x = \lim\limits_{s \to 1-} \frac{1}{2} \iint\limits_{\Omega\times \Omega} (u(x)-u(y))^2j_s(x,y)\di y \di x = \CE^A(u,u).
	\end{equation}
	Now, \cref{th:local_compactness} follows with the same arguments as in the proof of \cite[Theorem 5.76]{FoghemGounoue2020}, the only essential difference being that we use \eqref{eq:replacement_th_5.56} as a replacement for \cite[Theorem 5.69]{FoghemGounoue2020}.
\end{proof}
The next corollary guarantees a weak limit for the inhomogeneities in \eqref{eq:neumann_mean_zero} as $s$ approaches $1$ from below. 
\begin{cor}\label{cor:local_compactness}
	Fix any sequence $\{s_n\}\subset (0,1)$ that converges to $1$ from below and $F_n \in V_\perp^{s_n}(\Omega\,|\, \BR^d)'$. Suppose $\{F_{n}(v)\}$ is a Cauchy sequence in $\BR$ for every $v\in H^1(\BR^d)\cap L_\perp^2(\Omega)$ and $\sup\limits_n \norm{F_n}_{V_\perp^{s_n}\to \BR}<\infty$. There exists $F\in H_\perp^{1}(\Omega)'$ such that 
	\begin{equation*}
		F_n(v)\to F(v|_{\Omega})
	\end{equation*}
	for any $v\in H^1(\BR^d)\cap L_\perp^2(\Omega)$. 
\end{cor}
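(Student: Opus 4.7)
The plan is to mimic the argument used for \cref{cor:nonlocal_compactness}, with the local asymptotic compactness result \cref{th:local_compactness} taking over the role played by \cref{th:nonlocal_compactness}. First, since $(V_\perp^{s_n}(\Omega\,|\,\BR^d), (\cdot,\cdot)_{V^{s_n}(\Omega\,|\,\BR^d)})$ is a Hilbert space by \cref{prop:Vs_separable_hilbertspaces}, I would invoke the Riesz representation theorem to pick $u_n \in V_\perp^{s_n}(\Omega\,|\,\BR^d)$ with
\[
F_n(v) = (u_n,v)_{L^2(\Omega)} + [u_n,v]_{V^{s_n}(\Omega\,|\,\BR^d)} \quad \text{for every } v \in V_\perp^{s_n}(\Omega\,|\,\BR^d),
\]
and $\norm{u_n}_{V^{s_n}(\Omega\,|\,\BR^d)} = \norm{F_n}_{V_\perp^{s_n}\to \BR}$. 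The standing hypothesis then gives a uniform bound $\sup_n \norm{u_n}_{V^{s_n}(\Omega\,|\,\BR^d)} < \infty$.

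Next, the kernel $J_s(x,y) := \kappa_{d,s}\abs{x-y}^{-d-2s}$ trivially satisfies \eqref{eq:condition_nu_s} with $\Lambda = 1$, and the bilinear form $\CE^s$ associated to it coincides with $[\cdot,\cdot]_{V^s(\Omega\,|\,\BR^d)}$. Hence \cref{th:local_compactness} applies: after passing to a subsequence (still labelled $n_k$) one obtains $u \in H^1(\Omega)$ such that $u_{n_k} \to u$ in $L^2(\Omega)$ and, for every $v \in H^1(\BR^d)$,
\[
[u_{n_k},v]_{V^{s_{n_k}}(\Omega\,|\,\BR^d)} \longrightarrow \CE^A(u,v|_\Omega),
\]
where $A(\cdot)$ is given by \eqref{eq:conv_A}. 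The spatial homogeneity and symmetry of $J_s$, combined with the well-known localization identity $\lim_{s\to 1-}[w,w]_{V^s(\Omega\,|\,\BR^d)} = \int_\Omega \abs{\nabla w}^2 \di x$ for $w \in H^1(\BR^d)$ recalled in the introduction, together with polarization, force $A \equiv I_d$ and therefore $\CE^A = [\cdot,\cdot]_{H^1(\Omega)}$.

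Finally, because $\int_\Omega u_n = 0$ for every $n$ passes to the $L^2$-limit, $u \in H^1_\perp(\Omega)$, so I may define $F \in H^1_\perp(\Omega)'$ by $F(w) := (u,w)_{L^2(\Omega)} + [u,w]_{H^1(\Omega)}$. For $v \in H^1(\BR^d) \cap L_\perp^2(\Omega)$ the restriction $v|_\Omega$ belongs to $H^1_\perp(\Omega)$, and combining the $L^2$-convergence $u_{n_k} \to u$ with the convergence of the quadratic forms yields $F_{n_k}(v) \to F(v|_\Omega)$. The hypothesis that $\{F_n(v)\}$ is Cauchy in $\BR$ then forces the full sequence to share this limit. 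The only genuinely nontrivial input is \cref{th:local_compactness}; identifying $A$ with the identity is the sole computational obstacle, and it is sidestepped by invoking the Bourgain--Brezis--Mironescu type pointwise limit of $[\cdot,\cdot]_{V^s(\Omega\,|\,\BR^d)}$ rather than computing the integrals in \eqref{eq:conv_A} by hand.
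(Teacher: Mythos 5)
Your proposal is correct and follows essentially the same route as the paper: Riesz representation in $V_\perp^{s_n}(\Omega\,|\,\BR^d)$, a uniform bound on the representatives, passage through \cref{th:local_compactness} with the fractional-Laplacian kernel, mean-zero preservation in the $L^2$-limit, and the Cauchy hypothesis to upgrade from subsequential to full-sequence convergence. The only cosmetic difference is that you make the identification $A\equiv I_d$ explicit via the Bourgain--Brezis--Mironescu limit, whereas the paper's proof silently absorbs this fact when it asserts $(f_{n_k},v)_{V^{s_{n_k}}(\Omega\,|\,\BR^d)}\to (f,v)_{H^1(\Omega)}$.
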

\begin{proof}
	By Riesz-representation theorem, there exist $f_n\in V_{\perp}^{s_n}(\Omega\,|\, \BR^d)$ such that $F_n = (f_n, \cdot)_{V^{s_n}(\Omega\,|\,\BR^d)}$. Since $\norm{f_n}_{V^{s_n}(\Omega\,|\, \BR^d)}= \norm{F_n}_{V_\perp^{s_n}\to \BR}$ is bounded in $n$, \cref{th:local_compactness} yield the existence of $f\in H^1(\Omega)$ and a subsequence $\{n_k\}$ such that $f_{n_k}\to f$ in $L^2(\Omega)$ and $(f_{n_k}, v)_{V^{s_{n_k}}(\Omega\,|\,\BR^d)} \to (f,v)_{H^1(\Omega)}$ for $v\in H^1(\BR^d)\cap L^2_\perp(\Omega)$. Since $f_{n_k}\to f$ in $L^2(\Omega)$ and $f_{n_k}\in L^2_\perp(\Omega)$, we have $f\in H^1_\perp(\Omega)$. By the Cauchy condition, the convergence holds for the original sequence. We set $F:= (f,\cdot)_{H^1(\Omega)}\in H_{\perp}^1(\Omega)'$. 
\end{proof}
Now we can prove our first convergence result.
\begin{theorem}[Convergence of Neumann Problems I]\label{th:convergence_neumann_mean_zero}
	Let $\{s_n\}$ be a sequence converging to $1$ from below and $J_{s_n}$ satisfy \eqref{eq:condition_nu_s}. Fix $G_n\in \ST^{s_n}(\Omega^c)'$ for all $n\in \BN$ such that 
	\begin{itemize}
		\item[($N_1$)]{  $\{G_n(v|_{\Omega^c})\}$ is a Cauchy sequence in $\BR$ for every $v\in H^1(\BR^d)$.				}
		\item[($N_2$)]{ $\sup\limits_{n} \norm{G_n}_{\ST^{s_n}\to \BR}<\infty $ }.
	\end{itemize}
	Let $F_n\in V_\perp^{s_n}(\Omega\,|\, \BR^d)'$ for all $n\in \BN$ such that 
	\begin{itemize}
		\item[($I_1$)]{$\{F_n(v)\} $  is a Cauchy sequence in $\BR$ for every $v\in H^1(\BR^d)\cap L_\perp^2(\Omega)$.	 }
		\item[($I_2$)]{ $\sup\limits_{n} \norm{F_n}_{V_\perp^{s_n}\to \BR}<\infty $ }.
	\end{itemize}
	There exist $F\in H_\perp^1(\Omega)'$, $G\in H^{1/2}(\partial \Omega)'$ and $u\in H_\perp^1(\Omega)$ solving \eqref{eq:local_neumann_mean_zero} with $A$ given by \eqref{eq:conv_A}. Additionally, the weak solutions $u_{n}\in V_\perp^{s_n}(\Omega\,|\, \BR^d)$ to \eqref{eq:neumann_mean_zero}, i.e. $\CL_{s_n} u_{n} = F_n$ in $\Omega$ and $\CN_{s_n} u_{n} = G_n$ on $\Omega^c$ converge to $u$ in $L^2(\Omega)$
	and
	\begin{align*}
		\CE^{s_n}(u_{n}, v) \to \CE^A(u,v|_{\Omega})
	\end{align*}
	for all $v\in H^1({\BR^d})\cap L_\perp^2(\Omega)$.
\end{theorem}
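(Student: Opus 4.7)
The plan is to assemble the result from three compactness statements already proved earlier: Corollary \ref{cor:nonlocal_compactness} for the Neumann data, Corollary \ref{cor:local_compactness} for the inhomogeneities, and Theorem \ref{th:local_compactness} for the solutions themselves, using the robust a priori bound of Theorem \ref{th:solution_neumann_nonlocal} to produce uniformly bounded solutions in the first place.

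First I would fix $s_\star := \inf_n s_n > 0$ and invoke Theorem \ref{th:solution_neumann_nonlocal} to obtain, for every $n$, a weak solution $u_n \in V_\perp^{s_n}(\Omega\,|\,\BR^d)$ to \eqref{eq:neumann_mean_zero}, together with the bound $\norm{u_n}_{V^{s_n}(\Omega\,|\,\BR^d)} \le C(\norm{F_n}_{V_\perp^{s_n}\to\BR} + \norm{G_n}_{\ST^{s_n}\to\BR})$ with $C = C(d,\Omega,s_\star)$ independent of $n$. Assumptions ($I_2$) and ($N_2$) then make $\{\norm{u_n}_{V^{s_n}(\Omega\,|\,\BR^d)}\}$ uniformly bounded. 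Applying Theorem \ref{th:local_compactness} yields $u\in H^1(\Omega)$ and a subsequence (still denoted $u_n$) with $u_n \to u$ in $L^2(\Omega)$ and $\CE^{s_n}(u_n,v) \to \CE^A(u,v|_\Omega)$ for every $v\in H^1(\BR^d)$, where $A$ is the matrix field from \eqref{eq:conv_A}. Since $u_n$ has mean zero in $\Omega$ and $u_n \to u$ in $L^2(\Omega)$, we obtain $u \in H^1_\perp(\Omega)$. In parallel, Corollary \ref{cor:nonlocal_compactness} applied to $\{G_n\}$ (using ($N_1$), ($N_2$)) produces $G \in H^{1/2}(\partial\Omega)'$ with $G_n(v|_{\Omega^c}) \to G(\tr v|_{\Omega^c})$ for every $v\in H^1(\BR^d)$, and Corollary \ref{cor:local_compactness} applied to $\{F_n\}$ (using ($I_1$), ($I_2$)) produces $F\in H^1_\perp(\Omega)'$ with $F_n(v) \to F(v|_\Omega)$ for every $v\in H^1(\BR^d)\cap L^2_\perp(\Omega)$.

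Next I would pass to the limit in the weak formulation. For any $v \in H^1(\BR^d)\cap L^2_\perp(\Omega)$ the restriction lies in $V_\perp^{s_n}(\Omega\,|\,\BR^d)$ for every $n$ (by Sobolev embedding and the comparison \eqref{eq:condition_nu_s}), hence it is an admissible test function, and the defining identity $\CE^{s_n}(u_n,v) = F_n(v) + G_n(\trn v)$ passes in the limit to $\CE^A(u,v|_\Omega) = F(v|_\Omega) + G(\tr v|_{\Omega^c})$. To upgrade this to arbitrary $w\in H^1_\perp(\Omega)$, I use that $\Omega$ is a $C^{1,1}$-domain and therefore a Sobolev extension domain: picking a continuous extension $\tilde{E}w \in H^1(\BR^d)$ of $w$, which still has mean zero on $\Omega$ and whose trace on $\partial\Omega$ agrees with $\trc w$, the previous identity applied to $v = \tilde{E}w$ gives $\CE^A(u,w) = F(w) + G(\trc w)$ for every $w\in H^1_\perp(\Omega)$. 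Hence $u$ solves \eqref{eq:local_neumann_mean_zero}. Finally, because $A$ is symmetric and satisfies \eqref{eq:elliptic_matrix}, $\CE^A$ is coercive on $H^1_\perp(\Omega)$ by the Poincaré inequality, so the solution is unique; a standard subsequence-of-subsequences argument then promotes the subsequential convergence to convergence of the full sequence.

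The main obstacle I anticipate is nothing deep but rather the bookkeeping needed to ensure that the test space $H^1(\BR^d)\cap L^2_\perp(\Omega)$, which arises naturally as the intersection of all $V_\perp^{s_n}$ spaces, is rich enough to probe every $w\in H^1_\perp(\Omega)$ in the limiting identity; this is where the $C^{1,1}$-regularity of $\partial\Omega$ is used via the Sobolev extension operator. The combination of the robust trace theorem (Theorem \ref{th:trace_and_extension}) and the Mosco/Kuwae--Shioya convergence machinery encoded in Corollary \ref{cor:nonlocal_compactness} is precisely what makes the boundary term $G_n(\trn v) \to G(\trc v)$ legitimate in the limit; without robustness of the trace constant this step would break.
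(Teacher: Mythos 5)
Your proposal is correct and follows essentially the same route as the paper: a uniform a priori bound on $\{u_n\}$ from Theorem \ref{th:solution_neumann_nonlocal}, compactness via Theorem \ref{th:local_compactness}, weak limits of $F_n$ and $G_n$ via Corollaries \ref{cor:local_compactness} and \ref{cor:nonlocal_compactness}, passage to the limit in the weak formulation tested against $H^1(\BR^d)\cap L^2_\perp(\Omega)$, an upgrade to all of $H^1_\perp(\Omega)$ using the Sobolev extension property of $\Omega$, and a subsequence–of–subsequences argument based on uniqueness. The only cosmetic differences are the ordering of the steps (the paper extracts $F,G$ before producing the $u_n$) and your explicit remark that one should take $s_\star=\inf_n s_n>0$, which the paper leaves implicit.
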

\begin{rem}
	\begin{enumerate}
		\item{In sight of \cref{th:nonlocal_compactness} and \cref{th:local_compactness} it is clear that one can drop the conditions ($N_1$) and ($I_1$) and receive the statement in \cref{th:convergence_neumann_mean_zero} for a subsequence.   }
		\item{We can consider the linear functionals $ V_\perp^s(\Omega\mid \BR^d) \ni v \mapsto (h_s, v \mid_{\Omega})_{L^2(\Omega)} $ in \cref{th:convergence_neumann_mean_zero} as inhomogeneities in \eqref{eq:neumann_mean_zero}, where $\{h_s\}$ is a bounded sequence in $L^2(\Omega)$. We set $F_s(v):= (h_s, v|_{\Omega})_{L^2(\Omega)}$ for $v\in V_\perp^{s}(\Omega\,|\, \BR^d)$. $F_s$ satisfies the condition $(N_2)$ since
			\begin{align*}
				\norm{h_s}_{L^2(\Omega)} = \sup_{ \norm{v}_{L^2(\Omega) }\le 1} \big| \int_{\Omega} h_s v  \big| \ge \sup_{\substack{ v\in V_\perp^s(\Omega\,|\, \BR^d)\\ \norm{v}_{V^s(\Omega\mid \BR^d)} \le 1}} \big| \int_{\Omega} h_s v\mid_{\Omega}  \big| = \norm{F_s}_{V_\perp^s\to \BR}.
			\end{align*} Thus the condition $(N_2)$ is more general than the assumptions on the inhomogeneity in \cite[Theorem 5.4]{Kassmann_Foghem2022}.}
	\end{enumerate}
\end{rem}

\begin{proof}[Proof of \cref{th:convergence_neumann_mean_zero}]
	Under conditions $(N_1), (N_2),(I_1), (I_2)$ it follows by \cref{cor:nonlocal_compactness} and \cref{cor:local_compactness} that there exists $G \in H^{1/2}(\partial\Omega)'$ and $F\in H_\perp^1(\Omega)'$ such that we have for all $v \in H^1(\BR^d)\cap L_\perp^2(\Omega)$
	\begin{equation*}
		\lim\limits_{n\to \infty} G_{n}(v|_{\Omega^c})= G(\tr v|_{\Omega^c})
	\end{equation*}
	as well as
	\begin{equation*}
		\lim\limits_{n\to \infty} F_n(v)= F(v|_{\Omega}).
	\end{equation*}
	Now let $u_{n} \in  V_\perp^{s_n}(\Omega\,|\,\BR^d)$ be the weak solution to the Neumann problem \eqref{eq:neumann_mean_zero} from \cref{th:solution_neumann_nonlocal}, \ie
	\begin{align*}
		\CE^{s_n}(u_{n}, v) = F_n(v) + G_n(v|_{\Omega^c})
	\end{align*}
	for all $v\in V_\perp^{s_n}(\Omega\,|\,\BR^d)$. By \cref{th:solution_neumann_nonlocal}, there exists a constant $c_1=c_1(d, \Omega)$ such that
	\begin{align*}
		\sup_{n} \norm{u_{n}}_{V^{s_n}(\Omega\,|\, \BR^d)} \le c_1\,\sup_{n} \big( \norm{F_n}_{V^{s_n}_\perp\to \BR}+ \norm{G_n}_{\ST^{s_n}\to \BR} \big) < \infty.
	\end{align*}
	By \cref{th:local_compactness}, there exists $u\in H^{1}(\Omega)$ and a subsequence $\{s_{n_k}\}$ such that for any $v\in H^1(\BR^d)$
	\begin{equation*}
		\lim\limits_{k\to \infty}\mathcal{E}^{s_{n_k}}(u_{n_k},v)= \mathcal{E}^A(u,v|_{\Omega})
	\end{equation*}
	and $\{u_{n_k}\}$ converge to $u$ in $L^2(\Omega)$. Thus, $\int_{\Omega}u=0$ since $\int_{\Omega} u_{n_k} =0$ for all $n$. Finally, we conclude for all $ v\in H^1(\BR^d)\cap L_\perp^2(\Omega)$ the equality
	\begin{align*}
		\mathcal{E}^A(u,v|_{\Omega}) =   F(v|_{\Omega})+G(\tr( v|_{\Omega^c})).
	\end{align*}
	Since $\Omega$ is a Sobolev extension domain, there exists $\tilde{v}\in H^1(\BR^d) \cap L_\perp^2(\Omega)$ for any $v\in H_\perp^1(\Omega)$ such that $\tilde{v}=v$ in $\Omega$. By construction of the traces operators $\tr, \trc $, the identity $\tr (\tilde{v}|_{\Omega^c})= \trc  v$ holds. Thus, $u$ is the unique weak solution of \eqref{eq:local_neumann_mean_zero}. To argue the convergence of the original sequence $\{u_{n}\}$, we choose an arbitrary subsequence and repeat the procedure above. The result follows from the uniqueness of the solution $u$.
\end{proof}
In applications Neumann problems are typically studied with inhomogeneities and Neumann data from $L^2$ spaces. The following theorem is a convergence result in this setup. A crucial tool is the convergence of $L^2(\Omega^c,\tau_{s})$ to $L^2(\partial\Omega)$ in sense of \cref{def:convergence_hilbert}.
\begin{theorem}[Convergence of Neumann Problems II]\label{th:convergence_neumann_mean_zero_example}
	Let $\{s_n\}$ be a sequence converging to $1$ from below, $J_{s_n}$ be symmetric kernels satisfying \eqref{eq:condition_nu_s} and $g_{n} \in L^2(\Omega^c, \tau_{s_n}^{-1})$ such that 
	\begin{enumerate}
		\item[$(N_3)$]{ $\{(g_{n}, v)_{L^2(\Omega^c)}\} $  is a Cauchy sequence in $\BR$ for every $v\in H^1(\BR^d) \cap L^2_\perp(\Omega)$, }
		\item[$(N_4)$]{  $\sup_{n} \norm{g_{n}}_{L^2(\Omega^c, \tau_{s_n}^{-1})} <\infty$.}	
	\end{enumerate}
	Furthermore, let $f_{n} \in L^2(\Omega)$ converge weakly in $L^2(\Omega)$ to some $f \in L^2(\Omega)$. There exist $g\in L^2(\partial \Omega)$ and a unique function $u\in H_\perp^1(\Omega)$ solving \eqref{eq:local_neumann_mean_zero} with $A(\cdot)$ given by \eqref{eq:conv_A}, inhomogeneity $(f,\cdot )_{L^2(\Omega)}$ and Neumann data $(g, \cdot )_{L^2(\partial\Omega)}$. Additionally, let $u_{n}\in V_\perp^{s_n}(\Omega\,|\, \BR^d)$ be the unique weak solutions to \eqref{eq:neumann_mean_zero} with $\CL_{s_n}$ having the kernel $J_{s_n}$, inhomogeneity $(f_{n}, \cdot )_{L^2(\Omega)}$ and Neumann data $  (g_{n}, \cdot )_{L^2(\Omega^c)}$. Then $u_n$ converges to $u$ in $L^2(\Omega)$ and
	\begin{align*}
		\CE^{s_n}(u_{n}, v) \to \mathcal{E}^A(u,v|_{\Omega})
	\end{align*}
	for all $v\in H^1({\BR^d})\cap L_\perp^2(\Omega)$.
\end{theorem}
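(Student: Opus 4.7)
The plan is to reduce to \cref{th:convergence_neumann_mean_zero} and then identify the limiting functionals explicitly. For each $n$ set
\[
F_n(v) := (f_n, v|_{\Omega})_{L^2(\Omega)} \quad (v \in V_\perp^{s_n}(\Omega\,|\,\BR^d)), \qquad G_n(w) := (g_n, w)_{L^2(\Omega^c)} \quad (w \in \ST^{s_n}(\Omega^c)).
\]
First I would check the four hypotheses of \cref{th:convergence_neumann_mean_zero}. Weak convergence of $\{f_n\}$ in $L^2(\Omega)$ gives $(I_1)$ and, together with the resulting uniform bound $\sup_n \norm{f_n}_{L^2(\Omega)} < \infty$, also $(I_2)$ via Cauchy--Schwarz. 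For $(N_2)$ the weighted Cauchy--Schwarz inequality combined with the elementary embedding $\norm{w}_{L^2(\Omega^c,\tau_{s_n})} \le \norm{w}_{\ST^{s_n}(\Omega^c)}$ built into \cref{def:our_trace_space} yields
\[
|G_n(w)| = \Bigabs{\int_{\Omega^c} g_n(x) w(x)\,\di x} \le \norm{g_n}_{L^2(\Omega^c,\tau_{s_n}^{-1})}\,\norm{w}_{L^2(\Omega^c,\tau_{s_n})} \le \norm{g_n}_{L^2(\Omega^c,\tau_{s_n}^{-1})}\,\norm{w}_{\ST^{s_n}(\Omega^c)},
\]
so $\sup_n \norm{G_n}_{\ST^{s_n}\to \BR} < \infty$ by $(N_4)$. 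For $(N_1)$, fix once and for all $\phi \in H^1_0(\Omega)$ with $\int_\Omega \phi = 1$. Given $v \in H^1(\BR^d)$, the shift $\tilde v := v - \big(\int_{\Omega} v\big)\phi$ lies in $H^1(\BR^d) \cap L^2_\perp(\Omega)$ and satisfies $\tilde v|_{\Omega^c} = v|_{\Omega^c}$ (since $\phi$ vanishes on $\Omega^c$), so $G_n(v|_{\Omega^c}) = G_n(\tilde v|_{\Omega^c})$ is Cauchy by $(N_3)$.

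Applying \cref{th:convergence_neumann_mean_zero} produces functionals $F \in H^1_\perp(\Omega)'$, $G \in H^{1/2}(\partial\Omega)'$ and a function $u \in H^1_\perp(\Omega)$ solving \eqref{eq:local_neumann_mean_zero} with these limiting data, such that $u_n \to u$ in $L^2(\Omega)$ and $\CE^{s_n}(u_n, v) \to \CE^A(u, v|_\Omega)$ for every $v \in H^1(\BR^d) \cap L^2_\perp(\Omega)$; uniqueness of $u$ follows from standard Lax--Milgram applied to $\CE^A$ on $H^1_\perp(\Omega)$ in view of \eqref{eq:elliptic_matrix} and Poincar\'e's inequality. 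Passing to the limit in $F_n(v) = (f_n,v|_\Omega)_{L^2(\Omega)}$ using the weak convergence $f_n \rightharpoonup f$ gives $F(v|_\Omega) = (f,v|_\Omega)_{L^2(\Omega)}$ on the dense set of restrictions of $H^1(\BR^d)\cap L^2_\perp(\Omega)$ functions, hence $F = (f,\cdot)_{L^2(\Omega)}$ on $H^1_\perp(\Omega)$.

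The main obstacle is identifying $G$ as an $L^2(\partial\Omega)$-pairing. To this end I would recast $g_n$ as a bounded sequence in the weighted space $L^2(\Omega^c, \tau_{s_n})$ by setting $\tilde g_n := g_n/\tau_{s_n}$; direct computation gives
\[
\norm{\tilde g_n}_{L^2(\Omega^c,\tau_{s_n})}^2 = \int_{\Omega^c} \frac{|g_n(x)|^2}{\tau_{s_n}(x)^{2}}\,\tau_{s_n}(x)\,\di x = \norm{g_n}_{L^2(\Omega^c,\tau_{s_n}^{-1})}^2,
\]
uniformly bounded by $(N_4)$. The $L^2$-version of \cref{th:nonlocal_compactness} then yields a subsequence $\{s_{n_k}\}$ and a function $g \in L^2(\partial\Omega)$ with $(\tilde g_{n_k}, v|_{\Omega^c})_{L^2(\Omega^c,\tau_{s_{n_k}})} \to (g, \tr(v|_{\Omega^c}))_{L^2(\partial\Omega)}$ for every $v \in H^1(\BR^d)$. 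Since the weighted pairing collapses to the unweighted one, $(\tilde g_{n_k}, v|_{\Omega^c})_{L^2(\Omega^c,\tau_{s_{n_k}})} = (g_{n_k}, v|_{\Omega^c})_{L^2(\Omega^c)} = G_{n_k}(v|_{\Omega^c})$, and the full sequence $G_n(v|_{\Omega^c}) \to G(\tr v|_{\Omega^c})$ by the previous step, so necessarily $G(\tr v|_{\Omega^c}) = (g, \tr(v|_{\Omega^c}))_{L^2(\partial\Omega)}$. Surjectivity of the trace $H^1(\Omega^c) \to H^{1/2}(\partial\Omega)$ from \cref{prop:definition_E} extends this identity to all of $H^{1/2}(\partial\Omega)$, identifying $G = (g,\cdot)_{L^2(\partial\Omega)}$ and completing the proof.
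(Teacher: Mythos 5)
Your proposal is correct in substance and takes a genuinely different route from the paper. The paper does not reduce to \cref{th:convergence_neumann_mean_zero}; instead it re-runs the same machinery directly: it sets $h_n = g_n\tau_{s_n}^{-1}$, verifies boundedness in $L^2(\Omega^c,\tau_{s_n})$, applies \cref{th:convergence_hilbert}, \cref{lem:compact_embedding} and the $L^2$ branch of \cref{th:nonlocal_compactness} to extract $g\in L^2(\partial\Omega)$ with \eqref{eq:convergence_right_site}, then invokes \cref{th:solution_neumann_nonlocal} and \cref{th:local_compactness} and finishes by uniqueness. Your approach, reducing to \cref{th:convergence_neumann_mean_zero} after verifying $(N_1),(N_2),(I_1),(I_2)$, is more modular, and the shift $\tilde v = v - (\int_\Omega v)\phi$ with $\phi\in H^1_0(\Omega)$, $\int_\Omega\phi=1$ is a clean device for upgrading the Cauchy condition from $H^1(\BR^d)\cap L^2_\perp(\Omega)$ to all of $H^1(\BR^d)$, which the hypothesis $(N_1)$ of \cref{th:convergence_neumann_mean_zero} formally requires while $(N_3)$ here only covers the mean-zero test functions.

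One small inaccuracy worth flagging: you write ``Passing to the limit in $F_n(v)$ \ldots gives $F(v|_\Omega)=(f,v|_\Omega)_{L^2(\Omega)}$'' and later ``the full sequence $G_n(v|_{\Omega^c})\to G(\tr v|_{\Omega^c})$ by the previous step.'' Neither termwise convergence $F_n(v)\to F(v|_\Omega)$ nor $G_n(v|_{\Omega^c})\to G(\trc v|_\Omega)$ is part of the \emph{statement} of \cref{th:convergence_neumann_mean_zero}; that theorem only guarantees the existence of some $F,G$ and the convergence of $u_n$ and of the energies. These termwise facts do hold — they come from \cref{cor:local_compactness} and \cref{cor:nonlocal_compactness} inside its proof — but relying on them means partially unwrapping the theorem. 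The cleanest repair is to bypass the identification of the abstract $F,G$ entirely: from $(N_3)$ and the $L^2$-compactness you already establish $G_n(v|_{\Omega^c})\to(g,\tr v|_{\Omega^c})_{L^2(\partial\Omega)}$, and $F_n(v)\to(f,v|_\Omega)_{L^2(\Omega)}$ by weak convergence of $f_n$, so
\begin{equation*}
\CE^A(u,v|_\Omega)=\lim_n\CE^{s_n}(u_n,v)=\lim_n\bigl(F_n(v)+G_n(v|_{\Omega^c})\bigr)=(f,v|_\Omega)_{L^2(\Omega)}+(g,\tr v|_{\Omega^c})_{L^2(\partial\Omega)}
\end{equation*}
for all $v\in H^1(\BR^d)\cap L^2_\perp(\Omega)$, which by the extension property of $\Omega$ identifies $u$ as the solution with the data $(f,\cdot)_{L^2(\Omega)}$, $(g,\cdot)_{L^2(\partial\Omega)}$; uniqueness then closes the argument. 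This is precisely what the paper's direct proof does, and it avoids any appeal to the internals of \cref{th:convergence_neumann_mean_zero}.
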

Again the condition $(N_3)$ can be dropped and the statement of \cref{th:convergence_neumann_mean_zero_example} can be recovered for a subsequence.
\begin{proof}
	First notice that $(f_{n}, \cdot)_{L^2(\Omega)} \in V^{s_n}_{\perp}(\Omega\mid \BR^d)'$. Now define $h_{n}:= g_{n} \tau_{s_n}^{-1}$. Thus, 
	\begin{align*}
		(g_{n}, \cdot )_{L^2(\Omega^c)} =  	(h_{n}, \cdot )_{L^2(\Omega^c, \tau_{s_n})} \in  \ST^{s_n}(\Omega^c)'.
	\end{align*}
	The sequence $\{ h_n \}$ is bounded in $L^2(\Omega^c, \tau_{s_n})$ since $\norm{h_n}_{L^2(\Omega^c, \tau_{s_n})}=\norm{g_n}_{L^2(\Omega^c, \tau_{s_n}^{-1})}$ and $(N_4)$. By \cref{th:convergence_hilbert}, the spaces $L^2(\Omega^c, \tau_{s_n})$ converge to $L^2(\partial\Omega)$ in the sense of \cref{def:convergence_hilbert}. Hence \cref{lem:compact_embedding} and \cref{th:nonlocal_compactness} yield the existence of $g \in L^2(\partial\Omega)$ such that for all $v \in H^1(\BR^d)\cap L^2_\perp(\Omega)$
	\begin{equation}\label{eq:convergence_right_site}
		\lim\limits_{n\to \infty} (g_{n}, v)_{L^2(\Omega^c)} = \lim\limits_{n\to \infty} (h_{n}, v)_{L^2(\Omega^c, \tau_{s_n})}= (g,\tr v|_{\Omega^c})_{L^2(\partial \Omega)}.
	\end{equation}
	We don't have to consider a subsequence because of $(N_3)$. By \cref{th:solution_neumann_nonlocal}, there exists a solution $u_{n} \in  V^{s_n}_{\perp}(\Omega\mid \BR^d)$ satisfying
	\begin{align*}
		\CE^{s_n} (u_{n}, v) = (f_{n}, v)_{L^2(\Omega)} + (h_{n}, v )_{L^2(\Omega^c, \tau_{s_n})} = (f_{n}, v)_{L^2(\Omega)} + (g_{n}, v)_{L^2(\Omega^c)}
	\end{align*}
	for all $v \in V^{s_n}_{\perp}(\Omega\mid \BR^d)$. Additionally, there exists $c_1=c_1(d,\Omega) >0$ such that
	\begin{align*}
		\sup_{n} \norm{u_{n}}_{V^{s_n}(\Omega\,|\, \BR^d)}&\le c_1 \sup_{n} \big( \norm{f_{n}}_{L^2(\Omega)} + \norm{g_{n}}_{L^2(\Omega^c, \tau_{s_n}^{-1})} \big)<\infty.
	\end{align*}
	This is finite by $(N_4)$ and since $\{f_{n}\}$ is uniformly bounded in $L^2(\Omega)$. By \cref{th:local_compactness}, there exists $u\in H^{1}(\Omega)$ and a subsequence $\{s_{n_k}\}$ such that for any $v\in H^1(\BR^d)$ 
	\begin{equation*}
		\lim\limits_{k\to \infty}\mathcal{E}_{s_{n_k}}(u_{{n_k}},v)= \mathcal{E}^A(u,v|_{\Omega}).
	\end{equation*}
	Furthermore, $\{u_{{n_k}}\}$ converges to $u$ in $L^2(\Omega)$. Thus,  $u \in H^1_{\perp}(\Omega)$ since $u_n \in V^{s_n}_{\perp}(\Omega\,|\,\BR^d)$ for all $n$. Together with \eqref{eq:convergence_right_site} and the weak convergence of $\{f_{n}\}$ we conclude for all $ v\in H^1(\BR^d) \cap L^2_\perp(\Omega)$ the equality
	\begin{align*}
		\mathcal{E}^A(u,v|_{\Omega}) = (f,v|_\Omega)_{L^2(\Omega)} + (g,\tr v|_{\Omega^c})_{L^2(\partial\Omega)}.
	\end{align*}
	As in the proof of \cref{th:convergence_neumann_mean_zero}, $u$ is the unique weak solution of \eqref{eq:local_neumann_mean_zero}. To argue the convergence of the original sequence $\{u_{n}\}$, we choose an arbitrary subsequence and repeat the procedure above. The result follows from the uniqueness of the solution $u$.
\end{proof}
\begin{rem}\label{rem:comparison_with_Guy}
	In \cite[Theorem 5.4]{Kassmann_Foghem2022} the convergence of Neumann problems has been proven for $g_s := \CN_{s} \varphi$ with $\varphi \in C^2_b(\BR^d)$ and $\CL_s= (-\Delta^s)$. For $\varphi \in C_b^{0,1}(\BR^d)$ a calculation yields  $\sup_{s \in (s_\star, 1)} \norm{g_{s}}_{L^2(\Omega^c, \tau_{s}^{-1})} <\infty$. Therefore, \cite[Theorem 5.4]{Kassmann_Foghem2022} is a consequence of \cref{th:convergence_neumann_mean_zero_example}. Additionally, for $\varphi \in C^2_b(\BR^d)$ \cite[Lemma 5.3]{Kassmann_Foghem2022} implies that the limit function $g$ from \cref{th:convergence_neumann_mean_zero_example} satisfies $g = \partial_{n} \varphi \, \, \, a.e.$.
\end{rem}
\subsection{Approximation of local Neumann problems by nonlocal Neumann problems}
On the contrary, we want to approximate the solution $u$ to the local Neumann problem \eqref{eq:local_neumann_mean_zero} with inhomogeneity $F $ and Neumann boundary data $G$ by a sequence of solutions to the nonlocal problem. In sight of \cref{th:convergence_neumann_mean_zero} we need to pick kernels $J_{s}$ satisfying \eqref{eq:condition_nu_s}, inhomogeneities $ F_s \in V^{s}_{\perp}(\Omega\,|\, \BR^d)'$ and Neumann data $ G_s \in \ST^{s}(\Omega^c)'$ that satisfy the conditions $(I_1)$, $(I_2)$, $(N_1)$ and $(N_2)$. In the following proposition we give a possible choice of kernels $J_s$ that yield a given symmetric matrix satisfying \eqref{eq:elliptic_matrix}. 

\begin{proposition}[{\cite{balci_kassmann_diening_lee}}]\label{prop:elliptic_matrix_choice_of_levy_measures}
	Let $A(\cdot):\BR^d\to \BR^d\times \BR^d$ be a symmetric matrix-valued function satisfying the ellipticity condition \eqref{eq:elliptic_matrix}. Define
	\begin{equation*}
		j_s(x,x+h):= \kappa_{d,s} \abs{B(x)h}^{-d-2s} \abs{\det B(x)},
	\end{equation*}
	where $B(x):= \sqrt{A(x)^{-1}}$. Then $j_s$ satisfies \eqref{eq:condition_nu_s} and $A(\cdot)=(a_{i,j}(\cdot))_{i,j}$ where $a_{i,j}(\cdot)$ is given via \eqref{eq:conv_A}.
\end{proposition}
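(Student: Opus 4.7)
The argument splits into the routine bound \eqref{eq:condition_nu_s} and the more computational identification of $A(\cdot)$ as the limiting matrix.

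First, because $A(x)$ is symmetric with eigenvalues in $[\lambda^{-1},\lambda]$ by \eqref{eq:elliptic_matrix}, the inverse $A(x)^{-1}$ enjoys the same eigenvalue bounds, and hence $B(x)=\sqrt{A(x)^{-1}}$ is symmetric positive definite with eigenvalues in $[\lambda^{-1/2},\lambda^{1/2}]$. Consequently
\begin{equation*}
\lambda^{-1/2}|h|\le|B(x)h|\le\lambda^{1/2}|h|\qquad\text{and}\qquad |\det B(x)|\in[\lambda^{-d/2},\lambda^{d/2}].
\end{equation*}
Inserting these into the definition of $j_s$ produces a constant $\Lambda=\Lambda(d,\lambda)\ge 1$ independent of $s\in(0,1)$ such that \eqref{eq:condition_nu_s} holds.

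For the identification of $a_{ij}(x)$ via \eqref{eq:conv_A}, I will apply the change of variables $w=B(x)h$, i.e.\ $h=C(x)w$ with $C(x):=B(x)^{-1}=\sqrt{A(x)}$ and $dh=|\det B(x)|^{-1}\,dw$. Since $|B(x)h|=|w|$, the definition of $j_s$ gives
\begin{equation*}
\frac{1}{2}\int_{B_\delta(0)}h_ih_j\,j_s(x,x+h)\,dh=\frac{\kappa_{d,s}}{2}\int_{E_\delta(x)}\frac{(C(x)w)_i(C(x)w)_j}{|w|^{d+2s}}\,dw,
\end{equation*}
where $E_\delta(x)=B(x)B_\delta(0)$ is the ellipsoidal image of $B_\delta(0)$, pinched between $B_{r_-}(0)$ and $B_{r_+}(0)$ with $r_\pm=\lambda^{\pm 1/2}\delta$. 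On the annular remainder $E_\delta(x)\setminus B_{r_-}(0)$ the integrand is pointwise bounded by $\lambda|w|^{2-d-2s}$, and a direct polar-coordinates computation controls the absolute value by $\lambda\omega_{d-1}\,(r_+^{2-2s}-r_-^{2-2s})/(2-2s)$, which converges to $\lambda\omega_{d-1}\ln\lambda$ as $s\to 1^-$. Multiplied by $\kappa_{d,s}\asymp(1-s)$ from \cref{prop:asymptotics_constant}, this contribution vanishes in the limit.

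It therefore suffices to identify the limit of $(\kappa_{d,s}/2)\int_{B_{r_-}(0)}(C(x)w)_i(C(x)w)_j|w|^{-d-2s}\,dw$. Expanding the bilinear form in components, spherical symmetry on the ball yields
\begin{equation*}
\int_{B_{r_-}(0)}\frac{w_kw_l}{|w|^{d+2s}}\,dw=\delta_{kl}\,\frac{\omega_{d-1}}{d}\cdot\frac{r_-^{2-2s}}{2-2s},
\end{equation*}
and symmetry of $C(x)$ gives $\sum_k C_{ik}C_{jk}=(C(x)^2)_{ij}=A_{ij}(x)$. Combining,
\begin{equation*}
\frac{\kappa_{d,s}}{2}\int_{B_{r_-}(0)}\frac{(C(x)w)_i(C(x)w)_j}{|w|^{d+2s}}\,dw=A_{ij}(x)\cdot\frac{\omega_{d-1}}{4d}\cdot\frac{\kappa_{d,s}}{1-s}\cdot r_-^{2-2s}.
\end{equation*}
Writing $\kappa_{d,s}/(1-s)=2^{2s}s\,\Gamma((d+2s)/2)/(\pi^{d/2}\Gamma(2-s))$ via $(1-s)\Gamma(1-s)=\Gamma(2-s)$ and letting $s\to 1^-$ gives $\kappa_{d,s}/(1-s)\to 4\,(d/2)\Gamma(d/2)/\pi^{d/2}=4d/\omega_{d-1}$ by the formula $\omega_{d-1}=2\pi^{d/2}/\Gamma(d/2)$; together with $r_-^{2-2s}\to 1$ the limit evaluates to exactly $A_{ij}(x)$. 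Since this procedure never used the particular value of $\delta$ beyond eventually disappearing in error terms, it also justifies the stated independence of $\delta$.

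The argument has no real conceptual obstacle; the only subtlety is bookkeeping. The main point is the exact cancellation between the spherical-averaging factor $\omega_{d-1}/(4d)$ and the sharp asymptotic $\kappa_{d,s}/(1-s)\to 4d/\omega_{d-1}$, which is what upgrades the natural two-sided comparability between $j_s$ and $\kappa_{d,s}|\cdot|^{-d-2s}$ into the clean identity $a_{ij}(x)=A_{ij}(x)$.
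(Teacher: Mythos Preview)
Your proof is correct and follows essentially the same route as the paper: both perform the linear change of variables $w=B(x)h$, split the resulting integral over the ellipsoid $B(x)B_\delta(0)$ into a centred ball and an annular remainder, use spherical symmetry on the ball together with $C(x)^2=A(x)$ to identify the main term, and show the annular contribution vanishes since $\kappa_{d,s}\asymp(1-s)$ while the integral over the annulus stays bounded. Your write-up is slightly more explicit than the paper's in tracking the constant $\kappa_{d,s}/(1-s)\to 4d/\omega_{d-1}$, but the strategy is identical.
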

Notice that $j_s$ is in general nonsymmetric. We will later use the standard symmetrization $J_s(x,y)= \tfrac{1}{2}(j_s(x,y)+ j_s(y,x))$.
\begin{proof}
	Let $x\in \BR^d$. $A(x)$ is symmetric and positive definite, because $A(\cdot)$ satisfies \eqref{eq:elliptic_matrix}. Therefore, there exists $\SO(x)\in \text{SO}(d)$ and positive eigenvalues $\lambda_1(x), \dots, \lambda_d(x)$ such that $A(x)= \SO(x)M(x)^{2}\SO(x)^{-1}$ with the diagonal matrix
	\begin{equation*}
		M(x):= \begin{pmatrix}
			\sqrt{\lambda_1(x)} &  &0\\
			& \ddots &  \\
			0 & & \sqrt{\lambda_d(x)} 
		\end{pmatrix}.
	\end{equation*}
	Thus, $B(x)= \sqrt{ A(x)^{-1} }=  \SO(x)^{-1}M(x)^{-1}\SO(x)$. By \eqref{eq:elliptic_matrix}, there exists $\lambda\ge 1$ such that $\lambda^{-1}\le \lambda_i(x)\le \lambda$ for all $i$. Hence, \begin{align}
		\abs{\det B(x)}&= \frac{1}{\sqrt{\lambda_1(x)\cdots \lambda_d(x)}}\in[ \lambda^{-d/2}, \lambda^{d/2} ],\nonumber\\
		\lambda^{-1/2} \abs{\xi}&\le \abs{B(x)\xi}\le \lambda^{1/2} \abs{\xi}, \,\xi \in \BR^d.\label{eq:B(x)_bounds}
	\end{align} Thereby, $j_s$ satisfies \eqref{eq:condition_nu_s} with the constant $\Lambda= \lambda^{d+s}$. We define $D(x):=(d_{i,j}(x))_{i,j}:= B(x)^{-1}$, $r= 1/\sqrt{\lambda}$ and $R:= \sqrt{\lambda}$. Notice that \eqref{eq:B(x)_bounds} yields $B_r(0)\subset B(x)B_1(0) \subset B_R(0)$. It remains to prove $A(\cdot ) = (a_{i,j}(\cdot ))_{i,j}$. 
	\begin{align*}
		\frac{1}{2}\int\limits_{B_1(0)} h_i h_j j_s(x,x+h)\di h &=\frac{\kappa_{d,s}}{2}\int\limits_{B(x)B_1(0)} \frac{(B(x)^{-1} y )_i \,(B(x)^{-1} y )_j }{\abs{y}^{d+2s}} \di y\\
		=\frac{\kappa_{d,s}}{2}\int\limits_{B_{r}(0)  } \frac{(D(x) y )_i \,(D(x) y )_j }{\abs{y}^{d+2s}} \di y& +  \frac{\kappa_{d,s}}{2}  \int\limits_{B(x)B_1(0)\setminus B_{r}(0)  }  \frac{(D(x) y )_i \,(D(x) y )_j }{\abs{y}^{d+2s}} \di y =: (I)+ (II).
	\end{align*}
	We consider $(I)$ and $(II)$ separately. For $(I)$ notice
	\begin{align*}
		(I)= \sum_{k,l=1}^d d_{k,i}(x)\,d_{l,j}(x) \frac{\kappa_{d,s}}{2}\underbrace{\int\limits_{B_{r}(0)  } \frac{ y_k  \,y_l }{\abs{y}^{d+2s}} \di y}_{=:E_{l,k}}.
	\end{align*}
	By symmetry, $E_{l,k}=0$ for $l\ne k$. For $l=k$
	\begin{equation*}
		E_{k,k}= \tfrac{1}{d}\int\limits_{ B_r(0) } \abs{y}^{-d+2(1-s)} \di y = \tfrac{\omega_{d-1}}{d}\int\limits_{0}^r t^{1-2s}\di t = \frac{\omega_{d-1}}{2d(1-s)} r^{2-2s}.
	\end{equation*}
	Thus, by \cref{prop:asymptotics_constant} we find, since $A(x)$ is symmetric and therefore $D(x)$ is symmetric, 
	\begin{equation*}
		\lim\limits_{s\to 1-} (I)= \sum_{k=1}^d d_{k,i}(x)d_{k,j}(x) = \big(D(x)\cdot D(x)\big)_{i,j}= \big(A(x) \big)_{i,j}.
	\end{equation*}
	It is left to show that $(II)$ converges to $0$ as $s\to 1-$. 
	\begin{equation*}
		\abs{(II)}\le \frac{\kappa_{d,s}}{2}\lambda \omega_{d-1} \int\limits_{r}^R t^{-1+2(1-s)}\di t = \frac{\kappa_{d,s}}{4(1-s)}\lambda \omega_{d-1} \big( R^{2-2s}-r^{2-2s} \big)\to 0, \text{ as } s\to 1-.
	\end{equation*}
\end{proof}
\begin{theorem}[Convergence of Neumann Problems III]\label{th:convergence_neumann_mean_zero_contrary}
	Let $A(\cdot):\BR^d\to \BR^d\times\BR^d$ be symmetric satisfying \eqref{eq:elliptic_matrix}, $G\in (H^{1/2}(\partial\Omega))'$ and $F\in H_\perp^{1}(\Omega)'$. Let $u \in H_{\perp}^1(\Omega)$ be the weak solution of \eqref{eq:local_neumann_mean_zero}. There exist symmetric kernels $J_s$ satisfying \eqref{eq:condition_nu_s}, $G_s \in \ST^{s}(\Omega^c)'$ and $F_s \in V^s_{\perp}(\Omega \,|\, \BR^d)'$ such that the unique weak solutions $u_s \in V^s_{\perp}(\Omega \,|\, \BR^d)$ to \eqref{eq:neumann_mean_zero} satisfy the following. The solutions $u_{s}$ converge to $u$ in $L^2(\Omega)$ and 
	\begin{align*}
		\CE^{s}(u_{s}, v) \to \CE^A(u,v|_{\Omega}), s\to 1-,
	\end{align*}
	for all $v\in H^1({\BR^d})\cap L_{\perp}^2(\Omega)$.
\end{theorem}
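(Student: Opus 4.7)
The strategy is to apply \cref{th:convergence_neumann_mean_zero} by constructing suitable kernels $J_s$, inhomogeneities $F_s$, and Neumann data $G_s$ so that the resulting nonlocal weak solutions converge to the given $u$. For the kernel, I would apply \cref{prop:elliptic_matrix_choice_of_levy_measures} to the given matrix $A(\cdot)$ to obtain a (generally non-symmetric) kernel $j_s$, and define its symmetrization $J_s(x,y) := \tfrac{1}{2}\bigl(j_s(x,y)+j_s(y,x)\bigr)$. Then $J_s$ is symmetric, satisfies \eqref{eq:condition_nu_s}, the bilinear form $\CE^s$ depends only on $J_s$, and by \cref{prop:elliptic_matrix_choice_of_levy_measures} the matrix produced by the formula \eqref{eq:conv_A} from $j_s$ is exactly $A(\cdot)$. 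Hence \cref{th:convergence_neumann_mean_zero} will automatically yield convergence to a local Neumann problem with this $A(\cdot)$.

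For $F_s$ and $G_s$, I would lift $F$ and $G$ to global representatives and use the natural $V^s$- and $\ST^s$-inner products. By Riesz representation in $H^1_\perp(\Omega)$ there exists $f \in H^1_\perp(\Omega)$ with $F(w) = (f,w)_{H^1(\Omega)}$; take a Sobolev extension $\hat f \in H^1(\BR^d)$ (possible since $\Omega$ is $C^{1,1}$) and set
\begin{equation*}
F_s(v) := (\hat f, v)_{V^s(\Omega\,|\, \BR^d)}, \qquad v \in V^s_\perp(\Omega\,|\, \BR^d).
\end{equation*}
By Riesz representation in $H^{1/2}(\partial\Omega)$ there exists $g \in H^{1/2}(\partial\Omega)$ with $G(h) = (g,h)_{H^{1/2}(\partial\Omega)}$; with $E$ the extension operator from \cref{prop:definition_E}, set
\begin{equation*}
G_s(h) := (Eg, h)_{\ST^s(\Omega^c)}, \qquad h \in \ST^s(\Omega^c).
\end{equation*}

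The main work is verifying the hypotheses $(I_1), (I_2), (N_1), (N_2)$ of \cref{th:convergence_neumann_mean_zero}. For $(I_2)$, Cauchy-Schwarz yields $\|F_s\|_{V^s_\perp\to \BR} \le \|\hat f\|_{V^s(\Omega\,|\,\BR^d)}$, and the Fourier identity for $[\cdot,\cdot]_{H^s(\BR^d)}$ combined with the elementary bound $|\xi|^{2s} \le 1+|\xi|^2$ yields $\|\hat f\|_{V^s(\Omega\,|\,\BR^d)} \le c\|\hat f\|_{H^1(\BR^d)}$ uniformly in $s$. For $(N_2)$, Cauchy-Schwarz and \cref{example:h1_pointwise_convergence} give $\|G_s\|_{\ST^s\to \BR} \le \|Eg\|_{\ST^s(\Omega^c)} \le c\|Eg\|_{H^1(\Omega^c)}$ uniformly. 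For $(I_1)$, write
\begin{equation*}
F_s(v) = (\hat f|_\Omega, v|_\Omega)_{L^2(\Omega)} + [\hat f, v]_{V^s(\Omega\,|\, \BR^d)};
\end{equation*}
polarizing the Bourgain–Brezis–Mironescu limit $[u,u]_{V^s(\Omega\,|\,\BR^d)} \to \int_\Omega |\nabla u|^2$ for $u\in H^1(\BR^d)$ (recalled in the introduction) applied to $\hat f \pm v \in H^1(\BR^d)$ gives $[\hat f, v]_{V^s(\Omega\,|\,\BR^d)} \to [\hat f|_\Omega, v|_\Omega]_{H^1(\Omega)}$, whence $F_s(v) \to (f, v|_\Omega)_{L^2(\Omega)} + [f, v|_\Omega]_{H^1(\Omega)} = F(v|_\Omega)$. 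For $(N_1)$, polarize $G_s(v|_{\Omega^c}) = (Eg, v|_{\Omega^c})_{\ST^s(\Omega^c)}$ and apply \cref{th:convergence_pointwise} separately to the weighted $L^2$ and seminorm parts of $\|Eg \pm v|_{\Omega^c}\|_{\ST^s(\Omega^c)}^2$, noting $\tr(Eg) = g$ and that $v|_{\Omega^c} \in H^1(\Omega^c)$ since $v \in H^1(\BR^d)$, to conclude $G_s(v|_{\Omega^c}) \to (g, \tr(v|_{\Omega^c}))_{H^{1/2}(\partial\Omega)} = G(\tr v|_{\Omega^c})$.

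With the four conditions verified, \cref{th:convergence_neumann_mean_zero} produces a limit $u' \in H^1_\perp(\Omega)$ that solves \eqref{eq:local_neumann_mean_zero} with data $F$, $G$, $A$, together with the convergences $u_s \to u'$ in $L^2(\Omega)$ and $\CE^{s}(u_s, v) \to \CE^A(u', v|_\Omega)$ for $v \in H^1(\BR^d)\cap L^2_\perp(\Omega)$. By uniqueness of the weak solution to \eqref{eq:local_neumann_mean_zero}, $u' = u$, which closes the argument. The only delicate step is the verification of the polarized limits for the off-diagonal bilinear forms and inner products; once the diagonal (norm-square) convergences are in hand---and they are precisely the content of \cref{th:convergence_pointwise} and the BBM limit already used in the paper---the polarization identity reduces the task to routine algebra.
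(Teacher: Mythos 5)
Your proposal is correct and follows the same route as the paper's proof: the Riesz representatives, the extensions, the choices $F_s = (\overline{f},\cdot)_{V^s(\Omega\,|\,\BR^d)}$ and $G_s = (Eg,\cdot)_{\ST^s(\Omega^c)}$, the symmetrized kernel from \cref{prop:elliptic_matrix_choice_of_levy_measures}, and the polarization of the Bourgain--Brezis--Mironescu and trace-norm limits all coincide with the paper's argument. The only organizational difference is that you invoke \cref{th:convergence_neumann_mean_zero} as a packaged statement whereas the paper works through \cref{th:local_compactness} and the uniqueness argument directly, but since these are precisely the ingredients of that theorem's proof the two arguments are equivalent.
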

\begin{proof}
	Let $u \in H_{\perp}^1(\Omega)$ be the solution to \eqref{eq:local_neumann_mean_zero}. By the Riesz-representation theorem, there exists  $g \in H^{1/2}(\partial\Omega)$ such that $G= (g,\cdot)_{H^{1/2}(\partial \Omega)}$. The extension $E$ from \cref{prop:definition_E} yields $Eg\in H^1(\Omega^c)$. Again Riesz-representation theorem yields a unique $f\in H_\perp^1(\Omega)$ such that $F=(f,\cdot)_{H^1(\Omega)}$. Since $\Omega$ is a Sobolev extension domain, we fix an extension $\overline{f} \in H^1(\BR^d)\cap L^2_\perp(\Omega)$, see \eg \cite{hitchhiker}. By Sobolev embeddings, see \cite[Proposition 3.4]{hitchhiker}, the extension satisfies $\overline{f}\in V^s_\perp(\Omega\,|\, \BR^d)$. We set $G_s:= (Eg, \cdot )_{\ST^{s}(\Omega^c)} \in \ST^s(\Omega^c)'$ as well as $F_s := (\overline{f}, \cdot )_{  V^s(\Omega \,\mid\, \BR^d)  }  \in V^s_\perp(\Omega \,|\, \BR^d)'$. Let $j_s$ be the kernel from \cref{prop:elliptic_matrix_choice_of_levy_measures} and define the standard symmetrization $J_s(x,y):= \tfrac{1}{2}(j_s(x,y)+j_s(y,x))$. By \cref{th:solution_neumann_nonlocal}, there exist unique solutions $u_s \in V^s_\perp(\Omega \,|\, \BR^d)$ to \eqref{eq:neumann_mean_zero}, \ie $\CL_{s}$ is equipped with the kernel $J_s$, $\CL_{s}u_s=F_s$ in $\Omega$ and $\CN_{s}u_s=G_s$ on $\Omega^c$. By the convergence of the norms $\norm{\cdot}_{V^s(\Omega\,|\,\BR^d)}\to \norm{\cdot}_{H^1(\Omega)}$, see \cite[Corollary 2]{bourgain_compactness}, \cite{ponce} and \cite[Theorem 3.4, (3.5)]{kassmann_mosco}, we conclude for every sequence $s_n \to 1$ and all $v \in H^1(\BR^d)\cap L_\perp^2(\Omega)$ the convergence
	\begin{align*}
		F_{s_n}(v)&=(\overline{f},v)_{V^{s_n}(\Omega\,|\, \BR^d)}= \tfrac{1}{4} \Big( \norm{\overline{f}+v}^2_{V^{s_n}(\Omega\,|\, \BR^d)}-\norm{\overline{f}-v}^2_{V^{s_n}(\Omega\,|\, \BR^d)} \Big) \\
		&\to \tfrac{1}{4}\Big( \norm{f+v|_{\Omega}}_{H^1(\Omega)}^2-\norm{f-v|_{\Omega}}_{H^1(\Omega)}^2 \Big)= (f,v|_\Omega)_{H^1(\Omega)} = F(v|_{\Omega})
	\end{align*}
	and by \cref{th:convergence_pointwise}
	\begin{align*}
		G_{s_n}(v|_{\Omega^c})&= (Eg,v|_{\Omega^c})_{\ST^{s_n}(\Omega^c)}=\tfrac{1}{4} \Big( \norm{Eg + v|_{\Omega^c}}^2_{\ST^{s_n}(\Omega^c)}-\norm{Eg-v|_{\Omega^c}}^2_{\ST^{s_n}(\Omega^c)} \Big)  \\
		&\to \tfrac{1}{4}\Big( \norm{g+\tr (v|_{\Omega^c})}_{H^{1/2}(\partial \Omega)}^2 - \norm{g-\tr (v|_{\Omega^c})}_{H^{1/2}(\partial \Omega)}^2 \Big)=(g,\tr (v|_{\Omega^c}))_{H^{1/2}(\partial\Omega)}\\
		&= G(\tr (v|_{\Omega^c}))=G(\trc  (v|_{\Omega})).
	\end{align*}
	The last equality follows from the construction of the trace operators $\tr, \trc $ just as in \cref{th:convergence_neumann_mean_zero}. Thereby, for every $v\in H^1(\BR^d)\cap L^2_\perp(\Omega)$
	\begin{align*}
		\CE^{s_n}(u_{s_n},v)= F_{s_n}(v)+ G_{s_n}(v|_{\Omega^c}) \to F(v|_{\Omega^c})+ G(\trc (v|_{\Omega}))= \CE^A(u,v|_{\Omega}).
	\end{align*}
	By Sobolev embeddings, see \eg \cite[Proposition 3.4]{hitchhiker}, there exists a constant $c_1\ge 1$ such that the inequality $\norm{\overline{f}}_{V^{s_n}(\Omega\,|\,\BR^d)}\le c_1\, \norm{\overline{f}}_{H^1(\BR^d)}$ holds for all $n$. Additionally, \cref{prop:definition_E}, \cref{example:h1_pointwise_convergence} yield a constant $c_2\ge 1$ such that $\norm{Eg}_{\ST^{s_n}(\Omega^c)}\le c_2 \norm{Eg}_{H^1(\Omega^c)}\le c_2^2 \norm{g}_{H^{1/2}(\partial \Omega)}$. 
	By \cref{th:solution_neumann_nonlocal}, there exists a constant $c_3\ge 1$ such that the solutions $u_{s_n}$ satisfy the bound  
	\begin{align*}
		\norm{u_{s_n}}_{V^{s_n}(\Omega\,|\, \BR^d)}&\le c_3\big( \norm{F_{s_n}}_{V_\perp^{s_n}\to \BR}+ \norm{G_{s_n}}_{\ST^{s_n}\to \BR} \big)\\
		= c_3\big( \norm{\overline{f}}_{V^{s_n}(\Omega\,|\,\BR^d)} + \norm{Eg}_{\ST^{s_n}(\Omega^c)} \big)&\le c_3\Big( c_1 \norm{\overline{f}}_{H^1(\BR^d)} + c_2^2 \norm{g}_{H^{1/2}(\partial \Omega)}\Big)
	\end{align*}
	for all $n$. By \cref{th:local_compactness} and \cref{prop:elliptic_matrix_choice_of_levy_measures}, there exists  $u' \in H_{\perp}^1(\Omega)$ such that $ u_{s_n} $ converges to $u'$ in $L^2(\Omega)$ and for every $v\in H^1(\BR^d)\cap L_\perp^2(\Omega)$	
	\begin{align*}
		\mathcal{E}_{s_n}(u_{s_n},v) \to \mathcal{E}^A(u',v|_{\Omega}).
	\end{align*}
	Thus, $\CE^A(u',v|_{\Omega}) = \CE^A(u,v|_{\Omega})$ for every $v\in H^1(\BR^d)\cap L^2_\perp(\Omega)$. Plugging in $ v:= u-u'$ yields $u =u'$. Note that this is possible, since $u-u'$ has an extension in $H^1(\BR^d)\cap L_\perp^2(\Omega)$. 
\end{proof}		
Our last convergence theorem is in the spirit of \cref{th:convergence_neumann_mean_zero_contrary} but we consider more specific inhomogeneities and Neumann data. 
\begin{theorem}[Convergence of Neumann Problems IV]\label{th:convergence_neumann_mean_zero_contrary_example}
	Let $A(\cdot):\BR^d\to \BR^d\times\BR^d$ be a symmetric matrix-valued function satisfying \eqref{eq:elliptic_matrix}, $g\in L^2(\partial\Omega)$ and $f \in L^2(\Omega)$. Additionally, let $u \in H_{\perp}^1(\Omega)$ be the weak solution to \eqref{eq:local_neumann_mean_zero} with inhomogeneity $(f, \cdot)_{L^2(\Omega)}$ and Neumann data $(g, \cdot )_{L^2(\partial\Omega)}$. There exists a sequence $\{s_n\}$, $s_n\to 1-$, symmetric kernels $J_{s_n}$ satisfying \eqref{eq:condition_nu_s} and a sequence of functions $g_{n} \in L^2(\Omega^c, \tau_{s_n}^{-1})$ such that the following holds. The sequence of weak solution $u_{n} \in V_{\perp}^{s_n}(\Omega \mid \BR^d)$ to \eqref{eq:neumann_mean_zero} with inhomogeneity $(f,\cdot)_{L^2(\Omega)}$ and Neumann data $(g_{n}, \cdot )_{L^2(\Omega^c)}$ converges to $u$ in $L^2(\Omega)$ and
	\begin{align*}
		\CE^{s_n}(u_{n}, v) \to \CE^A(u,v|_{\Omega})
	\end{align*}
	for all $v\in H^1({\BR^d})\cap L_\perp^2(\Omega)$.
\end{theorem}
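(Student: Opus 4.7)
The idea is to reduce to \cref{th:convergence_neumann_mean_zero_example} by constructing an appropriate sequence of Neumann data $g_n$ whose Kuwae--Shioya weak limit is exactly the given $g\in L^2(\partial\Omega)$. The kernels are chosen as in the proof of \cref{th:convergence_neumann_mean_zero_contrary}: for each $s\in(0,1)$ set $j_s(x,y):=\kappa_{d,s}|B(x)(y-x)|^{-d-2s}|\det B(x)|$ with $B(x):=\sqrt{A(x)^{-1}}$, and let $J_s(x,y):=\tfrac12(j_s(x,y)+j_s(y,x))$ be its symmetrization. By \cref{prop:elliptic_matrix_choice_of_levy_measures} the (nonsymmetric) $j_s$ satisfies \eqref{eq:condition_nu_s} and generates the matrix $A(\cdot)$ through the limit \eqref{eq:conv_A}; the same is true for $J_s$ since \eqref{eq:conv_A} is linear in $j_s$ and the second moment is invariant under symmetrization in the pair $(h,-h)$.

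The main work is the construction of $g_n$. Since $H^{1/2}(\partial\Omega)$ is dense in $L^2(\partial\Omega)$, pick a sequence $\widetilde g^{(k)}\in H^{1/2}(\partial\Omega)$ with $\widetilde g^{(k)}\to g$ in $L^2(\partial\Omega)$, and set $h^{(k)}:=E\widetilde g^{(k)}\in H^1(\Omega^c)$ via the extension of \cref{prop:definition_E}. For each fixed $k$, \cref{th:convergence_pointwise} combined with \cref{example:h1_pointwise_convergence} yields
\begin{equation*}
	\lim_{s\to1-}\|h^{(k)}\|_{L^2(\Omega^c,\tau_s)}=\|\widetilde g^{(k)}\|_{L^2(\partial\Omega)},
\end{equation*}
and, by polarization applied to $h^{(k)}\pm v|_{\Omega^c}\in H^1(\Omega^c)$, $\lim_{s\to1-}(h^{(k)},v|_{\Omega^c})_{L^2(\Omega^c,\tau_s)}=(\widetilde g^{(k)},\trc v|_\Omega)_{L^2(\partial\Omega)}$ for every $v\in H^1(\BR^d)$. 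Fix a countable dense subset $\{v_m\}\subset H^1(\BR^d)\cap L^2_\perp(\Omega)$. By a standard diagonal extraction choose $s_n\nearrow 1$ and $k(n)\nearrow\infty$ such that $\|\widetilde g^{(k(n))}-g\|_{L^2(\partial\Omega)}\le 1/n$, $\|h^{(k(n))}\|_{L^2(\Omega^c,\tau_{s_n})}\le\|\widetilde g^{(k(n))}\|_{L^2(\partial\Omega)}+1$, and $|(h^{(k(n))},v_m|_{\Omega^c})_{L^2(\Omega^c,\tau_{s_n})}-(\widetilde g^{(k(n))},\trc v_m|_\Omega)_{L^2(\partial\Omega)}|\le 1/n$ for all $m\le n$. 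Define $h_n:=h^{(k(n))}$ and $g_n:=h_n\tau_{s_n}$, so that $g_n\in L^2(\Omega^c,\tau_{s_n}^{-1})$ with $\|g_n\|_{L^2(\Omega^c,\tau_{s_n}^{-1})}=\|h_n\|_{L^2(\Omega^c,\tau_{s_n})}$ uniformly bounded; this gives $(N_4)$ of \cref{th:convergence_neumann_mean_zero_example}.

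For $(N_3)$, note that $(g_n,v)_{L^2(\Omega^c)}=(h_n,v|_{\Omega^c})_{L^2(\Omega^c,\tau_{s_n})}$. For $v=v_m$ the diagonal conditions together with the triangle inequality give $(g_n,v_m)_{L^2(\Omega^c)}\to(g,\trc v_m|_\Omega)_{L^2(\partial\Omega)}$. For general $v\in H^1(\BR^d)\cap L^2_\perp(\Omega)$ we pass from $\{v_m\}$ by density, using the uniform trace bound $\|v|_{\Omega^c}\|_{L^2(\Omega^c,\tau_{s_n})}\le C\|v\|_{H^1(\BR^d)}$ (which holds uniformly in $n$, see \cref{example:h1_pointwise_convergence}) together with the Cauchy--Schwarz inequality $|(h_n,v|_{\Omega^c})_{L^2(\Omega^c,\tau_{s_n})}|\le \|h_n\|_{L^2(\Omega^c,\tau_{s_n})}\|v|_{\Omega^c}\|_{L^2(\Omega^c,\tau_{s_n})}$. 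Hence $(N_3)$ holds and the constant sequence $f_n:=f$ trivially converges weakly to $f$ in $L^2(\Omega)$.

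Applying \cref{th:convergence_neumann_mean_zero_example} with $J_{s_n}$, $f_n$, and $g_n$ produces $g^\star\in L^2(\partial\Omega)$ and a unique $u^\star\in H^1_\perp(\Omega)$ solving \eqref{eq:local_neumann_mean_zero} with matrix $A(\cdot)$, inhomogeneity $(f,\cdot)_{L^2(\Omega)}$ and Neumann data $(g^\star,\cdot)_{L^2(\partial\Omega)}$, such that $u_n\to u^\star$ in $L^2(\Omega)$ and $\CE^{s_n}(u_n,v)\to\CE^A(u^\star,v|_\Omega)$ for every test $v$. Since the theorem obtains the limit Neumann data from the convergence $(g_n,v)_{L^2(\Omega^c)}\to(g^\star,\trc v|_\Omega)_{L^2(\partial\Omega)}$, comparing with $(N_3)$ forces $(g-g^\star,\trc v|_\Omega)_{L^2(\partial\Omega)}=0$ for all $v\in H^1(\BR^d)\cap L^2_\perp(\Omega)$. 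The classical trace is surjective onto $H^{1/2}(\partial\Omega)$ (constants can be added without affecting the trace), which is dense in $L^2(\partial\Omega)$, so $g^\star=g$ and therefore $u^\star=u$ by uniqueness. The main obstacle is the diagonal construction ensuring simultaneously the boundedness of $\|h_n\|_{L^2(\Omega^c,\tau_{s_n})}$ and the correct identification of the Kuwae--Shioya weak limit as $g$; once this is in place the convergence of the Neumann problems follows from \cref{th:convergence_neumann_mean_zero_example}.
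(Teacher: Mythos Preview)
Your proof is correct and closely parallels the paper's, with two organizational differences worth noting. First, your diagonal construction works by testing against a countable dense family $\{v_m\}\subset H^1(\BR^d)\cap L^2_\perp(\Omega)$ and then passing to general $v$ by density and the uniform embedding of \cref{example:h1_pointwise_convergence}; the paper instead diagonalizes over the approximating family $\{Eh_m\}$ itself, arranging that $\bigl|\,\|Eh_m-Eh_n\|_{L^2(\Omega^c,\tau_{s_n})}-\|h_m-h_n\|_{L^2(\partial\Omega)}\bigr|\le 1/n$ for all $m\le n$, which shows directly that $\{Eh_n\}$ converges \emph{strongly} to $g$ in the Kuwae--Shioya sense (\cref{def:strong_convergence}); weak convergence and the uniform norm bound then come for free from \cref{prop:eigenschaften_konvergenz}. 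Second, you package the hypotheses so as to invoke \cref{th:convergence_neumann_mean_zero_example} as a black box and afterwards identify $g^\star=g$, whereas the paper argues the $L^2$-convergence of $u_n$ directly via \cref{th:local_compactness} and uniqueness of the local solution. Both routes are sound; yours is more modular, the paper's slightly more self-contained. One small imprecision: the parenthetical ``constants can be added without affecting the trace'' does not literally work in $H^1(\BR^d)$, since nonzero constants are not square-integrable on $\BR^d$; the clean way to see surjectivity of $\trc$ restricted to $H^1(\BR^d)\cap L^2_\perp(\Omega)$ is to correct the mean by adding a bump in $C_c^\infty(\Omega)$, which leaves the trace on $\partial\Omega$ unchanged.
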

\begin{proof}
	We begin by constructing an appropriate sequence of Neumann data. Fix a sequence $\{h_n\}$ in $H^{1/2}(\partial\Omega)$ converging to $g$ in $L^2(\partial\Omega)$. Using the operator $E$ from \cref{prop:definition_E} we define
	\begin{align*}
		h^n_s := \tau_s E h_n.
	\end{align*}
	By \cref{prop:definition_E}, \cref{example:h1_pointwise_convergence} and \cref{th:trace_and_extension}, there exists a constant $c_1\ge 1$ such that for any $v \in L^2(\Omega^c, \tau_s)$
	\begin{align*}
		\abs{	(h^n_s,v)_{L^2(\Omega^c)}} &\le  \norm{Eh_n}_{L^2(\Omega^c, \tau_s)} \norm{v}_{L^2(\Omega^c, \tau_s)} \le c_1\norm{Eh_n}_{H^1(\Omega^c)} \norm{v}_{L^2(\Omega^c, \tau_s)} \\
		&\le  c_1^2\norm{h_n}_{H^{1/2}(\partial\Omega)} \norm{v}_{L^2(\Omega^c, \tau_s)}.
	\end{align*}
	Thus, $(h^n_s,\cdot )_{L^2(\Omega^c)} \in L^2(\Omega^c, \tau_s)'$. Our next goal is to find a diagonal sequence $\{ s_n \}$ such that $Eh_n=\tau_{s_n}^{-1}\,h_{s_n}^n\in L^2(\Omega^c, \tau_{s_n})$ converges to $g\in L^2(\partial \Omega)$ in the sense of \cref{def:strong_convergence}. In the notation of \cref{th:convergence_hilbert}, \cref{example:h1_pointwise_convergence} the following is true. For any $n,m\in \BN$ there exists $s_{n,m}\in (0,1)$ such that 
	\begin{equation*}
		\abs{\norm{\Phi_{s}h_m - Eh_n}_{L^2(\Omega^c, \tau_{s})} - \norm{h_m-h_n}_{L^2(\partial \Omega)} }= \abs{\norm{Eh_m - Eh_n}_{L^2(\Omega^c, \tau_{s})} - \norm{h_m-h_n}_{L^2(\partial \Omega)} }\le \frac{1}{n}
	\end{equation*} for all $s\in [s_{n,m},1)$. Therefore, for any $n\in \BN$ there exists $s_n=\max\{ s_{n,1},\dots, s_{n,n}, 1-1/n \}\in(0,1)$ such that 
	\begin{equation*}
		\abs{\norm{\Phi_{s_n}h_m - Eh_n}_{L^2(\Omega^c, \tau_{s_n})} - \norm{h_m-h_n}_{L^2(\partial \Omega)} }= \abs{\norm{Eh_m - Eh_n}_{L^2(\Omega^c, \tau_{s_n})} - \norm{h_m-h_n}_{L^2(\partial \Omega)} }\le \frac{1}{n}
	\end{equation*}
	for all $m\le n$. Notice that $s_n \to 1-$ as $n\to \infty$. In particular, for any $m\in \BN$
	\begin{align*}
		&\limsup\limits_{n\to \infty} \abs{\norm{\Phi_{s_n}h_m - Eh_n}_{L^2(\Omega^c, \tau_{s_n})} - \norm{h_m-g}_{L^2(\partial \Omega)} }\\
		&\qquad\le \limsup\limits_{n\to \infty} \abs{\norm{\Phi_{s_n}h_m - Eh_n}_{L^2(\Omega^c, \tau_{s_n})} - \norm{h_m-h_n}_{L^2(\partial \Omega)} } + \abs{\norm{h_m-h_n}_{L^2(\partial \Omega)}- \norm{h_m-g}_{L^2(\partial \Omega)}}\\
		&\qquad\le \limsup\limits_{n\to \infty}\frac{1}{n} + \norm{h_n-g}_{L^2(\partial \Omega)}=0.
	\end{align*}
	Since $h_m\to g$ in $L^2(\partial \Omega)$ and $\lim\limits_{m \to \infty} \limsup\limits_{n\to \infty} \norm{\Phi_{s_n}h_m - Eh_n}_{L^2(\Omega^c, \tau_{s_n})} = \lim\limits_{m \to \infty}\norm{h_m-g}_{L^2(\partial \Omega)} = 0$, 
	the sequence $\{Eh_n\}$ converges to $g$ in the sense of \cref{def:strong_convergence}. By \cref{prop:eigenschaften_konvergenz}, $\{ Eh_n \}$ converges weakly to $g$ in sense of \cref{def:weak_convergence} and $\sup_n \norm{Eh_n}_{L^2(\Omega^c,\tau_{s_n})}<\infty$. In particular, for any $v\in H^1(\Omega^c)$
	\begin{align}
		\lim\limits_{n \to \infty}	(h^n_{s_n},v )_{L^2(\Omega^c)} &=\lim\limits_{n \to \infty} (Eh_n,v )_{L^2(\Omega^c,\tau_{s_n})} =  (g, \tr (v|_{\Omega^c}))_{L^2(\partial\Omega)},\label{eq:convergence_RHS}\\
		\sup\limits_{n}\norm{h^n_{s_n}}_{L^2(\Omega^c, \tau_{s_n}^{-1})} &= \sup\limits_n \norm{Eh_n}_{L^2(\Omega^c, \tau_{s_n})} <\infty.\label{eq:bound_RHS}
	\end{align} 
	We define $g_n:= h_{s_n}^n\in L^2(\Omega^c,\tau_{s_n}^{-1})$. Recall that $(g_n,\cdot)_{L^2(\Omega^c)}\in \ST^{s_n}(\Omega^c)'$. Let $J_{s_n}$ be the kernels from \cref{prop:elliptic_matrix_choice_of_levy_measures}. By \cref{th:solution_neumann_nonlocal}, there exist unique solutions $u_{n} \in V_\perp^{s_n}(\Omega\mid \BR^d)$ to
	\begin{align*}
		\CL_{s_n} u_{n} &= (f, \cdot )_{L^2(\Omega)} \text{ in } \Omega,\\
		\CN_{s_n} u_{n} &=  (g_n, \cdot )_{L^2(\Omega^c)} \text{ on }\Omega^c.
	\end{align*}
	for all $n$. Additionally, there exists a constant $c_2=c_2(d,\Omega)\ge 1$ such that for all $n\ge n_0$
	\begin{align*}
		\norm{u_{n}}_{V^{s_n}(\Omega\mid \BR^d)} \le c_2 \big( \norm{f}_{L^2(\Omega)} + \norm{g_n}_{L^2(\Omega^c, \tau_{s_n}^{-1})} \big).
	\end{align*} 
	By \eqref{eq:bound_RHS}, $\norm{u_{n}}_{V^{s_n}(\Omega\mid \BR^d)}$ is bounded in $n$. We fix $v\in H^1(\BR^d) \cap L^2_\perp(\Omega)$. By the construction of the traces $\tr, \trc $, it holds $\trc  (v |_\Omega) = \tr (v|_{\Omega^c})$. Therefore, 
	\begin{align*}
		\CE^{s_n}(u_{n},v)= (f, v|_{\Omega})_{L^2(\Omega)} + (g_n, v|_{\Omega^c})_{L^2(\Omega^c)}\to  (f, v|_{\Omega})_{L^2(\Omega)} + (g,\trc  ( v|_{\Omega}))_{L^2(\Omega^c)} = \CE^A(u,v|_{\Omega}).
	\end{align*}
	It remains to prove that $u_n$ converges to $u$ in $L^2(\Omega)$. By \cref{th:local_compactness} and \cref{prop:elliptic_matrix_choice_of_levy_measures}, there exists  $u' \in H_\perp^1(\Omega)$ such that $ u_{n} $ converges to $u'$ in $L^2(\Omega)$ and for every $v\in H^1(\BR^d)\cap L_\perp^2(\Omega)$
	\begin{align*}
		\mathcal{E}_{s_n}(u_{n},v) \to \mathcal{E}^A(u',v|_{\Omega}).
	\end{align*}
	Thus, we conclude $\CE^A(u',v|_{\Omega}) = \CE^A(u,v|_{\Omega})$ for every $v\in H^1(\BR^d)\cap L_\perp^2(\Omega)$. Plugging in $ v:= u-u'$ yields $u -u'=\text{constant}$ in $\Omega$. Since $u,u'\in L^2_{\perp}(\Omega)$, $u -u'=0$. Note that this is possible since $u-u'$ has an extension in $H^1(\BR^d) \cap L^2_\perp(\Omega)$. Therefore, $u_n \to u$ in $L^2(\Omega)$. 
\end{proof}
\begin{rem}
	For local Neumann problems \eqref{eq:local_neumann_mean_zero} it is also common to work with the Hilbert space $H^1(\Omega)$ instead of $H^1_\perp(\Omega)$. Since $H^1(\Omega)$ contains nonzero constant functions, the Neumann problem has a solution only if we assume the additional compatibility assumption $F(1)+G(1)=0$. In this case the solutions are only unique up to an additive constant. Analogously, we can consider weak solutions to \eqref{eq:neumann_mean_zero} in the space $V^s(\Omega\, | \, \BR^d)$ instead of $V^s_\perp(\Omega\, | \, \BR^d)$. Just as in the local setting, we have to assume the same compatibility assumption for solutions to exist and they will only be unique up to an additive constant. In this setup similar convergence results can be proven. This has been done in \cite[Theorem 5.78]{FoghemGounoue2020} and \cite[Chapter 4]{Kassmann_Foghem2022}.
\end{rem}

\appendix
\section{}
The coarea formula is an important tool in our proofs. We recall it here for the convenience of the reader. 
\begin{theorem}[{Coarea formula, \cite[Theorem 3.2.12]{coarea}}]\label{th:coarea}
	Let $D\subset\BR^d$ be an open set, $f:D\to \BR$ Lipschitz continuous and $g\in L^1(D)$. The following equation holds.
	\begin{equation*}
		\int\limits_{D} g(x) \abs{\nabla f(x)}\di x = \int\limits_{\BR} \Big( \int\limits_{f^{-1}(t)} g(x) \di \,\SH(x)  \Big) \di t.
	\end{equation*}
\end{theorem}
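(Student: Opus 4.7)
Since the statement is Federer's classical coarea formula, my plan is to sketch the standard two-step proof strategy rather than give a complete derivation.

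First I would treat the smooth case. Assume $f \in C^1(D)$ and split $D = \Sigma \cup (D \setminus \Sigma)$, where $\Sigma = \{x \in D : \nabla f(x) = 0\}$ is the critical set. On $\Sigma$ the left-hand integrand vanishes identically. By Sard's theorem the set $f(\Sigma)$ has Lebesgue measure zero in $\BR$, and a quantitative refinement shows that the right-hand side also receives no contribution from $\Sigma$. On $D \setminus \Sigma$, the implicit function theorem locally straightens $f$: choose $C^1$-diffeomorphisms $\Phi_\alpha : U_\alpha \to V_\alpha$ so that $f \circ \Phi_\alpha^{-1}(y', y_d) = y_d$. Under $\Phi_\alpha$, the level set $\{f = t\}$ pushes forward to the hyperplane slice $\{y_d = t\} \cap V_\alpha$, and its $\SH$-measure corresponds to the Euclidean $(d-1)$-measure on that slice weighted by $|\nabla f|^{-1}$. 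Fubini's theorem then gives the identity locally, and a $C^1$ partition of unity subordinate to a countable such covering yields the smooth case globally.

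Next I would extend to Lipschitz $f$ by approximation. By Rademacher's theorem $f$ is differentiable almost everywhere, so $|\nabla f|$ is well-defined a.e. For each $\varepsilon > 0$, a Whitney--Lusin type approximation produces $f_\varepsilon \in C^1(\BR^d)$ with $\mathrm{Lip}(f_\varepsilon) \le C\,\mathrm{Lip}(f)$ together with a measurable set $A_\varepsilon \subset D$ satisfying $|D \setminus A_\varepsilon| < \varepsilon$, $f \equiv f_\varepsilon$ on $A_\varepsilon$, and $\nabla f \equiv \nabla f_\varepsilon$ almost everywhere on $A_\varepsilon$. Applying the smooth case to $f_\varepsilon$ and restricting to $A_\varepsilon$, the discrepancy on the right-hand side is bounded via the Eilenberg inequality
\[
\int_{\BR} \SH\bigl( f^{-1}(t) \cap (D \setminus A_\varepsilon) \bigr) \di t \;\le\; C\,\mathrm{Lip}(f)\,|D \setminus A_\varepsilon|,
\]
while the discrepancy on the left-hand side is controlled by absolute continuity of the Lebesgue integral together with the hypothesis $g \in L^1(D)$. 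Letting $\varepsilon \to 0$ finishes the proof.

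The main obstacle is the Lipschitz step rather than the smooth one: one must establish a quantitative Sard-type result for Lipschitz $f$, so that the critical set contributes nothing on either side, and the Eilenberg inequality, so that the $\SH$-measure of level sets is stable under small modifications of $f$. Both are technical ingredients underlying Federer's original argument in \cite{coarea}, where they are carried out through the general framework of rectifiable sets and the area/coarea duality.
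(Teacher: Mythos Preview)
The paper does not prove this theorem at all; it is merely recalled in the appendix as a reference result and cited to Federer \cite[Theorem~3.2.12]{coarea}. So there is no ``paper's own proof'' to compare against. Your sketch is a correct outline of the standard proof (smooth case via Sard, implicit function theorem and Fubini; Lipschitz case via Rademacher, Whitney--Lusin $C^1$ approximation, and the Eilenberg inequality), and nothing more is expected here.
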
 

\section{}\label{sec:appendix_convergence}
Here we give the definition and basic properties of convergent Hilbert spaces introduced in \cite{kuwae_mosco} by Kuwae and Shioya.

\begin{Def}[{\cite[Section 2.2]{kuwae_mosco}}]\label{def:convergence_hilbert}
	Let $ H_n $ and $H$ be real Hilbert spaces. We say that $\{ H_n \}$ converges to $H$ if there exists a dense subspace $C\subset H$ and a sequence of linear operators $\Phi_n :C\to H_n$ with 
	\begin{equation*}
	\lim\limits_{n\to \infty} \norm{\Phi_n u}_{H_n}= \norm{u}_H \text{ for every } u \in C.
	\end{equation*}
\end{Def}
\begin{Def}[Strong convergence {\cite[Definition 2.4]{kuwae_mosco}}]\label{def:strong_convergence}
	Let $\{ H_n \}$ converge to $H$ in the sense of \cref{def:convergence_hilbert}. We say that a sequence of vectors $\{u_n\}$, $u_n \in H_n$ converges strongly to a vector $u\in H$ if there exists a sequence $\{ \tilde{u}_m\}\subset C$ such that 
	\begin{align*}
	&\lim\limits_{m\to \infty}\norm{\tilde{u}_m - u}_H =0,\\
	&\lim\limits_{m\to \infty} \limsup\limits_{n\to \infty}\norm{\Phi_n \tilde{u}_m - u_n}_{H_n}=0.
	\end{align*}
\end{Def}
\begin{Def}[Weak convergence {\cite[Definition 2.5]{kuwae_mosco}}]\label{def:weak_convergence}
	Let $\{ H_n \}$ converge to $H$ in the sense of \cref{def:convergence_hilbert}. We say that a sequence of vectors $\{u_n\}$, $u_n \in H_n$ converges weakly to a vector $u\in H$ if 
	\begin{equation*}
	\lim\limits_{n\to \infty}(u_n,v_n)_{H_n}= (u,v)_H
	\end{equation*}
	for every sequence $\{ v_n \}$, $v_n\in H_n$ strongly converging to $v\in H$.
\end{Def}

\begin{proposition}[{\cite[Lemma 2.1, Lemma 2.3]{kuwae_mosco}}]\label{prop:eigenschaften_konvergenz}
	Let $\{H_n\}$ converge to $H$, $\{u_n\}$ be a sequence with $u_n \in H_n$ for all $n \in \BN$ and $u \in H$.
	\begin{enumerate}
		\item If $\{u_n\}$ converges strongly to $u$, then it converges weakly to $u$.
		\item If $\{u_n\}$ is weakly convergent to $u$, then $\sup_{n \in \BN} \norm{u_n}_{H_n} < \infty$.
	\end{enumerate}
\end{proposition}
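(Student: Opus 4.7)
The two assertions will be proved independently; for (1) the key is that strong convergence is compatible with the Hilbert structure (sums and norms), after which polarization gives the inner-product convergence needed for weak convergence, and for (2) I will argue by contradiction, constructing an explicit test sequence that strongly converges to $0$ but whose pairing with $\{u_n\}$ blows up.

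\textbf{Part (1).} I will first verify three routine stability facts for strong convergence, using only \cref{def:strong_convergence} and the triangle inequality. \emph{(a) Linearity:} if $u_n \to u$ and $v_n \to v$ strongly, with approximating sequences $\{\tilde u_m\},\{\tilde v_m\}\subset C$, then $\{\tilde u_m+\tilde v_m\}\subset C$ witnesses the strong convergence $u_n+v_n\to u+v$, because
\begin{equation*}
\|\Phi_n(\tilde u_m+\tilde v_m)-(u_n+v_n)\|_{H_n}\le \|\Phi_n \tilde u_m-u_n\|_{H_n}+\|\Phi_n \tilde v_m-v_n\|_{H_n}
\end{equation*}
and scalar multiplication is handled analogously. \emph{(b) Norm convergence:} the decomposition
\begin{equation*}
\bigl|\|u_n\|_{H_n}-\|u\|_H\bigr|\le \|u_n-\Phi_n\tilde u_m\|_{H_n}+\bigl|\|\Phi_n\tilde u_m\|_{H_n}-\|\tilde u_m\|_H\bigr|+\|\tilde u_m-u\|_H
\end{equation*}
gives $\|u_n\|_{H_n}\to \|u\|_H$ after taking $\limsup_n$ and then $\lim_m$, using \cref{def:convergence_hilbert}. \emph{(c) Polarization:} given $v_n\to v$ strongly, apply (a) to obtain strong convergence of $u_n\pm v_n\to u\pm v$, then (b) to get $\|u_n\pm v_n\|_{H_n}^2\to \|u\pm v\|_H^2$; the polarization identity
\begin{equation*}
(u_n,v_n)_{H_n}=\tfrac14\bigl(\|u_n+v_n\|_{H_n}^2-\|u_n-v_n\|_{H_n}^2\bigr)
\end{equation*}
then yields $(u_n,v_n)_{H_n}\to (u,v)_H$, which is \cref{def:weak_convergence}.

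\textbf{Part (2).} I will argue by contradiction. Assume $\sup_n \|u_n\|_{H_n}=\infty$ and extract a subsequence $\{n_k\}$ with $\|u_{n_k}\|_{H_{n_k}}\to\infty$; in particular $u_{n_k}\neq 0$ eventually. Define a test sequence by
\begin{equation*}
v_{n_k}:=\|u_{n_k}\|_{H_{n_k}}^{-3/2}\,u_{n_k},\qquad v_n:=0\quad\text{for }n\notin\{n_k\},
\end{equation*}
so that $\|v_n\|_{H_n}\to 0$. Taking the constant approximating sequence $\tilde v_m=0\in C$, \cref{def:strong_convergence} is satisfied and hence $v_n\to 0$ strongly. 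The assumed weak convergence of $\{u_n\}$ to $u$ forces $(u_n,v_n)_{H_n}\to (u,0)_H=0$, but along the subsequence
\begin{equation*}
(u_{n_k},v_{n_k})_{H_{n_k}}=\|u_{n_k}\|_{H_{n_k}}^{1/2}\longrightarrow\infty,
\end{equation*}
a contradiction.

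\textbf{Main obstacle.} Part (1) is mostly a careful bookkeeping exercise in the definitions. The delicate point is Part (2): because each $u_n$ lives in a different Hilbert space, the classical Banach--Steinhaus principle does not apply off-the-shelf, so the boundedness must be extracted by constructing a single strongly null sequence whose pairing detects the unbounded subsequence. The trick is to scale $u_{n_k}$ by a factor that still drives the norm of $v_{n_k}$ to zero while making the inner product $(u_{n_k},v_{n_k})_{H_{n_k}}$ diverge; the choice of exponent $3/2$ above (any exponent strictly between $1$ and $2$ works) accomplishes precisely this.
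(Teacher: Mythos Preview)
Your proof is correct. Note, however, that the paper does not give its own proof of this proposition: it is stated in the appendix as a citation of \cite[Lemma 2.1, Lemma 2.3]{kuwae_mosco}, so there is nothing in the paper to compare against. Your argument is a clean self-contained verification: Part (1) via linearity of strong convergence, norm continuity, and polarization is the standard route; Part (2) via the scaled diagonal test sequence $v_{n_k}=\|u_{n_k}\|_{H_{n_k}}^{-3/2}u_{n_k}$ is an elegant substitute for the uniform boundedness principle, which is unavailable here because the $u_n$ live in varying spaces. The only point worth making explicit is that $\Phi_n(0)=0$ by linearity of $\Phi_n$, which you use implicitly when verifying that the constant sequence $\tilde v_m=0$ witnesses $v_n\to 0$ strongly.
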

For the nonlocal to local convergence of Neumann problems the next Lemma is important. It states the weak compactness of the ball in the disjoint union of all $H_n$.

\begin{lem}[{\cite[Lemma 2.2]{kuwae_mosco}}]\label{lem:compact_embedding}
	Let $\{H_n\}$ be a sequence of separable Hilbert spaces converging to a separable Hilbert space $H$ in sense of \cref{def:convergence_hilbert}. Let $\{u_n\}$ with $u_n \in H_n$ be a sequence such that $\{ \norm{u_n}_{H_n} \}$ is bounded. There exists a subsequence converging weakly to $u\in H$.
\end{lem}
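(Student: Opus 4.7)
The plan is to mimic the classical proof of weak sequential compactness in a Hilbert space, adapted to the Kuwae--Shioya framework. Since $H$ is separable and $C \subset H$ is dense by \cref{def:convergence_hilbert}, I would first pick a countable set $\{v_k\}_{k \in \BN} \subset C$ that is dense in $H$. Let $M := \sup_n \norm{u_n}_{H_n} < \infty$. For each fixed $k$, the defining property $\norm{\Phi_n v_k}_{H_n} \to \norm{v_k}_H$ combined with Cauchy--Schwarz gives $\bigl\lvert (u_n, \Phi_n v_k)_{H_n} \bigr\rvert \le M \norm{\Phi_n v_k}_{H_n}$, which is bounded in $n$. A standard diagonal extraction then yields a subsequence $\{u_{n_j}\}_j$ such that the limit $c_k := \lim_{j \to \infty} (u_{n_j}, \Phi_{n_j} v_k)_{H_{n_j}}$ exists for every $k \in \BN$.

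Next I would define a linear functional $\ell$ on $\mathrm{span}\{v_k : k \in \BN\}$ by $\ell(v_k) := c_k$ and linear extension. Since $\Phi_n$ is linear and $\norm{\Phi_n v}_{H_n} \to \norm{v}_H$ for each $v \in C$, passing to the limit in $\bigl\lvert (u_{n_j}, \Phi_{n_j} v)_{H_{n_j}} \bigr\rvert \le M \norm{\Phi_{n_j} v}_{H_{n_j}}$ yields $\abs{\ell(v)} \le M \norm{v}_H$ on the span. As this span is dense in $H$, $\ell$ extends uniquely to a continuous linear functional on $H$, and by the Riesz representation theorem there is $u \in H$ with $\ell(v) = (u,v)_H$ and $\norm{u}_H \le M$.

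It remains to verify that $u_{n_j}$ converges to $u$ weakly in the sense of \cref{def:weak_convergence}, i.e.\ that $(u_{n_j}, w_{n_j})_{H_{n_j}} \to (u, w)_H$ for every sequence $\{w_{n_j}\}$ strongly converging to some $w \in H$. Given such a sequence and $\varepsilon > 0$, choose first $\tilde{w}_m \in C$ as in \cref{def:strong_convergence} so that $\norm{\tilde{w}_m - w}_H$ and $\limsup_j \norm{\Phi_{n_j} \tilde{w}_m - w_{n_j}}_{H_{n_j}}$ are both small for large $m$, then approximate $\tilde{w}_m$ by some $v_k$ in $\norm{\cdot}_H$ (which by the defining property of \cref{def:convergence_hilbert} also implies $\norm{\Phi_{n_j}\tilde{w}_m - \Phi_{n_j} v_k}_{H_{n_j}} \to \norm{\tilde w_m - v_k}_H$). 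The estimate
\begin{equation*}
\bigl\lvert (u_{n_j}, w_{n_j})_{H_{n_j}} - (u,w)_H \bigr\rvert \le \bigl\lvert (u_{n_j}, w_{n_j} - \Phi_{n_j} v_k)_{H_{n_j}} \bigr\rvert + \bigl\lvert (u_{n_j}, \Phi_{n_j} v_k)_{H_{n_j}} - (u, v_k)_H \bigr\rvert + \bigl\lvert (u, v_k - w)_H \bigr\rvert
\end{equation*}
then bounds the left-hand side, where the middle term vanishes as $j \to \infty$ by the construction of $u$, and the outer terms are controlled by $M$ times small quantities upon sending $k \to \infty$ after $m \to \infty$.

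The main obstacle is the last step: the Kuwae--Shioya weak convergence demands testing against \emph{arbitrary} strongly convergent sequences, not just sequences of the form $\{\Phi_n v\}$, and strong convergence is itself defined by the double limit $\lim_m \limsup_n \norm{\Phi_n \tilde{w}_m - w_n}_{H_n} = 0$. Thus the verification requires careful bookkeeping with three parameters: the subsequence index $j$ (for the inner-product limit), the approximation index $k$ (to pass from $\mathrm{span}\{v_k\}$ to $C$), and $m$ (to pass from $C$ to the strong limit $w$), taking limits in the order $j \to \infty$, then $k \to \infty$, then $m \to \infty$.
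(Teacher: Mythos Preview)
The paper does not supply its own proof of this lemma; it merely quotes \cite[Lemma 2.2]{kuwae_mosco} as an auxiliary result in the appendix. Your proposal is the standard diagonal-extraction argument and is essentially the proof given by Kuwae--Shioya in the cited reference: extract along a countable dense family in $C$, represent the resulting bounded functional via Riesz, and upgrade to weak convergence against arbitrary strongly convergent test sequences. Your bookkeeping of the three limits $j \to \infty$, then $k$, then $m$ is correct; with the choices you describe one obtains $\limsup_j \bigl\lvert (u_{n_j}, w_{n_j})_{H_{n_j}} - (u,w)_H \bigr\rvert \le C M \varepsilon$ for every $\varepsilon > 0$, which finishes the argument.
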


\medskip


\begin{thebibliography}{10}
	
	\bibitem{Abatangelo_Neumann}
	Nicola Abatangelo.
	\newblock A remark on nonlocal {N}eumann conditions for the fractional
	{L}aplacian.
	\newblock {\em Arch. Math. (Basel)}, 114(6):699--708, 2020.
	
	\bibitem{Neumann_nonlinear_2}
	Claudianor~O. Alves and C\'{e}sar~E. Torres~Ledesma.
	\newblock Fractional elliptic problem in exterior domains with nonlocal
	{N}eumann condition.
	\newblock {\em Nonlinear Anal.}, 195:111732, 29, 2020.
	
	\bibitem{trace_aronzajin}
	N.~Aronszajn.
	\newblock Boundary values of functions with finite {Dirichlet} integral.
	\newblock Conference on partial differential equations, {Univ}. {Kansas},
	{Summer} 1954, 77-93 (1955)., 1955.
	
	\bibitem{Ros_Oton_Neumann_regularity}
	Alessandro {Audrito}, Juan-Carlos {Felipe-Navarro}, and Xavier {Ros-Oton}.
	\newblock {The Neumann problem for the fractional Laplacian: regularity up to
		the boundary}.
	\newblock {\em arXiv e-prints arXiv:2006.10026}, June 2020.
	
	\bibitem{Neumann_nonlinear_3}
	Sabri Bahrouni and Ariel~M. Salort.
	\newblock Neumann and {R}obin type boundary conditions in fractional
	{O}rlicz-{S}obolev spaces.
	\newblock {\em ESAIM Control Optim. Calc. Var.}, 27(suppl.):Paper No. S15, 23,
	2021.
	
	\bibitem{balci_kassmann_diening_lee}
	A.~Balci, L.~Diening, M.~Kassmann, and H.~Lee.
	\newblock Oral communication, 2022.
	
	\bibitem{Barles_Neumann}
	Guy Barles, Emmanuel Chasseigne, Christine Georgelin, and Espen~R. Jakobsen.
	\newblock On {N}eumann type problems for nonlocal equations set in a half
	space.
	\newblock {\em Trans. Amer. Math. Soc.}, 366(9):4873--4917, 2014.
	
	\bibitem{Barles_2_Neumann}
	Guy Barles, Christine Georgelin, and Espen~R. Jakobsen.
	\newblock On {N}eumann and oblique derivatives boundary conditions for nonlocal
	elliptic equations.
	\newblock {\em J. Differential Equations}, 256(4):1368--1394, 2014.
	
	\bibitem{Neumann_higher_order}
	Bego\~{n}a Barrios, Luigi Montoro, Ireneo Peral, and Fernando Soria.
	\newblock Neumann conditions for the higher order {$s$}-fractional {L}aplacian
	{$(- \Delta)^s u$} with {$s > 1$}.
	\newblock {\em Nonlinear Anal.}, 193:111368, 34, 2020.
	
	\bibitem{Bogdan_trace}
	Krzysztof Bogdan, Tomasz Grzywny, Katarzyna Pietruska-Pa\l~uba, and Artur
	Rutkowski.
	\newblock Extension and trace for nonlocal operators.
	\newblock {\em J. Math. Pures Appl. (9)}, 137:33--69, 2020.
	
	\bibitem{bogdan_nonlinear}
	Krzysztof {Bogdan}, Tomasz {Grzywny}, Katarzyna {Pietruska-Pa{\l}uba}, and
	Artur {Rutkowski}.
	\newblock {Nonlinear nonlocal Douglas identity}.
	\newblock {\em arXiv e-prints arXiv:2006.01932}, June 2020.
	
	\bibitem{bourgain_compactness}
	Jean Bourgain, Haim Brezis, and Petru Mironescu.
	\newblock Another look at {S}obolev spaces.
	\newblock In {\em Optimal control and partial differential equations}, pages
	439--455. IOS, Amsterdam, 2001.
	
	\bibitem{brasco_cinti_hardy}
	Lorenzo Brasco and Eleonora Cinti.
	\newblock On fractional {H}ardy inequalities in convex sets.
	\newblock {\em Discrete Contin. Dyn. Syst.}, 38(8):4019--4040, 2018.
	
	\bibitem{bucur_valdinoci}
	Claudia Bucur and Enrico Valdinoci.
	\newblock {\em Nonlocal diffusion and applications}, volume~20 of {\em Lecture
		Notes of the Unione Matematica Italiana}.
	\newblock Springer, [Cham]; Unione Matematica Italiana, Bologna, 2016.
	
	\bibitem{Chaves_Neumann}
	Emmanuel Chasseigne, Manuela Chaves, and Julio~D. Rossi.
	\newblock Asymptotic behavior for nonlocal diffusion equations.
	\newblock {\em J. Math. Pures Appl. (9)}, 86(3):271--291, 2006.
	
	\bibitem{Chen_Neumann_Schrodinger}
	Guoyuan Chen.
	\newblock Singularly perturbed {N}eumann problem for fractional
	{S}chr\"{o}dinger equations.
	\newblock {\em Sci. China Math.}, 61(4):695--708, 2018.
	
	\bibitem{Chen1999}
	Zhen-Qing Chen.
	\newblock Multidimensional symmetric stable processes.
	\newblock {\em The Korean Journal of Computational \& Applied Mathematics. An
		International Journal}, 6(2):227--266, 1999.
	
	\bibitem{chensonggreen}
	Zhen-Qing Chen and Renming Song.
	\newblock Estimates on {G}reen functions and {P}oisson kernels for symmetric
	stable processes.
	\newblock {\em Math. Ann.}, 312(3):465--501, 1998.
	
	\bibitem{chen_hardy_song}
	Zhen-Qing Chen and Renming Song.
	\newblock Hardy inequality for censored stable processes.
	\newblock {\em Tohoku Math. J. (2)}, 55(3):439--450, 2003.
	
	\bibitem{Neumann_nonlinear}
	Eleonora Cinti and Francesca Colasuonno.
	\newblock A nonlocal supercritical {N}eumann problem.
	\newblock {\em J. Differential Equations}, 268(5):2246--2279, 2020.
	
	\bibitem{Cortazar_Neumann}
	Carmen Cortazar, Manuel Elgueta, Julio~D. Rossi, and Noemi Wolanski.
	\newblock Boundary fluxes for nonlocal diffusion.
	\newblock {\em J. Differential Equations}, 234(2):360--390, 2007.
	
	\bibitem{Cortazar_2_Neumann}
	Carmen Cortazar, Manuel Elgueta, Julio~D. Rossi, and Noemi Wolanski.
	\newblock How to approximate the heat equation with {N}eumann boundary
	conditions by nonlocal diffusion problems.
	\newblock {\em Arch. Ration. Mech. Anal.}, 187(1):137--156, 2008.
	
	\bibitem{hitchhiker}
	Eleonora Di~Nezza, Giampiero Palatucci, and Enrico Valdinoci.
	\newblock Hitchhiker's guide to the fractional {S}obolev spaces.
	\newblock {\em Bull. Sci. Math.}, 136(5):521--573, 2012.
	
	\bibitem{Valdinoci_nonlocal_derivative}
	Serena Dipierro, Xavier Ros-Oton, and Enrico Valdinoci.
	\newblock Nonlocal problems with {N}eumann boundary conditions.
	\newblock {\em Rev. Mat. Iberoam.}, 33(2):377--416, 2017.
	
	\bibitem{Douglas_identity_1931}
	Jesse Douglas.
	\newblock Solution of the problem of plateau.
	\newblock {\em Transactions of the American Mathematical Society},
	33(1):263--321, 1931.
	
	\bibitem{Neumann_Droniou}
	J\'{e}r\^{o}me Droniou and Juan-Luis V\'{a}zquez.
	\newblock Noncoercive convection-diffusion elliptic problems with {N}eumann
	boundary conditions.
	\newblock {\em Calc. Var. Partial Differential Equations}, 34(4):413--434,
	2009.
	
	\bibitem{Du_nonlocal_normal}
	Qiang Du, Max Gunzburger, R.~B. Lehoucq, and Kun Zhou.
	\newblock Analysis and approximation of nonlocal diffusion problems with volume
	constraints.
	\newblock {\em SIAM Rev.}, 54(4):667--696, 2012.
	
	\bibitem{du_trace_extension}
	Qiang Du, Xiaochuan Tian, Cory Wright, and Yue Yu.
	\newblock Nonlocal trace spaces and extension results for nonlocal calculus.
	\newblock {\em J. Funct. Anal.}, 282(12):Paper No. 109453, 63, 2022.
	
	\bibitem{Du_neumann_problems}
	Qiang {Du}, Xiaochuan {Tian}, and Zhi {Zhou}.
	\newblock {Nonlocal diffusion models with consistent local and fractional
		limits}.
	\newblock {\em arXiv e-prints arXiv:2203.00167}, February 2022.
	
	\bibitem{hardy_inequality}
	Bart\l~omiej Dyda.
	\newblock A fractional order {H}ardy inequality.
	\newblock {\em Illinois J. Math.}, 48(2):575--588, 2004.
	
	\bibitem{kassmann_dyda_ext}
	Bart\l~omiej Dyda and Moritz Kassmann.
	\newblock Function spaces and extension results for nonlocal {D}irichlet
	problems.
	\newblock {\em J. Funct. Anal.}, 277(11):108134, 22, 2019.
	
	\bibitem{coarea}
	Herbert Federer.
	\newblock {\em Geometric measure theory}.
	\newblock Die Grundlehren der mathematischen Wissenschaften, Band 153.
	Springer-Verlag New York, Inc., New York, 1969.
	
	\bibitem{Felsinger2013}
	Matthieu Felsinger, Moritz Kassmann, and Paul Voigt.
	\newblock The {D}irichlet problem for nonlocal operators.
	\newblock {\em Math. Z.}, 279(3-4):779--809, 2015.
	
	\bibitem{Kassmann_Foghem2022}
	Guy {Foghem} and Moritz {Kassmann}.
	\newblock {A general framework for nonlocal Neumann problems}.
	\newblock {\em arXiv e-prints arXiv:2204.06793}, April 2022.
	
	\bibitem{FoghemGounoue2020}
	Guy~Fabrice Foghem~Gounoue.
	\newblock $l^2$-theory for nonlocal operators on domains.
	\newblock {\em PUB Bielefeld}, page 220, June 2020.
	
	\bibitem{kassmann_mosco}
	Guy~Fabrice Foghem~Gounoue, Moritz Kassmann, and Paul Voigt.
	\newblock Mosco convergence of nonlocal to local quadratic forms.
	\newblock {\em Nonlinear Anal.}, 193:111504, 22, 2020.
	
	\bibitem{vu_trier}
	Leonhard {Frerick}, Christian {Vollmann}, and Michael {Vu}.
	\newblock {The nonlocal Neumann problem}.
	\newblock {\em arXiv e-prints arXiv:2208.04561}, August 2022.
	
	\bibitem{Trace_Galirado}
	Emilio Gagliardo.
	\newblock Caratterizzazioni delle tracce sulla frontiera relative ad alcune
	classi di funzioni in {$n$} variabili.
	\newblock {\em Rend. Sem. Mat. Univ. Padova}, 27:284--305, 1957.
	
	\bibitem{glaeser_inequality}
	Georges Glaeser.
	\newblock Racine carr\'{e}e d'une fonction diff\'{e}rentiable.
	\newblock {\em Ann. Inst. Fourier (Grenoble)}, 13(fasc. 2):203--210, 1963.
	
	\bibitem{grisvard}
	P.~Grisvard.
	\newblock {\em Elliptic problems in nonsmooth domains}, volume~24 of {\em
		Monographs and Studies in Mathematics}.
	\newblock Pitman (Advanced Publishing Program), Boston, MA, 1985.
	
	\bibitem{Grubb_Neumann}
	Gerd Grubb.
	\newblock Local and nonlocal boundary conditions for {$\mu$}-transmission and
	fractional elliptic pseudodifferential operators.
	\newblock {\em Anal. PDE}, 7(7):1649--1682, 2014.
	
	\bibitem{Extension_Jonsson_Wallin}
	A.~Jonsson and H.~Wallin.
	\newblock A {W}hitney extension theorem in {$L_{p}$} and {B}esov spaces.
	\newblock {\em Ann. Inst. Fourier (Grenoble)}, 28(1):vi, 139--192, 1978.
	
	\bibitem{Panki_Poisson_estimates}
	Jaehoon Kang and Panki Kim.
	\newblock On estimates of {P}oisson kernels for symmetric {L}\'{e}vy processes.
	\newblock {\em J. Korean Math. Soc.}, 50(5):1009--1031, 2013.
	
	\bibitem{Marvin_Mosco}
	Moritz {Kassmann} and Marvin {Weidner}.
	\newblock {Nonlocal operators related to nonsymmetric forms I: H{\"o}lder
		estimates}.
	\newblock {\em arXiv e-prints arXiv:2203.07418}, March 2022.
	
	\bibitem{kuwae_mosco}
	Kazuhiro Kuwae and Takashi Shioya.
	\newblock Convergence of spectral structures: a functional analytic theory and
	its applications to spectral geometry.
	\newblock {\em Comm. Anal. Geom.}, 11(4):599--673, 2003.
	
	\bibitem{Neumann_Mikhailov}
	V.~P. Mikhailov.
	\newblock {\em Partial differential equations}.
	\newblock ``Mir'', Moscow; distributed by Imported Publications, Chicago, Ill.,
	1978.
	\newblock Translated from the Russian by P. C. Sinha.
	
	\bibitem{Montefusco_neumann}
	Eugenio Montefusco, Benedetta Pellacci, and Gianmaria Verzini.
	\newblock Fractional diffusion with {N}eumann boundary conditions: the logistic
	equation.
	\newblock {\em Discrete Contin. Dyn. Syst. Ser. B}, 18(8):2175--2202, 2013.
	
	\bibitem{mosco_original}
	Umberto Mosco.
	\newblock Composite media and asymptotic {D}irichlet forms.
	\newblock {\em J. Funct. Anal.}, 123(2):368--421, 1994.
	
	\bibitem{Mungai_Neumann_p_Laplace}
	Dimitri Mugnai and Edoardo Proietti~Lippi.
	\newblock Neumann fractional {$p$}-{L}aplacian: eigenvalues and existence
	results.
	\newblock {\em Nonlinear Anal.}, 188:455--474, 2019.
	
	\bibitem{Neumann_nonlinear_1}
	Dimitri Mugnai and Edoardo Proietti~Lippi.
	\newblock Linking over cones for the {N}eumann fractional {$p$}-{L}aplacian.
	\newblock {\em J. Differential Equations}, 271:797--820, 2021.
	
	\bibitem{ponce}
	Augusto~C. Ponce.
	\newblock An estimate in the spirit of {P}oincar\'{e}'s inequality.
	\newblock {\em J. Eur. Math. Soc. (JEMS)}, 6(1):1--15, 2004.
	
	\bibitem{trace_prodi}
	Giovanni Prodi.
	\newblock Tracce sulla frontiera delle funzioni di {Beppo} {Levi}.
	\newblock {\em Rend. Semin. Mat. Univ. Padova}, 26:36--60, 1956.
	
	\bibitem{Ros-Oton_Survey}
	Xavier Ros-Oton.
	\newblock Nonlocal elliptic equations in bounded domains: a survey.
	\newblock {\em Publ. Mat.}, 60(1):3--26, 2016.
	
	\bibitem{RosOton_Vladinoci_appendix}
	Xavier Ros-Oton and Enrico Valdinoci.
	\newblock The {D}irichlet problem for nonlocal operators with singular kernels:
	convex and nonconvex domains.
	\newblock {\em Adv. Math.}, 288:732--790, 2016.
	
	\bibitem{Rutkowski_Dirichlet}
	Artur Rutkowski.
	\newblock The {D}irichlet problem for nonlocal {L}\'{e}vy-type operators.
	\newblock {\em Publ. Mat.}, 62(1):213--251, 2018.
	
	\bibitem{energy_valdinoci_1}
	Raffaella Servadei and Enrico Valdinoci.
	\newblock Mountain pass solutions for non-local elliptic operators.
	\newblock {\em J. Math. Anal. Appl.}, 389(2):887--898, 2012.
	
	\bibitem{energy_valdinoci_2}
	Raffaella Servadei and Enrico Valdinoci.
	\newblock Variational methods for non-local operators of elliptic type.
	\newblock {\em Discrete Contin. Dyn. Syst.}, 33(5):2105--2137, 2013.
	
	\bibitem{Energy_Valdinoci}
	Raffaella Servadei and Enrico Valdinoci.
	\newblock Weak and viscosity solutions of the fractional {L}aplace equation.
	\newblock {\em Publ. Mat.}, 58(1):133--154, 2014.
	
	\bibitem{trace_Slobodeckij}
	L.~N. Slobodeckij.
	\newblock S. {L}. {Sobolev}'s spaces of fractional order and their application
	to boundary problems for partial differential equations.
	\newblock {\em Dokl. Akad. Nauk SSSR}, 118:243--246, 1958.
	
	\bibitem{Stinga_Neumann}
	Pablo~Ra\'{u}l Stinga and Bruno Volzone.
	\newblock Fractional semilinear {N}eumann problems arising from a fractional
	{K}eller-{S}egel model.
	\newblock {\em Calc. Var. Partial Differential Equations}, 54(1):1009--1042,
	2015.
	
	\bibitem{triebel_1}
	Hans Triebel.
	\newblock {\em Theory of function spaces}, volume~78 of {\em Monographs in
		Mathematics}.
	\newblock Birkh\"{a}user Verlag, Basel, 1983.
	
	\bibitem{zoran_neumann}
	Zoran Vondra\v{c}ek.
	\newblock A probabilistic approach to a non-local quadratic form and its
	connection to the {N}eumann boundary condition problem.
	\newblock {\em Math. Nachr.}, 294(1):177--194, 2021.
	
\end{thebibliography}

\end{document}